\newcommand{\wg}{\omega}
\newcommand{\Aff }{\mathrm{Aff}}
\newcommand{\Z}{\mathbb{Z}}
\newcommand{\D}{\mathbb{D}}
\newcommand{\ZN}{\mathbb{Z}_N}
\newcommand{\Sym}{\mathbb{S}}
\newcommand{\charf }{\mathrm{char}\,\fie }
\newcommand{\Alt}{\mathbb{A}}
\newcommand{\N}{\mathbb{N}}
\newcommand{\NA}{\mathfrak{B}}
\newcommand{\F}{\mathbb{F}}
\newcommand{\fie}{\Bbbk}
\newcommand{\Hilb}{\mathcal{H}}
\newcommand{\id}{\mathrm{id}}
\newcommand{\imm}{\mathrm{imm}}
\newcommand{\Inn}{\mathrm{Inn}}
\newcommand{\ndN }{\mathbb{N}}
\newcommand{\ndZ }{\mathbb{Z}}
\renewcommand{\pmod}[1]{\,\,(\mathrm{mod}\,#1)}
\newcommand{\SG}[1]{\mathbb{S}_{#1}}
\newcommand{\supp }{\mathrm{supp}\,}
\newcommand{\toba}{\mathfrak{B}}
\newcommand{\trid}{\triangleright}
\newcommand{\ydG}{ {}_{\fie G}^{\fie G}\mathcal{YD}}
\newcommand{\ydH}{ {}_{\fie H}^{\fie H}\mathcal{YD}}
\newcommand{\yedge}{\graphlinedash{3 4} \graphlinewidth{.03}}
\numberwithin{equation}{section}
\numberwithin{figure}{section}
\numberwithin{table}{section}
\theoremstyle{plain}
\newtheorem{thm}{Theorem}[section]
\theoremstyle{plain}
\newtheorem{lem}[thm]{Lemma}
\newtheorem{cor}[thm]{Corollary}
\theoremstyle{plain}
\newtheorem{pro}[thm]{Proposition}
\theoremstyle{remark}
\newtheorem{rem}[thm]{Remark}
\theoremstyle{plain}
\newtheorem{exa}[thm]{Example}
\theoremstyle{plain}
\newtheorem{defn}[thm]{Definition}
\theoremstyle{plain}
\newtheorem{problem}[thm]{Problem}
\newtheorem{conjecture}[thm]{Conjecture}
\begin{document}

\title[Nichols algebras with many cubic relations]{Nichols algebras with many cubic relations}
\author{I. Heckenberger}
\author{A. Lochmann}
\author{L. Vendramin}
\address{Philipps-Universit\"at Marburg\\ 
FB Mathematik und Informatik \\
Hans-Meerwein-Stra\ss e\\
35032 Marburg, Germany}
\email{heckenberger@mathematik.uni-marburg.de}
\email{lochmann@mathematik.uni-marburg.de}
\email{lvendramin@dm.uba.ar}

\begin{abstract}
 Nichols algebras of group type
 with many cubic relations are classified under a technical
 assumption on the structure of Hurwitz orbits of the third power of the
 underlying indecomposable rack. All such Nichols algebras are
 finite-dimensional and their Hilbert series have a factorization into quantum
 integers. Also, all known finite-dimensional elementary Nichols algebras
 turn out to have many cubic relations. The technical assumption
 of our theorem can be removed if a conjecture in the theory of cellular
 automata can be proven.
\end{abstract}

\maketitle

\setcounter{tocdepth}{1}
\tableofcontents{}

\section{Introduction}
\label{section:intro}

Let $(V,c)$ be a braided vector space. A fundamental question in the theory of
Hopf algebras is whether the Nichols algebra $\NA (V)$ (which heavily depends
on $c$) is finite-dimensional.
If $V$ is of diagonal type, then $\NA (V)$ is known to have a restricted PBW
basis and the PBW generators are parametrized by Lyndon words. The multidegrees
of these Lyndon words can be regarded as positive roots in a generalized root
system, which has the symmetry of a Weyl groupoid. Moreover, the existence of the
restricted PBW basis implies that the Hilbert series of $\NA (V)$ is rational.

If the braiding $c$ is of group type, then the structure of $\NA (V)$ is much less
understood. Moreover, only a few finite-dimensional examples are known which are
not of diagonal type. One common feature of them is the factorization of the
Hilbert series of $\NA (V)$ as
\[ \Hilb _{\NA (V)}(t)=\prod _{i=1}^{k_1} (a_i)_t \prod _{i=1}^{k_2}(b_i)_{t^2}\]
for some $k_1,k_2\ge 0$, $a_1,\dots ,a_{k_1},b_1,\dots ,b_{k_2}\ge 2$, where
$(a)_{t^b}=1+t^b+t^{2b}+\cdots +t^{(a-1)b}$ for all $a,b\ge 1$. We say that
$\Hilb _{\NA (V)}(t)$ is $t$-\emph{integral of depth two}.
The $t$-integrality of all known Hilbert series of Nichols algebras of group type
was the starting point of a classification program in \cite{MR2803792} and
\cite{MR2891215}.
It turned out that under additional technical restrictions the $t$-integrality
of the Hilbert series is equivalent to an inequality on the dimensions of the
homogeneous components of low degree of the Nichols algebra, and that a complete
list can be given.

Let $G$ be a group, $\fie $ be a field and $V\in \ydG $.
Then the Nichols algebra $\NA (V)$ is called of group type.
We say that the Nichols algebra $\NA (V)$ is \emph{elementary}
if $V$ is finite-dimensional, absolutely irreducible and if its support
$$\supp V=\{x\in G\,\,|\,V_x\not=0\}$$
generates the group $G$. A fundamental problem in the theory of Nichols
algebras is the classification of finite-dimensional elementary Nichols
algebras. In fact, often it is not important to know $G$. Let $H$
be a group. We say
that two Yetter-Drinfeld modules $V\in \ydG $, $W\in \ydH $ are
\emph{bg-equivalent}\footnote{g refers to the grading by the group and b
refers to the braiding}
if there exists a bijection
$\varphi :\supp V\to \supp W$ and a
linear isomorphism $\psi :V\to W$ such that
$$\psi (V_g)=W_{\varphi (g)},\quad
%
%
\psi (gv)=\varphi(g)\psi (v)$$
for all $g,x\in \supp V$, $v\in V$. Two Nichols algebras of group type are
called \emph{bg-equivalent} if their degree one parts are bg-equivalent.
Then it is more convenient to ask for
all finite-dimensional elementary Nichols algebras up to bg-equivalence.
The answer to this problem is unknown to a large
extent. However, there are indications that the following conjecture is
possibly true:

\begin{conjecture} \label{conj:elemNA}
	\footnote{This conjecture was posed in the Oberwolfach mini-workshop
  ``Nichols algebras and Weyl groupoids'' in October 2012.}
All finite-dimensional elementary Nichols algebras are
$t$-integral of depth two. In particular, any such Nichols algebra is
bg-equivalent to one of
those listed in Table~\ref{tab:nichols}.
\end{conjecture}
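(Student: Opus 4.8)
The plan is to reduce Conjecture~\ref{conj:elemNA} to a rack-theoretic statement and then to convert the hypothesis of having \emph{many cubic relations} into a bound on the size of the underlying rack. First, up to bg-equivalence a finite-dimensional absolutely irreducible $V\in\ydG$ whose support generates $G$ is the same datum as a finite indecomposable rack $X$ together with a (possibly non-abelian) $2$-cocycle $q=(q_{x,y})_{x,y\in X}$, the braiding on $V=\fie X$ being $c(x\otimes y)=q_{x,y}\,(x\triangleright y)\otimes x$. Hence it suffices to classify the pairs $(X,q)$ with $X$ indecomposable and $\dim\NA(\fie X,q)<\infty$, and then to check that every Nichols algebra on the resulting list has Hilbert series of the form $\prod_{i=1}^{k_1}(a_i)_t\prod_{i=1}^{k_2}(b_i)_{t^2}$; the list is then recorded in Table~\ref{tab:nichols}.

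Second, I would make precise what ``many cubic relations'' should mean. The degree-three component $\NA^3(V)$ is the image of the degree-$3$ quantum symmetrizer on $V^{\otimes3}$, and in the factorization of that symmetrizer the operator $1+c_{12}+c_{12}c_{23}$ appears as one factor; consequently the cubic relations of $\NA(V)$ beyond the consequences of the quadratic ones are controlled by $\dimker$ on $V^{\otimes3}$. Now $V^{\otimes3}$ has basis $X^{\times3}$, which decomposes into Hurwitz orbits $\cO$ for the braid group $B_3$ acting by $\sigma_1\cdot(x_1,x_2,x_3)=(x_1\triangleright x_2,\,x_1,\,x_3)$ and $\sigma_2\cdot(x_1,x_2,x_3)=(x_1,\,x_2\triangleright x_3,\,x_2)$; the operators $c_{12}$ and $c_{12}c_{23}$ are cocycle-twists of the permutations underlying $\sigma_1$ and $\sigma_1\sigma_2$, so $1+c_{12}+c_{12}c_{23}$ preserves each $V_{\cO}^{\otimes3}$. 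Thus the problem localizes: one must compute $\rdimker$ for a single Hurwitz orbit as a function of $|\cO|$, of the order of the cocycle $q$ on $\cO$, and of the cycle structure of the $B_3$-action, and ``many cubic relations'' is exactly the assertion that these local numbers attain the maximum the combinatorics permits.

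Third --- and this is the technical heart --- I would analyse $1+c_{12}+c_{12}c_{23}$ on $\fie\cO$. On a Hurwitz orbit $c_{12}$ and $c_{12}c_{23}$ act by cocycle-twisted permutations that trace out the $B_3$-orbit, so after choosing a base point the kernel reduces to kernels of twisted operators of the shape $1+P+P^2$ with $P$ governed by the cyclic pieces of the Hurwitz action on triples. Evaluating these requires knowing the cycle structure of that action on $X^{\times3}$: this is precisely where the technical assumption of the main theorem enters, and in full generality it is the input supplied by the conjectured behaviour of the associated one-dimensional cellular automaton (for affine racks the Hurwitz move becomes a linear recurrence and orbit lengths become cellular-automaton periods). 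Granting this, one reads off numerical constraints relating $|\cO|$, the order of $q$, and the arithmetic of $X$; summing over the orbits of $X^{\times3}$ and using indecomposability of $X$ forces $X$ to be small and of very restricted type --- small affine racks together with the racks of transpositions and of $4$-cycles in small symmetric groups and their close relatives --- and confines $q$ to finitely many possibilities. I would then dispatch this finite list by direct computation of each Nichols algebra (small Gr\"obner-basis arguments, or already-known PBW descriptions), exhibiting in every case the factorization of the Hilbert series into quantum integers of depth two; this simultaneously proves $t$-integrality and produces Table~\ref{tab:nichols}. For the converse implication in the ``in particular'' clause I would check that each presently known finite-dimensional elementary Nichols algebra satisfies the cubic-relations inequality and hence already occurs on the list.

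The step I expect to be the main obstacle is the third one: for a general rack there is currently no handle on the Hurwitz orbits of $X^{\times3}$, so without the technical assumption the local kernel computation --- and therefore the numerical bound that makes $X$ finite --- cannot be carried out. This is exactly the gap that the cellular-automaton conjecture mentioned in the abstract is designed to close; everything after it is finite bookkeeping and explicit computation.
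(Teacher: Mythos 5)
The statement you are addressing is Conjecture~\ref{conj:elemNA}, and the paper does not prove it — nor does it claim to. What the paper actually establishes is Theorem~\ref{thm:main}: a classification of elementary Nichols algebras \emph{with many cubic relations} under the standing assumption that no Hurwitz orbit quotient in $X^3$ has an $xy$-cycle of length $\geq 5$ (equivalently, via Theorem~\ref{thm:percolation}, under the immunity bound $\imm(\mathcal{O})\leq\wg(\mathcal{O})$), together with the observation (Remark~\ref{rem:weakerassumption}, Conjecture~\ref{conj:immbound}) that proving the immunity bound for all $\mathbb{B}_3$-spaces with simply intersecting cycles would remove that assumption. Your proposal retraces exactly this program: encode $V$ by an indecomposable rack with a $2$-cocycle, localize $1+c_{12}+c_{12}c_{23}$ to Hurwitz orbits, bound the rack through orbit-by-orbit kernel/immunity estimates, and finish by finite computation producing Table~\ref{tab:nichols}. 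But you yourself flag that the core step — controlling the Hurwitz orbits of $X^{3}$ for a general rack, i.e.\ the cellular-automaton input — is open. A program with an acknowledged unproved core is not a proof, so the proposal cannot be accepted as a proof of the conjecture; at best it reproduces the paper's conditional result.

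There is a second, independent gap. The inequality defining ``many cubic relations'' is known to follow from $t$-integrality of depth two (this is the result quoted from \cite{MR2891215}), \emph{not} from finite-dimensionality. Your plan classifies Nichols algebras satisfying that inequality, and for the remaining direction you only propose to ``check that each presently known finite-dimensional elementary Nichols algebra satisfies the cubic-relations inequality.'' Verifying known examples says nothing about an arbitrary finite-dimensional elementary Nichols algebra: to conclude Conjecture~\ref{conj:elemNA} you would need the implication ``finite-dimensional elementary $\Rightarrow$ many cubic relations'' (or directly $\Rightarrow$ $t$-integral of depth two), and no such implication is available — that is essentially what the conjecture asserts. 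Hence even if the Hurwitz-orbit/cellular-automaton difficulty were resolved, your argument would yield at most the analogue of Theorem~\ref{thm:main} without the technical hypothesis, i.e.\ a classification of those elementary Nichols algebras having many cubic relations, not the full statement of Conjecture~\ref{conj:elemNA}.
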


In this paper we extend the results in \cite{MR2803792} and \cite{MR2891215}.
More precisely, we reduce the classification problem of
elementary Nichols algebras
with $t$-integral Hilbert series of depth two to a problem for cellular
automata on braid group orbits.

Let us discuss some details of our approach.
Assume first that the Hilbert series of $\NA (V)$
is $t$-integral of depth two. In \cite[Section
2.1]{MR2891215} it was shown that then the inequality
$$ \dim \ker \,(1+c_{12}+c_{12}c_{23})\ge \frac{1}{3}\dim V( (\dim
V)^2-1)$$
holds. Now, if this inequality holds for $V$, we say that $\NA(V)$ has many
cubic relations. Our intention is to classify all Nichols algebras with many
cubic relations and to prove that their Hilbert series are $t$-integral of depth
two. If $\supp V$ is a braided rack, then this claim was proven in
\cite{MR2891215}. Here we attack the general problem.

Recall that the braid group $\mathbb{B}_{3}$ can be presented by
gererators $\sigma _1$ and $\sigma _2$
and relation $\sigma _1\sigma _2\sigma _1=\sigma _2\sigma _1\sigma _2$.
The group $\mathbb{B}_{3}$ acts
on $(\supp V)^{3}$ by $\sigma _1\cdot(x,y,z)=(x\triangleright y,x,z)$ and
$\sigma _2\cdot(x,y,z)=(x,y\triangleright z,y)$ for all $x,y,z\in \supp V$,
where $\triangleright $ means conjugation. The orbits of the action of
$\mathbb{B}_3$ are called Hurwitz orbits. 
We assume that the
quotients of the Hurwitz orbits of $(\supp V)^3$ by the action of the center
of the braid group have only $\sigma _1$-cycles of length $\le 4$. (The
braidedness condition on racks
means that the Hurwitz orbits of $(\supp V)^3$
themselves have only $\sigma _1$-cycles of length $\le 3$.) This way we cover the
braided case and the
cases $\supp V=\Aff (5,i)$ with $i=2,3$, for which finite-dimensional Nichols
algebras are known to exist. On the other hand, we have to deal with
infinitely many Hurwitz orbits in contrast to \cite{MR2891215}.

Looking at homogeneous components, $\dim \ker \,(1+c_{12}+c_{12}c_{23})$
can be estimated from above by a purely combinatorial way.
Choose a subset $Y$ of $(\supp V)^3$ and take an element
$\overline{\alpha }\in \oplus _{(x,y,z)\in Y}V_x\otimes V_y\otimes V_z$.
Let $k(Y,\overline{\alpha})$ be the set of all $\alpha \in \ker
\,(1+c_{12}+c_{12}c_{23})$ with projection $\overline{\alpha }$ to
its homogeneous parts with degree in $Y$.
If $x,y,z\in \supp V$ and two of
\[
\{(x,y,z),\sigma _2\cdot(x,y,z),\sigma _1\sigma _2\cdot(x,y,z)\}
\]
are in $Y$ then for any $\alpha \in k(Y,\overline{\alpha })$ the summand with
the third degree is uniquely determined. Thus
$k(Y,\overline{\alpha })\subseteq k(Y',\overline{\alpha}')$, where $Y'$ is the
union of $Y$ and the third degree and $\overline{\alpha}'$ is the extension of
$\overline{\alpha }$.
This procedure of enlarging $Y$ can be regarded as a
cellular automaton on $(\supp V)^3$.
If $Y=(\supp V)^3$ then
$k(Y,\overline{\alpha})=\overline{\alpha}$ or
$k(Y,\overline{\alpha})=\emptyset $.
Hence, if a given subset $Y\subseteq (\supp V)^3$
can be enlarged this way to $(\supp V)^3$, then
the projection of $\ker \,(1+c_{12}+c_{12}c_{23})$ to the sum of
homogeneous parts of degree $(x,y,z)\in Y$ is injective.
Thus an
important question is the following: for a given Hurwitz orbit $\mathcal{O}$,
provide (the size of) a smallest subset $Y$ which can be enlarged by the above
process to $\mathcal{O}$. The size of such a $Y$ yields surprizingly often
a sharp upper bound for
$\dim \ker \,(1+c_{12}+c_{12}c_{23})$.
We call the quotient $|Y|/|\mathcal{O}|$ the \emph{immunity} of $\mathcal{O}$.

Our classification of finite-dimensional elementary Nichols algebras
with many cubic relations has three steps. First we solve the evolution
problem on the Hurwitz orbits in $X^3$ for all indecomposable racks $X$ such
that the quotient by the center has only $\sigma _1$-cycles of length $\le 4$.
This is one of our
main results. Then we conclude a small upper bound for the size of possible
racks. Using additional information on the cycle structure of racks and
the classification of indecomposable racks of size $\le 35$, we finally obtain
all racks satisfying the inequality on the immunity of $X^3$. For those racks
we then study the possible cocycles coming from Yetter-Drinfeld structures of
the enveloping group $G_X$ and find all elementary Nichols algebras
with many cubic relations, under the standing assumption on the Hurwitz
orbit structure.
This finishes our classification.

The paper is organized as follows. In Section \ref{section:racks} we review the
basic definitions concerning racks and we prove our first main result about the
structure of indecomposable racks. In Section~\ref{section:cycles} we provide an
obstruction on the cycle structure of indecomposable crossed sets.
Section \ref{section:hurwitz} is devoted to
the theory of coverings of Hurwitz orbits. In Section~\ref{section:automaton}
we define cellular automata on $G$-spaces for arbitrary groups $G$ and study
particular examples.
In Section \ref{section:immunities}
we prove our second main result formulated in Section~\ref{section:plagues}:
an explicit upper bound for the immunity of a class of Hurwitz orbits
in terms of local data. Finally, in Sections~\ref{section:manycubic}
and \ref{section:2cocycles}
we classify elementary Nichols algebras
with many cubic relations under
our standing assumption on Hurwitz orbits.

\section{Racks} 
\label{section:racks}

We recall basic notions and facts about racks. For additional information we
refer to \cite{MR1994219}.  A \emph{rack} is a pair $(X,\trid)$, where $X$ is
a non-empty set and $\trid:X\times X\to X$ is a map (considered as a binary
operation on $X$) such that
\begin{enumerate}
  \item the map $\varphi_i:X\to X$, where $x\mapsto i\trid x$, is bijective for all $i\in X$, and
  \item $i\trid(j\trid k)=(i\trid j)\trid(i\trid k)$ for all $i,j,k\in X$
    (i.\,e.~$\trid $ is \emph{self-distributive}).
\end{enumerate}

A rack $(X,\trid )$, or shortly $X$, is a \emph{quandle} if $i\trid i=i$ for
all $i\in X$.  A \emph{crossed set} $X$ is a quandle such that
$x\triangleright y=y$ if and only if $y\triangleright x=x$ for all $x,y\in X$.
A \emph{subrack} of a rack $X$ is a non-empty subset $Y\subseteq X$ such that
$(Y,\trid)$ is also a rack.  The \emph{inner group} of a rack $X$ is the
group generated by the permutations $\varphi_i$ of $X$, where $i\in X$. We
write $\Inn(X)$ for the inner group of $X$.
For any rack $X$, the \emph{enveloping group of} $X$
is the group $G_X$ given by the generating set $X$ and the relations
$xy=(x\trid y)x$
for all $x,y\in X$. A rack $X$ is called \emph{injective} if the canonical map
$X\to G_X$ is injective.


We say that a rack $X$ is \emph{indecomposable} if the inner group $\Inn(X)$
acts transitively on $X$. Also, $X$ is \emph{decomposable} if it is not
indecomposable.
The \emph{profile} of an indecomposable rack $X$ is the cycle structure
of the permutation $\varphi_x$ for some $x\in X$. For example, a profile
$1^a2^b3^c$ means that for any $x\in X$, $\varphi _x$ is a product of $b$
disjoint transpositions, $c$ disjoint $3$-cycles and $|X|=a+b+c$.

\begin{exa}
\label{exa:racks}
A group $G$ is a rack with $x\trid y=xyx^{-1}$ for all $x,y\in G$.  If a subset
$X\subseteq G$ is stable under conjugation by the elements of $X$,
then it is a subrack of $G$.
In particular, the conjugacy class $g^G$ of any $g\in G$ is a rack.
\end{exa}

\begin{rem}
For a union $X$ of conjugacy classes of a group $G$,
the subgroup of $G$ generated
by $X$ is a quotient of the enveloping group of $X$.
This implies that a rack is injective if and only if it is
isomorphic to a union of conjugacy classes of a group.
\end{rem}

\begin{exa}
\label{exa:affine}
Let $A$ be an abelian group, and $X=A$. For any $g\in\mathrm{Aut}(A)$ we have a
rack structure on $X$ given by 
\[
x\trid y=(1-g)x+gy
\]
for all $x,y\in X$.  This rack is called the \emph{affine rack} associated to
the pair $(A,g)$ and will be denoted by $\mathrm{Aff}(A,g)$. In particular, let
$p$ be a prime number, $q$ a power of $p$ and $\alpha\in\F_q\setminus\{0,1\}$.
We write $\mathrm{Aff}(\mathbb{F}_q, \alpha)$, or simply
$\mathrm{Aff}(q,\alpha)$, for the affine rack $\mathrm{Aff}(A,g)$, where
$A=\mathbb{F}_q$ and $g$ is the automorphism given by $x\mapsto\alpha x$ for
all $x\in\F_q$. 
\end{exa}

\begin{exa}
\label{exa:dihedral}
Let $p$ be a prime number. The affine rack $\Aff(p,-1)$ is called
\emph{dihedral rack} and it will be denoted by $\D_p$.
\end{exa}


Let $X$ be a finite indecomposable injective rack and let $x\in X$. In
\cite[Section 2.3]{MR2803792} integers $k_n$ for $n\in\N_{\geq2}$ were defined
by 
\begin{align*}
k_n=|\{y\in X\mid & \underbrace{x\trid(y\trid(x\trid(y\trid\cdots)))}_{n\text{
elements}}=y,\\
&\underbrace{x\trid(y\trid(x\trid(y\trid\cdots)))}_{j\text{
elements}}\not=y \text{ for all $j\in \{1,2,\dots ,n-1\}$}\}|.
\end{align*}
Since $X$ is indecomposable, the numbers $k_n$ do not depend on $x$.
Recall that for all $x\in X$ the permutation $\varphi_x$ has precisely $1+k_2$
fixed points.
Furthermore, $1+k_2+k_3+\cdots$ is just the cardinality of $X$.
We frequently use the notation \[
	k_m'=k_{m+1}+k_{m+2}+k_{m+3}+\cdots
\]
where $m\in\N$. The number of points moved by $\varphi_x$ is then
$k_2'=k_3+k_3'$.

\begin{lem} \label{le:S}
  Let $X$ be a crossed set and let $u\in X$. Then the subset
  $S=\{x\in X\,|\,u\trid x=x,\,x\not=u\}$ is a subquandle of $X$.
\end{lem}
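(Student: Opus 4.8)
We want to show that $S=\{x\in X \mid u\trid x = x,\ x\neq u\}$ is closed under $\trid$, i.e.\ that for $x,y\in S$ we have $x\trid y\in S$. Two things must be checked: first that $u\trid(x\trid y)=x\trid y$, and second that $x\trid y\neq u$.

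For the first point, the plan is to use self-distributivity together with the hypotheses $u\trid x = x$ and $u\trid y = y$. Indeed,
$$u\trid(x\trid y) = (u\trid x)\trid(u\trid y) = x\trid y,$$
so $x\trid y$ is again fixed by $\varphi_u$. This step is completely routine and uses only axiom (2) of a rack; note it does not even require the crossed-set hypothesis.

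The second point, that $x\trid y\neq u$, is where the crossed-set condition enters, and I expect this to be the only real content of the lemma. Suppose for contradiction that $x\trid y = u$. We know $u\trid x = x$, which in a quandle means $\varphi_u(x)=x$; we want to derive $x\trid u = u$ and then reach a contradiction with $x\neq u$ via some fixed-point count, or alternatively show directly that $y=u$ or $x=u$. Concretely: from $x\trid y = u$ apply $\varphi_x$-invariance arguments — since $u\trid x = x$, the crossed-set axiom gives $x\trid u = u$; then $x\trid y = u = x\trid u$, and since $\varphi_x$ is injective we get $y = u$, contradicting $y\in S$. So the key step is precisely the implication $u\trid x = x \Rightarrow x\trid u = u$, which is exactly the defining property of a crossed set applied to the pair $(u,x)$ (in the form "$y\trid x = x$ iff $x\trid y = y$" with the roles of the two elements as needed).

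The main (and only) subtlety is making sure the crossed-set axiom is invoked with the correct pair of elements and the correct direction of the biconditional, and that the injectivity of $\varphi_x$ is legitimately available (it is, by rack axiom (1)). Everything else is a two-line computation. I would therefore organize the write-up as: (i) the self-distributivity computation showing $\varphi_u(x\trid y) = x\trid y$; (ii) the assumption $x\trid y = u$ for contradiction, the crossed-set step $x\trid u = u$, and cancellation of $\varphi_x$ to force $y = u$; (iii) conclude $x\trid y\in S$, hence $S$ is a subquandle (it inherits $i\trid i = i$ from $X$ automatically).
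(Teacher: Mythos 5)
Your two closure steps are exactly the ones in the paper: the self-distributivity computation $u\trid(x\trid y)=(u\trid x)\trid(u\trid y)=x\trid y$, and the crossed-set step $u\trid x=x\Rightarrow x\trid u=u$ together with injectivity of $\varphi_x$ to rule out $x\trid y=u$. So far this matches the paper's proof.

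However, there is a gap at the end, where you assert that closure under $\trid$ (plus the inherited idempotency) already makes $S$ a subquandle. With the paper's definition, a subrack is a non-empty subset $Y$ such that $(Y,\trid)$ is itself a rack, so you must also check that $\varphi_x|_S\colon S\to S$ is \emph{bijective} for every $x\in S$. Injectivity is inherited from $X$, and your argument gives $\varphi_x(S)\subseteq S$, but surjectivity onto $S$ is not automatic: the lemma does not assume $X$ finite, and for infinite racks a subset closed under $\trid$ need not be closed under the inverse operation, so $\varphi_x|_S$ could fail to map $S$ \emph{onto} $S$. The paper closes this by showing $\varphi_x^{-1}(S\cup\{u\})\subseteq S\cup\{u\}$, using
\[
u\trid\bigl(\varphi_x^{-1}(y)\bigr)=\varphi_{u\trid x}^{-1}(u\trid y)=\varphi_x^{-1}(y)
\]
(which follows from $\varphi_u\varphi_x=\varphi_{u\trid x}\varphi_u$) together with $\varphi_x^{-1}(u)=u$, whence $\varphi_x^{-1}(S)\subseteq S$ and $\varphi_x|_S$ is invertible with inverse $\varphi_x^{-1}|_S$. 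Your write-up needs this (easy but necessary) extra step; if you instead assume $X$ finite, say so explicitly, but that would weaken the statement relative to the paper.
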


\begin{proof}
  Let $x,y\in X$ with $u\trid x=x$, $u\trid y=y$. Then
  $$u\trid (x\trid y)= (u\trid x)\trid (u\trid y)=x\trid y.$$
  Further, $x\trid u=u$ since $X$ is a
  crossed set and hence $\varphi _x(S)\subseteq S$. Clearly, $\varphi _x|_S$ is
  injective. Finally, $\varphi _x^{-1}(S\cup \{u\})\subseteq S\cup \{u\}$ since
  $$u\trid (\varphi _x^{-1}(y))=\varphi _{u\trid x}^{-1}(u\trid y)=\varphi
  _x^{-1}(y)$$
  and $\varphi _x^{-1}(u)=u$. Thus $\varphi _x|_S$ is invertible with inverse
  $\varphi _x^{-1}|_S$.
\end{proof}

\begin{pro} \label{pro:TmovesT}
  Let $X$ be an indecomposable crossed set and let $u\in X$.
  Let $T=\{x\in X\,|\,u\trid x\not=x\}$.
  Then for all $t\in T$ there exists $t'\in T$ with $t\trid t'\not=t'$.
\end{pro}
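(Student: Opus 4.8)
The plan is to argue by contradiction. Suppose that for some $t\in T$ we have $t\trid t'=t'$ for all $t'\in T$. Since $t\trid t=t$, the permutation $\varphi_t$ then fixes every element of $T$. Because $X$ is a crossed set, $t\trid t'=t'$ is equivalent to $t'\trid t=t$, so we also obtain $\varphi_{t'}(t)=t$ for all $t'\in T$; in other words, $t$ is a fixed point of the subgroup $H_T:=\langle\varphi_{t'}\mid t'\in T\rangle$ of $\Inn(X)$. Note that $T$ is a nonempty proper subset of $X$: it contains $t$, and it misses $u$ since $u\trid u=u$.

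Next I would record the structural fact, essentially read off from the proof of Lemma~\ref{le:S}, that every $\varphi_x$ with $x\in R:=X\setminus T=\{x\in X\mid u\trid x=x\}$ restricts to a bijection of $R$, hence also to a bijection of $T$. For $x=u$ this is immediate, and for $x\in S=R\setminus\{u\}$ the proof of Lemma~\ref{le:S} already establishes $\varphi_x(S)\subseteq S$, $\varphi_x(u)=u$, $\varphi_x^{-1}(S\cup\{u\})\subseteq S\cup\{u\}$ and $\varphi_x^{-1}(u)=u$, which together force $\varphi_x(R)=R$ and therefore $\varphi_x(T)=T$; the crossed-set hypothesis enters precisely here, to guarantee $x\trid u=u$ for $x\in S$. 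Granting this, for $x\in R$ and $t'\in T$ the rack identity $\varphi_x\varphi_{t'}\varphi_x^{-1}=\varphi_{x\trid t'}$ and the membership $x\trid t'=\varphi_x(t')\in T$ show that conjugation by $\varphi_x^{\pm1}$ sends generators of $H_T$ to generators of $H_T$. Hence $H_R:=\langle\varphi_x\mid x\in R\rangle$ normalizes $H_T$, so $H_RH_T$ is a subgroup of $\Inn(X)$; since it contains every generator $\varphi_x$ with $x\in X=R\sqcup T$, we get $\Inn(X)=H_RH_T$.

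To finish, I would compute the $\Inn(X)$-orbit of $t$. Using $\Inn(X)=H_RH_T$ and $H_T\cdot t=\{t\}$, this orbit equals $H_R\cdot(H_T\cdot t)=H_R\cdot t$; but $H_R$ stabilizes $T$ setwise and $t\in T$, so $\Inn(X)\cdot t\subseteq T\subsetneq X$, contradicting the indecomposability of $X$. The only step requiring genuine care is the middle one --- verifying that elements of $R$ preserve $T$ and hence normalize $H_T$ --- and this is little more than bookkeeping built on Lemma~\ref{le:S}; the subtlety that $\varphi_x|_R$ must be surjective, not merely injective into $R$, when $X$ is infinite is already handled inside that lemma's proof, so I do not expect a real obstacle there.
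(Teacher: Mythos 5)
Your argument is correct, and it takes a genuinely different route from the paper's. The paper argues locally: it collects all putative counterexamples into the set $T_0=\{t\in T\mid \supp \varphi _t\subseteq S\cup \{u\}\}$, uses indecomposability to produce a boundary triple $z\trid x=y$ with $x\in T_0$, $y\notin T_0$, deduces $z\in S\cup\{u\}$ from the crossed-set property, and then derives a contradiction from the support-conjugation identity $\supp \varphi _{z\trid x}=\varphi _z(\supp \varphi _x)$ together with Lemma~\ref{le:S}. You instead argue globally with a single bad element $t$: the crossed-set symmetry turns the hypothesis ``$\varphi_t$ fixes $T$'' into ``$t$ is fixed by $H_T$'', Lemma~\ref{le:S} (whose proof indeed yields $\varphi_x(S\cup\{u\})=S\cup\{u\}$, hence $\varphi_x(T)=T$, also for infinite $X$) shows that $H_R$ stabilizes $T$ and, via $\varphi_{x\trid y}=\varphi_x\varphi_y\varphi_x^{-1}$, normalizes $H_T$, whence $\Inn(X)=H_RH_T$ and the orbit of $t$ is trapped in $T\subsetneq X$, contradicting transitivity. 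Both proofs rest on the same two pillars (Lemma~\ref{le:S} and the crossed-set condition), but your factorization $\Inn(X)=H_RH_T$ replaces the paper's pointwise support computation; what it buys is a cleaner conceptual statement (the whole $\Inn(X)$-orbit of a bad point stays inside $T$) and an argument that is manifestly independent of finiteness, since you never need to pass from ``one-step stability'' of a subset to stability under the full inner group, while the paper's boundary-triple step is most comfortable for finite $X$. The price is a bit more group-theoretic bookkeeping (checking that conjugation by $\varphi_x^{\pm1}$ permutes the generators of $H_T$), all of which you carry out correctly.
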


\begin{proof}
  Let $S=X\setminus (T\cup \{u\})$ and
  $$T_0=\{t\in T\,|\,\supp \varphi _t\subseteq S\cup \{u\}\}.$$
  We have to show that $T_0=\emptyset$.

  Assume that $T_0\not=\emptyset$. Since $X$ is
  indecomposable, there exist $x\in T_0$, $y\in X\setminus T_0$, $z\in X$ such
  that $z\trid x=y$. As $x\trid z\not=z$ and $x\in T_0$,
  we conclude that $z\in S\cup \{u\}$.
  As $S\cup \{u\}$ is a subquandle of $X$ by Lemma~\ref{le:S}
  and $x=\varphi _z^{-1}(y)$, it follows that $y\in T\setminus T_0$. But
  $\supp \varphi _{z\trid x}=\varphi _z(\supp \varphi
  _x)\subseteq S\cup \{u\}$, a contradiction to $z\trid x=y\in T\setminus T_0$.
  This proves the proposition.
\end{proof}

\begin{thm}
	\label{thm:size_of_racks}
	Let $X$ be an indecomposable crossed set. Assume that the profile of $X$ is
	$1^{a_1}\;2^{a_2}\;\ldots\;k^{a_k}$. Then 
	\begin{equation*}
		k_2\leq\sum_{j\geq 2}a_j (k'_2-2)\quad\text{and}\quad |X|\leq\sum_{j\geq 2}a_j(k'_2-2) + k'_2 + 1.
	\end{equation*}
\end{thm}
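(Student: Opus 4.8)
The plan is to work locally at a fixed element $u\in X$ and to bound the fixed part of $\varphi_u$ by its moved part together with the number of its cycles. Write $X=\{u\}\sqcup S\sqcup T$, where $S=\{x\in X\mid u\trid x=x,\ x\neq u\}$ and $T=\supp\varphi_u=\{x\in X\mid u\trid x\neq x\}$, so that $|S|=k_2$ and $|T|=k_2'$. Since $X$ is indecomposable, all the permutations $\varphi_x$ are conjugate in $\Inn(X)$ and hence share the cycle type $1^{a_1}2^{a_2}\cdots k^{a_k}$; in particular $|\supp\varphi_t|=k_2'$ for every $t\in X$, and the number of non-trivial cycles of $\varphi_u$ equals $r:=\sum_{j\geq2}a_j$, all of these cycles lying inside $T$. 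As $|X|=1+k_2+k_2'$, it suffices to prove the single inequality $k_2\leq r(k_2'-2)$. We may assume $|X|>1$, so that $\varphi_u\neq\id$ and $k_2'\geq2$.

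The first observation is that for a non-trivial cycle $C$ of $\varphi_u$ the set $\supp\varphi_t\cap S$ is the same for every $t\in C$. Indeed, if $t'=\varphi_u^{\,k}(t)$ with $t,t'\in C$, then self-distributivity gives $\varphi_{t'}=\varphi_u^{\,k}\varphi_t\varphi_u^{-k}$, hence $\supp\varphi_{t'}=\varphi_u^{\,k}(\supp\varphi_t)$; since $\varphi_u$ fixes $S$ pointwise (and therefore preserves $X\setminus S$), intersecting with $S$ yields $\supp\varphi_{t'}\cap S=\supp\varphi_t\cap S$. Denote this common subset of $S$ by $S_C$. Its size is easy to bound: for $t\in C\subseteq T$ we have $u\trid t\neq t$, so $t\trid u\neq u$ because $X$ is a crossed set, and thus $u\in\supp\varphi_t$; moreover Proposition~\ref{pro:TmovesT} provides $t''\in T$ with $t\trid t''\neq t''$, i.e.\ $t''\in\supp\varphi_t$. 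Since $u$ and $t''$ are distinct and neither lies in $S$, it follows that $|S_C|=|\supp\varphi_t\cap S|\leq k_2'-2$.

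The crux is that the $r$ sets $S_C$ already exhaust $S$. Put $S^{*}=\bigcup_C S_C$, the union over the non-trivial cycles of $\varphi_u$. Unwinding the definitions and using the crossed-set axiom, $s\in S^{*}$ iff $t\trid s\neq s$ for some $t\in T$ iff $s\trid t\neq t$ for some $t\in T$, so
\[
 S\setminus S^{*}=\{\,s\in S\mid \supp\varphi_s\cap T=\emptyset\,\}=\{\,s\in S\mid \supp\varphi_s\subseteq S\,\},
\]
the last equality because $\varphi_s$ fixes both $s$ and $u$. I claim $S\setminus S^{*}$ is invariant under $\Inn(X)$. For $s\in S\setminus S^{*}$: if $q\in\{u\}\cup T$ then $\varphi_q(s)=s$ (for $q\in T$ because $s\trid q=q$ and $X$ is a crossed set); and if $q\in S$ then $\varphi_q(s)\in S$ by Lemma~\ref{le:S}, while $\supp\varphi_{\varphi_q(s)}=\varphi_q(\supp\varphi_s)\subseteq\varphi_q(S)=S$, so again $\varphi_q(s)\in S\setminus S^{*}$. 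Thus $S\setminus S^{*}$ is stable under every $\varphi_q$, hence $\Inn(X)$-invariant; since $u\notin S$ it is a proper subset of $X$, so indecomposability forces $S\setminus S^{*}=\emptyset$. Therefore $S=S^{*}$ and
\[
 k_2=|S|=|S^{*}|\leq\sum_C|S_C|\leq r\,(k_2'-2)=\sum_{j\geq2}a_j\,(k_2'-2),
\]
which, combined with $|X|=1+k_2+k_2'$, gives both inequalities.

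The main obstacle is this last step: excluding an element of $S$ whose entire ``motion'' stays inside $S$. What makes it work is that the set of such elements, $S\setminus S^{*}$, is stable not only under the maps $\varphi_q$ with $q\in S\cup\{u\}$ (essentially the content of Lemma~\ref{le:S}) but also under the remaining generators $\varphi_q$ with $q\in T$, and this total invariance contradicts indecomposability. By contrast, the first two steps are short once Proposition~\ref{pro:TmovesT} is in hand; the care in the third step lies in the explicit description of $S\setminus S^{*}$ and the repeated, but routine, use of the crossed-set hypothesis.
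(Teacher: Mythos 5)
Your proof is correct and follows essentially the same route as the paper: the decomposition $X=\{u\}\sqcup S\sqcup T$, Lemma~\ref{le:S}, Proposition~\ref{pro:TmovesT} to exclude $u$ and one element of $T$ from $\supp \varphi_t\cap S$, conjugation by $\varphi_u$ to see that all elements of one $\varphi_u$-cycle move the same part of $S$, and indecomposability to show that every element of $S$ is moved by some $\varphi_t$ with $t\in T$. The only cosmetic differences are that you obtain this last point by checking that the set of unmoved elements of $S$ is stable under every $\varphi_q$ (the paper argues via $\Inn(S)$-orbits of $S$), and that you phrase the final estimate as a union bound over the $\sum_{j\ge 2}a_j$ nontrivial cycles rather than as the paper's double count of the pairs $(x,s)\in Y$.
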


\begin{proof}
  Let $u\in X$, $T=\{x\in X\,|\,u\trid x\not=x\}$
  and $S=X\setminus (T\cup\{u\})$.
  Then $S$ is a
  subquandle of $X$ by Lemma~\ref{le:S}.
  Let $S_1,\ldots,S_n$ be the $\Inn (S)$ orbits of $S$.
  Then $S\trid S_j=S_j$ for all $j$, and $S\trid T=T$ because $S$ is
  a subquandle and $S\trid \{u\}=\{u\}$.
  Since $X$ is indecomposable, there can only be elements of $T$
	to connect the $\Inn (S)$ orbits $S_j$ with each other, with $\{u\}$ and with $T$.

	Let $x\in T$, $s\in S_j$ and $y\in X\setminus S_j$ be such that $x\trid y=s$.
	Applying $\Inn (S)$ one can see that for all $s'\in S_j$ there must be elements
	$x'\in T$, $y'\in X\setminus S_j$ with $x'\trid y'=s'$. Hence, if one element
	of $S_j$ is connected to $X\setminus S_j$ via a single element of $T$, then
	every element of $S_j$ is. We conclude that each of the $k_2$ elements of $S$
	has to be moved by at least one $\varphi_x$ for an $x\in T$.

	Each $\varphi_x$, $x\in T$, moves $k'_2$
	elements within $X$. One of these elements must be $u$, and by
  Proposition~\ref{pro:TmovesT} another one must be an element of $T$.
  So there are at most $k'_2-2$ elements which may be moved by $\varphi _x$
  from $S$ to $X$ for each $x\in T$, and $k'_2(k'_2-2)$ altogether.
  Let
  $$Y=\{(x,s)\,|\,x\in T,\,s\in S,\,x\trid s\not= s\}.$$
  For any $(x,s)\in Y$,
	$\varphi_u (x)\trid s=\varphi _u(x)\trid \varphi _u(s)=\varphi _u(x\trid s)
  \not=\varphi _u(s)=s$,
	and $\varphi _u(x)\not=x$. So each $s\in S$ appears not only once as a
  second component in some $(x,s)\in Y$, but
  at least $j$ times, where $j=\min \{m\in \ndN \,|\, \varphi _u^m(x)=x\}$.
	For each $j$, there are $ja_j$ such $x\in T$, which
	provide (since $x\trid u\not=u$)
  at most $ja_j(k'_2-2)$ pairs $(x,s)\in Y$.
	In these pairs we may have up to $a_j(k'_2-2)$ different elements
  from $S$ as a second component, and hence $k_2\le \sum _{j\ge
  2}a_j(k'_2-2)$.
	This proves the first formula of the claim. The second now follows
  immediately from $|X|=1+k_2+k'_2$.
\end{proof}

\begin{cor}
	Under the assumptions of Theorem \ref{thm:size_of_racks}, 
	$k_2\leq\frac{1}{2}\,k'_2(k'_2-2)$ and $|X|\leq\frac{1}{2}k^{'2}_2+1$.
\end{cor}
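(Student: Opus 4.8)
The plan is to feed the first inequality of Theorem~\ref{thm:size_of_racks} into the elementary estimate $\sum_{j\ge 2}a_j\le \frac{1}{2}\sum_{j\ge 2}j\,a_j$ and to identify the right-hand sum with $k'_2$.

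First I would recall that, by definition of the profile $1^{a_1}2^{a_2}\cdots k^{a_k}$, for each $x\in X$ the permutation $\varphi_x$ is a product of $a_j$ disjoint $j$-cycles, so the number of points it moves equals $\sum_{j\ge 2}j\,a_j$. On the other hand, this number was already identified in the text preceding Lemma~\ref{le:S} as $k'_2$. Hence $\sum_{j\ge 2}j\,a_j=k'_2$, and since every index $j$ occurring in this sum satisfies $j\ge 2$, we obtain $2\sum_{j\ge 2}a_j\le\sum_{j\ge 2}j\,a_j=k'_2$, that is, $\sum_{j\ge 2}a_j\le\frac{1}{2}k'_2$.

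Combining this with the bound $k_2\le\sum_{j\ge 2}a_j(k'_2-2)$ of Theorem~\ref{thm:size_of_racks} yields $k_2\le\frac{1}{2}k'_2(k'_2-2)$, which is the first claim; here $k'_2-2\ge 0$ unless $k'_2=0$, in which case $X$ is trivial and everything is immediate, so the manipulation is harmless. For the second claim I would simply substitute into $|X|=1+k_2+k'_2$:
\[
|X|\le 1+\frac{1}{2}k'_2(k'_2-2)+k'_2=1+\frac{1}{2}k^{'2}_2 .
\]
There is no real obstacle in this argument; the only step deserving a word is the identification $\sum_{j\ge 2}j\,a_j=k'_2$, and that is immediate from the definition of the profile together with the description of the moved points of $\varphi_x$ given above.
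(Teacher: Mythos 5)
Your proof is correct and follows essentially the same route as the paper: the paper's proof is exactly the observation $\sum_{j\ge 2}a_j\le\frac{1}{2}\sum_{j\ge 2}ja_j=k'_2/2$ fed into Theorem~\ref{thm:size_of_racks}, with the second claim coming from $|X|=1+k_2+k'_2$. Your extra remarks on the identification $\sum_{j\ge 2}ja_j=k'_2$ and on the degenerate case are harmless elaborations of the same argument.
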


\begin{proof}
  Use the inequality $\sum _{j\ge 2}a_j\le \frac{1}{2}\sum _{j\ge
  2}ja_j=k'_2/2$.
\end{proof}

\section{On the cycle structure of racks}
\label{section:cycles}

\begin{lem} 
	\label{le:abdec}
  Let $d\in \ndN $, $a,b\in \ndN _{\ge 2}$ with $\gcd (a,b)=1$
  and let $X=\{1,\dots ,d\}$
  be an indecomposable crossed set.
  Assume that $\varphi _1^a(x)=x$ or $\varphi _1^b(x)=x$ for all $x\in X$.
  Let
  \begin{align*}
    S=&\,\{x\in X\,|\,1\trid x=x,x\not=1\},\\
    T_a=&\,\{x\in X\,|\,\varphi _1^a(x)=x,\,1\trid x\not=x\},\\
    T_b=&\,\{x\in X\,|\,\varphi _1^b(x)=x,\,1\trid x\not=x\}.
  \end{align*}
  Then $X$ is the disjoint union of $\{1\},S,T_a$ and $T_b$ and we have
  \begin{align*}
    1\trid 1=&\,1,& 1\trid S=&\,S,& 1\trid T_a=&\,T_a,& 1\trid T_b=&\,T_b,\\
    S\trid 1=&\,1,& S\trid S=&\,S,& S\trid T_a=&\,T_a,& S\trid T_b=&\,T_b,\\
    & & T_a\trid T_b=&\,T_b,& T_b\trid T_a=&\,T_a.
  \end{align*}
\end{lem}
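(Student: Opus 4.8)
The plan is to establish the disjoint-union decomposition first and then fill in the multiplication table block by block, the main device being the fixed-point sets of the permutations $\varphi _1^a$ and $\varphi _1^b$. For the decomposition: since a crossed set is a quandle, $1\trid 1=1$, so $1\notin S\cup T_a\cup T_b$, and $S$ is disjoint from $T_a,T_b$ by definition; if $x\in T_a\cap T_b$ then $\varphi _1^a(x)=\varphi _1^b(x)=x$ and $\gcd (a,b)=1$ force $\varphi _1(x)=x$, contradicting $x\in T_a$. Every $x\neq 1$ with $1\trid x=x$ lies in $S$, and every $x$ with $1\trid x\neq x$ lies in $T_a\cup T_b$ by hypothesis, so $X=\{1\}\sqcup S\sqcup T_a\sqcup T_b$. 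Next, writing $\psi =\varphi _1^a$ and $\chi =\varphi _1^b$, I would check that $\mathrm{Fix}(\psi )=\{1\}\cup S\cup T_a$ and $\mathrm{Fix}(\chi )=\{1\}\cup S\cup T_b$: both $\psi $ and $\chi $ fix $1$ and all of $S$; $\psi $ fixes $T_a$ and $\chi $ fixes $T_b$ by definition; and $\psi $ moves every $t'\in T_b$, since $\varphi _1^a(t')=\varphi _1^b(t')=t'$ would give $\varphi _1(t')=t'$, and symmetrically for $\chi $. In particular $\mathrm{Fix}(\psi )\cap \mathrm{Fix}(\chi )=\{1\}\cup S$.

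Throughout I use the standard consequence of self-distributivity $\varphi _i\varphi _j\varphi _i^{-1}=\varphi _{i\trid j}$, and the remark that a bijection of the finite set $X$ mapping a subset into itself restricts to a bijection of that subset, hence also of its complement. For the rows of $1$ and $S$: $\varphi _1$ commutes with its powers $\psi ,\chi $, so it preserves $\mathrm{Fix}(\psi )$ and $\mathrm{Fix}(\chi )$; since it fixes $\{1\}\cup S$ pointwise it must permute $T_a$ and $T_b$, giving $1\trid 1=1$, $1\trid S=S$, $1\trid T_a=T_a$, $1\trid T_b=T_b$. For $s\in S$ we have $\varphi _1(s)=s$, hence $\varphi _1\varphi _s\varphi _1^{-1}=\varphi _s$, so $\varphi _s$ commutes with $\psi $ and $\chi $ and preserves their fixed-point sets; also $\varphi _s(S)=S$ because $S$ is a subquandle by Lemma~\ref{le:S}, and $s\trid 1=1$ because $X$ is a crossed set and $1\trid s=s$; thus $\varphi _s$ stabilizes $\{1\}\cup S$, and therefore also $T_a=\mathrm{Fix}(\psi )\setminus(\{1\}\cup S)$ and $T_b=\mathrm{Fix}(\chi )\setminus(\{1\}\cup S)$. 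This is the second row.

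The crux is the last row. Let $t\in T_a$. The point is simply that $\psi (t)=\varphi _1^a(t)=t$, so $\varphi _1^a\varphi _t\varphi _1^{-a}=\varphi _{\varphi _1^a(t)}=\varphi _t$, i.e.\ $\varphi _t$ commutes with $\psi $; hence $\varphi _t$ preserves $\mathrm{Fix}(\psi )=\{1\}\cup S\cup T_a$, and therefore also the complement $T_b$, which is exactly $T_a\trid T_b=T_b$. The symmetric argument with $\chi $ in place of $\psi $ shows that $\varphi _{t'}$ commutes with $\chi $ for $t'\in T_b$, preserves $\mathrm{Fix}(\chi )=\{1\}\cup S\cup T_b$, and hence preserves $T_a$, giving $T_b\trid T_a=T_a$. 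Assembling the pieces yields the asserted table; the products not listed (such as $T_a\trid T_a$) are not claimed and need no analysis.

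I expect the first two steps to be essentially bookkeeping; the real content --- and the step to get right --- is identifying $\mathrm{Fix}(\varphi _1^a)$ with exactly $\{1\}\cup S\cup T_a$ and observing that $\varphi _t$ for $t\in T_a$ centralizes $\varphi _1^a$ \emph{merely} because $\varphi _1^a$ fixes $t$, which pins down enough of $\varphi _t$ to force it to respect the block decomposition. A minor point to keep in mind is that these permutations are first seen only to map a finite set into itself, which upgrades to a bijection of that set by injectivity.
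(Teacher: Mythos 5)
Your proof is correct and is essentially the same argument as the paper's: the key facts in both are that $\varphi_1^a,\varphi_1^b$ are rack automorphisms and that $\varphi_s$ (for $s\in S$) and $\varphi_t$ (for $t\in T_a$ or $T_b$) commute with the relevant power of $\varphi_1$, followed by the bijectivity/finiteness upgrade. Your packaging via the fixed-point sets $\mathrm{Fix}(\varphi_1^a)=\{1\}\cup S\cup T_a$ and $\mathrm{Fix}(\varphi_1^b)=\{1\}\cup S\cup T_b$ is just a tidy rephrasing of the paper's element-wise computations (and you additionally spell out the disjoint-union decomposition, which the paper leaves implicit).
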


\begin{proof}
  The action of $1$ is clear by definition of $S$, $T_a$ and $T_b$.
  $X$ is a crossed set, therefore $s\trid 1=1$ holds for each $s\in S$.
  Moreover, $S\trid S=S$ by Lemma~\ref{le:S}.
  Let $s\in S$ and $t\in T_a$ be arbitrary.
  Then $s\trid t=s\trid\varphi_1^a(t)=\varphi_1^a(s\trid t)$ because
  $1$ and $s$ commute. Hence, $s\trid t\in T_a\cup S\cup\{1\}$. But
  $\varphi_s$ is bijective, so $s\trid t\in T_a$; similar for $t\in T_b$.
  Now let $x\in T_a$ and $y\in T_b$ be arbitrary. Assume $x\trid y\in T_a\cup S\cup \{1\}$.
  Then apply $\varphi_1^a$ to get $x\trid \varphi_1^a(y)=x\trid y$,
  hence $\varphi_1^a(y)=y$, which is a contradiction; so $T_a\trid T_b=T_b$
  and similarly $T_b\trid T_a=T_a$.
\end{proof}

\begin{pro} \label{pro:incommensurable_profiles}
	Let $r,d\in \N_{\ge 1}$, $a_1,\dots ,a_r,b\in \N_{\ge 2}$ with $\gcd
	(a_1\cdots a_r,b)=1$ and $d\ge a_1+\cdots +a_r+b+1$.  For all $i\in \{1,\dots
	,r\}$ let $\sigma _i$ be an $a_i$-cycle and let $\tau $ be a $b$-cycle
  in $\SG d$
	with pairwise disjoint supports. Then there is no indecomposable crossed set
	$X=\{1,\dots ,d\}$ with $\varphi _1=\sigma _1\cdots \sigma _r\tau $.
\end{pro}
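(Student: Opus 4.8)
The plan is to use Lemma~\ref{le:abdec} to split $X$ into manageable pieces and then to contradict the transitivity of $\Inn(X)$.

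\textbf{Step 1: the decomposition.} Put $a:=a_1a_2\cdots a_r$, so $a,b\ge 2$ and $\gcd(a,b)=1$. Every $x\in X$ satisfies $\varphi_1^a(x)=x$ or $\varphi_1^b(x)=x$: if $x\in\supp\sigma_i$ then $a_i\mid a$ forces $\varphi_1^a(x)=x$, if $x\in\supp\tau$ then $\varphi_1^b(x)=x$, and otherwise $\varphi_1(x)=x$. Hence Lemma~\ref{le:abdec} applies and gives $X=\{1\}\sqcup S\sqcup T_a\sqcup T_b$. Since $\gcd(a,b)=1$ and $b\ge 2$, the only non-trivial cycle of $\varphi_1$ not fixed by $\varphi_1^a$ is $\tau$; therefore $T_b=\supp\tau$, $T_a=\bigsqcup_i\supp\sigma_i$, so $|T_b|=b$, $|T_a|=a_1+\cdots+a_r$ and $|S|=d-1-(a_1+\cdots+a_r)-b\ge 0$. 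In particular $T_a\ne\emptyset$ and $T_b\ne\emptyset$.

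\textbf{Step 2: structural consequences.} From Lemma~\ref{le:abdec}, each $\varphi_x$ with $x\in\{1\}\cup S$ fixes $\{1\}$, $S$, $T_a$, $T_b$ setwise, and $T_a\trid T_b=T_b$, $T_b\trid T_a=T_a$. Applying the automorphism $\varphi_1^a$ to a product $x\trid y$ shows that $x\trid y$ is $\varphi_1^a$-fixed whenever $x$ and $y$ are; hence for $x\in T_a$ the map $\varphi_x$ preserves $X_a:=\{1\}\cup S\cup T_a$ (as well as $T_b$, by the lemma), and symmetrically for $x\in T_b$ the map $\varphi_x$ preserves $X_b:=\{1\}\cup S\cup T_b$ and $T_a$. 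Consequently $X_a$ and $X_b$ are subracks, \emph{no} generator $\varphi_x$ maps a point of $T_a$ into $T_b$ or a point of $T_b$ into $T_a$, and — using $\gcd(a,b)=1$ once more — the element $\rho:=\varphi_1^a\in\Inn(X)$ is a single $b$-cycle supported on $T_b$, while $\varphi_1^b$ is supported on $T_a$.

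\textbf{Step 3: the contradiction (the main obstacle).} By Step 2 the partition $\{\,T_b,\ \{1\}\cup S\cup T_a\,\}$ is invariant under every $\varphi_x$ with $x\notin T_b$, and $\{\,T_a,\ \{1\}\cup S\cup T_b\,\}$ under every $\varphi_x$ with $x\notin T_a$; so $X$ can be indecomposable only if some $\varphi_x$ with $x\in T_b$ pushes a point of $\{1\}\cup S$ into $T_b$ \emph{and} some $\varphi_y$ with $y\in T_a$ pushes a point of $\{1\}\cup S$ into $T_a$, i.e.\ $T_a$ and $T_b$ can be linked only through $\{1\}\cup S$ (and, when $S=\emptyset$, only through the single point $1$). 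Ruling this out is the hard part. The route I would take exploits the coprimality via $\rho$: its $\Inn(X)$-conjugates $\varphi_u^a=g\rho g^{-1}$ ($u=g(1)$) are $b$-cycles supported on the $b$-cycles of the $\varphi_u$, which cover $X$; they generate a normal subgroup $N\trianglelefteq\Inn(X)$, and since $sa+tb=1$ for suitable $s,t$ we have $\varphi_u=(\varphi_u^a)^s(\varphi_u^b)^t$, so $\Inn(X)=NM$ with $M=\langle\varphi_u^b:u\in X\rangle$. One then analyses the orbits of $N$ — a system of blocks for the transitive group $\Inn(X)$ — together with the disjoint commuting factorisation $\varphi_u=\varphi_u^a\varphi_u^b$ and the bookkeeping of which of the four pieces $\{1\},S,T_a,T_b$ each conjugate of $\rho$ can touch, in order to force a contradiction with indecomposability; the case $S=\emptyset$, where the linkage must run through $1$, I expect to settle by a short direct argument using again that $\rho$ is a non-trivial $b$-cycle while $\Inn(X)$ is transitive. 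Carrying out this last analysis — in particular controlling how $M$ can glue the $N$-orbits together — is where I expect the real work to lie.
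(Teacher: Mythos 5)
There is a genuine gap: the proof stops exactly where the proposition becomes nontrivial. Your Steps 1--2 are correct but are essentially a restatement of Lemma~\ref{le:abdec} and its immediate consequences (with $T_a=\bigcup_i\supp\sigma_i$, $T_b=\supp\tau$), and your Step 3 only records the easy observation that $T_a$ and $T_b$ can be linked to the rest of $X$ solely through $\{1\}\cup S$, followed by a programme (the normal subgroup $N$ generated by the conjugates $\varphi_u^a$, the factorisation $\Inn(X)=NM$, an analysis of the $N$-blocks) that you explicitly do not carry out -- ``controlling how $M$ can glue the $N$-orbits together'' is precisely the same difficulty you started with, and no contradiction is actually derived, not even in the case $S=\emptyset$. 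As written, nothing in the argument ever uses the hypothesis $d\ge a_1+\cdots+a_r+b+1$ or produces a $\varphi_x$-stable proper subset, so indecomposability is never contradicted.

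For comparison, the paper closes the argument not through a global group-theoretic analysis of $\Inn(X)$ but through a local case analysis at a point $y_1\in\supp\tau$: since $X$ is a crossed set, $y_1\trid 1\ne 1$, and one distinguishes $y_1\trid 1\in S$, $y_1\trid 1\in T_a$, and $y_1\trid 1\in\supp\tau$ (the last split further according to whether $\varphi_{y_1}^{a}(1)=1$ or $\varphi_{y_1}^{b}(1)=1$, which is legitimate because $\varphi_{y_1}$ has the same cycle type as $\varphi_1$). In each case the identity $\varphi_{y_1\trid 1}=\varphi_{y_1}\varphi_1\varphi_{y_1}^{-1}$, together with Lemma~\ref{le:abdec} and the coprimality $\gcd(a,b)=1$ (e.g.\ $b\nmid a$ forces $\varphi_{y_1\trid 1}^a(y_1)\ne y_1$ in one case), shows that either $\supp\tau$ or $T_a$ is stable under every $\varphi_x$, contradicting indecomposability. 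If you want to salvage your approach you would need to supply an argument of comparable strength for the block analysis; at present the key step is missing.
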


\begin{proof}
  Assume to the contrary that $X$ is an indecomposable crossed set with
  $\varphi _1=\sigma _1\cdots \sigma _r\tau $.
  Then $1\notin \supp \sigma _i$ for all $i$
  and $1\notin \supp \tau $. Let
  $y_1,\dots ,y_b\in X$ with $1\trid y_i=y_{i+1}$ for all $1\le i<b$ and
  $1\trid y_b=y_1$. Then
  $$\tau =(y_1\,\cdots \,y_b).$$
  Let
  $$T=\bigcup _{i=1}^r \supp \sigma _i,\qquad S=X\setminus
  \left(\{1\}\cup T\cup \supp \tau \right).$$
  Then Lemma~\ref{le:abdec} applies with $a=a_1\cdots a_r$,
  $T_a=T$, $T_b=\supp \tau $.
  We consider four cases.

  \textbf{Case 1.} $y_1\trid 1\in S$. Let $s=y_1\trid 1$. Then
  $$ \varphi _s=\varphi _{y_1\trid 1}=\varphi _{y_1}\trid \varphi _1
  =\varphi _{y_1}\sigma _1\cdots \sigma _r\varphi _{y_1}^{-1}
  \cdot ( y_1 \, y_1\trid y_2\cdots \,y_1\trid y_b).$$
  As $s\in S$, we know from Lemma~\ref{le:abdec}
  that $s\trid \supp \tau =\supp \tau $
  and hence $y_1\in \supp \tau $ implies that $y_1\trid y_i\in \supp \tau $
  for all $i\in \{1,\dots ,b\}$. Thus $y_1\trid \supp \tau =\supp \tau $.
  Applying $\varphi _1$ and using the transitivity of the action of $\varphi
  _1$ on $\supp \tau $ we conclude that $y_i\trid \supp \tau =\supp
  \tau $ for all $i\in \{1,\dots ,b\}$. Thus $\supp \tau $ is $X$-stable by
  Lemma~\ref{le:abdec} which
  contradicts the indecomposability of $X$.

  \textbf{Case 2.} $y_1\trid 1\in T$. Since $y_1\in \supp \tau $,
  Lemma~\ref{le:abdec} yields that $y_1\trid T=T$ and hence $\varphi
  _{y_1}^{-1}(T)=T$, a contradiction to $1\in \varphi _{y_1}^{-1}(T)$.

  \textbf{Case 3.} $y_1\trid 1\in \supp \tau $,
  $\varphi _{y_1}^{a_1\cdots a_r}(1)=1$. Let $a=a_1\cdots a_r$.
  Then $\varphi _{y_1}^a\varphi _1\varphi _{y_1}^{-a}=\varphi _1$,
  and hence $\varphi _{y_1}^a\tau \varphi _{y_1}^{-a}=\tau $. Since $\varphi
  _{y_1}(y_1)=y_1$, we conclude that $\varphi _{y_1}^a(y)=y$ for all $y\in
  \supp \tau $. Conjugation by $\varphi _1$ yields
  \begin{align}\label{eq:phiy}
    \varphi _y^a(y')=y'\quad \text{for all $y,y'\in \supp \tau $.}
  \end{align}

  By assumption, $y_1\trid 1\in \supp \tau $. Further,
  $$ \varphi _{y_1\trid 1}=\varphi _{y_1}\varphi _1\varphi _{y_1}^{-1}
  =\varphi _{y_1}\sigma _1\cdots \sigma _r\varphi _{y_1}^{-1}\cdot
  (y_1 \,y_1\trid y_2\,\cdots y_1\trid y_b). $$
  Since $b\nmid a$, we conclude that
  $\varphi _{y_1\trid 1}^a(y_1)\not=y_1$ which is a contradiction to
  Equation~\eqref{eq:phiy}.

  \textbf{Case 4.} $y_1\trid 1\in \supp \tau $,
  $\varphi _{y_1}^b(1)=1$. By applying $\varphi _1$ and using
  Lemma~\ref{le:abdec} we obtain that
  $y\trid 1\in \supp \tau $ and $\varphi _y^b(1)=1$ for all $y\in \supp \tau
  $.
  In particular, for all $y\in \supp \tau $ there exist $\sigma _{y,1},\dots
  ,\sigma _{y,r},\tau _y\in \SG d$ with $1\in \supp \tau _y$ such that $\varphi
  _y=\sigma _{y,1}\cdots \sigma _{y,r}\tau _y$. Since
  \begin{align} \label{eq:phiy11}
    \varphi _{y_1\trid 1}=\prod _{i=1}^r(\varphi _{y_1}\sigma _i\varphi
    _{y_1}^{-1})\cdot (y_1\,y_1\trid y_2 \cdots y_1\trid y_b)
  \end{align}
  and $y_1\trid T=T$ by Lemma~\ref{le:abdec}, we conclude that $\supp \sigma
  _{y_1\trid 1 ,i}\subseteq T$ for all $i\in \{1,\dots ,r\}$. By conjugation
  of Equation~\eqref{eq:phiy11} with
  $\varphi _1$ and using $1\trid T=T$ and
  the transitivity of $\varphi _1$ on $\supp \tau $ it
  follows that
  \begin{align}
    \supp \sigma _{y,1}\cdots \sigma _{y,r}=T \quad
    \text{for all $y\in \supp \tau $.}
  \end{align}
  Let $x\in T$, $y\in \supp \tau $, $z\in X$ with $x\trid y=z$.
  Then $z\in \supp \tau $ by Lemma~\ref{le:abdec}. Further,
  $$\sigma _{z,1}\cdots \sigma _{z,r}\tau _z=\varphi _z
  =\varphi _{x\trid y}=\varphi _x (\sigma _{y,1}\cdots \sigma _{y,r})\varphi
  _x^{-1}\cdot \varphi _x\tau _y\varphi _x^{-1}
  $$
  and hence $\sigma _{z,1}\cdots \sigma _{z,r}=
  \varphi _x (\sigma _{y,1}\cdots \sigma _{y,r})\varphi _x^{-1}$. Thus $\varphi
  _x(T)=T$. We conclude that $T\trid T=T$ and hence $X\trid T=T$ by
  Lemma~\ref{le:abdec}, a contradiction to the indecomposability of $X$.
\end{proof}

\section{Quotients of Hurwitz orbits}
\label{section:hurwitz}

For the general theory of braid groups we refer to \cite{MR2435235}.  Recall
that the braid group in three strands can be presented as 
\[
\mathbb{B}_3=\langle\sigma_1,\sigma_2\mid\sigma_1\sigma_2\sigma_1
=\sigma_2\sigma_1\sigma_2\rangle.
\]
Let $X$
be a finite rack and $(x,y,z)\in X^3$. Then the braid group $\mathbb{B}_3$ acts
on $X^3$ via the Hurwitz action: $$ \sigma _1\cdot (x,y,z)=(x\trid y,x,z),\quad
\sigma _2\cdot (x,y,z)=(x,y\trid z,y)$$ for all $x,y,z\in X$. The orbits of
this action are called \emph{Hurwitz orbits}.  We write
$\mathcal{O}=\mathcal{O}(x,y,z)$ for the Hurwitz orbit of $(x,y,z)$.  According
to Brieskorn \cite{MR975077}, this action of $\mathbb{B}_3$ on $X^3$ was
implicitly considered by Hurwitz in \cite{MR1510692}.

\begin{lem} \label{le:OtoInnX}
  Let $X$ be a rack and $\mathcal{O}\subseteq X^3$ a Hurwitz orbit.
  Then the map $\mathcal{O}\to G_X$, $(x,y,z)\mapsto xyz$ is constant.
\end{lem}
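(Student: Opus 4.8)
The plan is to check that the map $f\colon X^3\to G_X$, $f(x,y,z)=xyz$, is invariant under the two generators $\sigma_1,\sigma_2$ of $\mathbb{B}_3$, and then to deduce that it is constant on every $\mathbb{B}_3$-orbit.

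Recall that $G_X$ is presented by the generating set $X$ together with the relations $ab=(a\trid b)a$ for all $a,b\in X$. Applying this relation to the pair $(x,y)$ gives $(x\trid y)\,x=xy$, hence
\[
f(\sigma_1\cdot(x,y,z))=(x\trid y)\,x\,z=xyz=f(x,y,z);
\]
applying it to the pair $(y,z)$ gives $(y\trid z)\,y=yz$, hence
\[
f(\sigma_2\cdot(x,y,z))=x\,(y\trid z)\,y=xyz=f(x,y,z).
\]
So $f\circ\sigma_1=f$ and $f\circ\sigma_2=f$ as maps $X^3\to G_X$. Replacing $(x,y,z)$ by $\sigma_i^{-1}\cdot(x,y,z)$ in these identities shows $f\circ\sigma_i^{-1}=f$ as well, so $f$ is invariant under all of $\mathbb{B}_3=\langle\sigma_1,\sigma_2\rangle$. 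Since any two points of a Hurwitz orbit $\mathcal{O}$ differ by the action of an element of $\mathbb{B}_3$, the restriction of $f$ to $\mathcal{O}$ is constant, which is the claim.

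There is essentially no obstacle here: the whole content is the one-line computation that the defining relations of $G_X$ turn $(x\trid y)\,x$ into $xy$ and $(y\trid z)\,y$ into $yz$. The only thing worth spelling out is the passage from invariance under $\sigma_1,\sigma_2$ to invariance under $\sigma_1^{-1},\sigma_2^{-1}$, which is immediate as indicated above; alternatively one may compute $\sigma_1^{-1}\cdot(x,y,z)=(y,\varphi_y^{-1}(x),z)$ and $\sigma_2^{-1}\cdot(x,y,z)=(x,z,\varphi_z^{-1}(y))$ and verify invariance of $f$ directly by the same relations $ab=(a\trid b)a$.
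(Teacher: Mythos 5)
Your proof is correct and follows the same route as the paper: the paper's proof likewise reduces the claim to the identity $xy=(x\trid y)x$, which is a defining relation of $G_X$, and concludes invariance of $(x,y,z)\mapsto xyz$ under the Hurwitz action. Your extra care about invariance under $\sigma_i^{-1}$ is a harmless elaboration of what the paper leaves implicit.
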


\begin{proof}
  According to the definition of the Hurwitz action,
  it suffices to show that $xy=(x\trid y)x$ for all $x,y\in X$.
  The latter holds by the definition of $G_X$.
\end{proof}

\begin{lem}
\label{lem:black_and_red_cycles}
Let $X$ be an injective rack, $\mathcal{O}\subseteq X^3$ a Hurwitz orbit and let
$(x,y,z)\in\mathcal{O}$. Then 
$|(x,y,z)^{\langle\sigma_1\rangle}\cap
(x,y,z)^{\langle\sigma_2\rangle}|=1$.
\end{lem}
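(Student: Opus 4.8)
The plan is to exploit the fact that $\sigma_1$ and $\sigma_2$ act on complementary pairs of coordinates, and to combine this with the invariant of Lemma~\ref{le:OtoInnX}. First I would record that $\sigma_1\cdot(a,b,c)=(a\trid b,a,c)$ and $\sigma_1^{-1}\cdot(a,b,c)=(b,\varphi_b^{-1}(a),c)$ both leave the third coordinate unchanged; hence every triple in the orbit $(x,y,z)^{\langle\sigma_1\rangle}$ has third coordinate $z$. Dually, $\sigma_2\cdot(a,b,c)=(a,b\trid c,b)$ and its inverse fix the first coordinate, so every triple in $(x,y,z)^{\langle\sigma_2\rangle}$ has first coordinate $x$. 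Therefore any element of the intersection $(x,y,z)^{\langle\sigma_1\rangle}\cap(x,y,z)^{\langle\sigma_2\rangle}$ is of the form $(x,b,z)$ for some $b\in X$.

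Next, since $\langle\sigma_1\rangle$ and $\langle\sigma_2\rangle$ are subgroups of $\mathbb{B}_3$, both cyclic orbits lie inside the Hurwitz orbit $\mathcal{O}=\mathcal{O}(x,y,z)$, so in particular $(x,b,z)\in\mathcal{O}$. By Lemma~\ref{le:OtoInnX} the map $\mathcal{O}\to G_X$, $(a,b,c)\mapsto abc$, is constant, whence $xbz=xyz$ in $G_X$; cancelling $x$ on the left and $z$ on the right yields $b=y$ in $G_X$. As $X$ is injective, the canonical map $X\to G_X$ is injective, so $b=y$ holds already in $X$. Since $(x,y,z)$ obviously lies in both cyclic orbits, this shows that the intersection equals $\{(x,y,z)\}$, which has cardinality $1$.

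The argument is short and I expect no genuine obstacle; the only points requiring care are the bookkeeping of which generator fixes which coordinate (and that this persists under inverses and powers) and the timing of the appeal to injectivity: the identity $b=y$ is a priori valid only in $G_X$ and upgrades to an identity in $X$ precisely because $X$ is an injective rack.
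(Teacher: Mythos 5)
Your proof is correct and follows essentially the same route as the paper: observe that the intersection consists of triples of the form $(x,y',z)$, apply Lemma~\ref{le:OtoInnX} to get $xy'z=xyz$ in $G_X$, and use injectivity of $X$ to conclude $y'=y$ in $X$. The only difference is that you spell out the coordinate bookkeeping for the generators and their inverses, which the paper leaves implicit.
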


\begin{proof}
  The elements of $(x,y,z)^{\langle\sigma_1\rangle}\cap
  (x,y,z)^{\langle\sigma_2\rangle}$ are of the form $(x,y',z)$ with
  $y'\in X$. For all such triples, $xyz=xy'z$ in $G_X$ by Lemma~\ref{le:OtoInnX}.
  Thus $y=y'$ in $G_X$ and hence $y=y'$ in $X$ since $X$ is injective.
\end{proof}

Let $(x,y,z),(x',y',z')\in X^3$. We define the following relation on $X^3$:
\begin{equation}
	\label{eq:delta_equivalence}
	(x,y,z)\sim(x',y',z')\Leftrightarrow\Delta^m\cdot (x,y,z)=(x',y',z')\text{ for some }m\in\Z,
\end{equation}
where $\Delta=(\sigma_1\sigma_2)^3$.  Clearly, $\sim$ is an equivalence
relation. We write $\overline{\mathcal{O}}(x,y,z)$ for the equivalence class
containing $(x,y,z)$.
The set $\overline{\mathcal{O}}$ of equivalence classes of $\mathcal{O}$ is
called a \emph{Hurwitz orbit quotient}.

Let $x=\sigma_2^{-1}\sigma_1^{-1}Z(\mathbb{B}_3)$ and
$y=\sigma_1\sigma_2\sigma_1 Z(\mathbb{B}_3)$.  Then, by construction, the group 
\[
\langle x,y\mid x^3=y^2=1\rangle\simeq \mathbf{PSL}(2,\Z)\simeq\mathbb{B}_3\slash\langle\Delta\rangle
\]
acts on
$\overline{\mathcal{O}}$, see for example \cite[Appendix A]{MR2435235}. Since
$\mathbb{B}_3$ acts transitively on $\mathcal{O}$, the action of
$\mathbf{PSL}(2,\Z)$ on $\overline{\mathcal{O}}$ is transitive, that is,
$\overline{\mathcal{O}}$ is a homogeneous space.

An \emph{$xy$-cycle} in a $\mathbf{PSL}(2,\Z)$-space (in
particular, in a Hurwitz orbit quotient) is a minimal non-empty subset $C$ such
that $xy\cdot v\in C$ and $(xy)^{-1}\cdot v\in C$ for all $v\in C$.
We write $C_{xy}(v)$ for the $xy$-cycle containing a fixed element $v$.
Similarly, one defines \emph{$yx$-cycles} $C_{yx}(v)$.

We intend to determine all finite homogeneous
$\mathbf{PSL}(2,\Z)$-spaces (up to isomorphism) such that any
$xy$-cycle has at most $4$ elements.  Finite homogeneous
$\mathbf{PSL}(2,\Z)$-spaces up to isomorphism are known to be in
bijection with conjugacy classes of finite index subgroups of the modular group
$\mathbf{PSL}(2,\Z)$, which are intensively studied, see
e.\,g.~\cite{MR0498390}.  We present such
sets in terms of their Schreier graphs with respect to the generators $x$ and
$y$ of $\mathbf{PSL}(2,\Z)$. Here a Schreier graph of a subgroup
$H\subseteq \mathbf{PSL}(2,\Z)$
will be an oriented labeled graph with vertices corresponding to the left $H$-cosets.
In the interpretation as a $\mathbf{PSL}(2,\Z )$-space, the vertices correspond
to the points of the space.
In the Schreier graph, an $x$-arrow points from any coset $gH$ (equivalently, 
point of the $\mathbf{PSL}(2,\Z )$-space) to the coset $xgH$,
and a $y$-arrow points from any coset $gH$ to the coset $ygH$. Instead of a double
arrow labeled by $y$ we display a single dashed line. We then omit the label $x$
on the $x$-arrows. Later on we will add further labels on the graph which is
used to study Schreier graphs of finite index subgroups of coverings of
$\mathbf{PSL}(2,\Z )$.

\begin{pro}
	\label{pro:homogeneous_spaces}
  Let $M$ be a finite homogeneous $\mathbf{PSL}(2,\Z)$-space such
  that any $xy$-cycle in $M$ has at most $4$ elements.
  Then the Schreier graph of $M$ is one of the graphs in
  Figures~\ref{fig:Hoq1}--\ref{fig:Hoq18}.
\end{pro}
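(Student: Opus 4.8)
The plan is to prove first that $|M|\le 24$, and then to run through the (finite) list of small‑index subgroups of $\mathbf{PSL}(2,\Z)$ and keep the admissible ones. Recall that $M$ being a finite homogeneous $\mathbf{PSL}(2,\Z)$‑space means exactly that we have permutations $x,y$ of a finite set $M$ with $x^3=y^2=\id$ generating a transitive group; write $\mu=|M|$. Since $x$ has order dividing $3$, its cycles have length $1$ or $3$; let $\nu_3$ be the number of its fixed points. Since $y$ has order dividing $2$, its cycles have length $1$ or $2$; let $\nu_2$ be the number of its fixed points. Finally let $\nu_\infty$ be the number of $xy$‑cycles in $M$; note that $xy=\sigma_1 Z(\mathbb{B}_3)$, so these are the orbits of the parabolic generator. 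Under the bijection between finite homogeneous spaces and conjugacy classes of finite‑index subgroups of $\mathbf{PSL}(2,\Z)$, the quantities $\mu,\nu_2,\nu_3,\nu_\infty$ are precisely the index, the number of elliptic points of order $2$ and of order $3$, and the number of cusps of the corresponding subgroup $H\le \mathbf{PSL}(2,\Z)$.

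First I would invoke the Riemann--Hurwitz formula for the degree‑$\mu$ covering $X_H\to X(1)$ of modular curves, which is ramified only over the three distinguished points with exactly the local data recorded above; it yields
\[
 g(X_H)=1+\frac{\mu}{12}-\frac{\nu_2}{4}-\frac{\nu_3}{3}-\frac{\nu_\infty}{2}.
\]
(This is classical; see e.g.\ \cite{MR0498390}.) Since $g(X_H)\ge 0$ we obtain $\nu_\infty/2\le 1+\mu/12$. On the other hand, the hypothesis that every $xy$‑cycle has at most $4$ elements gives $\nu_\infty\ge \mu/4$. Combining the two inequalities, $\mu/8\le 1+\mu/12$, hence $\mu\le 24$.

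It remains to enumerate the conjugacy classes of subgroups of $\mathbf{PSL}(2,\Z)\cong \Z/3\ast \Z/2$ of index at most $24$ — a finite list, obtainable either by the standard low‑index‑subgroups procedure or from the existing tables of subgroups of the modular group — and to discard those possessing an $xy$‑cycle (equivalently, a cusp) of length $>4$. Drawing the Schreier graphs of the survivors produces exactly the graphs of Figures~\ref{fig:Hoq1}--\ref{fig:Hoq18}; conversely, each graph displayed there is immediately seen to be a transitive $\mathbf{PSL}(2,\Z)$‑space all of whose $xy$‑cycles have length $\le 4$, so the list is complete and non‑redundant.

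The main obstacle is the enumeration in the last step: there are many conjugacy classes of subgroups of index up to $24$, so the search is best organised — and most safely carried out with computer assistance — by first fixing the partition of $\mu$ into cusp widths (all $\le 4$) and a pair $(\nu_2,\nu_3)$ compatible with the genus bound above, and then building the Schreier graph one $x$‑orbit (triangle or loop) at a time, using transitivity and $\mu\le 24$ to terminate. A fully self‑contained alternative is to reconstruct $M$ directly from its $xy$‑cycles: each such cycle is a directed cycle of length $1,2,3$ or $4$, and $M$ is recovered by gluing these cycles along the $x$‑orbits and along the involution $y$, which is again a bounded finite case analysis once $\mu\le 24$ is known.
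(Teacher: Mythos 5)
Your first step is correct and is genuinely different from what the paper does: the genus formula $g=1+\tfrac{\mu}{12}-\tfrac{\nu_2}{4}-\tfrac{\nu_3}{3}-\tfrac{\nu_\infty}{2}$ together with $g\ge 0$ and the estimate $\nu_\infty\ge \mu/4$ (every point lies on exactly one $xy$-cycle, each of length $\le 4$) does give $\mu\le 24$, and this bound is sharp, as the $24$-point space in Figure~\ref{fig:Hoq18} shows. The paper never states or needs such an a priori bound: its proof is a local combinatorial construction of the Schreier graph, organized by whether some point is fixed by $x$, then by $xy$, then by $y$, then by nothing, and the hypothesis on $xy$-cycle lengths is used at each step to force the partial graph to close up. So your framing (global index bound, then classification) and the paper's (direct structural growth argument) are genuinely different, and your Riemann--Hurwitz observation is a nice piece of extra structure.

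However, as a proof of Proposition~\ref{pro:homogeneous_spaces} your write-up has a gap precisely where the content of the statement lies: the classification step is asserted, not carried out. ``Enumerate the conjugacy classes of subgroups of index $\le 24$ and keep the admissible ones'' is not something one can do by inspection --- the number of such conjugacy classes in $\mathbf{PSL}(2,\Z)\cong \Z/3 * \Z/2$ runs into the hundreds of thousands at the top indices, so the blunt enumeration is only meaningful as a computer calculation, which you do not perform and which the proposition (with its explicit list of $18$ graphs) would then rest on entirely. Your closing remark about organizing the search by cusp widths and gluing $xy$-cycles along $x$-triangles and $y$-edges is the right repair, but once you do that the bound $\mu\le 24$ becomes superfluous and the argument collapses back into exactly the kind of case-by-case graph construction the paper gives (and the paper at least executes the $x$-fixed-point branch in full and specifies how the remaining branches are organized). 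So: keep the genus bound as a pleasant consistency check or as a way to certify completeness of a computer search, but to claim the proposition you must actually run the constrained construction and exhibit that it terminates in the $18$ graphs of Figures~\ref{fig:Hoq1}--\ref{fig:Hoq18}.
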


We denote the homogeneous $\mathbf{PSL}(2,\Z)$-spaces
and their Schreier graphs appearing in
Proposition~\ref{pro:homogeneous_spaces} by $\Sigma _{nX}$, where $n$ denotes
the number of vertices of the graph, and $X$ is a capital letter
which serves as a further distinction.

\begin{proof}
  The calculations are somewhat lengthy but elementary. It is reasonable to
  start with the classification of all homogeneous spaces with a point fixed by
  $x$, that is, with Schreier graphs containing an oriented loop. The
  restriction on the cycles yields that only the Schreier graphs in
  Figures~\ref{fig:Hoq1},
  \ref{fig:Hoq2},
  \ref{fig:Hoq5},
  \ref{fig:Hoq6},
  \ref{fig:Hoq11}, and
  \ref{fig:Hoq12}
  are possible. We explain this step in detail.

  Let $v_1$ be a point of $M$ with $xv_1=v_1$. If $yv_1=v_1$,
  then the Schreier graph of $M$ is in Figure~\ref{fig:Hoq1}.
  Otherwise, let $v_2=yv_1$. If $xv_2=v_2$,
  then the Schreier graph of $M$ is in Figure~\ref{fig:Hoq2}.
  Assume now that $xv_2\not=v_2$. Since $x^3v_2=v_2$, we have at least
  two more points $v_3=xv_2$ and $v_4=xv_3$, and then $xv_4=v_2$.
  Recall that each vertex of the Schreier graph
  has precisely one incoming and one outgoing black arrow
  and one outgoing dashed line, which implies that $v_3,v_4$ can neither be equal
  to an existing point nor to each other.
  If $yv_3=v_4$,
  then the Schreier graph of $M$ is in Figure~\ref{fig:Hoq5}.
  If $yv_4=v_4$, then let $v=yv_3$. The $xy$-cycle through $v$
  containst the vertices $v$, $xyv=v_4$,
  $xyv_4=v_2$, $xyv_2=v_1$, and $xyv_1=v_3$. Since it can contain at most $4$
  vertices, we conclude that $v=v_3$. Similarly, $yv_3=v_3$ implies that $yv_4=v_4$.
  Then the Schreier graph of $M$ is in Figure~\ref{fig:Hoq6}.
  Assume now that $yv_3\not=v_3$ and $yv_4\not=v_4$.
  Then we have at least two more points, $v_5=yv_3$ and $v_6=yv_4$.
  Since $(xy)^mv_6=v_6$
  for some $m\in \{1,2,3,4\}$ and since $xyv_6=v_2$, $(xy)^2v_6=v_1$,
  $(xy)^3v_6=v_3$, we obtain that $v_6=xyv_3=xv_5$. Since $x^3v_5=v_5$,
  there is a point $v_7=xv_6$ of $M$, and $xv_7=v_5$. If $yv_7=v_7$,
  then the Schreier graph of $M$ is in Figure~\ref{fig:Hoq11}.
  Finally, let $v_8=yv_7$. Then $v_8$, $v_5=xyv_8$,
  $v_4=xyv_5$, and $v_7=xyv_4$ are pairwise distinct, and hence $xyv_7=v_8$
  by assumption on the length of $xy$-cycles. Thus $xv_8=v_8$. The 
  Schreier graph of $M$ is then in Figure~\ref{fig:Hoq12}.

  Next, one proceeds with the homogeneous spaces having a point
  fixed by $xy$ but no point fixed by $x$. The corresponding Schreier graphs are
  Figures~\ref{fig:Hoq3} and \ref{fig:Hoq7}. Next, one can determine those
  homogeneous spaces which have a point fixed by $y$, but no points fixed by $x$
  or $xy$. The corresponding Schreier graphs are Figures~\ref{fig:Hoq4},
  \ref{fig:Hoq8}, \ref{fig:Hoq9}, \ref{fig:Hoq13}, and \ref{fig:Hoq14}. Finally,
  only Schreier graphs are left which have no oriented loops and no $y$-edges
  starting and ending in the same oriented triangle. Such graphs can be
  classified for example by looking at the number of $y$-edges between
  two oriented triangles. Under the restriction on the length of $xy$-cycles,
  one gets the Figures~\ref{fig:Hoq10},
  \ref{fig:Hoq15}, \ref{fig:Hoq16}, \ref{fig:Hoq17}, and \ref{fig:Hoq18}.
\end{proof}


For our analysis, finite homogeneous $\mathbb{B}_3$-spaces are relevant. We
obtain such spaces as coverings of homogeneous
$\mathbf{PSL}(2,\Z)$-spaces.

\begin{defn}
	\label{defn:covering}
	Let $T$ be a $\mathbf{PSL}(2,\Z)$-space.
	Consider $T$ as a $\mathbb{B}_3$-space on which
	$Z(\mathbb{B}_3)$ acts trivially.
	A \emph{covering} of $T$ is a triple $(p,S,T)$, where $S$ is a
	$\mathbb{B}_3$-space, and $p:S\to T$ is a surjective
	$\mathbb{B}_3$-equivariant map
	such that $p(s_1)=p(s_2)$ for $s_1,s_2\in S$
        implies that $s_1=\Delta ^ms_2$ for some $m\in \Z $. 
\end{defn}

\begin{rem}
	Let $T$ be a $\mathbf{PSL}(2,\Z)$-space
	and let $(p,S,T)$ be a covering of $T$.
 	If the action on $T$ is transitive then the action on $S$ is transitive.
\end{rem}

\begin{defn}
	A covering $(p,S,T)$ is said to be \emph{trivial} if $p:S\to T$ is bijective.
	We say that a covering $p:S\to T$ is \emph{finite} if $S$ (and hence
	$T$) is a finite set.
\end{defn}

Since $\Delta $ is contained in (and in fact generates)
the center $Z(\mathbb{B}_3)$, the following lemma is straightforward.

\begin{lem} \label{lem:N}
  Let $(p,S,T)$ be a covering of a homogeneous $\mathbf{PSL}(2,\Z)$-space $T$.
  Then $|p^{-1}(v)|=|p^{-1}(w)|$ for all points $v,w$ of $T$.
\end{lem}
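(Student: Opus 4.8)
The plan is to exploit the $\mathbb{B}_3$-equivariance of $p$ together with the fact that $\mathbb{B}_3$ acts transitively on $T$. The latter holds because $T$ is homogeneous as a $\mathbf{PSL}(2,\Z)$-space and $\mathbf{PSL}(2,\Z)\cong\mathbb{B}_3/\langle\Delta\rangle$ is a quotient of $\mathbb{B}_3$, on which $Z(\mathbb{B}_3)=\langle\Delta\rangle$ acts trivially by hypothesis.

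First I would record the elementary observation that, for every $g\in\mathbb{B}_3$ and every point $v$ of $T$, the map $g\colon S\to S$ sends the fiber $p^{-1}(v)$ into the fiber $p^{-1}(g\cdot v)$: if $p(s)=v$, then $p(g\cdot s)=g\cdot p(s)=g\cdot v$ by equivariance. Since the action of $g$ on $S$ is a bijection with inverse given by the action of $g^{-1}$, and the same argument applied to $g^{-1}$ shows that $g^{-1}$ maps $p^{-1}(g\cdot v)$ into $p^{-1}(v)$, the map $g$ restricts to a bijection $p^{-1}(v)\to p^{-1}(g\cdot v)$. In particular $|p^{-1}(v)|=|p^{-1}(g\cdot v)|$.

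Next, given arbitrary points $v,w$ of $T$, transitivity of the $\mathbb{B}_3$-action on $T$ provides some $g\in\mathbb{B}_3$ with $g\cdot v=w$; applying the previous step with this $g$ yields $|p^{-1}(v)|=|p^{-1}(w)|$, which is the assertion.

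There is essentially no obstacle here — this is why the statement is called straightforward. The only point worth a word is that all fibers are non-empty, so that the equality of cardinalities is not vacuous for some of them; this is guaranteed by the surjectivity of $p$ in the definition of a covering. One could additionally remark that, since $\langle\Delta\rangle$ acts trivially on $T$ and $p$ is equivariant, each fiber $p^{-1}(v)$ is a union of $\langle\Delta\rangle$-orbits, and in fact a single orbit by the defining property of a covering; but this finer structure is not needed for the counting argument.
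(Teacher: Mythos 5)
Your proof is correct: equivariance of $p$ makes each $g\in\mathbb{B}_3$ restrict to a bijection $p^{-1}(v)\to p^{-1}(g\cdot v)$, and transitivity of the $\mathbb{B}_3$-action on $T$ (inherited from the homogeneous $\mathbf{PSL}(2,\Z)$-action through the quotient by $\langle\Delta\rangle$) finishes the argument. The paper gives no written proof, only the remark that the claim is straightforward because $\Delta$ is central; your argument is the natural one and, as you note, does not even need the centrality of $\Delta$ or the fact that fibers are single $\langle\Delta\rangle$-orbits.
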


Let $(p,S,T)$ be a covering of a homogeneous $\mathbf{PSL}(2,\Z)$-space $T$.
We will always write $N$ for the number of elements of the fibers.
 
\begin{rem} \label{rem:covgraph}
  Let $T$ be a homogeneous $\mathbf{PSL}(2,\Z)$-space.
   Coverings $(p,S,T)$ of $T$
  can be displayed as labeled Schreier graphs with respect to the generators
  $\sigma_2^{-1}\sigma _1^{-1}$
  and $\sigma _1\sigma _2\sigma _1$ of $\mathbb{B}_3$. Hence, by definition of
  $x$ and $y$, the generators
  $\sigma_2^{-1}\sigma _1^{-1}$
  and $\sigma _1\sigma _2\sigma _1$ correspond to labeled $x$- and $y$-arrows,
  respectively, in the labeled Schreier graph.
  Since the covering is a homogeneous
  space and the sequence
  $$ Z(\mathbb{B}_3) \to \mathbb{B}_3 \to \mathbf{PSL}(2,\Z) $$
  is exact, the fiber over any $v\in T$ consists of a $\langle \Delta \rangle
  $-orbit.
  We may fix a point $v[0]\in S$ in the fiber over a point $v\in T$ and 
  enumerate all other points of the fiber by $v[i]=\Delta ^i v[0]$ for all
  $i\in \{0,1,\dots ,N-1\}$, where $N$ is the size of the fiber.
  We also write $v[*]$ for the complete fiber $p^{-1}(v)$ over $v$.
  By choosing a spanning tree of the Schreier graph of $T$ and the images of
  $v[0]$ along the arrows of the spanning tree, one obtains the images of $v[i]$
  for all $i$ since $\Delta $ is central. The remaining arrows $v_i\to v_j$
  (those not on the spanning tree)
  in the graph of $T$ then have to obtain labels indicating the index shift in
  the fiber: a label $s$ tells that $v_i[k]$ is mapped to
  $v_j[k+s \pmod N]$ for all $k$. Then, up to the choice of the
  spanning tree, any covering of $T$ is uniquely determined by the labels 
  of the $x$- and $y$-edges.

	Observe that, since $\Delta =(\sigma _1\sigma _2)^3=(\sigma _1\sigma _2\sigma
	_1)^2$, the sum of the labels in any $x$-triangle is $-1$ and the sum of the
	two labels of a $y$-edge is $1$. The $y$-edges we interpret as double arrows
	and put the label of the arrow close to its destination.

  For any $xy$-cycle (or $yx$-cycle) $C$ in $T$,
  the label of $C$ is the sum of the labels of
  $x$- and $y$-edges of the cycle.
\end{rem}

\begin{defn} 
	\label{defn:graph_covering}
	Let $\mathcal{O}$ be a covering of a
  $\mathbf{PSL}(2,\Z)$-space.
  The \emph{graph of the covering} is the
	labeled Schreier graph as explained in
	Remark~\ref{rem:covgraph}. The arrows correspond to the action of
  $\sigma _2^{-1}\sigma_1^{-1}$
	and $\sigma _1\sigma_2\sigma _1$.
\end{defn}

\begin{defn}
	\label{defn:simpleadmisible_covering}
 	For $i\in \{1,2\}$, a $\sigma _i$-\emph{cycle} of a homogeneous
	$\mathbb{B}_3$-space is a minimal non-empty subset which is closed under the
	action of $\sigma _i$.
	We say that a $\mathbb{B}_3$-space $S$ (or a covering $(p,S,T)$ of a
	$\mathbf{PSL}(2,\Z)$-space $T$) has \emph{simply intersecting
	cycles} if any given $\sigma _1$-cycle $c_1$ and $\sigma _2$-cycle $c_2$
	in $S$ intersect in at most one point.
\end{defn}

\begin{exa}
	By Lemma \ref{lem:black_and_red_cycles}, if $\mathcal{O}\subseteq X^3$
  is a Hurwitz orbit, where $X$ is an injective rack,
  then $\mathcal{O}$ has simply intersecting cycles.
\end{exa}

\begin{rem}
	For any covering $(p,S,T)$ of a $\mathbf{PSL}(2,\Z)$-space $T$, the image of
	a $\sigma _1$-cycle in $S$ is an $xy$-cycle in $T$, and the image of a
	$\sigma _2$-cycle in $S$ is a $yx$-cycle in $T$.
\end{rem}

\begin{lem}
	\label{lem:loops}
	Let $T$ be a $\mathbf{PSL}(2,\Z)$-space
  and let $(p,S,T)$ be a covering of $T$ with simply intersecting cycles. 
	Let $v$ be a vertex of the graph of $T$. 
	\begin{enumerate}
		\item If there exists an $x$-loop on $v$ with label $a$ then
			$3a\equiv-1\pmod{N}$.
		\item If there exists a $y$-loop on $v$ with label $a$ then
			$2a\equiv1\pmod{N}$.
	\end{enumerate}
\end{lem}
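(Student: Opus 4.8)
The plan is to deduce both congruences from the two relations $x^{3}=\Delta^{-1}$ and $y^{2}=\Delta$ in $\mathbb{B}_3$, combined with the description of the fibres and of the edge labels given in Remark~\ref{rem:covgraph}.

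First I would record the two group identities. From $x=\sigma_2^{-1}\sigma_1^{-1}$ we get $x^{-1}=\sigma_1\sigma_2$, hence $x^{-3}=(\sigma_1\sigma_2)^3=\Delta$, that is $x^{3}=\Delta^{-1}$; and from $y=\sigma_1\sigma_2\sigma_1$ together with $\Delta=(\sigma_1\sigma_2\sigma_1)^2$ we get $y^{2}=\Delta$. Next, fix the vertex $v$ of the graph of $T$ and enumerate its fibre $p^{-1}(v)=\{v[0],\dots,v[N-1]\}$ with $v[i]=\Delta^{i}v[0]$ as in Remark~\ref{rem:covgraph}; then $\Delta$ acts on $p^{-1}(v)$ as the shift $v[k]\mapsto v[k+1\pmod N]$, and $\langle\Delta\rangle$ acts transitively on this fibre.

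For part~(1): that there is an $x$-loop on $v$ with label $a$ means $x\cdot v=v$ in $T$ and that the induced bijection of the fibre is $x\cdot v[k]=v[k+a\pmod N]$ for all $k$ — a constant shift, which is forced because $x$ commutes with $\Delta$ and $\langle\Delta\rangle$ is transitive on the fibre. Iterating three times gives $x^{3}\cdot v[k]=v[k+3a]$, while $x^{3}=\Delta^{-1}$ gives $x^{3}\cdot v[k]=v[k-1]$; comparing indices yields $3a\equiv-1\pmod N$. Part~(2) is the same argument with $y$ in place of $x$: a $y$-loop with label $a$ gives $y\cdot v[k]=v[k+a]$, hence $y^{2}\cdot v[k]=v[k+2a]$, and $y^{2}=\Delta$ forces $v[k+2a]=v[k+1]$, i.e.\ $2a\equiv1\pmod N$. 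Equivalently one may regard the $x$-loop as a degenerate $x$-triangle and the $y$-loop as a degenerate $y$-edge and invoke directly the label-sum rules already established in Remark~\ref{rem:covgraph} (sum $-1$ on an $x$-triangle, sum $1$ on a $y$-edge).

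I do not anticipate any real obstacle; the only points requiring a little care are the signs, in particular that it is $\Delta^{-1}$ (not $\Delta$) that equals $x^{3}$, so that $-1$ rather than $+1$ appears on the right-hand side of~(1). It is also worth noting that the hypothesis of simply intersecting cycles, inherited here from the setting of Definition~\ref{defn:simpleadmisible_covering}, is not actually used in this particular argument.
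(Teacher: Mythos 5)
Your proof is correct and is essentially the paper's argument: the paper simply cites Remark~\ref{rem:covgraph}, whose label-sum rules (sum $-1$ over an $x$-triangle, sum $1$ over a $y$-edge, coming from $x^{3}=\Delta^{-1}$ and $y^{2}=\Delta$) you have merely unpacked and specialized to loops, with the signs handled correctly. Your side remark that the simply-intersecting-cycles hypothesis is not needed here is also accurate.
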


\begin{proof}
	It follows from Remark \ref{rem:covgraph}.
\end{proof}

The following four lemmas are easy consequences of the definition of simply
intersecting cycles.

\begin{lem} \label{lem:trivial_covering}
  Let $T$ be a $\mathbf{PSL}(2,\Z)$-space
  and let $(p,S,T)$ be a covering of $T$ with simply intersecting cycles. 
  Let $v\in T$ and $w\in C_{yx}(v)\cap C_{xy}(v)$. If $w\not=v$,
  then $(p,S,T)$ is not trivial.
\end{lem}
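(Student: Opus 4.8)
The plan is to prove the contrapositive: assuming that the covering $(p,S,T)$ is trivial, I will show that $C_{yx}(v)\cap C_{xy}(v)=\{v\}$ for every $v\in T$, which is incompatible with the existence of a point $w\neq v$ in this intersection.

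The starting observation is that a trivial covering $p\colon S\to T$ is a bijective $\mathbb{B}_3$-equivariant map, hence an isomorphism of $\mathbb{B}_3$-spaces; I may therefore identify $S$ with $T$ along $p$. Under this identification $Z(\mathbb{B}_3)$, and in particular $\Delta$, acts trivially on $S$, and the generators $\sigma_1$ and $\sigma_2$ act on $S=T$ through their images in $\mathbf{PSL}(2,\Z)$. Those images are $xy$ and $yx$, respectively: using the braid relation one checks $xy=\sigma_2^{-1}\sigma_1^{-1}\cdot\sigma_1\sigma_2\sigma_1=\sigma_1$ and $yx=\sigma_1\sigma_2\sigma_1\cdot\sigma_2^{-1}\sigma_1^{-1}=\sigma_2$ modulo $Z(\mathbb{B}_3)$. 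This is exactly the cycle correspondence recorded in the remark preceding Lemma~\ref{lem:loops}, now promoted to an equality of subsets because $p$ is bijective.

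Consequently, on $S=T$ the $\sigma_1$-cycle through $v$ is precisely the $xy$-cycle $C_{xy}(v)$, and the $\sigma_2$-cycle through $v$ is precisely the $yx$-cycle $C_{yx}(v)$. Now $v$ lies in both of these cycles, and by hypothesis so does $w$, with $w\neq v$; hence this $\sigma_1$-cycle and this $\sigma_2$-cycle share the two distinct points $v$ and $w$. This contradicts the defining property of simply intersecting cycles in Definition~\ref{defn:simpleadmisible_covering}, so the covering cannot be trivial.

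I do not foresee a genuine obstacle here; the argument is essentially bookkeeping. The only point deserving a word of care is the identification step — verifying that, for a bijective covering, a $\sigma_i$-cycle of $S$ maps bijectively onto the corresponding $xy$- or $yx$-cycle of $T$ so that the two notions of cycle literally coincide — together with the elementary identities $xy\equiv\sigma_1$ and $yx\equiv\sigma_2$ modulo $Z(\mathbb{B}_3)$. Everything else is immediate from the hypotheses.
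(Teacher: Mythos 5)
Your proof is correct and is exactly the argument the paper has in mind: the paper omits a written proof, stating only that this lemma (together with its three neighbors) is an easy consequence of the definition of simply intersecting cycles, and your bookkeeping — triviality forces $\Delta$ to act trivially on $S$, the identities $xy\equiv\sigma_1$, $yx\equiv\sigma_2$ modulo $Z(\mathbb{B}_3)$ identify $\sigma_1$-/$\sigma_2$-cycles with $xy$-/$yx$-cycles, and then $v\neq w$ in the intersection violates Definition~\ref{defn:simpleadmisible_covering} — is the intended easy consequence.
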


\begin{lem}
	\label{lem:xy_and_yx_cycles}
	Let $T$ be a $\mathbf{PSL}(2,\Z)$-space
  and let $(p,S,T)$ be a covering of $T$ with simply intersecting cycles. 
  Let $v\in T$ and $N=|p^{-1}(v)|$.
	Let $\lambda \in \ZN $ and $\mu \in \ZN $
  be the labels of the $xy$- and $yx$-cycle containing $v$, respectively.
  Then $\langle\lambda\rangle\cap\langle\mu\rangle=0$ as subgroups of $\ZN $.
\end{lem}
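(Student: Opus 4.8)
The plan is to reduce the arithmetic statement in $\ZN$ to a geometric one inside $S$: the intersection of the $\sigma_1$-cycle and the $\sigma_2$-cycle of $S$ that both pass through a chosen point of the fibre over $v$. Fix such a point $v[0]\in p^{-1}(v)$ and use the enumeration $p^{-1}(v)=\{v[i]\mid i\in\ZN\}$ with $v[i]=\Delta^i\cdot v[0]$ from Remark~\ref{rem:covgraph}, identifying the fibre with $\ZN$ via $v[i]\leftrightarrow i$.

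First I would record the identities $xy=\sigma_1$ and $yx=\sigma_2$ in $\mathbf{PSL}(2,\Z)$, the second one using the braid relation $\sigma_1\sigma_2\sigma_1=\sigma_2\sigma_1\sigma_2$; equivalently, in the covering graph one $xy$-step ($y$-arrow then $x$-arrow) realizes the action of $\sigma_1$ on $S$, and one $yx$-step that of $\sigma_2$. Consequently $\sigma_1$ acts on $T$ as $xy$, so the $\langle\sigma_1\rangle$-orbit of $v[0]$ maps under $p$ onto $C_{xy}(v)$, and tracing once around this cycle --- which has $\ell_1:=|C_{xy}(v)|$ vertices and, by definition, label $\lambda$ --- accumulates the index shift $\lambda$; that is,
\[
\sigma_1^{\ell_1}\cdot v[0]=\Delta^{\lambda}\cdot v[0]=v[\lambda].
\]
The sign with which $\lambda$ enters is irrelevant here, since $\langle\lambda\rangle=\langle-\lambda\rangle$ in $\ZN$, and the identity does not depend on the choice of $v[0]$ in the fibre because $\Delta$ is central. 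Symmetrically, writing $\ell_2=|C_{yx}(v)|$, one gets $\sigma_2^{\ell_2}\cdot v[0]=\Delta^{\mu}\cdot v[0]$.

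Next I would identify the two cycles with cyclic subgroups of $\ZN$. Let $c_1$ be the $\sigma_1$-cycle of $S$ through $v[0]$. Since $p(\sigma_1^i\cdot v[0])=(xy)^i\cdot v$, which equals $v$ exactly when $\ell_1\mid i$, and since $\sigma_1^{\ell_1}$ shifts the fibre by $\lambda$, we obtain $c_1\cap p^{-1}(v)=\{v[r]\mid r\in\langle\lambda\rangle\}$. The same argument with $\sigma_2$ in place of $\sigma_1$ shows that the $\sigma_2$-cycle $c_2$ through $v[0]$ satisfies $c_2\cap p^{-1}(v)=\{v[r]\mid r\in\langle\mu\rangle\}$. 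Finally, $v[0]\in c_1\cap c_2$, so by the hypothesis that $(p,S,T)$ has simply intersecting cycles we get $c_1\cap c_2=\{v[0]\}$; on the other hand every $r\in\langle\lambda\rangle\cap\langle\mu\rangle$ yields $v[r]\in c_1\cap c_2$, hence $r=0$, so $\langle\lambda\rangle\cap\langle\mu\rangle=0$.

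I expect the only genuine obstacle to be the bookkeeping in the displayed identity: carefully justifying that running once around the $xy$-cycle shifts the base point of the fibre by precisely the label $\lambda$ of Remark~\ref{rem:covgraph}, and, en route, that one $xy$-step equals the action of $\sigma_1$ (via $\sigma_2^{-1}\sigma_1^{-1}\cdot\sigma_1\sigma_2\sigma_1=\sigma_1$) while one $yx$-step equals the action of $\sigma_2$ (via the braid relation). Once these identifications are in place, the remainder is elementary arithmetic of cyclic subgroups of $\ZN$ together with a single appeal to the simply-intersecting-cycles hypothesis.
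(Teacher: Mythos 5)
Your proof is correct, and it is precisely the argument the paper has in mind: the paper states this lemma without proof as an ``easy consequence of the definition of simply intersecting cycles,'' and your computation that the $\sigma_1$- and $\sigma_2$-cycles through $v[0]$ meet the fiber $p^{-1}(v)$ in $\{v[r]\mid r\in\langle\lambda\rangle\}$ and $\{v[r]\mid r\in\langle\mu\rangle\}$ respectively (via $xy=\sigma_1$ and $yx=\sigma_2$ modulo the center and the label bookkeeping of Remark~\ref{rem:covgraph}) is the intended way to fill in the details before invoking the simply-intersecting-cycles hypothesis at $v[0]$.
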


\begin{lem}
	\label{lem:xy_and_yx_cycles_with_loops}
	Let $T$ be a $\mathbf{PSL}(2,\Z)$-space
  and let $(p,S,T)$ be a covering of $T$ with simply intersecting cycles. 
  Let $v\in T$ and assume that $xv=v$ or $yv=v$ and that
  $\mathbf{PSL}(2,\Z)v\not=\{v\}$.
  Then the labels of the $xy$- and $yx$-cycles containing $v$ are $0$.
\end{lem}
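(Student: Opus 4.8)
The plan is to deduce the claim from Lemma~\ref{lem:xy_and_yx_cycles}. Write $\lambda,\mu\in\ZN$ for the labels of the $xy$-cycle $C_{xy}(v)$ and the $yx$-cycle $C_{yx}(v)$, respectively. If one can show $\lambda=\mu$, then Lemma~\ref{lem:xy_and_yx_cycles} gives $\langle\lambda\rangle=\langle\lambda\rangle\cap\langle\mu\rangle=0$ in $\ZN$, hence $\lambda=\mu=0$, which is exactly the assertion. So the whole proof reduces to establishing the equality $\lambda=\mu$.

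Assume first that $xv=v$. Since $x$ and $y$ generate $\mathbf{PSL}(2,\Z)$ and $\mathbf{PSL}(2,\Z)v\neq\{v\}$, we have $yv\neq v$. Using $x^3=y^2=1$ one finds $(xy)^{-1}v=y^{-1}x^{-1}v=yx^2v=yv$ (the last step because $x^2v=xv=v$), so $yv\in C_{xy}(v)$. Since $y(xy)y^{-1}=yx$, the set $y\cdot C_{xy}(v)$ is a single $yx$-cycle; it contains $y(yv)=v$, and therefore equals $C_{yx}(v)$. Thus $C_{yx}(v)=y\cdot C_{xy}(v)$, and the bijection $c\mapsto yc$ identifies the two cycles compatibly with their cyclic orders: if $c_{i+1}=xy\cdot c_i$, then $yx\cdot(yc_i)=y(x(yc_i))=yc_{i+1}$.

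Next I would compare the two labels by means of the combinatorial description in Remark~\ref{rem:covgraph}. The label of $C_{xy}(v)$ is the sum, taken over all steps $c_i\mapsto c_{i+1}=xy\cdot c_i$, of the label of the $y$-arrow $c_i\to yc_i$ together with the label of the $x$-arrow $yc_i\to c_{i+1}$. Writing the corresponding sum for $C_{yx}(v)$ along the steps $yc_i\mapsto yc_{i+1}=yx\cdot(yc_i)$, one traverses exactly the same $x$-arrows $yc_i\to c_{i+1}$ and exactly the same $y$-arrows $c_i\to yc_i$ (the latter merely shifted by one in the cyclic order), so the two sums agree: $\mu=\lambda$. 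Together with the reduction above, this settles the case $xv=v$.

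The case $yv=v$ is handled symmetrically, with the roles of $x$ and $y$ exchanged: here $xv=(xy)\cdot v\in C_{xy}(v)$, and since $x(yx)x^{-1}=xy$ one gets $C_{xy}(v)=x\cdot C_{yx}(v)$; the same bookkeeping of $x$- and $y$-arrow labels then yields $\lambda=\mu$. I expect the only point requiring care to be the verification that the bijection between the two cycles genuinely preserves the cyclic orientation, so that the reindexing in the label sum is legitimate; apart from that the argument is purely formal, using only $x^3=y^2=1$, the centrality of $\Delta$ (which underlies the very notion of a label), and the hypothesis of simply intersecting cycles, which enters solely through the invocation of Lemma~\ref{lem:xy_and_yx_cycles}.
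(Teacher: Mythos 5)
Your argument is correct. The paper itself offers no written proof here: this lemma is one of the four statements dismissed as ``easy consequences of the definition of simply intersecting cycles'', so there is no official argument to compare against. Your route is a clean and complete one: using $x^3=y^2=1$ you show that a fixed point of $x$ (resp.\ $y$) forces $yv\in C_{xy}(v)$ (resp.\ $xv\in C_{xy}(v)$), the conjugation identities $y(xy)y^{-1}=yx$ and $x(yx)x^{-1}=xy$ then identify $C_{xy}(v)$ and $C_{yx}(v)$ compatibly with their cyclic traversals, and since the two traversals use exactly the same directed $x$-arrows and the same directed $y$-arrows (only cyclically shifted), the two labels coincide, $\lambda=\mu$; Lemma~\ref{lem:xy_and_yx_cycles} then gives $\langle\lambda\rangle=\langle\lambda\rangle\cap\langle\mu\rangle=0$. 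The orientation bookkeeping you flag as the delicate point does check out against the paper's conventions (label of a $y$-arrow placed at its destination, cycle label equal to the sum of traversed arrow labels), and your conclusion is consistent with the paper's own computations, e.g.\ in Lemmas~\ref{lem:Hoq3} and~\ref{lem:Hoq5} the $xy$- and $yx$-cycles through the $y$- resp.\ $x$-fixed vertex indeed carry the same label ($a-b$, resp.\ $a+b$), which is then set to zero exactly as your reduction predicts. The presumably intended direct proof would instead work upstairs in $S$, comparing how the $\sigma_1$- and $\sigma_2$-cycles through a point of the fiber over $v$ meet that fiber; your derivation has the mild advantage that the simple-intersection hypothesis enters only through the previously stated Lemma~\ref{lem:xy_and_yx_cycles}, while the extra hypotheses ($xv=v$ or $yv=v$) are used purely combinatorially to force $\lambda=\mu$.
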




\begin{lem}
  \label{lem:from_v_to_w}
  Let $T$ be a $\mathbf{PSL}(2,\Z)$-space
  and let $(p,S,T)$ be a covering of $T$ with simply intersecting cycles. 
  Let $v,w\in T$, $N=|p^{-1}(v)|$
  and assume that $v\not=w$ and that $v,w$ are on the same $xy$-
  and the same $yx$-cycle. Let $\lambda$ and $\mu$ be the labels of the
  $xy$- and $yx$-path from $v$ to $w$, respectively.
  Then $\lambda\not\equiv\mu\pmod{N}$. 
\end{lem}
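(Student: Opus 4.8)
The plan is to pull the statement back to the covering $S$ and play the two lifted paths against the hypothesis of simply intersecting cycles. Fix a point $v[0]$ in the fiber over $v$ and a point $w[0]$ in the fiber over $w$, and enumerate the fibers $v[*]$, $w[*]$ as in Remark~\ref{rem:covgraph}. Recall that under $\mathbb{B}_3\to\mathbb{B}_3/\langle\Delta\rangle\simeq\mathbf{PSL}(2,\Z)$ the element $\sigma_1$ maps to $xy$ and $\sigma_2$ maps to $yx$; since $\langle\Delta\rangle$ acts trivially on $T$, the elements $\sigma_1,\sigma_2\in\mathbb{B}_3$ act on $T$ as $xy$ and $yx$ respectively, and (by the remark preceding Lemma~\ref{lem:loops}) a $\sigma_1$-cycle of $S$ projects onto an $xy$-cycle of $T$ while a $\sigma_2$-cycle of $S$ projects onto a $yx$-cycle of $T$.

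First I would lift the $xy$-path. As $v$ and $w$ lie on a common $xy$-cycle, $w=(xy)^{k}\cdot v$ for some $k\in\Z$; unwinding Remark~\ref{rem:covgraph}, the label $\lambda$ of this path is precisely the index determined by $\sigma_1^{k}\cdot v[0]=w[\lambda]$. In particular $w[\lambda]$ lies in the $\sigma_1$-cycle $c_1$ of $S$ containing $v[0]$, and trivially $v[0]\in c_1$. Symmetrically, writing $w=(yx)^{\ell}\cdot v$ and using that $\sigma_2$ acts as $yx$, the label $\mu$ of the $yx$-path satisfies $\sigma_2^{\ell}\cdot v[0]=w[\mu]$, so $w[\mu]$ lies in the $\sigma_2$-cycle $c_2$ containing $v[0]$, and $v[0]\in c_2$. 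Now assume, for a contradiction, that $\lambda\equiv\mu\pmod N$. Then $w[\lambda]=w[\mu]$ is a single point lying in $c_1\cap c_2$, and $v[0]$ lies in $c_1\cap c_2$ as well. Since $p(v[0])=v\ne w=p(w[\lambda])$, these two points are distinct, whence $|c_1\cap c_2|\ge 2$, contradicting the hypothesis that $(p,S,T)$ has simply intersecting cycles (Definition~\ref{defn:simpleadmisible_covering}). Therefore $\lambda\not\equiv\mu\pmod N$.

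The only genuinely technical point is the bookkeeping identification ``label of the $xy$-path'' $\leftrightarrow$ ``the fiber shift $\sigma_1^{k}\cdot v[0]=w[\lambda]$'': this is exactly the content of Remark~\ref{rem:covgraph}, where the label $s$ of a non-tree $x$- or $y$-edge encodes the shift $v_i[j]\mapsto v_j[j+s]$ and, $\Delta$ being central, such shifts compose additively along a path and are independent of the chosen spanning tree. I would also note in passing that $\lambda-\mu\in\ZN$ does not depend on the choices of $v[0]$ and $w[0]$: replacing $w[0]$ by $\Delta\cdot w[0]$ decreases both $\lambda$ and $\mu$ by $1$, and replacing $v[0]$ by $\Delta\cdot v[0]$ increases both by $1$, so the assertion is well posed. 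Beyond this, I expect no real obstacle.
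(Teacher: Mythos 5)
Your proof is correct, and it is exactly the argument the paper has in mind: the paper states Lemma~\ref{lem:from_v_to_w} (together with its three companions) as an ``easy consequence of the definition of simply intersecting cycles'' without writing out details, and your lifting argument -- identifying the path label with the fiber shift of $\sigma_1^k$ resp.\ $\sigma_2^\ell$ via Remark~\ref{rem:covgraph}, then noting that $\lambda\equiv\mu\pmod N$ would force the $\sigma_1$- and $\sigma_2$-cycles through $v[0]$ to meet in the two distinct points $v[0]$ and $w[\lambda]$ -- is precisely the intended verification. Your side remark on the well-posedness of $\lambda-\mu$ under changes of base points is a nice touch and also correct.
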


\begin{cor} \label{cor:two_y_loops}
  Let $T$ be a $\mathbf{PSL}(2,\Z)$-space.
  Let $v,w\in T$ and assume that $v\not=w$, $yv=v$, $yw=w$ and that
  $v,w$ are on the same $xy$-cycle. Then $T$ has no coverings with
  simply intersecting cycles.
\end{cor}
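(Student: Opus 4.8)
The plan is to argue by contradiction: assume $(p,S,T)$ is a covering of $T$ with simply intersecting cycles and put $N=|p^{-1}(v)|$. Since $yv=v$ and $yw=w$, the graph of this covering carries a $y$-loop at $v$ and a $y$-loop at $w$; call their labels $a$ and $a'$. By Lemma~\ref{lem:loops}(2) we get $2a\equiv 1\pmod N$ and $2a'\equiv 1\pmod N$. In particular $N$ must be odd, and hence $a\equiv a'\pmod N$.

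The key structural observation is that $v$ and $w$ must then also lie on a common $yx$-cycle. Indeed, in $\mathbf{PSL}(2,\Z)$ (and, with $x=\sigma_2^{-1}\sigma_1^{-1}$, $y=\sigma_1\sigma_2\sigma_1$, already in $\mathbb{B}_3$) one has $(yx)y=y(xy)$, so the image of the $xy$-cycle $C_{xy}(v)$ under $y$ is the $yx$-cycle through $yv$; since $yv=v$ and $yw=w$, this $yx$-cycle $C_{yx}(v)$ still contains both $v$ and $w$. Thus $v\ne w$ lie on a common $xy$-cycle and on a common $yx$-cycle, and by Lemma~\ref{lem:xy_and_yx_cycles_with_loops} (applicable because $w\in\mathbf{PSL}(2,\Z)v\setminus\{v\}$) both of these cycles have label $0$. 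It therefore suffices to show that the $xy$-path and the $yx$-path from $v$ to $w$ carry the same label modulo $N$: this contradicts Lemma~\ref{lem:from_v_to_w}.

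To verify this I would compute with the arrow labels of Remark~\ref{rem:covgraph}. Enumerate the common $xy$-cycle by $xy$-arrows as $v=v_0\to v_1\to\dots\to v_\ell=v_0$, with $v_k=w$ for some $0<k<\ell$; write $d_i$ for the label of the $xy$-step $v_i\to v_{i+1}$, write $f_i$ for the label of the $y$-edge $v_i\to yv_i$ (so that $f_0=a$ and $f_k=a'$, these being the two $y$-loop labels), and write $e_i$ for the label of the $yx$-step $yv_i\to yv_{i+1}$ along $C_{yx}(v)$. Evaluating the braid identity $\sigma_1\sigma_2\sigma_1\cdot\sigma_1=\sigma_2\cdot\sigma_1\sigma_2\sigma_1$, that is $y\cdot(xy)=(yx)\cdot y$, on a fiber yields $e_i\equiv d_i+f_{i+1}-f_i\pmod N$. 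Summing over $i=0,\dots,k-1$, the label $\lambda=\sum_i d_i$ of the $xy$-path from $v$ to $w$ and the label $\mu=\sum_i e_i$ of the corresponding $yx$-path from $v$ to $w$ satisfy
\[
\mu\equiv\lambda+(f_k-f_0)=\lambda+(a'-a)\equiv\lambda\pmod N,
\]
using $a\equiv a'\pmod N$. This is the contradiction we want.

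The only slightly delicate point is the translation of the braid identity $y\cdot(xy)=(yx)\cdot y$ into the congruence $e_i\equiv d_i+f_{i+1}-f_i\pmod N$, i.e.\ the careful bookkeeping of the arrow labels of the covering graph (Remark~\ref{rem:covgraph}), together with the identification of $y\cdot C_{xy}(v)$ with the $yx$-cycle $C_{yx}(v)$ through $v$ that passes through $w$. I expect this to be the main obstacle; everything else is formal manipulation of the structure already set up.
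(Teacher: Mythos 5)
Your proposal is correct and follows essentially the same route as the paper: use Lemma~\ref{lem:loops}(2) to force the two $y$-loop labels to agree (and $N$ odd), observe that $yv=v$, $yw=w$ puts $v,w$ on a common $yx$-cycle, and contradict Lemma~\ref{lem:from_v_to_w} because the $xy$- and $yx$-paths from $v$ to $w$ then carry equal labels. The only difference is that you spell out the edge-label bookkeeping ($e_i\equiv d_i+f_{i+1}-f_i$) that the paper leaves implicit, which is a fine addition but not a new idea.
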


\begin{proof} Assume to the contrary that $(p,S,T)$ is a covering of $T$
  with simply intersecting cycles. Let $N=|p^{-1}(v)|$ and let $a$ and $b$ be
  the labels of the $y$-loops at $v$ and $w$, respectively. By
  Lemma~\ref{lem:loops}(2) we obtain that $2a\equiv 1\pmod{N}$ and $2b\equiv
  1\pmod{N}$ and hence $a\equiv b\pmod{N}$ and $N$ is odd. Since $yv=v$
  and $yw=w$, $v$ and $w$ are on the same $yx$-cycle and the
  $xy$- and $yx$-paths from $v$ to $w$ have the same labels. This is a
  contradiction to Lemma~\ref{lem:from_v_to_w}.
\end{proof}


%
%
%
%

\section{Cellular automata over homogeneous spaces}
\label{section:automaton}


As explained in the introduction, for the classification of Nichols algebras with many
cubic relations we are going to determine upper bounds for immunities of Hurwitz orbits.
The principle of the method to calculate these upper bounds is well-known from
the theory of cellular automata. Therefore we intend to formulate our techniques
in the language of cellular automata.

The following definition is inspired by \cite[Definition 1.4.1]{MR2683112}, where the very
similar definition of a cellular automaton over a group $G$ was given.
Let $G$ be a group acting transitively on a set $\Omega$ and let $A$ be a set.
Let $A^\Omega $ be the set of all functions from $\Omega $ to $A$.

\begin{defn}
  Let $S$ be a set, let $(g_s)_{s\in S}$ be a family of elements in $G$,
  and let $\mu :A^S\to A$
  be a map. Then the map $\tau :A^\Omega \to A^\Omega $ such that
  \begin{equation}
    \tau(f)(w)=\mu( (f(g_s\cdot w))_{s\in S})
  \end{equation}
  for all $f\in A^\Omega $, $w\in\Omega$, is called a
  \emph{cellular automaton} over $(G,\Omega)$ with alphabet $A$.
  The infinite sequence $(\tau ^n(f))_{n\ge 0}$ is called the \emph{evolution of} $f$.
\end{defn}

\begin{rem}
  A good interpretation of a cellular automaton over $(G,\Omega )$ is the following.
  For any $w\in \Omega $,
  consider the family of points $(g_s\cdot w)_{s\in S}$ as the neighborhood of $w$.
  Then for any function $f\in A^\Omega $, the value of $\tau (f)$ at $w$ is obtained
  from the values of $f$ in the neighborhood of $w$ according to the rule determined
  by $\mu $.
\end{rem}

We will only consider cellular automata over $(G,\Omega )$ with the alphabet
$A=\Z_2$. For any function $f\in \Z _2^\Omega $ let
$$ \supp \,f=\{w\in \Omega \,|\,f(w)=1\}.$$
Conversely, the characteristic function of a set $I\subseteq \Omega $ is
\begin{equation} \label{eq:f}
  \chi _I\in \Z_2^\Omega ,\quad 
	x\mapsto \begin{cases}
		1&\text{if $x\in I$},\\
		0&\text{otherwise}.
	\end{cases}
\end{equation}

\begin{defn}
  Let $\tau $ be a cellular automaton over $(G,\Omega )$ with alphabet
  $\Z_2$. We say that $\tau $ is \emph{monotonic} if
  \begin{enumerate}
    \item $\supp f\subseteq \supp \tau (f)$ for all $f\in \Z_2^\Omega $, and
    \item $\supp \tau (f)\subseteq \supp \tau (g)$
      for all $f,g\in \Z_2^\Omega $ with $\supp f\subseteq \supp g$.
  \end{enumerate}
\end{defn}

In what follows we will only study monotonic cellular automata over
homogeneous spaces.

\begin{defn}
  Let $\tau $ be a monotonic cellular automaton over $(G,\Omega )$
  with alphabet $\Z_2$.
  For any two subsets $I,J\subseteq \Omega $ with $I\subseteq J$
  we say that $I$ \emph{spreads to} $J$, if
  $J\subseteq \supp \tau ^n(\chi _I)$ for some $n\in \N $.
  A subset $I\subseteq \Omega $ is a \emph{quarantine}
  if $\tau (\chi _I)=\chi _I$.
  A subset $I\subseteq \Omega $ is a \emph{plague}
  if the smallest quarantine containing $I$ is $\Omega $.
  The cardinality of a plague $I$ is also called its \emph{size}.
\end{defn}

\begin{rem}
Let $\tau $ be a monotonic cellular automaton over $(G,\Omega )$
with alphabet $\Z_2$. If a subset $I$ spreads to another subset $J$ of $\Omega $,
then any subset $I'\subseteq \Omega $ with $I\subseteq I'$ spreads to $J$.

Assume that $\Omega $ has only finitely many points.
Then a subset $I$ of $\Omega $ is
a plague if and only if it spreads to $\Omega $. In this case, any subset
of $\Omega $ containing $I$ is a plague.
\end{rem}

Plagues of Hurwitz orbits were already introduced in
\cite[Definition~3]{MR2891215}. That definition is a special case
of a plague of the monotonic cellular automaton in the following example,
which is the main example of our interest.

\begin{exa}
	\label{exa:B3}
  Let $G=\mathbb{B}_3$ and let $\Omega $ be a finite homogeneous $G$-space.
  For example, if $X$ is a finite rack, then $G$ acts on $X^3$ by
	$\sigma _1\cdot (x,y,z)=(x\triangleright y,x,z)$ and $\sigma _2\cdot
	(x,y,z)=(x,y\triangleright z,y)$, and $\Omega $ can be taken as an orbit of
  this action.
  Let
  $$(g_s)_{s\in \{1,\dots ,7\}}=(1,\sigma _2,\sigma _1\sigma _2,
  \sigma _2^{-1}\sigma _1^{-1},\sigma _1^{-1},\sigma _2^{-1},\sigma _1)
  \in \mathbb{B}_3^7.$$
  As an alphabet take $A=\Z_2$.
  We consider the configuration given in Figure \ref{fig:neighbors}.
	(In Figure \ref{fig:neighbors} the black arrow indicates the action of
  $\sigma_1$,
	and the dashed arrow the action of $\sigma_2$.)
  Then $x_s=g_s\cdot x_1$ for all $s\in \{1,2,\dots ,7\}$.
  Define
	$\mu:A^7\to A$ by 
	\begin{align*}
	\mu(f_1,f_2,\dots,f_7)=&\;f_1\vee f_2f_3 \vee f_4f_5 \vee f_6f_7\\
        =&\;1-(1-f_1)(1-f_2f_3)(1-f_4f_5)(1-f_6f_7),
	\end{align*}
        where $f_1,\dots ,f_7\in A$, and $\vee $ denotes logical or.
  Then the map $\tau$
  defined by $\mu $ and $(g_s)_{s\in \{1,\dots ,7\}}$ is a monotonic
  cellular automaton over $(\mathbb{B}_3,\Omega )$.
  A plague of this automaton is literally the
  same as a plague in the sense of \cite[Definition~3]{MR2891215}.
  \begin{figure}[h]
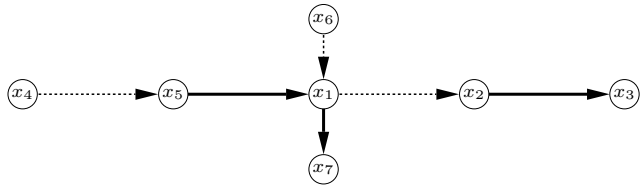

    \begin{graph}(7,2)
			\graphnodesize{.4}
      \roundnode{n1}(4,1)[\graphlinewidth{.01}\graphnodecolour{1}]
			\autonodetext{n1}{\tiny $x_1$}
			\autonodetext{n2}{\tiny $x_2$}
			\autonodetext{n3}{\tiny $x_3$}
			\autonodetext{n4}{\tiny $x_4$}
			\autonodetext{n5}{\tiny $x_5$}
			\autonodetext{n6}{\tiny $x_6$}
			\autonodetext{n7}{\tiny $x_7$}
      \roundnode{n2}(6,1)[\graphlinewidth{.01}\graphnodecolour{1}]
      \roundnode{n3}(8,1)[\graphlinewidth{.01}\graphnodecolour{1}]
      \roundnode{n4}(0,1)[\graphlinewidth{.01}\graphnodecolour{1}]
      \roundnode{n5}(2,1)[\graphlinewidth{.01}\graphnodecolour{1}]
      \roundnode{n6}(4,2)[\graphlinewidth{.01}\graphnodecolour{1}]
      \roundnode{n7}(4,0)[\graphlinewidth{.01}\graphnodecolour{1}]
			\diredge{n1}{n2}[\graphlinedash{1}]
			\diredge{n2}{n3}[\graphlinewidth{.04}]
  		\diredge{n4}{n5}[\graphlinedash{1}]
      \diredge{n5}{n1}[\graphlinewidth{.04}]
  		\diredge{n6}{n1}[\graphlinedash{1}]
  		\diredge{n1}{n7}[\graphlinewidth{.04}]
    \end{graph}
		\caption{Neighbors of $x_1$}
		\label{fig:neighbors}
  \end{figure}
\end{exa}

\begin{problem}
	\label{problem:percolation}
	Find a plague of minimal size for the cellular automata of Example
	\ref{exa:B3}. 
\end{problem}

\begin{exa}
  A cellular automaton in the very traditional sense is just a cellular
  automaton over $(\Z ^n,\Z ^n)$ with $n\in \{1,2\}$,
  where the action is given by the usual free
  action of $\Z ^n$ on itself and in $(g_s)_{s\in S}$ only $n\in \{1,-1\}$
  ($(n_1,n_2)\in \Z ^2$ with $|n_1|,|n_2|\le 1$, respectively,) appear.
\end{exa}

\subsection{Examples over $(\Z,\Z_m)$}

In this subsection we present a family of examples of automata useful for
studying the following process. 

Let $m\in\N_{\geq2}$, $f\in \Z_2^{\Z_m }$, $r\in \N $,
and $a_1,\dots,a_r\in\Z_m\setminus \{0\}$.
The group $G=\Z$ acts transitively on $\Omega=\Z_m$. Let
$A=\Z _2$, $S=\{0,1,\dots ,r\}$, and
$(g_s)_{s\in S}=(0,-a_1,-a_2,\dots,-a_r)\in G^S$.
Define $\mu:A^S\to A$ by 
\[
	\mu(f_0,f_1,\dots,f_r)=\begin{cases}
		1 & \text{if $f_0=1$ or $f_1=f_2=\cdots=f_r=1$,}\\
		0 & \text{otherwise.}
	\end{cases}
\]
The map $\tau:\Z_2^{\Z_m}\to \Z_2^{\Z_m}$ defined by $\mu $ and $(g_s)_{s\in
S}$
is then a monotonic cellular automaton. By definition,
\begin{align} \label{eq:rule_ZN}
  \supp \tau (f)=\supp f\cup \{w\in \Omega \,|\,
  f(x-a_1)=\cdots =f(x-a_r)=1\}
\end{align}
for all $f\in \Z_2^{\Z _m}$.
We study now plagues for special cases of this cellular automaton.

\begin{exa}
	\label{exa:9A}
	Let $r=1$ and $a_1=\lambda $. The cellular automaton is determined by
  the rule
  \begin{equation}
		\label{eq:rule9A}
    \supp \tau (f)=\supp f\cup \{x\in \Z_m \,|\,f(x-\lambda)=1\}.
	\end{equation}
	Let $\Gamma=\langle\lambda\rangle$ and let $I$ be a set of representatives for
	$\Omega /\Gamma$. Then $I$ is a plague.
\end{exa}

\begin{exa}
	\label{exa:6D}
	Let $\lambda\in\Z_m$, $r=3$, $a_1=1$, $a_2=\lambda +1$, and $a_3=-\lambda $.
	Let $\Gamma=\langle\lambda\rangle$ and let $I$ be the union of a set of
	representatives for $\Omega /\Gamma$ with $\Gamma$. For example
	$I=\langle\lambda\rangle\cup\{1,2,\dots,\lambda-1\}$.
  If $\supp f$ contains a coset $a+\Gamma $, where $a+1\in I$,
  then $\supp f$ spreads to $a+1+\Gamma $.
  Thus $I$ is a plague.
\end{exa}

\begin{exa}
	\label{exa:game12C}
	Let $\lambda\in\Omega\setminus\{0,1\}$, $r=2$, $a_1=\lambda $,
  $a_2=\lambda -1$.
  Let $I=\{0,1,\dots,(m-1)/2\}$ if $m$ is odd and $\{0,1,\dots ,m/2-1\}$ if
  $m$ is even. Then $I$ is a plague of size $\le (m+1)/2$.
  It is in general not minimal, for example
  for $m\ge 3$, $\lambda =2$ the set $\{0,1\}$ is a plague.
\end{exa}

\section{Plagues on Hurwitz orbits}
\label{section:plagues}

Example~\ref{exa:B3} describes cellular automata over braid group orbits.
In order to prove Theorem~\ref{thm:percolation}, we have to find small plagues
for these automata.

By \cite[\S 1.4]{MR2891215}, the \emph{immunity} of a Hurwitz orbit $\Sigma $
  is the ratio $s/|\Sigma |$, where $s$ is the size of a smallest
  plague for $\Sigma $ with respect to the cellular automaton
  in Example~\ref{exa:B3}. We write $\imm
  (\Sigma )$ for the immunity of $\Sigma $.

  Let us formulate the cellular automaton in Example~\ref{exa:B3}
in terms of the generators
$\sigma _2^{-1}\sigma _1^{-1}$ and $\sigma _1\sigma _2\sigma _1$
of the group $\mathbb{B}_3$. Note that
$x=\sigma _2^{-1}\sigma _1^{-1}\langle \Delta \rangle $ and
$y=\sigma _1\sigma _2\sigma _1\langle \Delta \rangle $ in
$\mathbf{PSL}(2,\Z)$.
Let $f$ be a $\Z _2$-valued function on the braid group
orbit $\Omega $ and let $P=\supp f$.
To obtain the support of $\tau (f)$, proceed as
follows.

We use the notation regarding
$\Omega $ and its Schreier graph introduced in Remark~\ref{rem:covgraph}.
Let $p$ be a point in the braid group orbit quotient $\Omega /\langle \Delta
\rangle $.
Let $I$ be a subset of $\ZN $ and let
$p[I]=\{p[i]\,|\,i\in I\}$ be the corresponding subset of the fiber over $p$.
Consider the three neighboring subsets
$$(\sigma _2^{-1}\sigma _1^{-1})^{-1} \cdot p[I],\quad
\sigma _1\sigma _2\sigma _1\cdot p[I],\quad\text{and}\quad
\sigma _2^{-1}\sigma _1^{-1}\cdot p[I+1]$$
of $p[I]$. They are displayed in Figure~\ref{fig:percolation} and are denoted
by $x_1[I-c]$, $x_2[I+a]$ and $x_3[I+b+1]$, respectively, where,
for example, $I+a$ means the translation of $I$ by $a$,
i.e., $I+a=\{i+a\mid i\in I\}$. In this setting we call $p[I]$ a \emph{pivot}.

By Example~\ref{exa:B3}, if $P$ contains the subsets
$x_1[I-c]=\sigma_1\sigma_2\cdot p[I]$
and $\sigma _2\cdot x_1[I-c]=x_2[I+a]$, then $\supp \tau (f)$
contains
$$\sigma _1\cdot x_2[I+a]=\sigma _1\sigma _2\sigma _1\sigma _2\cdot p[I]
=\sigma _2^{-1}\sigma _1^{-1}\Delta \cdot p[I]=x_3[I+b+1].$$
Similarly, if any two of the neighboring subsets
$x_1[I-c]$, $x_2[I+a]$, and $x_3[I+b+1]$ of $p[I]$
are contained in $P$,
then the third is a subset of $\supp \tau (f)$. Moreover, $\supp \tau (f)$
is the smallest subset of $\Omega $ containing $\supp f$ and all sets
constructed this way for some point $p$ and some subset $I\subseteq \ZN$.

\begin{figure}[h]
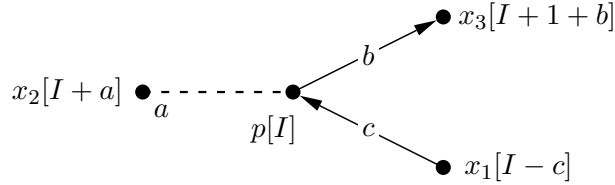

  \begin{graph}(6,2)
    \roundnode{n1}(1,1)
    \roundnode{n2}(3,1)
    \roundnode{n3}(5,2)
    \roundnode{n4}(5,0)
    \edge{n1}{n2}[\yedge ]
    \diredge{n2}{n3}
    \diredge{n4}{n2}
		\edgetext{n2}{n3}{$b$}
		\edgetext{n4}{n2}{$c$}
		\nodetext{n1}(-1,0){$x_2[I+a]$}
		\nodetext{n2}(-.25,-.50){$p[I]$}
		\nodetext{n3}(1.25,0){$x_3[I+1+b]$}
		\nodetext{n4}(1,0){$x_1[I-c]$}
		\nodetext{n1}(.25,-.25){$a$}
  \end{graph}
	\caption{The cellular automaton on braid group orbits}
  \label{fig:percolation}
\end{figure}

We intend to obtain an upper bound for the immunity of each Hurwitz orbit
using the cycle structure at each vertex. For that purpose, we define  
\begin{equation}
  (\omega'_{ij})_{i,j\ge 1}=
	\left(\begin{array}{ccccccc}
		1   & 1/3 &  11/24 &  1/2 & 1/2 & \cdots \\
		1/3 & 1/3 &  1/3 &  1/3 & 1/3 &  \cdots \\
		11/24 & 1/3 &  7/24 &  7/24 & 7/24 & \cdots \\
		1/2 & 1/3 & 7/24 & 1/4 & 1/4 & \cdots \\
		1/2 & 1/3 & 7/24 & 1/4 & 1/4 & \cdots \\
		\vdots & \vdots & \vdots & \vdots & \vdots & \ddots
	\end{array}\right)
	\label{eq:weights}
\end{equation}

The notation for $\mathbf{PSL}(2,\Z)$-spaces was fixed below
Proposition~\ref{pro:homogeneous_spaces}.
For the coverings of a
$\mathbf{PSL}(2,\Z)$-space $\Sigma _*$, where $*$ stands for any possible
index,
we use the notation $\Sigma _*^{N;a,b,\dots}$. Here,
$N$ is the size of the fiber over a point of $\Sigma _*$,
and $a,b,\dots $ are the values of the individual labels determining
the covering. These labels are chosen by the method described in
Remark~\ref{rem:covgraph} and can be read off from the corresponding figure.
We again use the same notation for the homogeneous space and its Schreier
graph.

Let $\Sigma$ be a homogeneous $\mathbb{B}_3$-space
and let $v\in\Sigma$. Assume that $v$ belongs
to a $\sigma_1$-cycle of length $i$, and also to a $\sigma_2$-cycle of length
$j$. Then we write $c(v)=(i,j)$.  Let
$\omega:\Sigma\to\mathbb{Q}$ be the map defined by 
\begin{equation} \label{eq:omega}
	\omega(v)=\begin{cases}
		\omega'_{ij}+\frac1{30}=\frac{13}{40}&\text{if $\Sigma=\Sigma^{5;3,2}_{4A}$ and $v\in v_1[*]$,}\\
		\omega'_{ij}+\frac1{12}=\frac13&\text{if $\Sigma=\Sigma^{4;2,2}_{6A}$ and $v\in v_3[*]$,}\\
		\omega'_{ij}+\frac1{24}=\frac13&\text{if $\Sigma$ is the trivial covering of $\Sigma_{12C}$,}\\
		\omega'_{ij}&\text{otherwise,}
	\end{cases}
\end{equation}
where $c(v)=(i,j)$, and $v_i[*]$ is defined in Remark~\ref{rem:covgraph}.

\begin{rem}
  If we would not add the three exceptional coverings in the definition of
  $\omega $, then Theorem~\ref{thm:percolation} below would not hold. We could
  compensate this by changing the infinite matrix $(\omega '_{ij})_{i,j\ge
  1}$, but then we would have only a weak upper bound for the immunity of a
  Hurwitz orbit. In that case we would not have enough information to reduce
  the proof of Theorem~\ref{thm:main} to the study of few small racks.
\end{rem}

\begin{defn}
  Let $\Sigma$ be a homogeneous $\mathbb{B}_3$-space.
The \emph{weight} of $\Sigma$ is defined as
\begin{equation}
	\label{eq:weight_of_sigma}
	\omega(\Sigma)=\frac1{|\Sigma|}\sum_{v\in\Sigma}\omega(v).
\end{equation}
\end{defn}

The main goal of this section is to formulate a good upper bound for the immunity
of Hurwitz orbits.

\begin{thm}
	\label{thm:percolation}
	Let $\Sigma$ be a covering with simply intersecting cycles of a finite
	homogeneous $\mathbf{PSL}(2,\Z)$-space $\overline{\Sigma}$. Assume that any
	$xy$-cycle of $\overline{\Sigma}$ has at most four elements. Then
	$\imm(\Sigma)\leq\omega(\Sigma)$.
\end{thm}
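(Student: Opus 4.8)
The plan is to reduce the global estimate $\imm(\Sigma)\le\omega(\Sigma)$ to a local one, vertex by vertex. Since $\imm(\Sigma)=s/|\Sigma|$ where $s$ is the size of a smallest plague, and $\omega(\Sigma)=\frac1{|\Sigma|}\sum_{v\in\Sigma}\omega(v)$, it suffices to exhibit a plague $P\subseteq\Sigma$ with $|P|\le\sum_{v\in\Sigma}\omega(v)$. I would construct $P$ as a union over the fibers of $\overline{\Sigma}$: for each vertex $\bar v\in\overline{\Sigma}$ with fiber $\bar v[*]$ of size $N$, I choose a subset $P_{\bar v}\subseteq\bar v[*]$ of size roughly $N\cdot\omega(\bar v)$, built from the arithmetic-progression plagues of Examples~\ref{exa:9A}, \ref{exa:6D} and \ref{exa:game12C} (and their analogues) applied inside the fiber. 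The key point from Section~\ref{section:plagues} is that the cellular automaton on $\Omega=\Sigma$, restricted to the three neighbouring subsets of a pivot $p[I]$, behaves exactly like the $(\Z,\Z_N)$-automata of Section~\ref{section:automaton}: if two of $x_1[I-c]$, $x_2[I+a]$, $x_3[I+b+1]$ lie in the current support, the third is added. So each $xy$-cycle of $\overline{\Sigma}$ gives, upon lifting, a rule of exactly the form \eqref{eq:rule_ZN}, and a plague for that rule inside $\ZN$ spreads the corresponding fibers.

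**Key steps.** First, by Proposition~\ref{pro:homogeneous_spaces} there are only finitely many $\mathbf{PSL}(2,\Z)$-spaces $\overline{\Sigma}$ to consider, namely $\Sigma_{nX}$ in Figures~\ref{fig:Hoq1}--\ref{fig:Hoq18}; so the argument is a finite case check, one base space at a time. Second, for a fixed $\overline{\Sigma}$ I would enumerate its coverings $\Sigma_*^{N;a,b,\dots}$ using Remark~\ref{rem:covgraph}: the covering is determined by the edge labels, which by Lemmas~\ref{lem:loops}, \ref{lem:xy_and_yx_cycles}, \ref{lem:xy_and_yx_cycles_with_loops} and \ref{lem:from_v_to_w} and Corollary~\ref{cor:two_y_loops} are heavily constrained once we impose simply intersecting cycles; these constraints are exactly what makes the list of effective labels finite modulo easy congruences in $N$. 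Third, for each covering I exhibit the plague: pick a spanning tree of $\overline{\Sigma}$, put the whole fiber over the "extra" vertices that $\omega'$ assigns weight $1$ (the loops / degenerate cycles, entries $\omega'_{1j}$), and over the remaining vertices take the reduced sets coming from Examples~\ref{exa:9A}--\ref{exa:game12C} — a set of coset representatives for $\langle\lambda\rangle\le\ZN$ contributing the $1/|\langle\lambda\rangle|$ factors that produce $1/3$, $7/24$, $1/4$, etc. Fourth, verify $P$ spreads to all of $\Sigma$: propagate along the spanning tree, at each step using that two of the three neighbours of a suitable pivot are already filled, exactly as in the proofs of Examples~\ref{exa:6D} and \ref{exa:game12C}. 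Finally, count: $|P|=\sum_{\bar v}|P_{\bar v}|\le\sum_{\bar v}N\,\omega(\bar v)=\sum_{v\in\Sigma}\omega(v)$, which is the claim; the three exceptional summands $\tfrac1{30},\tfrac1{12},\tfrac1{24}$ in \eqref{eq:omega} are precisely the overshoot forced in the coverings $\Sigma^{5;3,2}_{4A}$, $\Sigma^{4;2,2}_{6A}$ and the trivial covering of $\Sigma_{12C}$, where the generic progression plague is not available and one must take a slightly larger set.

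**Main obstacle.** The hard part is not any single estimate but the combinatorial bookkeeping: one must show that for \emph{every} covering with simply intersecting cycles of each of the eighteen base graphs, the small plague built from one-dimensional progression arguments actually spreads, and that its size does not exceed the sum of the tabulated weights $\omega'_{ij}$. The delicate cases are those base spaces (like $\Sigma_{4A}$, $\Sigma_{6A}$, $\Sigma_{12C}$) where the interaction of an $xy$-cycle with a fixed point or with another short cycle forces the labels into a configuration where a naive coset-representative set fails to propagate across a fiber; there one has to enlarge the local piece, and it must be checked that the enlargement is captured exactly by the correction terms $+\tfrac1{30},+\tfrac1{12},+\tfrac1{24}$ and by no more — otherwise the bound, and with it the reduction to small racks in Theorem~\ref{thm:main}, would break. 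The rest (the loop congruences, the disjointness of $\langle\lambda\rangle$ and $\langle\mu\rangle$, the spanning-tree propagation) is routine given Sections~\ref{section:hurwitz} and \ref{section:automaton}.
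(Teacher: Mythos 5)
Your proposal follows essentially the same route as the paper's proof: a finite case check over the base graphs of Proposition~\ref{pro:homogeneous_spaces}, with the covering labels constrained by Lemmas~\ref{lem:loops}--\ref{lem:from_v_to_w} and Corollary~\ref{cor:two_y_loops}, explicit plagues built from full fibers plus coset-representative sets propagated via pivots and the $(\Z,\Z_m)$-automata of Examples~\ref{exa:9A}, \ref{exa:6D}, \ref{exa:game12C}, and the three exceptional coverings absorbed by the correction terms in \eqref{eq:omega}. The only caveat is that the bound should be read in aggregate rather than fiber-by-fiber: in the paper's constructions (e.g.\ for $\Sigma_{8A}$, $\Sigma_{12A}$, $\Sigma_{24A}$) whole fibers are placed over vertices of weight $1/4$, so $|P_{\bar v}|\le N\omega(\bar v)$ fails pointwise while $|P|\le\sum_{v\in\Sigma}\omega(v)$ still holds, exactly as in the paper.
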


We prove Theorem \ref{thm:percolation} in Section \ref{section:proof}.



\section{Proof of Theorem \ref{thm:percolation}}

This section contains a case-by-case analysis of the coverings, immunities and
weights for the Schreier graphs of finite homogeneous
$\mathbf{PSL}(2,\Z)$-spaces $\overline\Sigma$ such that any $xy$-cycle of
$\overline\Sigma$ has at most four elements.
The main goal of this section is to prove 
Theorem~\ref{thm:percolation}.

\label{section:immunities}
\label{section:proof}

\subsection{The graph $\Sigma_{1A}$}
\label{1A}

\begin{lem}
	\label{lem:Hoq1} 
	Every covering of $\Sigma_{1A}$ in Figure \ref{fig:Hoq1} with simply
	intersecting cycles is trivial.
\end{lem}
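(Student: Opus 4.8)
The goal is to show that a covering $(p,S,\Sigma_{1A})$ with simply intersecting cycles has fibers of size $N=1$. First I recall (from the proof of Proposition~\ref{pro:homogeneous_spaces}) that $\Sigma_{1A}$ is the one-vertex space: it consists of a single vertex $v_1$ carrying an $x$-loop and a $y$-loop. Since $\Sigma_{1A}$ has only one point, the whole space $S$ is the fiber $v_1[*]=\{v_1[0],\dots,v_1[N-1]\}$. Write $a$ for the label of the $x$-loop and $b$ for the label of the $y$-loop, so that, by the conventions of Remark~\ref{rem:covgraph}, the generators $\sigma_2^{-1}\sigma_1^{-1}$ and $\sigma_1\sigma_2\sigma_1$ of $\mathbb{B}_3$ act on $S$ by $v_1[k]\mapsto v_1[k+a]$ and $v_1[k]\mapsto v_1[k+b]$, respectively. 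By Lemma~\ref{lem:loops} we already have $3a\equiv-1\pmod N$ and $2b\equiv1\pmod N$.

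The next step is to extract a third congruence from the hypothesis on cycles. A direct computation in $\mathbb{B}_3$ gives $\sigma_2^{-1}\sigma_1^{-1}\cdot\sigma_1\sigma_2\sigma_1=\sigma_1$ and $\sigma_1\sigma_2\sigma_1\cdot\sigma_2^{-1}\sigma_1^{-1}=\sigma_2$ (using the braid relation for the second); hence both $\sigma_1$ and $\sigma_2$ act on $S$ as the shift $v_1[k]\mapsto v_1[k+a+b]$. Therefore the $\sigma_1$-cycle and the $\sigma_2$-cycle through $v_1[0]$ coincide, both equal to $v_1[\langle a+b\rangle]$. Simple intersection forces this common set to have at most one element, i.e.\ $a+b\equiv0\pmod N$. (Equivalently, one may invoke Lemma~\ref{lem:xy_and_yx_cycles} with the length-one $xy$- and $yx$-cycles $\{v_1\}$, each of which has label $a+b$, obtaining $\langle a+b\rangle\cap\langle a+b\rangle=0$ in $\ZN$.)

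Finally I combine the three congruences. From $a\equiv-b\pmod N$ and $3a\equiv-1\pmod N$ we get $3b\equiv1\pmod N$; subtracting $2b\equiv1\pmod N$ yields $b\equiv0\pmod N$, and then $2b\equiv0\equiv1\pmod N$ forces $N=1$. Thus $p$ is a bijection and the covering is trivial.

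\textbf{Main obstacle.} There is essentially no deep difficulty here: the argument is a short piece of edge-label bookkeeping together with elementary modular arithmetic. The only point demanding care is the identification of $\sigma_1,\sigma_2$ with $xy,yx$ — so that the $\sigma_1$- and $\sigma_2$-cycles collapse onto the single fiber and can be compared — and checking that the degenerate length-one cycles behave as the labelling conventions of Remark~\ref{rem:covgraph} prescribe. Once that is in place the conclusion is immediate, and this lemma opens the case-by-case analysis of the coverings carried out in this section.
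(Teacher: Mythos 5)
Your proof is correct and follows essentially the same route as the paper: Lemma~\ref{lem:loops} gives $3a\equiv-1\pmod N$ and $2b\equiv1\pmod N$, the cycle condition gives $a+b\equiv0\pmod N$, and the resulting arithmetic forces $N=1$. The only (harmless, indeed slightly cleaner) deviation is that you obtain $a+b\equiv0$ directly from the coincidence of the $\sigma_1$- and $\sigma_2$-cycles, or equivalently from Lemma~\ref{lem:xy_and_yx_cycles}, whereas the paper invokes Lemma~\ref{lem:xy_and_yx_cycles_with_loops}, whose hypothesis $\mathbf{PSL}(2,\Z)v\ne\{v\}$ is not literally satisfied by the one-point space $\Sigma_{1A}$.
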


\begin{proof}
	Lemma \ref{lem:loops} implies that $3a\equiv-1\pmod{N}$ and
	$2b\equiv1\pmod{N}$. By Lemma \ref{lem:xy_and_yx_cycles_with_loops},
	$a+b\equiv0\pmod{N}$ and hence $N=1$. 
\end{proof}

\begin{figure}[h]
  \begin{graph}(2,2)
    \roundnode{n1}(1,1)
    \dirloopedge{n1}(-.5,-.5)(-.5,.5)
    \loopedge{n1}(.5,.5)(.5,-.5)[\yedge ]
		\freetext(-.15,1){$a$}
		\freetext(2.15,1){$b$}
  \end{graph}
	\caption{Schreier graph $\Sigma _{1A}$ and its coverings}
  \label{fig:Hoq1}
\end{figure}

The following lemma is trivial, but we state it for completeness.

\begin{lem} \label{lem:1A_imm}
  Let $\Sigma $ be the trivial covering of
  the homogeneous $\mathbf{PSL}(2,\Z)$-space $\Sigma _{1A}$.
  Then $\imm(\Sigma)=1=\wg(\Sigma)$.
\end{lem}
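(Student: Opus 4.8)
The plan is to reduce everything to a direct inspection, since the space $\Sigma$ has a single point. First I would invoke Lemma~\ref{lem:Hoq1}, which says that the only covering of $\Sigma_{1A}$ with simply intersecting cycles is the trivial one; in particular $N=1$, so $\Sigma=\{v\}$ is a singleton $\mathbb{B}_3$-space, with $\sigma_1$ and $\sigma_2$ both acting as the identity.

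Next I would compute the immunity. The only subsets of $\Sigma$ are $\emptyset$ and $\Sigma$. Since the local rule $\mu$ of the cellular automaton of Example~\ref{exa:B3} sends the all-zero tuple to $0$, we have $\tau(\chi_{\emptyset})=\chi_{\emptyset}$, so $\emptyset$ is a quarantine strictly contained in $\Sigma$ and hence is not a plague. On the other hand, $\Sigma$ trivially spreads to $\Sigma$, so it is a plague. Therefore the smallest plague has size $s=1$, and $\imm(\Sigma)=s/|\Sigma|=1$.

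Finally I would compute the weight. The single vertex $v$ lies in a $\sigma_1$-cycle and in a $\sigma_2$-cycle, both of length $1$, so $c(v)=(1,1)$. Since $\Sigma$ is none of the three exceptional coverings $\Sigma^{5;3,2}_{4A}$, $\Sigma^{4;2,2}_{6A}$, nor the trivial covering of $\Sigma_{12C}$, the defining formula~\eqref{eq:omega} for $\omega$ gives $\omega(v)=\omega'_{11}=1$, whence $\omega(\Sigma)=\frac{1}{|\Sigma|}\sum_{w\in\Sigma}\omega(w)=1$. Combining the two computations yields $\imm(\Sigma)=1=\omega(\Sigma)$, as claimed.

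There is essentially no obstacle here; the only point that needs a moment's care is verifying that the empty set is not a plague, so that the minimal plague size is genuinely $1$ and not $0$.
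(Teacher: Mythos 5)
Your proof is correct and amounts to the direct verification the paper has in mind (the paper states the lemma without proof, calling it trivial): a one-point $\mathbb{B}_3$-space has $c(v)=(1,1)$, so $\wg(\Sigma)=\omega'_{11}=1$, and since the empty set is a quarantine, the unique minimal plague is the whole space, giving $\imm(\Sigma)=1$. The appeal to Lemma~\ref{lem:Hoq1} is unnecessary (triviality of the covering is already part of the hypothesis, so $N=1$ is immediate), but this is harmless.
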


\subsection{The graph $\Sigma_{2A}$}
\label{2A}
\begin{lem}
	\label{lem:Hoq2}
	The graph $\Sigma_{2A}$ in Figure \ref{fig:Hoq2} has no coverings with simply
	intersecting cycles.
\end{lem}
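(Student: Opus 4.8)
The plan is to assume that $(p,S,T)$ is a covering of $T=\Sigma_{2A}$ with simply intersecting cycles and to force $N:=|p^{-1}(v)|$ to satisfy both $N\ge 2$ and $N=1$. Recall from the proof of Proposition~\ref{pro:homogeneous_spaces} that $\Sigma_{2A}$ has two vertices $v_1,v_2$ carrying an $x$-loop each and joined by a $y$-edge, i.e.\ $xv_1=v_1$, $xv_2=v_2$ and $yv_1=v_2$. Hence $xy\cdot v_1=v_2$ and $yx\cdot v_1=v_2$, so that $C_{xy}(v_1)=C_{yx}(v_1)=\{v_1,v_2\}$.

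The first step is to show $N\ge2$. Since $v_2\in C_{xy}(v_1)\cap C_{yx}(v_1)$ and $v_2\ne v_1$, Lemma~\ref{lem:trivial_covering} applied with $v=v_1$, $w=v_2$ shows that the covering is not trivial, so $N\ge2$.

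The second step extracts the label relations. Let $a_1$ and $a_2$ denote the labels of the $x$-loops at $v_1$ and $v_2$. Lemma~\ref{lem:loops}(1) gives $3a_1\equiv-1\pmod N$ and $3a_2\equiv-1\pmod N$. Because $xv_1=v_1$ and the $\mathbf{PSL}(2,\Z)$-orbit of $v_1$ is $\{v_1,v_2\}\ne\{v_1\}$, Lemma~\ref{lem:xy_and_yx_cycles_with_loops} says the label of the $xy$-cycle $\{v_1,v_2\}$ is $0$. Traversing that cycle once passes through the $x$-loop at $v_1$, the $x$-loop at $v_2$, and both arrows of the $y$-edge, whose two labels sum to $1$ by Remark~\ref{rem:covgraph}; hence the cycle label equals $a_1+a_2+1$, giving $a_1+a_2\equiv-1\pmod N$. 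Now $3a_1\equiv-1$ and $3a_2\equiv-1$ yield $3(a_1+a_2)\equiv-2\pmod N$, while $a_1+a_2\equiv-1$ yields $3(a_1+a_2)\equiv-3\pmod N$; therefore $-2\equiv-3\pmod N$, i.e.\ $N=1$, contradicting $N\ge2$. This contradiction completes the proof.

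The only delicate point is the label bookkeeping along the $xy$-cycle — identifying exactly which $x$- and $y$-edges it runs over and applying the sign conventions of Remark~\ref{rem:covgraph}. Once that is settled the arithmetic is immediate, so I do not expect a genuine obstacle; the argument is short, and the same template (loop constraints from Lemma~\ref{lem:loops} plus a zero-label $xy$-cycle from Lemma~\ref{lem:xy_and_yx_cycles_with_loops}) should handle several of the small graphs uniformly.
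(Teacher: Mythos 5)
Your proof is correct and follows essentially the same route as the paper: Lemma~\ref{lem:loops} at the two loop vertices plus Lemma~\ref{lem:xy_and_yx_cycles_with_loops} on the $xy$-cycle $\{v_1,v_2\}$ give $3a\equiv 3b\equiv-1$ and $a+b+1\equiv 0\pmod N$, forcing $N=1$. Your explicit use of Lemma~\ref{lem:trivial_covering} to rule out the trivial covering ($N\ge 2$) is a step the paper leaves implicit, and your label bookkeeping along the cycle matches the conventions of Remark~\ref{rem:covgraph}.
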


\begin{proof}
	Lemma \ref{lem:loops} implies that $3a\equiv3b\equiv-1\pmod{N}$.
	Further, from Lemma \ref{lem:xy_and_yx_cycles_with_loops}
        on $v$ we obtain
	$a+b+1\equiv0\pmod{N}$ and the claim follows. 
\end{proof}

\begin{figure}[h]
  \begin{graph}(4,2)
    \roundnode{n1}(1,1)
		\nodetext{n1}(.25,.25){$v$}
		\nodetext{n1}(.25,-.25){$1$}
		\nodetext{n2}(-.25,.25){$0$}
    \roundnode{n2}(3,1)
    \dirloopedge{n1}(-.5,-.5)(-.5,.5)
    \edge{n1}{n2}[\yedge ]
    \dirloopedge{n2}(.5,.5)(.5,-.5)
		\freetext(-.15,1){$a$}
		\freetext(4.15,1){$b$}
  \end{graph}
	\caption{Schreier graph $\Sigma_{2A}$ and its coverings}
  \label{fig:Hoq2}
\end{figure}

\subsection{The graph $\Sigma_{3A}$}
\label{3A}

\begin{lem}
\label{lem:Hoq3}
	Any covering of $\Sigma_{3A}$ in Figure \ref{fig:Hoq3} with simply intersecting
	cycles satisfies $2a\equiv1\pmod{N}$, $a\equiv b\pmod{N}$.
\end{lem}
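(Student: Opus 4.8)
The plan is to read the two congruences straight off the labels of Figure~\ref{fig:Hoq3}, using only the structural facts about coverings collected above. Fix a covering $(p,S,\overline{\Sigma})$ of $\overline{\Sigma}=\Sigma_{3A}$ with simply intersecting cycles and let $N=|p^{-1}(v)|$ be the common fibre size (Lemma~\ref{lem:N}). The graph $\Sigma_{3A}$ has three vertices lying on a single $x$-triangle, one of which --- call it $v_1$ --- carries a $y$-loop, while the other two are joined by a $y$-edge; with the spanning tree used in Figure~\ref{fig:Hoq3}, the label $a$ is the label of the $y$-loop at $v_1$, the label $b$ is the label of the $y$-edge, and the one remaining nontrivial label sits on an $x$-arrow of the triangle and is forced to be $-1$ by the $x$-triangle relation of Remark~\ref{rem:covgraph}.

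First I would apply Lemma~\ref{lem:loops}(2) to the $y$-loop at $v_1$ with label $a$; this gives $2a\equiv 1\pmod N$, the first assertion. For the second, note that $\mathbf{PSL}(2,\Z)v_1=\Sigma_{3A}$ has three elements, so $\mathbf{PSL}(2,\Z)v_1\neq\{v_1\}$; since also $yv_1=v_1$, Lemma~\ref{lem:xy_and_yx_cycles_with_loops} tells us that the $xy$-cycle through $v_1$ has label $0$. This cycle is the two-element cycle $\{v_1,xy\cdot v_1\}$, and running once around it one successively traverses the $y$-loop at $v_1$ (label $a$), an $x$-arrow of label $0$ leaving $v_1$, the $y$-edge (label $b$), and the $x$-arrow of label $-1$ returning to $v_1$; summing, the label of the cycle equals $a+b-1\pmod N$. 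Hence $a+b-1\equiv 0\pmod N$, and combining with $2a\equiv 1\pmod N$ yields $b\equiv 1-a\equiv a\pmod N$, as claimed.

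The only real work is the bookkeeping in the previous paragraph: one must check against Figure~\ref{fig:Hoq3} that the spanning-tree conventions and edge orientations are exactly as described, so that the traced cycle label comes out as $a+b-1$ (rather than, say, $a-b+1$), and that the $xy$-cycle through $v_1$ genuinely runs through all four of the listed edges. Everything else is a direct invocation of Lemmas~\ref{lem:loops} and~\ref{lem:xy_and_yx_cycles_with_loops}. Unlike Lemmas~\ref{lem:Hoq1} and~\ref{lem:Hoq2}, no contradiction is obtained here --- the system $2a\equiv 1$, $a\equiv b\pmod N$ is solvable (for instance $N=3$, $a=b=2$) --- so $\Sigma_{3A}$ does admit nontrivial coverings with simply intersecting cycles, and the lemma merely records the constraints that such coverings must satisfy.
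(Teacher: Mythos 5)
Your proof follows the paper's argument exactly: Lemma~\ref{lem:loops}(2) at the $y$-loop gives $2a\equiv1\pmod N$, and Lemma~\ref{lem:xy_and_yx_cycles_with_loops} (applicable since $yv_1=v_1$ and the space has three points) forces the label of the cycles through $v_1$ to vanish. The only discrepancy is in reading Figure~\ref{fig:Hoq3}: the $x$-arrow $v_1\to v_2$ carries the label $-1$ (the arrow $v_3\to v_1$ is unlabelled), and the traversed half of the $y$-edge, namely the arrow into $v_3$, carries $1-b$, so the cycle label is $a+(-1)+(1-b)+0=a-b$, which yields $a\equiv b\pmod N$ directly --- this is how the paper concludes. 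Your value $a+b-1$ comes from swapping the two $x$-labels and using the other half of the $y$-edge; since $2a\equiv1$ makes $1-a\equiv a\pmod N$, your final conclusion is unaffected, so this is only the bookkeeping slip you flagged yourself and not a genuine gap.
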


\begin{proof}
	Lemma \ref{lem:loops} on
	$v_1$ implies that $2a\equiv1\pmod{N}$.
  Since $v_1$ belongs to cycles with label $a-b$,
	$a\equiv b\pmod{N}$
  by Lemma \ref{lem:xy_and_yx_cycles_with_loops}.
\end{proof}

\begin{exa}
	The trivial covering of $\Sigma_{3A}$ in Figure \ref{fig:Hoq3} is isomorphic to
	the Hurwitz orbit with three elements of \cite[Figure 8]{MR2891215}.
\end{exa}

\begin{figure}[h]
  \begin{graph}(4,4)
    \roundnode{n1}(1,2)
    \loopedge{n1}(-.5,-.5)(-.5,.5)[\yedge]
    \roundnode{n2}(3,3)
    \roundnode{n3}(3,1)
    \diredge{n1}{n2}
    \diredge{n2}{n3}
    \diredge{n3}{n1}
		\edgetext{n1}{n2}{$-1$}
    \bow{n2}{n3}{.2}[\yedge]
		\nodetext{n1}(0,.5){$v_1$}
		\nodetext{n2}(-.30,.25){$v_2$}
		\nodetext{n3}(-.30,-.25){$v_3$}
		\freetext(-.15,2){$a$}
		\freetext(3.35,3){$b$}
		\freetext(3.75,1){$1-b$}
  \end{graph}
	\caption{Schreier graph $\Sigma _{3A}$ and its coverings}
  \label{fig:Hoq3}
\end{figure}

\begin{lem}
	\label{lem:3A_imm}
	Let $\Sigma$ be a covering of $\Sigma_{3A}$ with simply intersecting cycles.
	Then $\imm(\Sigma)\leq1/3=\wg(\Sigma)$.
\end{lem}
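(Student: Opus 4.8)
The plan is to prove the equality $\wg(\Sigma)=1/3$ and the bound $\imm(\Sigma)\le 1/3$ separately.

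\emph{Weight.} Let $N$ be the fibre size of the covering $\Sigma$ of $\Sigma_{3A}$. By Lemma~\ref{lem:Hoq3} we have $2a\equiv 1$ and $a\equiv b\pmod N$, so $N$ is odd and $b$ is a unit in $\ZN$. Computing the $\sigma_1$- and $\sigma_2$-cycles from the edge labels of Figure~\ref{fig:Hoq3} via Remark~\ref{rem:covgraph}, one finds that every point of the fibre over $v_1$ has $c(v)=(2,2)$, every point of the fibre over $v_2$ has $c(v)=(2,N)$, and every point of the fibre over $v_3$ has $c(v)=(N,2)$; here the congruence $a\equiv b$ makes the two cycles through $v_1[*]$ have length $2$, and $b$ being invertible makes the two remaining cycles have length $N$. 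Since no covering of $\Sigma_{3A}$ is among the three exceptional coverings in~\eqref{eq:omega}, the "otherwise" branch of that formula applies, and by~\eqref{eq:weights} we get $\omega(v)=\omega'_{ij}=1/3$ in each of the three cases (row and column $2$ of the matrix are constant $1/3$). Hence $\wg(\Sigma)=1/3$.

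\emph{Immunity.} It suffices to exhibit a plague of size $N=|\Sigma|/3$; I claim the whole fibre $P=v_2[*]$ works. Recall from Figure~\ref{fig:percolation} that the three neighbours of a pivot $p[I]$ are, after the appropriate shifts in their fibres, the two vertices of the $x$-triangle through $p$ other than $p$, together with the $y$-neighbour of $p$. For $p=v_3$ the $x$-triangle is $\{v_1,v_2,v_3\}$ and the $y$-neighbour of $v_3$ is $v_2$, so each pivot $v_3[\{k\}]$ has two of its three neighbours in $v_2[*]=P$ (these are distinct because $b$ is a unit; the case $N=1$ is the trivial covering and is immediate) and the third in the fibre over $v_1$. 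Applying $\tau$ once therefore adds, for every $k\in\ZN$, the $v_1$-neighbour of the pivot $v_3[\{k\}]$, and these points exhaust $v_1[*]$; thus $\supp\tau(\chi_P)\supseteq v_1[*]\cup v_2[*]$. For $p=v_1$ the $x$-triangle is again $\{v_1,v_2,v_3\}$ and the $y$-neighbour of $v_1$ is $v_1$, so the three neighbours of a pivot $v_1[\{i\}]$ lie one in each fibre; two of them (those in $v_1[*]$ and in $v_2[*]$) are now present, so a further application of $\tau$ adds all of $v_3[*]$. Hence $P$ spreads to $\Sigma$, so $P$ is a plague, and $\imm(\Sigma)\le|P|/|\Sigma|=1/3$.

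\emph{Expected difficulty.} There is no genuine conceptual obstacle; the only care needed is bookkeeping, namely transporting the abstract neighbour configuration of Figure~\ref{fig:percolation} to the concrete graph $\Sigma_{3A}$ without orientation errors and reading off the $\sigma_i$-cycle lengths from Figure~\ref{fig:Hoq3}. Note that the spreading argument itself never uses the congruences of Lemma~\ref{lem:Hoq3}; those enter only in the weight computation, to guarantee that the two short cycles really have length $2$ and the long ones really have length $N$, which is exactly what forces $\wg(\Sigma)=1/3$.
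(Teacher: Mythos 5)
Your proof is correct and follows essentially the same route as the paper: the same plague $P=v_2[*]$ spread via the pivots $v_3[*]$ and then $v_1[*]$, and the same weight computation from the cycle structure given by Lemma~\ref{lem:Hoq3}. The parenthetical distinctness remark (and the separate treatment of $N=1$) is unnecessary, since the automaton rule only asks that two of the three neighbouring subsets lie in $P$, whether or not they coincide.
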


\begin{proof}
	First we prove that $P=v_2[*]$ is a plague.
  With $v_3[*]$ as a pivot we see that $P$ spreads to $v_1[*]$.
  Then, with $v_1[*]$ as a pivot, it also spreads to
  $v_3[*]$ and hence $\imm(\Sigma)\leq 1/3$. 
	Now we prove $\wg(\Sigma)=1/3$.  By Lemma \ref{lem:Hoq3}, the cycle structure
	on each vertex of $\Sigma$ is the following: 
	\begin{align*}
		v_1[i]&:\text{ two $2$-cycles,}\\
		v_2[i]\text{ and }v_3[i]&:\text{ one $2$-cycle and a cycle of length
    $|\langle b\rangle|$,}
	\end{align*}
	for all $i\in\ZN$. Then $\wg(\Sigma)=1/3$ and this proves the claim.
\end{proof}

\subsection{The graph $\Sigma_{3B}$}
\label{3B}

\begin{lem}
	\label{lem:Hoq4}
	The graph $\Sigma_{3B}$ in Figure \ref{fig:Hoq4} has no coverings with simply
	intersecting cycles.
\end{lem}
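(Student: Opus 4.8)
My plan is to identify the shape of $\Sigma_{3B}$ and then invoke Corollary~\ref{cor:two_y_loops}. First I would pin down $\Sigma_{3B}$ from the classification underlying Proposition~\ref{pro:homogeneous_spaces}: among the $3$-vertex homogeneous $\mathbf{PSL}(2,\Z)$-spaces it is the one with a $y$-fixed point but no $x$- and no $xy$-fixed point. On three vertices $x$ must act as a $3$-cycle (it has no fixed point), and an involution $y$ on three vertices with a fixed point is either a transposition, which forces $xy$ to be a transposition and hence to have a fixed point — that case is $\Sigma_{3A}$ — or the identity. So $\Sigma_{3B}$ is the oriented $x$-triangle $v_1\to v_2\to v_3\to v_1$ with a $y$-loop at each vertex, and it carries a single $xy$-cycle, equal to all of $\Sigma_{3B}$.

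Given this, the vertices $v_1$ and $v_2$ are distinct, are both fixed by $y$, and lie on a common $xy$-cycle; these are exactly the hypotheses of Corollary~\ref{cor:two_y_loops}, which then yields immediately that $\Sigma_{3B}$ has no covering with simply intersecting cycles. If one wants to avoid that corollary and argue directly in the style of Lemmas~\ref{lem:Hoq1} and~\ref{lem:Hoq2}, the unwound version is: for a covering with fiber size $N$ and simply intersecting cycles, Lemma~\ref{lem:loops}(2) at the three $y$-loops forces each loop label $a_i$ to satisfy $2a_i\equiv1\pmod{N}$, so $N$ is odd and the $a_i$ all coincide mod $N$, say $a_i\equiv a$; Lemma~\ref{lem:xy_and_yx_cycles_with_loops} at $v_1$ forces the label of the $xy$-cycle to be $0$, and since that cycle runs once through each $y$-loop and once through each edge of the $x$-triangle (whose labels sum to $-1$ by Remark~\ref{rem:covgraph}) we get $3a\equiv1\pmod{N}$; combining with $2a\equiv1\pmod{N}$ gives $a\equiv0$, hence $N=1$, hence the covering is trivial, contradicting Lemma~\ref{lem:trivial_covering} because $v_1\ne v_2$ sit on a common $xy$- and $yx$-cycle.

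The only step that takes any thought is the first one — reading off that $\Sigma_{3B}$ is precisely the triangle with three $y$-loops — and that is settled by the elementary observation about involutions on three points above. I do not anticipate a genuine obstacle: this is one of the routine base cases of the case-by-case analysis in Section~\ref{section:immunities}, entirely on a par with Lemmas~\ref{lem:Hoq1} and~\ref{lem:Hoq2}.
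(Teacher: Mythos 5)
Your proposal is correct and takes the same route as the paper, whose entire proof is the appeal to Corollary~\ref{cor:two_y_loops}: $\Sigma_{3B}$ is the oriented $x$-triangle with a $y$-loop at each vertex, so two distinct $y$-fixed vertices lie on the common $xy$-cycle. Your optional unwound argument via Lemmas~\ref{lem:loops}, \ref{lem:xy_and_yx_cycles_with_loops} and \ref{lem:trivial_covering} is also sound, and is essentially the computation hidden inside that corollary.
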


\begin{proof}
  This follows from Corollary~\ref{cor:two_y_loops}.
\end{proof}

\begin{figure}[h]
  \begin{graph}(4,4)
    \roundnode{n1}(1,2)
    \roundnode{n2}(3,3)
    \roundnode{n3}(3,1)
    \diredge{n1}{n2}
    \diredge{n2}{n3}
    \diredge{n3}{n1}
    \loopedge{n1}(-.5,-.5)(-.5,.5)[\yedge]
    \loopedge{n2}(0,.5)(.5,0)[\yedge]
    \loopedge{n3}(.5,0)(0,-.5)[\yedge]
		\edgetext{n1}{n2}{$-1$}
		\nodetext{n1}(0,.5){$v_1$}
		\nodetext{n2}(-.30,.25){$v_2$}
		\nodetext{n3}(-.30,-.25){$v_3$}
		\freetext(-.15,2){$a$}
		\freetext(3.75,3.5){$b$}
		\freetext(3.75,.5){$c$}
  \end{graph}
	\caption{Schreier graph $\Sigma_{3B}$ and its coverings}
  \label{fig:Hoq4}
\end{figure}

\subsection{The graph $\Sigma_{4A}$}
\label{4A}

\begin{figure}[h]
  \begin{graph}(6,4)
    \roundnode{n1}(1,2)
    \roundnode{n2}(3,2)
    \roundnode{n3}(5,3)
    \roundnode{n4}(5,1)
    \dirloopedge{n1}(-.5,-.5)(-.5,.5)
    \edge{n1}{n2}[\yedge ]
    \diredge{n2}{n3}
    \diredge{n3}{n4}
    \diredge{n4}{n2}
		\edgetext{n2}{n3}{$-1$}
    \bow{n3}{n4}{.2}[\yedge]
		\freetext(-.15,2){$a$}
		\nodetext{n1}(.25,.30){$v_1$}
		\nodetext{n2}(-.25,-.30){$v_2$}
		\nodetext{n3}(-.25,.30){$v_3$}
		\nodetext{n4}(-.25,-.30){$v_4$}
		\nodetext{n1}(.25,-.25){$0$}
		\nodetext{n2}(-.25,.25){$1$}
		\freetext(6,3){$1-b$}
		\freetext(5.5,1){$b$}
  \end{graph}
	\caption{Schreier graph $\Sigma_{4A}$ and its coverings}
  \label{fig:Hoq5}
\end{figure}

\begin{lem}
	\label{lem:Hoq5}
	Any covering of $\Sigma_{4A}$ in Figure \ref{fig:Hoq5} with simply intersecting
	cycles satisfies $3a\equiv-1\pmod{N}$, $a+b\equiv0\pmod{N}$ and $N>1$.
\end{lem}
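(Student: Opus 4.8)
The plan is to read the three congruences straight off the labelled Schreier graph of $\Sigma_{4A}$ in Figure~\ref{fig:Hoq5}, using the general facts on coverings with simply intersecting cycles from Section~\ref{section:hurwitz}. Write $N=|p^{-1}(v_1)|$ for the fiber size and recall the conventions of Remark~\ref{rem:covgraph}: the label of a $y$-arrow sits near its destination, the three $x$-labels around each triangle sum to $-1$, and the two labels of a $y$-edge sum to $1$. Thus in $\Sigma_{4A}$ the $x$-arrows $v_3\to v_4$ and $v_4\to v_2$ have label $0$, the $y$-arrows $v_1\to v_2$ and $v_2\to v_1$ have labels $1$ and $0$, and the $y$-arrows $v_3\to v_4$ and $v_4\to v_3$ have labels $b$ and $1-b$.

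The vertex $v_1$ carries an $x$-loop with label $a$, so Lemma~\ref{lem:loops}(1) gives $3a\equiv-1\pmod N$ immediately. Since moreover $xv_1=v_1$ and $\mathbf{PSL}(2,\Z)v_1\ne\{v_1\}$ (the space is homogeneous with four vertices), Lemma~\ref{lem:xy_and_yx_cycles_with_loops} tells us that the $xy$-cycle through $v_1$ has label $0$. Tracing $xy$ from $v_1$ one passes successively through the $y$-arrow $v_1\to v_2$ and the $x$-arrow $v_2\to v_3$ (contributing $1+(-1)=0$), the $y$-arrow $v_3\to v_4$ and the $x$-arrow $v_4\to v_2$ (contributing $b+0=b$), and the $y$-arrow $v_2\to v_1$ and the $x$-loop at $v_1$ (contributing $0+a=a$), so the label of this cycle equals $a+b$; hence $a+b\equiv0\pmod N$.

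Finally, that same trace shows $v_2\in C_{xy}(v_1)$, while $v_2\in C_{yx}(v_1)$ too since $yx\cdot v_1=v_2$. As $v_1\ne v_2$, Lemma~\ref{lem:trivial_covering} forces the covering to be nontrivial, i.e.\ $N>1$, completing the proof.

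I expect no real difficulty here: the only place calling for attention is the bookkeeping of edge labels along the $xy$-cycle through $v_1$, where one must apply the conventions of Remark~\ref{rem:covgraph} correctly so that the loop contributes exactly $a$, the bow exactly $b$, and the remaining $x$- and $y$-edges nothing; everything else is a direct appeal to the lemmas of Section~\ref{section:hurwitz}.
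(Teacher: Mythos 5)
Your proof is correct and follows the paper's own argument exactly: Lemma~\ref{lem:loops}(1) at the $x$-loop on $v_1$, Lemma~\ref{lem:xy_and_yx_cycles_with_loops} applied to the $xy$-cycle through $v_1$ (whose label is indeed $a+b$), and Lemma~\ref{lem:trivial_covering} with $v_2\in C_{xy}(v_1)\cap C_{yx}(v_1)$ to get $N>1$. The only difference is that you spell out the label bookkeeping along the cycle, which the paper leaves implicit, and your accounting is consistent with the conventions of Remark~\ref{rem:covgraph}.
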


\begin{proof}
  Since $v_2\in C_{xy}(v_1)\cap C_{yx}(v_1)$,
	Lemma \ref{lem:trivial_covering} implies that $N>1$.
  Since $v_1$ is on an $xy$-cycle with label $a+b$, 
  by Lemmas \ref{lem:loops}(1) and
	\ref{lem:xy_and_yx_cycles_with_loops} on $v_1$ the claim follows.
\end{proof}

\begin{exa}
	Let $\Sigma$ be the covering of $\Sigma_{4A}$ with $N=2$ and $a=b=1$.  Then
	$\Sigma$ can be presented as 
	\begin{gather*}
		\sigma _1 = (v_1\;v_3\;\Delta v_2)(\Delta v_1\;\Delta v_3\;v_2), \quad
		\sigma _2 = (v_1\;v_2\;v_4)(\Delta v_1\;\Delta v_2\;\Delta v_4),
	\end{gather*}
	and $\Sigma$ is isomorphic to the Hurwitz orbit of eight elements of 
	\cite[Figure 10]{MR2891215}. The isomorphism is given by the
	bijection
	\[
	\left(\begin{array}{cccccccc}
		v_1 & \Delta v_1 & v_2 & \Delta v_2 & v_3 & \Delta v_3 & v_4 & \Delta v_4\\
		E & D & B & G & H & A & C & F
	\end{array}\right).
	\]
\end{exa}

\begin{lem}
	\label{lem:Hoq5_imm}
	Let $\Sigma$ be a covering of $\Sigma_{4A}$ with simply intersecting cycles.
	Then $\imm(\Sigma)\leq\frac{N+1}{4N}$. 
\end{lem}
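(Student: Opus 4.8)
The graph $\Sigma_{4A}$ has four vertices, so $|\Sigma|=4N$ by Lemma~\ref{lem:N}, and it suffices to exhibit a plague of size $N+1$ for the cellular automaton of Example~\ref{exa:B3}. The plan is to take
\[
  P=v_4[*]\cup\{v_1[0]\}
\]
and to check that $P$ spreads to all of $\Sigma$; then $\imm(\Sigma)\le|P|/|\Sigma|=\frac{N+1}{4N}$.

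The first step is to record what is needed about the labels. By Lemma~\ref{lem:Hoq5} we have $3a\equiv-1\pmod N$, $a+b\equiv0\pmod N$ and $N>1$; since $3(2a+1)=2(3a)+3\equiv1\pmod N$, the element $2a+1$ is a unit of $\ZN$, so $\langle 2a+1\rangle=\ZN$. Next I would read off the neighbouring triples at the four pivots from the Schreier graph in Figure~\ref{fig:Hoq5}, interpreted via the conventions of Remark~\ref{rem:covgraph} and Figure~\ref{fig:percolation}. Writing $a,b$ as in Lemma~\ref{lem:Hoq5} and using $a+b\equiv0\pmod N$ to eliminate $b$, the resulting spreading rules (``any two of the three listed subsets force the third into $\supp\tau$'') are, for every $I\subseteq\ZN$,
\begin{align*}
  &\text{pivot }v_1:\quad \{\,v_1[I-a],\ v_2[I+1],\ v_1[I+a+1]\,\},\\
  &\text{pivot }v_2:\quad \{\,v_4[I],\ v_1[I],\ v_3[I]\,\},\\
  &\text{pivot }v_3:\quad \{\,v_2[I+1],\ v_4[I-a],\ v_4[I+1]\,\},\\
  &\text{pivot }v_4:\quad \{\,v_3[I],\ v_3[I+a+1],\ v_2[I+1]\,\}.
\end{align*}

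The spreading then proceeds in three moves. First, apply the rule at $v_3$: since $v_4[*]\subseteq P$, for every $I$ both $v_4[I-a]$ and $v_4[I+1]$ lie in $P$, hence $v_2[I+1]\in\supp\tau(\chi_P)$; as $I$ runs over $\ZN$ this shows $P$ spreads to $v_2[*]\cup v_4[*]\cup\{v_1[0]\}$. Second, apply the rule at $v_1$: with $v_2[*]$ in the support, whenever $v_1[j]$ is in the support so is $v_1[j+2a+1]$ (take $I=j+a$, so that $v_2[I+1]=v_2[j+a+1]\in P$); starting from $v_1[0]$ and iterating, and using $\langle 2a+1\rangle=\ZN$, all of $v_1[*]$ enters the support. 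Third, apply the rule at $v_2$: with $v_1[*]$ and $v_4[*]$ both present, $v_3[I]$ enters the support for every $I$, so the support becomes $\Sigma$. Hence $P$ is a plague of size $N+1$, which proves the claim.

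The part that needs care is the extraction of the four triples with the correct index shifts from the figures (the sign conventions of Remark~\ref{rem:covgraph}), together with two consistency checks: that the extra point $v_1[0]$ really is needed — one verifies directly that $v_2[*]\cup v_4[*]$ is a quarantine, so $v_4[*]$ alone is not a plague — and that no case distinction on $\gcd(a+1,N)$ is required, even though the $\sigma_1$-cycles over $v_4$ and the $\sigma_2$-cycles over $v_3$ can be short when $N$ is even; the argument above only uses complete fibers together with the unit $2a+1$, so it is uniform in $N$.
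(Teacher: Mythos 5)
Your proof is correct and follows essentially the same route as the paper's: a plague consisting of one full fiber plus a single point of $v_1[*]$, spread via the triangle pivots and the fact that $2a+1$ generates $\ZN$ (the paper phrases this as $\langle 2a+1\rangle=\langle -a\rangle=\ZN$ and invokes Example~\ref{exa:9A} for the iteration). The only cosmetic difference is that you take $v_4[*]\cup\{v_1[0]\}$ where the paper takes $v_3[*]\cup v_1[\{0\}]$, and your extracted pivot rules and index shifts match the conventions of Remark~\ref{rem:covgraph}.
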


\begin{proof}
	Let $I=\{0\}\subseteq \ZN $. We claim that $P=v_3[*]\cup
	v_1[I]$ is a plague. With $v_4[*]$ as a pivot, $P$ spreads to
  $v_2[*]$.
	Since the neighbors of $v_1[\lambda]$ are
	$v_1[\lambda-a]$, $v_1[\lambda+a+1]$ and
	$v_2[\lambda+1]$, pivoting from $v_1[I+a]$ implies that
  $P$ spreads to $v_1[I+2a+1]$.
  We describe this situation with the following table:
	\begin{center}
	\begin{tabular}{c|cc}
		pivot & $v_{4}[*]$ & $v_{1}[I+a]$\tabularnewline
		\hline
		 & $v_{2}[*]$ & $v_{1}[I+2a+1]$\tabularnewline
	 \end{tabular}
	 \end{center}
	 From Lemma \ref{lem:Hoq5} we obtain that
         $\langle 2a+1\rangle=\langle-a\rangle=\ZN$ and hence
         Example~\ref{exa:9A} implies
	 that $P$ spreads to $v_1[*]$. 
	 Now with $v_2[*]$ as a pivot we see that $P$ spreads to
         $\Sigma $ and we are done.
	 Thus the claim follows. 
\end{proof}

\begin{lem}
	\label{lem:4A_imm}
	Let $\Sigma$ be a covering of $\Sigma_{4A}$ with simply intersecting cycles.
	Then $\imm(\Sigma)\leq\omega(\Sigma)$.
\end{lem}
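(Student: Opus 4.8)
The plan is to combine the plague bound of Lemma~\ref{lem:Hoq5_imm}, which gives $\imm(\Sigma)\le\frac{N+1}{4N}$, with an explicit evaluation of the weight $\omega(\Sigma)$, and then to check that the first quantity never exceeds the second. For the weight one has to read off the cycle type $c(v)=(i,j)$ at each vertex $v$ of the covering graph. Recall that the $\sigma_1$-cycles of $\Sigma$ project onto the $xy$-cycles of $\Sigma_{4A}$ and the $\sigma_2$-cycles onto the $yx$-cycles. From Figure~\ref{fig:Hoq5} one finds that the $xy$-cycles of $\Sigma_{4A}$ are $\{v_1,v_2,v_3\}$ and $\{v_4\}$, while the $yx$-cycles are $\{v_1,v_2,v_4\}$ and $\{v_3\}$. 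Summing the labels along these cycles and using Lemma~\ref{lem:Hoq5} ($3a\equiv-1$, $a+b\equiv0\pmod N$), the two triangles carry label $a+b\equiv0\pmod N$ and the two fixed points carry label $1-b\equiv1+a\pmod N$. Since $3a\equiv-1\pmod N$ forces $\gcd(N,3)=1$, the element $1+a$ equals $2\cdot 3^{-1}$ in $\ZN$ and so has order $N/\gcd(N,2)$.

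\textbf{The weight.} Consequently (cf.\ Remark~\ref{rem:covgraph}), in $\Sigma$ each of $v_1[k]$ and $v_2[k]$ lies on a $\sigma_1$-cycle and a $\sigma_2$-cycle of length $3$, each $v_3[k]$ lies on a $\sigma_1$-cycle of length $3$ and a $\sigma_2$-cycle of length $N/\gcd(N,2)$, and symmetrically each $v_4[k]$ lies on a $\sigma_1$-cycle of length $N/\gcd(N,2)$ and a $\sigma_2$-cycle of length $3$. Feeding these values of $c(v)$ into the matrix \eqref{eq:weights} and the rule \eqref{eq:omega}, I would compute $\omega(\Sigma)$ case by case. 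For every admissible $N\ge7$ (necessarily coprime to $3$) every vertex gets weight $\omega'_{ij}=7/24$ and the exceptional summand of \eqref{eq:omega} is not triggered, so $\omega(\Sigma)=7/24$. For $N=2$ and $N=4$ the vertices $v_3[k]$, $v_4[k]$ sit on cycles of length $1$, respectively $2$, hence contribute the larger entries $\omega'_{31}=11/24$, respectively $\omega'_{32}=1/3$. For $N=5$ (where necessarily $a=3$, $b=2$, i.e.\ $\Sigma=\Sigma^{5;3,2}_{4A}$) the vertices $v_1[k]$ pick up the extra $\frac1{30}$ and have weight $\frac{13}{40}$. A short arithmetic then gives $\omega(\Sigma)=\frac38$, $\frac5{16}$, $\frac3{10}$ for $N=2,4,5$ respectively, and $\omega(\Sigma)=\frac7{24}$ otherwise.

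\textbf{Comparison.} It remains to compare the two sides. For $N\ge7$ one has $\frac{N+1}{4N}\le\frac14+\frac1{28}=\frac27<\frac7{24}=\omega(\Sigma)$, so the bound is even strict. For $N\in\{2,4,5\}$ the computation above yields $\frac{N+1}{4N}=\omega(\Sigma)$, so the bound holds with equality. In every case $\imm(\Sigma)\le\frac{N+1}{4N}\le\omega(\Sigma)$, which is the claim. The only delicate part of the argument is the bookkeeping of the $\sigma_i$-cycle lengths over the two non-triangular cycles of $\Sigma_{4A}$, together with the tight small cases $N\in\{2,4,5\}$: there the naive value $7/24$ is strictly too small, and the inequality survives precisely because of the short-cycle entries $11/24$ and $1/3$ of \eqref{eq:weights} and the ad hoc correction $+\frac1{30}$ built into \eqref{eq:omega} for exactly this covering.
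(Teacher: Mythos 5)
Your proof is correct and follows essentially the same route as the paper: combine the plague bound $\imm(\Sigma)\le\frac{N+1}{4N}$ of Lemma~\ref{lem:Hoq5_imm} with the cycle structure forced by Lemma~\ref{lem:Hoq5}, compute $\omega(\Sigma)$, and compare, with the exceptional small cases $N\in\{2,4,5\}$ (including the $+\frac1{30}$ correction for $\Sigma^{5;3,2}_{4A}$) giving equality. The only difference is cosmetic: you obtain $|\langle 1-b\rangle|=N/\gcd(N,2)$ uniformly from $3(1+a)\equiv 2\pmod N$, whereas the paper splits into the cases $N=3k+1$ and $N=3k+2$.
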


\begin{proof}
	By Lemma \ref{lem:Hoq5}, the cycle structure on each vertex of $\Sigma$ is
	the following: 
	\begin{align}
		\label{eq:Hoq5_v1v2}
		v_1[i]\text{ and }v_2[i]&:\text{ two $3$-cycles,}\\
		\label{eq:Hoq5_v3v4}
		v_3[i]\text{ and }v_4[i]&:\text{ one $3$-cycle and a cycle of length $|\langle1-b\rangle|$,}
	\end{align}
	for all $i\in\ZN$. Assume first that $N=3k+1$. Then $a=k$, $b=2k+1$ and $k\geq1$.  
	Since $3(b-1)\equiv -2\pmod{N}$ and $b-1$ is even, we get
	\begin{equation*}
	|\langle1-b\rangle|=|\langle 2\rangle |=
	\begin{cases}
		N & \text{ if $k$ is even, }\\
		\frac{N}2 & \text{ if $k$ is odd.}
		\end{cases}
	\end{equation*}
	By \eqref{eq:Hoq5_v3v4}, $\omega(v_3[i])=\omega(v_4[i])=1/3$	if $k=1$, 
	and $\omega(v_3[i])=\omega(v_4[i])=7/24$ if $k>1$, for all $i\in\ZN$. 
	Therefore 
	\begin{equation*}
		\wg(\Sigma)=
		\begin{cases}
			 5/16 & \text{ if $k=1$, }\\
			 7/24 & \text{ if $k>1$.}
		\end{cases}
	\end{equation*}
	If $k=1$ then $N=4$ and $\imm(\Sigma)\leq\frac{N+1}{4N}=5/16=\wg(\Sigma)$.	
	Otherwise, if $k>1$, then $N>6$ and hence  
	\[
	\imm(\Sigma)\leq\frac{N+1}{4N}<\frac7{24}=\wg(\Sigma).
	\]

	Now assume that $N=3k+2$. Then $a=2k+1$, $b=k+1$ and $k\geq0$. Further,
  $3(b-1)\equiv -2\pmod{N}$ and hence
	\begin{equation*}
	|\langle1-b\rangle|=
	\begin{cases}
		\frac{N}2 & \text{ if $k$ is even, }\\
		N & \text{ if $k$ is odd.}
		\end{cases}
	\end{equation*}
	By \eqref{eq:Hoq5_v3v4}, 
	\begin{equation*}
		\omega(v_3[i])=
	\begin{cases}
		11/24 & \text{ if $k=0$, }\\
		7/24 & \text{ if $k>0$,}
		\end{cases}
	\end{equation*}
	for all $i\in\ZN$, and therefore 
	\begin{equation*}
		\wg(\Sigma)=
		\begin{cases}
			3/8& \text{ if $k=0$, }\\
			7/24 & \text{ if $k>0$.}
		\end{cases}
	\end{equation*}
	If $k=0$ then $N=2$ and $\imm(\Sigma)\leq\frac{N+1}{4N}=3/8=\wg(\Sigma)$.
        If $k=1$ then $N=5$,
        $$\imm(\Sigma)\le \frac{N+1}{4N}=3/10=(4\cdot 7/24+1/30)/4=\wg(\Sigma). $$
	Finally, if $k>1$, then $\imm(\Sigma)\leq\frac{N+1}{4N}\leq7/24=\wg(\Sigma)$
	for $N\geq6$.  
\end{proof}

\begin{exa}
	Let $x_1=(1\,2\,3\,4\,5)$, $x_2=(1\,2\,4\,5\,3)$ and $x_3=(1\,4\,3\,5\,2)$ be
	$5$-cycles in $\Alt_5$.  Let $X$ be the rack associated to the conjugacy
	class of $x_1$ in $\Alt_5$.  The covering $\Sigma_{4A}^{5;3,2}$ can be
	realized as the Hurwitz orbit of $(x_1,x_2,x_3)\in X^3$.
\end{exa}

\begin{lem}
	\label{lem:exception_4A}
	Let $X$ be an injective indecomposable rack. Assume that $X^3$ contains a
	Hurwitz orbit isomorphic to the covering $\Sigma_{4A}^{5;3,2}$.  If
	$(x,y,z)\in v_1[*]$ then $(x\triangleright y)\triangleright y=z$.
\end{lem}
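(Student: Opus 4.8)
The plan is to read the dynamics off the Schreier graph of $\Sigma_{4A}$ in Figure~\ref{fig:Hoq5}, combine it with the cycle lengths already computed in the proof of Lemma~\ref{lem:4A_imm}, and then simply track the first and third components of a triple under the Hurwitz action. Write $\mathcal{O}$ for the given Hurwitz orbit, identified with the covering $\Sigma_{4A}^{5;3,2}$, so that $N=|v_i[*]|=5$ for each vertex $v_i$ of $\overline{\mathcal{O}}$. Recall that $\sigma_1$ acts on $\overline{\mathcal{O}}$ as $xy$ and $\sigma_2$ as $yx$, where $x=\sigma_2^{-1}\sigma_1^{-1}$, $y=\sigma_1\sigma_2\sigma_1$ are the standard generators of $\mathbf{PSL}(2,\Z)$. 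Reading Figure~\ref{fig:Hoq5}, on $\overline{\mathcal{O}}=\{v_1,v_2,v_3,v_4\}$ the permutation $y$ interchanges $v_1\leftrightarrow v_2$ and $v_3\leftrightarrow v_4$ and $x$ fixes $v_1$ and cycles $v_2\to v_3\to v_4\to v_2$; hence $\sigma_1$ acts as $(v_1\,v_3\,v_2)(v_4)$, $\sigma_2$ acts as $(v_1\,v_2\,v_4)(v_3)$, and $\sigma_1\sigma_2\sigma_1\,(=y)$ maps $v_3[*]$ onto $v_4[*]$.

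Next I would record the Hurwitz-action formulas needed: $\sigma_2^2(x,y,z)=(x,(y\triangleright z)\triangleright y,\,y\triangleright z)$, and $\sigma_2^3(x,y,z)=\bigl(x,\,((y\triangleright z)\triangleright y)\triangleright(y\triangleright z),\,(y\triangleright z)\triangleright y\bigr)$, and, using self-distributivity once, $\sigma_1\sigma_2\sigma_1(a,b,c)=\bigl(a\triangleright(b\triangleright c),\,a\triangleright b,\,a\bigr)$. The one structural point is the behaviour of the fiber $v_4[*]$: since $v_4$ is fixed by $\sigma_1$ on $\overline{\mathcal{O}}$, the action of $\sigma_1$ preserves $v_4[*]$, and by the cycle-structure analysis in the proof of Lemma~\ref{lem:4A_imm} (case $N=5$) every element of $v_4[*]$ lies on a $\sigma_1$-cycle of length $5=|v_4[*]|$ (this is the cycle of length $|\langle 1-b\rangle|$; the accompanying $3$-cycle is the $\sigma_2$-cycle, which only returns to $v_4[*]$ after three steps). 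Hence $v_4[*]$ is a single $\sigma_1$-orbit, and since the Hurwitz action of $\sigma_1$ never alters the third component, all elements of $v_4[*]$ have one and the same third component.

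Now take $(x,y,z)\in v_1[*]$. Since $\sigma_1$ sends $v_1$ to $v_3$, we get $\sigma_1\cdot(x,y,z)=(x\triangleright y,x,z)\in v_3[*]$. Applying $\sigma_1\sigma_2\sigma_1$, which carries $v_3[*]$ into $v_4[*]$, the formula above shows that $\sigma_1\sigma_2\sigma_1\cdot(x\triangleright y,x,z)\in v_4[*]$ has third component $x\triangleright y$. On the other hand $\sigma_2$ sends $v_1\mapsto v_2\mapsto v_4$, so $\sigma_2^2\cdot(x,y,z)\in v_4[*]$, and this triple has third component $y\triangleright z$. By the previous paragraph these two third components agree, i.e.\ $x\triangleright y=y\triangleright z$. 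Finally $v_1$ lies on a $\sigma_2$-cycle of length $3$ (again by Lemma~\ref{lem:4A_imm}), so $\sigma_2^3\cdot(x,y,z)=(x,y,z)$; comparing third components with the formula for $\sigma_2^3$ yields $(y\triangleright z)\triangleright y=z$. Therefore $(x\triangleright y)\triangleright y=(y\triangleright z)\triangleright y=z$, which is the claim.

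The main obstacle is purely bookkeeping: one has to translate Figure~\ref{fig:Hoq5} correctly into the permutations induced by $\sigma_1$, $\sigma_2$ and $\sigma_1\sigma_2\sigma_1$ on the four points of $\overline{\mathcal{O}}$, and one has to know which of the two cycle lengths attached to $v_4[i]$ in Lemma~\ref{lem:4A_imm} is the $\sigma_1$-cycle — namely the one of length $5$, because $v_4$ is $\sigma_1$-fixed on $\overline{\mathcal{O}}$ whereas the $\sigma_2$-cycle through $v_4[*]$ meets that fiber only every third step. Once these identifications are in place, everything reduces to the short computations with $\triangleright$ indicated above, and injectivity of $X$ enters only through the hypothesis that $\mathcal{O}$ really is a covering with simply intersecting cycles.
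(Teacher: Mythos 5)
Your proof is correct, and it takes a noticeably different (and more economical) route than the paper's. The paper proves the lemma by brute force: it writes out the explicit fiber formulas $\sigma_1 v_i[j]=\dots$, $\sigma_2 v_i[j]=\dots$ for $\Sigma_{4A}^{5;3,2}$, names the rack elements $4=1\trid 2$, $5=1\trid 3$, $6=4\trid 5$, tabulates all $20$ triples of the orbit starting from $v_1[0]=(1,2,3)$, reads off $(1\trid 2)\trid 2=3$ from one entry of the table, and finally invokes transitivity of $\Delta$ on $v_1[*]$ to get the statement for all of $v_1[*]$. You instead extract only the structural facts you need: $\sigma_1$ acts as $xy$ and $\sigma_2$ as $yx$ on the quotient, the fiber $v_4[*]$ is a single $\sigma_1$-cycle (your identification of which of the two cycle lengths $3$ and $5$ from \eqref{eq:Hoq5_v3v4} belongs to $\sigma_1$ is sound, since the $\sigma_2$-cycle through $v_4[i]$ projects onto the $yx$-cycle $(v_1\,v_2\,v_4)$ and so has length a multiple of $3$), and $\sigma_1$ never changes the third component. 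Comparing the two routes $\sigma_1\sigma_2\sigma_1\cdot\sigma_1$ and $\sigma_2^2$ from $v_1[*]$ into $v_4[*]$ gives $x\trid y=y\trid z$, and the length-$3$ $\sigma_2$-cycle at $v_1[*]$ (label $a+b\equiv 0\pmod 5$) gives $(y\trid z)\trid y=z$, whence the claim — uniformly in $(x,y,z)\in v_1[*]$, so you do not even need the final $\Delta$-transitivity step. What your approach buys is brevity and transparency (no table, no auxiliary element names, only two cycle lengths and one invariance observation); what the paper's table buys is a completely explicit description of the orbit, which some readers may find easier to verify mechanically. Your closing remark is also accurate: injectivity/indecomposability are not used beyond guaranteeing that such an orbit is a covering of the stated type, and your citations of the cycle-structure facts from Lemma \ref{lem:4A_imm} are legitimate because the specific labels $(N,a,b)=(5,3,2)$ satisfy the constraints of Lemma \ref{lem:Hoq5}.
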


\begin{proof}
	By inspection,
	\begin{align}
		\sigma _1 v_{1}[i]&=v_{3}[i],\quad & \sigma _2 v_{1}[i]&=v_{2}[i+4],\label{eq:v1_4A}\\
		\sigma _1 v_{2}[i]&=v_{1}[i+3],\quad & \sigma _2 v_{2}[i]&=v_{4}[i+1],\label{eq:v2_4A}\\
		\sigma _1 v_{3}[i]&=v_{2}[i+2],\quad & \sigma _2 v_{3}[i]&=v_{3}[i+4],\label{eq:v3_4A}\\
		\sigma _1 v_{4}[i]&=v_{4}[i+4],\quad & \sigma _2 v_{4}[i]&=v_{1}[i].\label{eq:v4_4A}
	\end{align}
	Let $1,2,3\in X$ with $v_{1}[0]=(1,2,3)\in \Sigma _{4A}^{5;3,2}\subseteq X^{3}$.
	Set $1\triangleright2=4$, $1\triangleright3=5$ and
	$1\triangleright4=6$. (The elements $1,2,3,4,5,6\in X$ are not necessarily pairwise distinct.)
        We claim that $(1\triangleright2)\triangleright1=2$.
	Indeed, using \eqref{eq:v1_4A} we obtain
	$v_{3}[0]=\sigma _1 v_{1}[0]=(1\triangleright2,1,3)$.  By \eqref{eq:v3_4A},
	$v_{2}[2]=\sigma _1v_{3}[0]=((1\triangleright2)\triangleright1,1\triangleright2,3)$.  Since
	$v_{1}[0]=\sigma _1v_{2}[2]$, we obtain $(1\triangleright2)\triangleright1=2$.
	Similarly, using formulas \eqref{eq:v1_4A}--\eqref{eq:v4_4A}, straightforward
	computations show that: 
  \begin{center}
	\begin{tabular}{c|c|c|c|c}
		$i$ & $v_1[i]$ & $v_{2}[i]$ & $v_{3}[i]$ & $v_{4}[i]$\tabularnewline
		\hline
		$0$ & $(1,2,3)$ & $(3,4,5)$ & $(4,1,3)$ & $(1,3,4)$\tabularnewline
		$1$ & $(3,5,6)$ & $(6,4,1)$ & $(4,3,6)$ & $(3,6,4)$\tabularnewline
		$2$ & $(6,1,2)$ & $(2,4,3)$ & $(4,6,2)$ & $(6,2,4)$\tabularnewline
		$3$ & $(2,3,5)$ & $(5,4,6)$ & $(4,2,5)$ & $(2,5,4)$\tabularnewline
		$4$ & $(5,6,1)$ & $(1,4,2)$ & $(4,5,1)$ & $(5,1,4)$\tabularnewline
	\end{tabular}
	\end{center}
        Since $\sigma _2\cdot (1,4,2)=\sigma _2 v_2[4]=v_4[0]=(1,3,4)$, we get
        $(1\trid 2)\trid 2=4\trid 2=3$. Since $\Delta $ acts transitively on $v_1[*]$,
        the lemma follows.
\end{proof}

\subsection{The graph $\Sigma_{4B}$}
\label{4B}

\begin{lem}
	\label{lem:Hoq6}
	The graph $\Sigma_{4B}$ in Figure \ref{fig:Hoq6} has no coverings with simply
	intersecting cycles.
\end{lem}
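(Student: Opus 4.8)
The plan is to reduce the statement to Corollary~\ref{cor:two_y_loops}, exactly as was done for $\Sigma_{3B}$ in Lemma~\ref{lem:Hoq4}. First I would read off the structure of the Schreier graph $\Sigma_{4B}$ from Figure~\ref{fig:Hoq6} (equivalently, from the branch of the proof of Proposition~\ref{pro:homogeneous_spaces} that produces this graph): it has four vertices $v_1,v_2,v_3,v_4$, an $x$-loop at $v_1$, an oriented $x$-triangle through $v_2,v_3,v_4$, a $y$-edge joining $v_1$ and $v_2$, and $y$-loops at both $v_3$ and $v_4$.

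Next I would verify that $v_3$ and $v_4$ lie on a common $xy$-cycle. Since $yv_3=v_3$ and $xv_3=v_4$, we get $xy\cdot v_3=v_4$, so indeed $v_3,v_4\in C_{xy}(v_3)$. Hence the hypotheses of Corollary~\ref{cor:two_y_loops} are satisfied: $v_3\neq v_4$, $yv_3=v_3$, $yv_4=v_4$, and $v_3,v_4$ lie on the same $xy$-cycle. The corollary then immediately yields that $\Sigma_{4B}$ has no coverings with simply intersecting cycles.

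I do not expect any real obstacle here; the only point requiring care is correctly identifying the two $y$-loops and confirming they sit on the same $xy$-cycle, which is read off directly from the figure. An alternative, more hands-on route (should one prefer not to invoke the corollary) would be to introduce labels $a$ for the $x$-loop at $v_1$ and $b,c$ for the $y$-loops at $v_3,v_4$, use Lemma~\ref{lem:loops} to obtain $2b\equiv1$ and $2c\equiv1\pmod{N}$ (forcing $N$ odd and $b\equiv c\pmod{N}$), apply Lemma~\ref{lem:xy_and_yx_cycles_with_loops} at $v_3$ and $v_4$ to force the $xy$- and $yx$-paths between them to carry equal labels, and then derive a contradiction with Lemma~\ref{lem:from_v_to_w}. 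But this merely re-derives Corollary~\ref{cor:two_y_loops}, so the natural proof is the one-line ``This follows from Corollary~\ref{cor:two_y_loops}.''
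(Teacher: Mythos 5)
Your proof is correct and is exactly the paper's argument: the paper's proof of this lemma is the one-liner ``This follows from Corollary~\ref{cor:two_y_loops}'', and your verification that $v_3\neq v_4$ carry $y$-loops and satisfy $xy\cdot v_3=v_4$, hence lie on a common $xy$-cycle, is precisely the check needed to apply that corollary to $\Sigma_{4B}$. The alternative label-chasing route you sketch is, as you note, just a re-derivation of the corollary and matches how the corollary itself is proved via Lemmas~\ref{lem:loops}, \ref{lem:xy_and_yx_cycles_with_loops} and \ref{lem:from_v_to_w}.
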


\begin{proof}
  This follows from Corollary~\ref{cor:two_y_loops}.
\end{proof}

\begin{figure}[h]
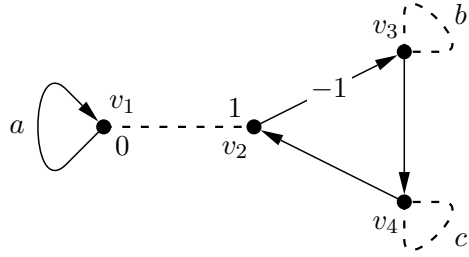

  \begin{graph}(6,4)
    \roundnode{n1}(1,2)
    \roundnode{n2}(3,2)
    \roundnode{n3}(5,3)
    \roundnode{n4}(5,1)
    \dirloopedge{n1}(-.5,-.5)(-.5,.5)
    \edge{n1}{n2}[\yedge ]
    \diredge{n2}{n3}
    \diredge{n3}{n4}
    \diredge{n4}{n2}
		\edgetext{n2}{n3}{$-1$}
    \loopedge{n3}(0,.5)(.5,0)[\yedge]
    \loopedge{n4}(0,-.5)(.5,0)[\yedge]
		\freetext(-.15,2){$a$}
		\freetext(5.75,3.5){$b$}
		\freetext(5.75,.5){$c$}

		\nodetext{n1}(.25,.30){$v_1$}
		\nodetext{n2}(-.25,-.30){$v_2$}
		\nodetext{n3}(-.25,.30){$v_3$}
		\nodetext{n4}(-.25,-.30){$v_4$}
		\nodetext{n1}(.25,-.25){$0$}
		\nodetext{n2}(-.25,.25){$1$}
  \end{graph}
	\caption{Schreier graph $\Sigma _{4B}$ and its coverings}
  \label{fig:Hoq6}
\end{figure}

\subsection{The graph $\Sigma_{6A}$}
\label{6A}

\begin{lem}
	\label{lem:Hoq7}
	Any covering of $\Sigma_{6A}$ in Figure \ref{fig:Hoq7} with simply intersecting
	cycles satisfies $N\ne1$, $a+b\equiv0\pmod{N}$ and
	$2b\not\equiv1\pmod{N}$.
\end{lem}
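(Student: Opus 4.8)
The plan is to follow the template of Lemmas~\ref{lem:Hoq3} and \ref{lem:Hoq5}: read off Figure~\ref{fig:Hoq7} the cycle structure of $\Sigma_{6A}$ together with the two labels $a,b$, and then feed a few well-chosen features into the structural lemmas of Section~\ref{section:hurwitz}. The relevant features of $\Sigma_{6A}$ are that it is built from two $x$-triangles joined by $y$-edges (so it has no $x$- or $y$-loop), that it has exactly one $xy$-cycle $C$ and exactly one $yx$-cycle $C'$ of length four, and that $C\cap C'$ consists of precisely two vertices, the endpoints $v,w$ of the ``bridge'' $y$-edge between the two triangles. In particular, unlike in Lemmas~\ref{lem:Hoq3}--\ref{lem:Hoq5}, there is no loop on which to run Lemma~\ref{lem:loops} or Lemma~\ref{lem:xy_and_yx_cycles_with_loops}; the whole argument is carried by the pair $C,C'$ and the two vertices $v,w$.

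The three assertions then come out as follows. (i) $N\ne 1$: apply Lemma~\ref{lem:trivial_covering} to $v$ and $w\in C_{xy}(v)\cap C_{yx}(v)$ with $w\ne v$; it shows that no covering of $\Sigma_{6A}$ with simply intersecting cycles is trivial. (ii) $a+b\equiv 0\pmod N$: apply Lemma~\ref{lem:xy_and_yx_cycles} at $v$; the $xy$-cycle and the $yx$-cycle through $v$ are $C$ and $C'$, whose labels one reads off the figure to both equal $a+b$ (the two forced $-1$'s on the triangle edges and the forced $+1$ on the bridge $y$-edge combining to leave exactly $a+b$), so the lemma yields $\langle a+b\rangle\cap\langle a+b\rangle=0$, i.e.\ $a+b\equiv 0$. (iii) $2b\not\equiv 1\pmod N$: apply Lemma~\ref{lem:from_v_to_w} to $v\ne w$, which lie on $C$ and on $C'$; it says the label of the $xy$-path from $v$ to $w$ is incongruent mod $N$ to the label of the $yx$-path, and reading these off the figure as, say, $b-1$ and $a$, this reads $b-1\not\equiv a$, which together with $a\equiv -b$ from (ii) becomes $2b\not\equiv 1$.

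The only real work is the label bookkeeping in (ii) and (iii): using the conventions of Remark~\ref{rem:covgraph} (label placed ``close to its destination'', sum $-1$ around each $x$-triangle, sum $1$ on each $y$-edge, and normalization along a chosen spanning tree) one must verify that $C$ and $C'$ both carry label $a+b$ and that the two $v$-to-$w$ paths carry the labels $a$ and $b-1$. This is routine but sign-sensitive, so the placement and orientation of each individual label is the single point that requires care — which is precisely why the proofs of the neighbouring lemmas are so terse. Nothing beyond Section~\ref{section:hurwitz} is needed; the hypothesis ``simply intersecting cycles'' enters only through Lemmas~\ref{lem:trivial_covering}, \ref{lem:xy_and_yx_cycles}, and \ref{lem:from_v_to_w}.
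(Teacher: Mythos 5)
Your proposal is correct and takes essentially the same route as the paper: the paper likewise reads off that the unique length-four $xy$- and $yx$-cycles both carry label $a+b$ and intersect exactly in the bridge vertices $v_3,v_4$, then applies Lemma~\ref{lem:xy_and_yx_cycles} there to get $a+b\equiv0\pmod{N}$, Lemma~\ref{lem:trivial_covering} to get $N>1$, and Lemma~\ref{lem:from_v_to_w} to get $a+1\not\equiv b\pmod{N}$, i.e.\ $2b\not\equiv1\pmod{N}$. Only your bookkeeping asides are slightly imprecise (each four-cycle traverses just one of the two $-1$ labels, and your $xy$- and $yx$-path labels $a$ and $b-1$ are swapped relative to the figure), but since the conclusion of Lemma~\ref{lem:from_v_to_w} is symmetric this affects nothing.
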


\begin{proof}
	The $xy$-cycles and their labels are: $(v_1\;v_3\;v_5\;v_4)$ with $a+b$,
	$(v_2)$ with $-a$ and $(v_6)$ with $-b+1$. The $yx$-cycles are:
	$(v_2\;v_4\;v_6\;v_3)$ with $a+b$, $(v_5)$ with $-b+1$ and $(v_1)$ with $-a$.
	Lemma \ref{lem:xy_and_yx_cycles} on $v_3$ implies $a+b\equiv 0\pmod{N}$.  Since
	$v_3$ and $v_4$ are on the same cycle, $N>1$ by Lemma
	\ref{lem:trivial_covering} and $a+1\not\equiv b\pmod{N}$ by Lemma
	\ref{lem:from_v_to_w}.
\end{proof}

\begin{figure}[h]
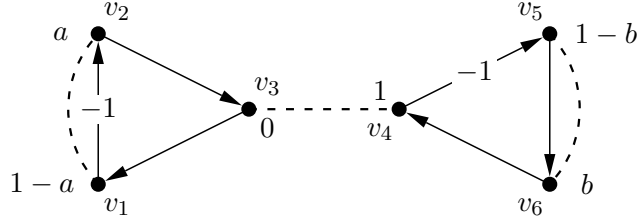

  \begin{graph}(8,4)
    \roundnode{n1}(1,1)
    \roundnode{n2}(1,3)
    \roundnode{n3}(3,2)
    \roundnode{n4}(5,2)
    \roundnode{n5}(7,3)
    \roundnode{n6}(7,1)
    \diredge{n1}{n2}
    \diredge{n2}{n3}
    \diredge{n3}{n1}
    \diredge{n4}{n5}
    \diredge{n5}{n6}
    \diredge{n6}{n4}
    \bow{n1}{n2}{.2}[\yedge]
    \bow{n5}{n6}{.2}[\yedge]
    \edge{n3}{n4}[\yedge]
		\freetext(0.5,3){$a$}
		\freetext(0.25,1){$1-a$}
		\freetext(7.75,3){$1-b$}
		\freetext(7.5,1){$b$}
		\edgetext{n1}{n2}{$-1$}
		\edgetext{n4}{n5}{$-1$}
		\nodetext{n1}(.25,-.30){$v_1$}
		\nodetext{n2}(.25,.30){$v_2$}
		\nodetext{n3}(.25,.30){$v_3$}
		\nodetext{n4}(-.25,-.30){$v_4$}
		\nodetext{n5}(-.25,.30){$v_5$}
		\nodetext{n6}(-.25,-.30){$v_6$}
	  \nodetext{n3}(.25,-.25){$0$}
		\nodetext{n4}(-.25,.25){$1$}
  \end{graph}
	\caption{Schreier graph $\Sigma _{6A}$ and its coverings}
  \label{fig:Hoq7}
\end{figure}

\begin{lem}
	\label{lem:Hoq7_imm}
	Let $\Sigma$ be a covering of $\Sigma_{6A}$ with simply intersecting cycles,
  and let 
	\[
		m=\min\left\{\frac{|\langle1+a\rangle|-1}{6N},\frac1{6|\langle-a\rangle|}\right\}.
	\]
	Then
	\[
	\imm(\Sigma)\leq\begin{cases}
		1/3& \text{if $a\in\{0,-1\}$, }\\
		\frac1{6|\langle a+1\rangle|}+\frac16+m & \text{otherwise. }
	\end{cases}
	\]
\end{lem}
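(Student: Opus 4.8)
The plan is to exhibit in each case an explicit subset $P\subseteq\Sigma$, bound $|P|$, and prove that $P$ is a plague by iterating the pivoting rule of Figure~\ref{fig:percolation}; whenever the remaining spreading takes place inside a single fiber $v_i[*]$, I reduce to the one-dimensional cellular automata of Examples~\ref{exa:9A}--\ref{exa:game12C}. By Lemma~\ref{lem:Hoq7} we have $b\equiv-a\pmod N$, so the covering is determined by $a$, and the proof of that lemma records the cycle structure: writing $\ell=|\langle a\rangle|$ and $\ell'=|\langle 1+a\rangle|$ as subgroups of $\ZN$, the vertices $v_3[i]$ and $v_4[i]$ lie on a $\sigma_1$- and a $\sigma_2$-cycle of length $4$; the vertices $v_1[i]$ and $v_2[i]$ lie on cycles of lengths $(4,\ell)$ and $(\ell,4)$; and $v_5[i]$ and $v_6[i]$ lie on cycles of lengths $(4,\ell')$ and $(\ell',4)$, for all $i\in\ZN$. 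Every fiber has $N$ points, so $|\Sigma|=6N$ and it suffices to produce plagues of the sizes corresponding to the claimed bounds.

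First suppose $a\in\{0,-1\}$. Then exactly one of $\ell,\ell'$ equals $1$, so one pair of vertices consists of fixed points ($v_1[i],v_2[i]$ when $a=0$, and $v_5[i],v_6[i]$ when $a=-1$). I would take $P$ to be the union of $v_4[*]$ with one further full fiber, chosen so that every $\sigma_1$- and $\sigma_2$-cycle of length $N$ lies in $P$; concretely $P=v_4[*]\cup v_5[*]$ when $a=0$ and $P=v_4[*]\cup v_6[*]$ when $a=-1$. Using a length-four cycle through $v_4$ (and subsequently through $v_3$) as a pivot, together with the $y$-edge $v_3\leftrightarrow v_4$, one checks that $P$ spreads first to the two fibers adjacent to the chosen ones and then, from those, to all of $\Sigma$. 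Hence $\imm(\Sigma)\le 2N/(6N)=1/3$. Formally this $P$ is the same set that appears in the generic construction below, with the degenerate transversals replaced by full fibers; it is treated separately only because the spreading argument below uses nondegenerate steps.

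Now assume $a\notin\{0,-1\}$. The plague I would use is
\[
 P=v_4[*]\cup v_6[J]\cup Q,
\]
where $J\subseteq\ZN$ is a complete set of representatives of $\ZN/\langle 1+a\rangle$, so that $|J|=N/\ell'$ and $v_6[J]$ meets each $\sigma_1$-cycle through $v_6$ in exactly one point, and $Q$ is an auxiliary seed of size $\min\{\ell'-1,\,N/\ell\}$. For $Q$ there are two constructions, according to which of $\ell'-1$ and $N/\ell$ is smaller: either a complete set of representatives of $\ZN/\langle a\rangle$ inside $v_2[*]$ (of size $N/\ell$), or the $\ell'-1$ additional points needed to complete one of the $\sigma_1$-cycles through $v_6$ to its full length $\ell'$. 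In both cases $|P|=N+N/\ell'+\min\{\ell'-1,N/\ell\}$, so $\imm(\Sigma)\le\frac1{6}+\frac1{6\ell'}+m$ once $P$ is shown to be a plague. To prove the latter I would argue as follows. Pivoting along the length-four $\sigma_1$- and $\sigma_2$-cycles through $v_4$ propagates $v_4[*]$ to $v_3[*]$; then, for each $\sigma_1$-cycle through $v_6$, the presence of $v_3[*]$ (and $v_4[*]$) reduces the induced dynamics on that cycle to the rule of Example~\ref{exa:9A} with step $1+a$, so $v_6[J]$ spreads to $v_6[*]$, and the filled fibers $v_3[*],v_6[*]$ then fill $v_5[*]$ along its length-$\ell'$ $\sigma_2$-cycles. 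It remains to fill $v_1[*]$ and $v_2[*]$: if $Q$ is the transversal inside $v_2[*]$ this is again Example~\ref{exa:9A}, now with a step governed by $a$, followed by a short propagation $v_2[*]\to v_1[*]$; if $Q$ completes a $\sigma_1$-cycle through $v_6$, the two filled fibers $v_4[*],v_6[*]$ supply two of the three neighbours needed to pivot around each remaining cycle of the $\{v_1,v_3,v_5,v_4\}$- and $\{v_2,v_4,v_6,v_3\}$-families, and a bootstrap comparable to Examples~\ref{exa:6D} and \ref{exa:game12C} closes everything.

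The main obstacle is precisely this last stage: checking that in \emph{both} regimes for $m$ the pivoting rule really does close up the long peripheral cycles, and tracking the fiber shifts (which depend on $a$ through Lemma~\ref{lem:Hoq7}) carefully enough that the two candidate seeds $Q$ have exactly the claimed sizes and do not accidentally overlap $v_4[*]\cup v_6[J]$. Everything else is routine case analysis once the cycle structure and the three local configurations at the pivots of $\Sigma_{6A}$ are written down.
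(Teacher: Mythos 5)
Your overall strategy (one full fiber plus transversal seeds, then reduce the residual spreading inside a fiber to Examples~\ref{exa:9A} and \ref{exa:6D}) is the same as the paper's, and your size bookkeeping matches the claimed bound; but the concrete seed sets you choose are not plagues, so the proof fails. Write down the pivot triples of $\Sigma_{6A}$ with $b\equiv-a\pmod N$: the pivot $v_3[I]$ has neighbours $\{v_2[I],v_4[I+1],v_1[I+1]\}$, the pivot $v_4[I]$ has $\{v_6[I],v_3[I],v_5[I]\}$, the pivot $v_5[I]$ has $\{v_4[I+1],v_6[I-a],v_6[I+1]\}$, the pivot $v_6[I]$ has $\{v_5[I],v_5[I+1+a],v_4[I+1]\}$, the pivot $v_1[I]$ has $\{v_3[I],v_2[I+a],v_2[I]\}$, and the pivot $v_2[I]$ has $\{v_1[I+1],v_1[I+1-a],v_3[I+1]\}$. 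In the case $a\in\{0,-1\}$ your set $v_4[*]\cup v_5[*]$ (resp.\ $v_4[*]\cup v_6[*]$) meets every triple either in all of its entries or in at most one entry, so it is a quarantine and spreads to nothing; the degenerate triples created by the $\sigma_1$- or $\sigma_2$-fixed fibers are exactly what one must exploit, which is why the paper's choice $v_1[*]\cup v_5[*]$ works (with $v_1[*]$ full, the pivot-$v_2$ triple has two entries in $v_1[*]$, so $v_3[*]$ fills, and everything follows).

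The same defect kills the generic case. Every triple contains at most one point over $v_4$ (and at most one over $v_3$), so your assertion that ``pivoting along the length-four cycles through $v_4$ propagates $v_4[*]$ to $v_3[*]$'' is false: $v_4[*]$ alone spreads to nothing. Your set $v_4[*]\cup v_6[J]$ does fill $v_6[*]$ (via the pivot $v_5$, as in Example~\ref{exa:9A}), but $v_4[*]\cup v_6[*]$ is itself a quarantine, because the only triple with two entries over $v_6$ is the pivot-$v_5$ triple, whose third entry already lies in $v_4[*]$. Hence your second variant $P=v_4[*]\cup v_6[J\cup K]$ can never leave $v_4[*]\cup v_6[*]$, and your first variant stalls at $v_4[*]\cup v_6[*]\cup v_2[I_0]\cup v_1[I_0+1]$, never reaching $v_3[*]$ or $v_5[*]$. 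The anchor fiber has to sit over a vertex whose pivot triple at a neighbouring vertex has two entries in that fiber, i.e.\ over $v_1$, $v_2$, $v_5$ or $v_6$; this is why the paper takes $v_1[*]\cup v_2[I]\cup v_5[J]$ (with $I$, $J$ transversals of $\ZN/\langle -a\rangle$, $\ZN/\langle a+1\rangle$) for one bound and $v_1[*]\cup v_5[J\cup\langle 1+a\rangle]$ for the other, and then chains $v_1[*]\Rightarrow v_3[*]\Rightarrow v_2[*]\Rightarrow v_4[*]\Rightarrow v_5[*]\Rightarrow v_6[*]$. Your argument needs to be redone with such an anchor; as written, both constructions are contained in quarantines.
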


\begin{proof}
        Let $\Sigma =\Sigma _{6A}^{N;a,b}$ as in Figure~\ref{fig:Hoq7}.
        There exists a unique isomorphism from $\Sigma $ to $\Sigma _{6A}^{N;b-1,1+a}$ with
	\begin{align*}
		 &v_1\mapsto v_5,&&v_3\mapsto v_4[1],&&v_5\mapsto v_1[1],\\ 
		 &v_2\mapsto v_6[1],&&v_4\mapsto v_3,&&v_6\mapsto v_2.
	\end{align*}
	The labels $-a$ and $1-b$ are permuted under this isomorphism.
        Hence without loss of generality
	we may assume $|\langle-a\rangle|\leq|\langle1-b\rangle|$.  
	By Lemma \ref{lem:Hoq7} we have $b\equiv -a\pmod{N}$ and the cycle structure of $\Sigma$ is the following:
	\begin{align}
		\label{eq:Hoq7_v1v2}
		v_1[i]\text{ and }v_2[i]&:\text{ one $4$-cycle and a cycle of length $|\langle-a\rangle|$,}\\
		\label{eq:Hoq7_v3v4}
		v_3[i]\text{ and }v_4[i]&:\text{ two $4$-cycles,}\\
		\label{eq:Hoq7_v5v6}
		v_5[i]\text{ and }v_6[i]&:\text{ one $4$-cycle and a cycle of length $|\langle1+a\rangle|$}
	\end{align}
	for all $i\in\ZN$. 
	First assume that $a\in\{0,-1\}$.  A straightforward computation shows that
	$v_1[*]\cup v_5[*]$ is a plague of size $2N$. In this case,
	$\imm(\Sigma)\leq1/3$.

	 Now assume that $a\notin\{0,-1\}$.	Let $I$ (resp. $J$) be a set of
	 representatives for $\ZN/\langle -a\rangle$ (resp. $\ZN/\langle
	 a+1\rangle$). We claim that $P=v_1[*]\cup v_2[I]\cup v_5[J]$ is
	 a plague. First we compute
	 \begin{center}
	 \begin{tabular}{c|cc}
		 pivot & $v_{2}[*]$ & $v_{1}[I]$\tabularnewline
		 \hline
		  & $v_{3}[*]$ & $v_{2}[I+a]$\tabularnewline
		\end{tabular}
	\end{center}
   and hence $P$ spreads to
   $v_2[*]$ by Example \ref{exa:9A}.
   Now we compute
	 \begin{center}
	 \begin{tabular}{c|cc}
		 pivot & $v_{3}[*]$ & $v_{6}[J]$\tabularnewline
		 \hline
		  & $v_{4}[*]$ & $v_{5}[J+1+a]$\tabularnewline
		\end{tabular}
	\end{center}
	and hence $P$ spreads to
  $v_1[*]\cup v_2[*]\cup v_3[*]\cup v_4[*]\cup v_5[*]$ by Example \ref{exa:9A}.
  Then we conclude that $P$ is a plague.
	 
	 As an alternative, let $J$ be a set of representatives for
	 $\ZN/\langle1+a\rangle$, and let $K=\langle 1+a\rangle $ and $I=J\cup K$.
	 We prove that $P=v_1[*]\cup v_5[I]$ is a plague.
   With $v_2[*]$ as a pivot we see that $P$ spreads to
       	 $v_3[*]$, with $v_4[I]$ it spreads to $v_6[I]$,
         and with $v_5[(I-1)\cap(I+a)]$ it spreads further to
         $v_4[I\cap(I+a+1)]$.
         Finally, with $v_6[(I-1)\cap(I+a)\cap(I-a-1)]$ as a pivot,
         $P$ further spreads to
	 $v_5[(I-1)\cap(I+a)\cap(I-a-1)]$.
         By restricting our attention
         to the fiber over $v_5$, we conclude from Example~\ref{exa:6D}
         that $P$ spreads to $v_5[*]$.
         Since $v_1[*]\cup v_5[*]$ is a plague of $\Sigma $,
	 the claim of the lemma follows.
\end{proof}

\begin{lem}
	\label{lem:6A_imm}
	Let $\Sigma$ be a covering of $\Sigma_{6A}$ with simply intersecting cycles.
  Then $\imm(\Sigma)\leq\omega(\Sigma)$.
\end{lem}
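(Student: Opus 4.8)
The plan is to compare the upper bound for $\imm(\Sigma)$ furnished by Lemma~\ref{lem:Hoq7_imm} with the weight $\omega(\Sigma)$, both of which I can express through the two orders $p=|\langle a\rangle|=|\langle -a\rangle|$ and $q=|\langle a+1\rangle|=|\langle 1-b\rangle|$ (using $b\equiv-a\pmod N$ from Lemma~\ref{lem:Hoq7}).  As in the proof of Lemma~\ref{lem:Hoq7_imm}, the isomorphism $\Sigma_{6A}^{N;a,b}\cong\Sigma_{6A}^{N;b-1,1+a}$ interchanges $p$ and $q$ and preserves both $\imm$ and $\omega$, so I would first normalize to $p\le q$.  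From the cycle structure recorded in the proof of Lemma~\ref{lem:Hoq7_imm} --- for each $i\in\ZN$ the vertices $v_1[i],v_2[i]$ carry one $4$-cycle and one $p$-cycle, the vertices $v_3[i],v_4[i]$ carry two $4$-cycles, and the vertices $v_5[i],v_6[i]$ carry one $4$-cycle and one $q$-cycle --- together with the symmetry of $(\omega'_{ij})$ and the fact that each fiber has $N$ points, I obtain
\[
  \omega(\Sigma)=\tfrac16\bigl(2\,\omega'_{4p}+\omega(v_3)+\omega(v_4)+2\,\omega'_{4q}\bigr),
\]
where $\omega(v_3)=\omega(v_4)=\tfrac14$ unless $\Sigma\cong\Sigma_{6A}^{4;2,2}$, in which case the exceptional term in the definition of $\omega$ raises $\omega(v_3)$.

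I would then split into three cases.  If $a\in\{0,-1\}$, the normalization lets me take $a=0$, so $p=1$, $q=N\ge2$, $\omega'_{4p}=\tfrac12$, $\omega(v_3)=\omega(v_4)=\tfrac14$, and $\omega'_{4q}\ge\tfrac14$; hence $\omega(\Sigma)\ge\tfrac16(1+\tfrac12+\tfrac12)=\tfrac13\ge\imm(\Sigma)$ by Lemma~\ref{lem:Hoq7_imm}.  If $\Sigma\cong\Sigma_{6A}^{4;2,2}$, then $N=4$, $p=2$, $q=4$, and evaluating the definition of $\omega$ directly (this is exactly where the exceptional summand is needed) gives $\omega(\Sigma)\ge\tfrac7{24}$, while Lemma~\ref{lem:Hoq7_imm} yields $\imm(\Sigma)\le\tfrac1{24}+\tfrac16+\tfrac1{12}=\tfrac7{24}$.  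In the remaining case $a\notin\{0,-1\}$ and $\Sigma\not\cong\Sigma_{6A}^{4;2,2}$ we have $2\le p\le q$ and $\omega(v_3)=\omega(v_4)=\tfrac14$, so by Lemma~\ref{lem:Hoq7_imm} it suffices to establish
\[
  2\,\omega'_{4p}+2\,\omega'_{4q}\ \ge\ \tfrac1q+\tfrac12+\min\{\tfrac{q-1}{N},\tfrac1p\}.
\]

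For this final inequality I would first use the crude estimate $\min\{\tfrac{q-1}{N},\tfrac1p\}\le\tfrac1p$, which reduces the claim to $g(p)+g(q)\ge\tfrac12$ for $g(j):=2\,\omega'_{4j}-\tfrac1j$; since $g(2)=\tfrac16$, $g(3)=g(4)=\tfrac14$, and $g(j)=\tfrac12-\tfrac1j$ is non-decreasing for $j\ge4$, this holds whenever $p\ge3$, or $p=2$ and $q\ge6$, leaving only $p=2$ with $q\in\{2,3,4,5\}$.  The main --- and essentially only --- delicate step is to settle these four pairs by means of the congruences of Lemma~\ref{lem:Hoq7}: since $p=2$ forces $N$ even and $a=N/2$, the relation $q(a+1)\equiv0\pmod N$ forces $N=2$ (which is excluded here, since then $a\in\{0,-1\}$) when $q=2$, forces $(N,a)=(6,3)$ when $q=3$, forces $(N,a)=(4,2)$ --- the case already treated --- when $q=4$, and forces $(N,a)=(10,5)$ when $q=5$.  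For $(N,a)=(6,3)$ one has $\min\{\tfrac{q-1}{N},\tfrac1p\}=\tfrac13$ and the inequality becomes $\tfrac23+\tfrac7{12}\ge\tfrac13+\tfrac12+\tfrac13$; for $(N,a)=(10,5)$ one has $\min\{\tfrac{q-1}{N},\tfrac1p\}=\tfrac25$ and it becomes $\tfrac23+\tfrac12\ge\tfrac15+\tfrac12+\tfrac25$.  Both hold, which completes the argument.  Conceptually nothing surprising happens --- as the remark following \eqref{eq:omega} indicates, the exceptional terms in $\omega$ were chosen precisely so that this comparison goes through --- so the real work is the case bookkeeping and, above all, pinning down $(N,a)$ for the four residual small pairs.
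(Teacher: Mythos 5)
Your proof is correct and takes essentially the same route as the paper: after normalizing $|\langle -a\rangle|\le|\langle 1+a\rangle|$ via the graph isomorphism, you compare the immunity bound of Lemma~\ref{lem:Hoq7_imm} with the weight obtained from the cycle structure \eqref{eq:Hoq7_v1v2}--\eqref{eq:Hoq7_v5v6}, isolating the exceptional covering $\Sigma_{6A}^{4;2,2}$, exactly as the paper does. The only difference is bookkeeping (the paper splits on $|\langle -a\rangle|\in\{1,2,3,\ge 4\}$ with subcases on $N$, while you reduce to $g(p)+g(q)\ge\tfrac12$ and settle the residual small pairs by congruences), and your numerical checks are all correct.
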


\begin{proof}
  By Lemma~\ref{lem:Hoq7} we have $b\equiv -a\pmod{N}$ and $N>1$.
	As in the proof of Lemma \ref{lem:Hoq7_imm}, we may assume
	$|\langle-a\rangle|\leq|\langle1+a\rangle|$.  
	We split the proof into several cases according to the
	order of $\langle-a\rangle$.  Assume first that $|\langle-a\rangle|=1$. Then
	$a=0$ and $|\langle1+a\rangle|=N\geq2$.
  Hence
  $$\wg(\Sigma )\geq 2(1/2+1/4+1/4)/6=1/3=\imm(\Sigma )$$
  by Lemma~\ref{lem:Hoq7_imm}.

	Now assume that $|\langle-a\rangle|=2$. Then $a=N/2$, $N$ is even, and
	$N\geq4$. (If $N=2$ then $a=1$, which contradicts to
	$|\langle-a\rangle|\leq|\langle1+a\rangle|$.) As before, we need to consider
	three cases: $N=4$, $N=6$ and $N\geq8$. Then 
	\begin{center}
  	\begin{tabular}{c|c|c|c|c}
  		$N$ & $|\langle1+a\rangle|$ & $\omega(v_{1})$, $\omega(v_{2})$ &
      $\omega(v_{3})$, $\omega(v_{4})$ & $\omega(v_{5})$, $\omega(v_{6})$
      \tabularnewline
  		\hline 
  		$4$ & $4$ & $1/3$ & $1/4$ & $1/4$ \tabularnewline
  		$6$ & $3$ & $1/3$ & $1/4$ & $7/24$ \tabularnewline
  		$\geq8$ & $\geq 5$ & $1/3$ & $1/4$ & $1/4$ \tabularnewline
  	\end{tabular}
	\end{center}
	and therefore (observe that the case $N=4$ is exceptional)
	\[
	\wg(\Sigma)=\begin{cases}
		(3\cdot 1/3+3\cdot 1/4)/6=7/24 & \text{ if $N=4$, }\\
		7/24 & \text{ if $N=6$, }\\
		5/18 & \text{ if $N\geq 8$. }
	\end{cases}
	\]
	Further, $|\langle a+1\rangle |=N/2$ if $a$ is odd and $|\langle a+1\rangle
  |=N$ if $a$ is even. Thus Lemma~\ref{lem:Hoq7_imm}
  yields that $\imm (\Sigma )\le 7/24$ for $N=4$,
  $\imm (\Sigma )\le 1/4+1/6N$ for $N\ge 6$, both if $a$ is odd and if $a$ is
  even.
  This implies the claim.

	Now assume that $|\langle-a\rangle|=3$. Clearly, $3a\equiv0\pmod{N}$ and
	$3$ divides $N$. We claim that $4\leq|\langle1+a\rangle|$. Indeed, if
	$|\langle1+a\rangle|=3$ then $3(1+a)\equiv0\pmod{N}$ and $N=3$, $a=2$, $b=1$,
	by Lemma \ref{lem:Hoq7}. Hence $|\langle1+a\rangle|=0$, which is a
	contradiction. Therefore, by \eqref{eq:Hoq7_v1v2}--\eqref{eq:Hoq7_v5v6} and
	Lemma \ref{lem:Hoq7_imm}, we obtain $\imm(\Sigma)\leq19/72=\omega(\Sigma)$.

	Finally, assume that $|\langle-a\rangle|\geq4$. Using
	\eqref{eq:Hoq7_v1v2}--\eqref{eq:Hoq7_v5v6} and Lemma \ref{lem:Hoq7_imm} we
	obtain $\wg(\Sigma)=1/4\geq\imm(\Sigma)$.
\end{proof}



\begin{exa}
	Let $X=\Aff(5,2)$ or $X=\Aff(5,3)$. The Hurwitz orbit of $(1,1,2)\in X^3$ is
	isomorphic to the covering $\Sigma_{6A}^{4;2,2}$.
\end{exa}


\begin{lem}
	\label{lem:exception_6A}
	Let $X$ be an injective indecomposable rack, and $x\in X$. Assume that $X$ has
	at least one Hurwitz orbit isomorphic to the covering $\Sigma_{6A}^{4;2,2}$.
	Then 
	\begin{enumerate}
		\item $\varphi_x$ contains a $4$-cycle and $k_4\geq4$.
		\item If $(x,y,z)\in v_3[*]$ and $w=x\triangleright y$, then $w\triangleright( (w\triangleright  x)\triangleright x)=z$.
	\end{enumerate}
\end{lem}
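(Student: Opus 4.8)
The plan is to imitate the proof of Lemma~\ref{lem:exception_4A}, now for the covering $\Sigma_{6A}^{4;2,2}$ of Figure~\ref{fig:Hoq7}. First I would write down, from that figure together with the labelling conventions of Remark~\ref{rem:covgraph}, the permutations $\sigma_1$ and $\sigma_2$ on the $24$ vertices $v_i[j]$ ($1\le i\le 6$, $j\in\Z_4$); one checks in passing that $\sigma_1$ has five $4$-cycles — the four obtained by chaining the fibres over $v_1,v_3,v_4,v_5$ together with the one over $v_6$ — and two $2$-cycles over $v_2$, and that $\sigma_2$ has the analogous structure. Next I fix a base point $v_3[0]=(x,y,z)\in X^3$, set $w=x\trid y$, and, running around the $\sigma_1$- and $\sigma_2$-orbits emanating from $v_3[0]$, express each of the $24$ triples in terms of $x,y,z$ and finitely many derived rack elements. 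Every time a vertex is reached by two distinct words in $\sigma_1^{\pm1},\sigma_2^{\pm1}$, the two expressions for its three coordinates must agree; after simplification using self-distributivity and the quandle/crossed-set axioms, these agreements produce a system of rack identities valid in $X$. (The hypothesis $\mathcal{O}\cong\Sigma_{6A}^{4;2,2}$, i.e.\ that the $24$ triples are pairwise distinct, is used alongside to exclude degenerate coincidences of the elements involved.)

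From this list I would isolate identities including $w\trid z=x$, $(w\trid x)\trid x=y\trid z$, $w\trid(w\trid x)=y\trid z$, and $(x\trid y)\trid(y\trid z)=z$. Part~(2) is then the one-line computation
\[ w\trid\bigl((w\trid x)\trid x\bigr)=w\trid(y\trid z)=(x\trid y)\trid(y\trid z)=z, \]
and part~(1) comes in two steps. First, the identities show that $\varphi_w$ sends $x\mapsto w\trid x\mapsto y\trid z\mapsto z\mapsto x$; since these four elements are pairwise distinct (again by matching triples in the $24$-element orbit), $\varphi_w$ — hence, $X$ being indecomposable, every $\varphi_u$ — contains a $4$-cycle. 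Second, to get $k_4\ge4$ I would translate the $\sigma_1$-$4$-cycles of $\mathcal{O}$ into the counting definition of the $k_n$ from \cite[Section~2.3]{MR2803792}. Reading a $\sigma_1$-$4$-cycle on the first two coordinates of its four triples gives a Hurwitz orbit of length $4$ in $X^2$ under $(a,b)\mapsto(a\trid b,a)$; as in the proof of Lemma~\ref{lem:exception_4A} one identifies the $n$-th alternating power $x\trid(y\trid(x\trid\cdots))$ with the iterates of this map, so each of the four pairs $(a_t,b_t)$ of such an orbit contributes the element $b_t$ to the set counted by $k_4$ with base $a_t$, provided $a_t$, $a_t\trid b_t$ and $(a_t\trid b_t)\trid a_t$ are all $\neq b_t$ — which again follows from the triples being distinct. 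Carrying this out for the four $4$-cycles meeting the fibres over $v_1,v_3,v_4,v_5$ yields four distinct elements — $x$, $w\trid x$, $y\trid z$, $z$ — all counted by $k_4$ relative to the single base $w$, whence $k_4\ge 4$.

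The routine but lengthy step is the bookkeeping of the $24$ triples and the ensuing simplifications. The delicate point is the count for $k_4$: one must check that the four $\sigma_1$-$4$-cycles in question really supply four \emph{distinct} contributors relative to one common base (rather than fewer, with repetitions), and that the non-degeneracy conditions for membership in the $k_4$-set hold — both of which reduce, via the identities above and the distinctness of the $24$ triples, to short verifications.
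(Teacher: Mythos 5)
Your proposal is correct and follows essentially the same route as the paper: one reads off the $\sigma_1,\sigma_2$-action on the $24$ points of $\Sigma_{6A}^{4;2,2}$, expresses all triples in terms of a base point, and extracts the rack identities that give part (2) directly and exhibit the $4$-cycle $(x,\,w\trid x,\,y\trid z,\,z)$ in $\varphi_w$; your count of $k_4\ge 4$ via projecting the four $\sigma_1$-$4$-cycles over $(v_1\,v_3\,v_5\,v_4)$ to $X^2$ with common first coordinate $w$ is the same content as the paper's argument with the orbit $\lfloor 1\,3\,8\,7\rfloor$ and its three images under $\varphi_w$, and the minimality needed for $k_4$-membership is indeed guaranteed by the projected $\sigma$-cycles having length exactly $4$.
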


\begin{proof}
	We may assume that $X=\{1,2,\dots,d\}$. By inspection,
	\begin{align}
		\sigma _1v_{1}[i]&=v_{3}[i+2],\quad & \sigma_2v_{1}[i]&=v_{1}[i+2],\label{eq:v1_6A}\\
		\sigma _1v_{2}[i]&=v_{2}[i+2],\quad & \sigma_2v_{2}[i]&=v_{4}[i+1],\label{eq:v2_6A}\\
		\sigma _1v_{3}[i]&=v_{5}[i],\quad & \sigma_2v_{3}[i]&=v_{2}[i+2],\label{eq:v3_6A}\\
		\sigma _1v_{4}[i]&=v_{1}[i],\quad & \sigma_2v_{4}[i]&=v_{6}[i+1],\label{eq:v4_6A}\\
		\sigma _1v_{5}[i]&=v_{4}[i+2],\quad & \sigma_2v_{5}[i]&=v_{5}[i+3],\label{eq:v5_6A}\\
		\sigma _1v_{6}[i]&=v_{6}[i+3],\quad & \sigma_2v_{6}[i]&=v_{3}[i].\label{eq:v6_6A}
	\end{align}
	Without loss of generality we may assume that $v_{1}[0]=(1,2,3)\in X^{3}$,
	where $|\{1,2,3\}|\leq3$.	Set $4=1\trid2$, $5=1\trid4$, $6=4\trid5$,
  $7=2\trid 4$, $8=7\trid 1$ and $9=\varphi _2^{-1}(8)$.
  Using formulas \eqref{eq:v1_6A}--\eqref{eq:v6_6A}, a
	straightforward computation shows that: 
  \begin{center}
	\begin{tabular}{c|c|c|c|c|c|c}
		$i$ & $v_1[i]$ & $v_{2}[i]$ & $v_{3}[i]$ & $v_{4}[i]$ & $v_5[i]$ & $v_6[i]$\tabularnewline
		\hline
		$0$ & $(1,2,3)$ & $(4,7,1)$ & $(7,1,2)$ & $(2,8,3)$ & $(8,7,2)$ & $(7,2,8)$\tabularnewline
		$1$ & $(9,4,7)$ & $(3,2,9)$ & $(2,9,4)$ & $(4,8,7)$ & $(8,2,4)$ & $(2,4,8)$\tabularnewline
		$2$ & $(1,3,2)$ & $(7,4,1)$ & $(4,1,3)$ & $(3,8,2)$ & $(8,4,3)$ & $(4,3,8)$\tabularnewline
		$3$ & $(9,7,4)$ & $(2,3,9)$ & $(3,9,7)$ & $(7,8,4)$ & $(8,3,7)$ & $(3,7,8)$\tabularnewline
	\end{tabular}
	\end{center}
	Since there are no loops, we obtain $|\{2,3,4,7\}|=4$. Furthermore, using
	\eqref{eq:v1_6A}--\eqref{eq:v6_6A} we obtain that
  $\varphi _8=(2\;7\;3\;4)\tau $ for some $\tau \in \Sym _d$,
  $\tau |_{\{2,3,4,7\}}=\id $.
  In particular, $|\{2,3,4,7,8\}|=5$. Moreover, $\lfloor 1\;3\;8\;7\rfloor$
  is a Hurwitz orbit in $X^2$, and acting with $\varphi _8$ on this we obtain
  three other Hurwitz orbits
  $\lfloor ?\;4\;8\;3\rfloor$,
  $\lfloor ?\;2\;8\;4\rfloor$,
  $\lfloor ?\;7\;8\;2\rfloor$. Hence $k_4\ge 4$. Finally, $3\trid 7=4$,
  $7\trid 1=8$,
  $8\trid ((8\trid 7)\trid 7)=8\trid (3\trid 7)=8\trid 4=2$, and hence
  $w\trid ( (w\trid x)\trid x)=z$ for $(x,y,z)=(7,1,2)$.
\end{proof}

\subsection{The graph $\Sigma_{6B}$}

\begin{lem}
	\label{lem:Hoq8}
	There is no covering of $\Sigma_{6B}$ in Figure \ref{fig:Hoq8} with simply
	intersecting cycles.
\end{lem}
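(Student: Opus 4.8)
The plan is to read off from Figure~\ref{fig:Hoq8} the combinatorial data of $\Sigma_{6B}$ --- the two oriented $x$-triangles, the $y$-loops, and the $xy$- and $yx$-cycle decompositions --- and then to show that any hypothetical covering $(p,S,T)$ of $\Sigma_{6B}$ with simply intersecting cycles would force the fiber size $N=|p^{-1}(v)|$ to be $1$, which is impossible. Since $\Sigma_{6B}$ is one of the graphs of Proposition~\ref{pro:homogeneous_spaces} with a point fixed by $y$ but none fixed by $x$ or by $xy$, it has no $x$-loop, and its $y$-permutation is an involution of a $6$-element set with at least one fixed point, hence with exactly two $y$-loops, at two distinct vertices $v,v'$.

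If $v$ and $v'$ happen to lie on a common $xy$-cycle, the lemma is immediate from Corollary~\ref{cor:two_y_loops}, just as for Lemmas~\ref{lem:Hoq4} and~\ref{lem:Hoq6}; so the substantive case is when the two $y$-loops sit on the two different $xy$-cycles. There I would proceed as follows. Writing $a,a'$ for the labels of the $y$-loops at $v,v'$, Lemma~\ref{lem:loops}(2) gives $2a\equiv 2a'\equiv 1\pmod{N}$, so $N$ is odd. By Lemma~\ref{lem:xy_and_yx_cycles_with_loops} applied at $v$ and at $v'$, each of the two $xy$-cycles of $\Sigma_{6B}$ has label $0$. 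Now I would add the labels of the two $xy$-cycles: every $xy$-step out of a vertex $u$ contributes the label of the $y$-arrow leaving $u$ together with the label of the $x$-arrow leaving $yu$, and as $u$ runs over all six vertices this uses each of the six $x$-arrows exactly once, each of the two $y$-edges once in each direction, and each of the two $y$-loops once. By Remark~\ref{rem:covgraph} the three $x$-labels of each $x$-triangle sum to $-1$ and the two labels of a $y$-edge sum to $1$, so the grand total is $(-1)+(-1)+1+1+a+a'=a+a'$. Since this total equals $0+0$, we get $a+a'\equiv 0\pmod{N}$, and together with $2(a+a')\equiv 2\pmod{N}$ this forces $0\equiv 2\pmod{N}$, hence $N=1$ because $N$ is odd.

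To conclude I would invoke Lemma~\ref{lem:trivial_covering}: by inspection of Figure~\ref{fig:Hoq8} there is a pair of distinct vertices lying simultaneously on one $xy$-cycle and on one $yx$-cycle of $\Sigma_{6B}$ (for the two-triangle shape this is the pair of vertices shared by an $xy$-triangle and a $yx$-triangle), so no covering of $\Sigma_{6B}$ with simply intersecting cycles can be trivial. This contradicts $N=1$ and proves the lemma.

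The computations are short; the only real work is bookkeeping, namely transcribing the $x$-triangles, $y$-loops, and $xy$/$yx$-cycles of $\Sigma_{6B}$ correctly from Figure~\ref{fig:Hoq8} and tracking how often each edge-label enters the sum over the two $xy$-cycles. I expect the main --- essentially the only --- obstacle to be reading the figure correctly: an error in the cycle structure would either spoil the edge-counting in the sum or make the relevant pair for Corollary~\ref{cor:two_y_loops} or Lemma~\ref{lem:trivial_covering} unavailable. Once the graph data is pinned down, everything reduces to a one-line calculation modulo $N$.
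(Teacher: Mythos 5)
Your proof is correct and follows essentially the same route as the paper's: Lemma~\ref{lem:loops}(2) at the two $y$-loop vertices gives $2a\equiv 2b\equiv 1\pmod{N}$ (so $N$ odd), Lemma~\ref{lem:xy_and_yx_cycles_with_loops} forces the labels of the two $xy$-cycles to vanish, and Lemma~\ref{lem:trivial_covering} (applied to the pair of vertices shared by an $xy$- and a $yx$-cycle, namely $v_3,v_4$ in Figure~\ref{fig:Hoq8}) gives $N>1$, producing the same contradiction $2\equiv 0\pmod{N}$. The only difference is bookkeeping: the paper reads the two individual $xy$-cycle labels $a-c$ and $b+c$ off the figure and sets each to zero, whereas you compute only their sum via the global edge count $(-1)+(-1)+1+1+a+b=a+b$, which indeed suffices.
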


\begin{proof}
	The $xy$-cycles and their labels are $(v_1\;v_2\;v_5)$ with $a-c$ and
	$(v_3\;v_6\;v_4)$ with $c+b$.  The $yx$-cycles and their labels are
  $(v_1\;v_4\;v_3)$ with
	$a-c$ and $(v_2\;v_5\;v_6)$ with $c+b$.  By Lemma \ref{lem:trivial_covering}
	on $v_3$ and $v_4$, $N>1$.  Lemma \ref{lem:loops} on $v_1$ and $v_6$ implies
	that $2a\equiv2b\equiv1\pmod{N}$ and hence $a\equiv b\pmod{N}$ and $N$ is
  odd.  By Lemma
	\ref{lem:xy_and_yx_cycles_with_loops} on $v_1$ and $v_6$, $b+c\equiv
	0\pmod{N}$ and $a-c\equiv0\pmod{N}$. Then $a\equiv -b\pmod{N}$ and
  hence $1\equiv -1\pmod{N}$. This is a contradiction to $N>2$.
\end{proof}

\begin{figure}[h]
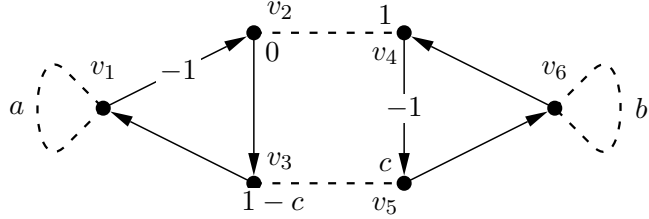

  \begin{graph}(8,4)
    \roundnode{n1}(1,2)
    \roundnode{n2}(3,3)
    \roundnode{n3}(3,1)
    \roundnode{n4}(5,3)
    \roundnode{n5}(5,1)
    \roundnode{n6}(7,2)
    \diredge{n1}{n2}
    \diredge{n2}{n3}
    \diredge{n3}{n1}
    \diredge{n4}{n5}
    \diredge{n5}{n6}
    \diredge{n6}{n4}
    \loopedge{n1}(-.5,-.5)(-.5,.5)[\yedge]
    \edge{n2}{n4}[\yedge]
    \edge{n3}{n5}[\yedge]
    \loopedge{n6}(.5,.5)(.5,-.5)[\yedge]
		\freetext(-.15,2){$a$}
		\freetext(8.15,2){$b$}
		\nodetext{n1}(0,.5){$v_1$}
		\nodetext{n6}(0,.5){$v_6$}
		\nodetext{n2}(.35,.30){$v_2$}
		\nodetext{n2}(.25,-.25){$0$}
		\nodetext{n4}(-.25,-.30){$v_4$}
		\nodetext{n4}(-.25,.25){$1$}
		\nodetext{n3}(.35,.30){$v_3$}
		\nodetext{n3}(.25,-.25){$1-c$}
		\nodetext{n5}(-.25,-.30){$v_5$}
		\nodetext{n5}(-.25,.25){$c$}
		\edgetext{n1}{n2}{$-1$}
		\edgetext{n4}{n5}{$-1$}
  \end{graph}
  \caption{Schreier graph $\Sigma _{6B}$ and its coverings}
  \label{fig:Hoq8}
\end {figure}

\subsection{The graph $\Sigma_{6C}$}
\label{6C}

\begin{lem}
	\label{lem:Hoq9}
	There is no covering of $\Sigma_{6C}$ in Figure \ref{fig:Hoq9} with simply
	intersecting cycles.
\end{lem}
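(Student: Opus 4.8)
The plan is to argue exactly as for the closely related graphs $\Sigma_{3B}$, $\Sigma_{6A}$ and $\Sigma_{6B}$ in Lemmas~\ref{lem:Hoq4}, \ref{lem:Hoq7} and \ref{lem:Hoq8}: transcribe the combinatorics of Figure~\ref{fig:Hoq9} into a system of congruences modulo $N$ and show that it has no solution, or only the forbidden solution $N=1$. Concretely, I would first read off from Figure~\ref{fig:Hoq9} the two oriented $x$-triangles of $\Sigma_{6C}$, all $y$-edges and $y$-loops and their labels, and then list the $xy$-cycles and the $yx$-cycles with their labels, using the rules of Remark~\ref{rem:covgraph}: the three labels around an $x$-triangle sum to $-1$, and the two labels of a $y$-edge sum to $1$.

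Since $\Sigma_{6C}$ is one of the graphs classified in Proposition~\ref{pro:homogeneous_spaces} as having a point fixed by $y$ but none fixed by $x$ or by $xy$, it carries at least one $y$-loop, say with label $a$ at a vertex $v$, and no $x$-loop. Lemma~\ref{lem:loops}(2) then gives $2a\equiv 1\pmod N$, so $N$ is odd; and Lemma~\ref{lem:xy_and_yx_cycles_with_loops} applied at $v$ (using that a nontrivial covering is meant) forces the labels of the $xy$- and $yx$-cycle through $v$ to vanish modulo $N$. Feeding these vanishing conditions into the label-sum relations for the $x$-triangles and $y$-edges expresses the remaining edge labels in terms of $a$, after which I would invoke Lemma~\ref{lem:from_v_to_w} (or Lemma~\ref{lem:trivial_covering}) for the two vertices of $\Sigma_{6C}$ lying on a common $xy$- and $yx$-cycle, and Corollary~\ref{cor:two_y_loops} if two $y$-loops happen to sit on one $xy$-cycle. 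I expect the system to collapse to $1\equiv-1\pmod N$, equivalently $N\mid 2$, which together with $N$ odd gives $N=1$ and hence a trivial covering, or else to an outright contradiction such as $0\equiv1\pmod N$.

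The only genuinely non-routine step is the bookkeeping: each label identity obtained at a single vertex must be propagated across the whole $x$-triangle by transitivity of $\varphi_1$, as in Case~4 of Proposition~\ref{pro:incommensurable_profiles} and in the proof of Lemma~\ref{lem:Hoq8}, and one must keep careful track of which non-tree edge of the Schreier graph carries which label. Once Figure~\ref{fig:Hoq9} is transcribed correctly, the contradiction should come out in two or three lines of modular arithmetic.
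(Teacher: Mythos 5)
Your proposal is a plan rather than an executed argument, and the route it emphasizes would in fact not close for this particular graph. The paper's proof is one line: in $\Sigma_{6C}$ the $xy$-cycle through $v_1$ is $(v_1\,v_2\,v_6\,v_4)$, so the two $y$-loop vertices $v_1$ and $v_6$ lie on a \emph{common} $xy$-cycle, and Corollary~\ref{cor:two_y_loops} immediately excludes any covering with simply intersecting cycles. You list that corollary only as a contingency (``if two $y$-loops happen to sit on one $xy$-cycle'') and never transcribe Figure~\ref{fig:Hoq9} or determine its cycles, so the one fact that actually settles the lemma is left unverified.

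More importantly, the route you foreground — mimic Lemma~\ref{lem:Hoq8} and let the local congruences collapse to $1\equiv-1\pmod N$ — fails here. Carrying out your bookkeeping for $\Sigma_{6C}$: Lemma~\ref{lem:loops}(2) at $v_1,v_6$ gives $2a\equiv 2b\equiv 1\pmod N$ (so $a\equiv b$, $N$ odd), and Lemma~\ref{lem:xy_and_yx_cycles_with_loops} gives the single relation $a+b+1-c\equiv 0\pmod N$, since both the $xy$-cycle $(v_1\,v_2\,v_6\,v_4)$ and the $yx$-cycle $(v_1\,v_5\,v_6\,v_3)$ carry the same label $a+b+1-c$. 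These conditions are perfectly consistent (take $a\equiv b$, $c\equiv 2$), so no contradiction of the form $1\equiv-1\pmod N$ arises from the local data, unlike in $\Sigma_{6B}$ where the two loops sit on \emph{different} cycles. The contradiction only comes from comparing paths between the two loop vertices: the $xy$-path from $v_1$ to $v_6$ has label $a$ and the $yx$-path has label $b$, so Lemma~\ref{lem:from_v_to_w} forces $a\not\equiv b\pmod N$, against $a\equiv b$ — which is precisely the content of Corollary~\ref{cor:two_y_loops}. So the missing step is not mere bookkeeping: you need to identify that $v_1,v_6$ share both an $xy$- and a $yx$-cycle and invoke the path-comparison argument; without that, your proposed derivation does not reach a contradiction.
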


\begin{proof}
  This follows from Corollary~\ref{cor:two_y_loops}.
\end{proof}

\begin{figure}[ht]
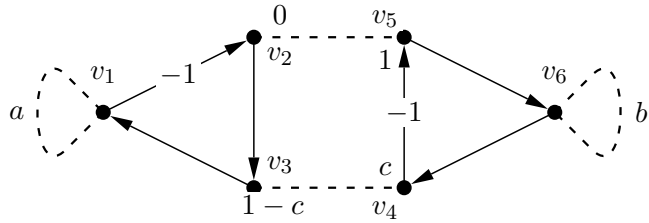

  \begin{graph}(8,4)
    \roundnode{n1}(1,2)
    \roundnode{n2}(3,3)
    \roundnode{n3}(3,1)
    \roundnode{n5}(5,3)
    \roundnode{n4}(5,1)
    \roundnode{n6}(7,2)
    \diredge{n1}{n2}
    \diredge{n2}{n3}
    \diredge{n3}{n1}
    \diredge{n4}{n5}
    \diredge{n5}{n6}
    \diredge{n6}{n4}
    \loopedge{n1}(-.5,-.5)(-.5,.5)[\yedge]
    \edge{n2}{n5}[\yedge]
    \edge{n3}{n4}[\yedge]
    \loopedge{n6}(.5,.5)(.5,-.5)[\yedge]
		\freetext(-.15,2){$a$}
		\freetext(8.15,2){$b$}
		\nodetext{n1}(0,.5){$v_1$}
		\nodetext{n6}(0,.5){$v_6$}
		\nodetext{n2}(.35,.30){$0$}
		\nodetext{n2}(.35,-.25){$v_2$}
		\nodetext{n4}(-.25,-.30){$v_4$}
		\nodetext{n4}(-.25,.25){$c$}
		\nodetext{n3}(.35,.30){$v_3$}
		\nodetext{n3}(.25,-.25){$1-c$}
		\nodetext{n5}(-.25,-.30){$1$}
		\nodetext{n5}(-.25,.25){$v_5$}
		\edgetext{n1}{n2}{$-1$}
		\edgetext{n4}{n5}{$-1$}
  \end{graph}
  \caption{Schreier graph $\Sigma _{6C}$ and its coverings}
  \label{fig:Hoq9}
\end{figure}

\subsection{The graph $\Sigma_{6D}$}
\label{6D}

\begin{lem}
	\label{lem:Hoq10}
	Let $\Sigma $ be a covering of $\Sigma_{6D}$ in Figure \ref{fig:Hoq10}
  with simply intersecting cycles. Then
	\begin{enumerate}
	  \item $\langle a+b\rangle\cap\langle -a+1\rangle=0$,
	  \item $\langle a+b\rangle\cap\langle -b\rangle=0$,
	  \item $\langle -a+1\rangle\cap\langle -b\rangle=0$.
		\end{enumerate}
\end{lem}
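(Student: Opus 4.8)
The plan is to extract the cycle structure of $\Sigma_{6D}$ from Figure~\ref{fig:Hoq10} and then invoke Lemma~\ref{lem:xy_and_yx_cycles} three times, once for each of the three claimed intersections. First I would identify all $xy$-cycles and $yx$-cycles of $\Sigma_{6D}$ together with their labels. The graph $\Sigma_{6D}$ consists of two oriented triangles joined by three $y$-edges and has no loops, so it has exactly three $xy$-cycles and three $yx$-cycles, each of length two. Using the constraints from Remark~\ref{rem:covgraph} (the labels around an $x$-triangle sum to $-1$ and the two labels of a $y$-edge sum to $1$), a short walk along each of these six cycles shows that the set of $xy$-cycle labels and the set of $yx$-cycle labels both reduce to $\{a+b,\,-a+1,\,-b\}$, each value occurring once among the $\sigma_1$-cycles and once among the $\sigma_2$-cycles.

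Next I would read off the incidence in the graph: each of the three pairs $\{a+b,-a+1\}$, $\{a+b,-b\}$, $\{-a+1,-b\}$ is realized at some vertex $v$ of $\Sigma_{6D}$, in the sense that $v$ lies simultaneously on an $xy$-cycle carrying one label of the pair and on a $yx$-cycle carrying the other (in fact the three vertices of one of the two triangles already realize all three pairs). Applying Lemma~\ref{lem:xy_and_yx_cycles} at such a vertex, together with $b\equiv -a$ being \emph{not} imposed here (unlike $\Sigma_{6A}$), yields $\langle\lambda\rangle\cap\langle\mu\rangle=0$ in $\ZN$ for the labels $\lambda,\mu$ of the $xy$- and $yx$-cycle through $v$; running over the three pairs gives precisely assertions (1), (2) and (3).

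The only step that needs genuine care is the label bookkeeping. One must track the double-arrow convention for the $y$-edges (the label being placed near the destination) and the choice of spanning tree, in order to confirm that the six length-two cycles really do carry the labels $a+b$, $-a+1$ and $-b$ and that all three unordered pairs occur at actual vertices of the Schreier graph. Once this verification is done, the lemma follows immediately from a threefold application of Lemma~\ref{lem:xy_and_yx_cycles}, and I do not expect any further obstacle.
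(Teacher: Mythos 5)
Your proposal is correct and follows essentially the same route as the paper: list the three length-two $xy$-cycles and three length-two $yx$-cycles of $\Sigma_{6D}$ with labels $a+b$, $-a+1$, $-b$, note that each of the three unordered pairs of labels occurs at a vertex (indeed at the vertices of a single triangle, e.g. $v_1$, $v_2$, $v_3$), and apply Lemma~\ref{lem:xy_and_yx_cycles} three times. The aside about $b\equiv -a$ not being imposed is irrelevant but harmless; the label bookkeeping you defer checks out exactly as in the paper's proof.
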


\begin{proof}
	The $xy$-cycles and their labels are: $(v_1\;v_4)$ with label $a+b$,
	$(v_2\;v_6)$ with $-a+1$ and $(v_3\;v_5)$ with $-b$. The
	$yx$-cycles are: $(v_1\;v_5)$ with label $-a+1$, $(v_2\;v_4)$ with $-b$ and
	$(v_3\;v_6)$ with $a+b$. Now apply Lemma \ref{lem:xy_and_yx_cycles}.
\end{proof}

\begin{exa}
	The trivial covering of $\Sigma_{6D}$ is isomorphic to the Hurwitz orbit
	of six elements of \cite[Figure 9]{MR2891215}.
\end{exa}

\begin{figure}[ht]
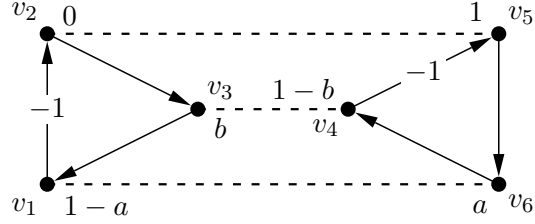

  \begin{graph}(8,4)
    \roundnode{n1}(1,1)
    \roundnode{n2}(1,3)
    \roundnode{n3}(3,2)
    \roundnode{n4}(5,2)
    \roundnode{n5}(7,3)
    \roundnode{n6}(7,1)
    \diredge{n1}{n2}
    \diredge{n2}{n3}
    \diredge{n3}{n1}
    \diredge{n4}{n5}
    \diredge{n5}{n6}
    \diredge{n6}{n4}
    \edge{n3}{n4}[\yedge]
    \edge{n1}{n6}[\yedge]
    \edge{n2}{n5}[\yedge]
		\edgetext{n1}{n2}{$-1$}
		\edgetext{n4}{n5}{$-1$}
		\nodetext{n1}(-.30,-.30){$v_1$}
		\nodetext{n2}(-.30,.30){$v_2$}
		\nodetext{n3}(.30,.25){$v_3$}
	  \nodetext{n4}(-.30,-.25){$v_4$}
		\nodetext{n5}(.30,.25){$v_5$}
		\nodetext{n6}(.30,-.25){$v_6$}
		\nodetext{n3}(.30,-.25){$b$}
		\nodetext{n4}(-.60,.25){$1-b$}
		\nodetext{n1}(.65,-.30){$1-a$}
		\nodetext{n2}(.30,.25){$0$}
		\nodetext{n5}(-.30,.30){$1$}
		\nodetext{n6}(-.25,-.30){$a$}
  \end{graph}
  \caption{Schreier graph $\Sigma _{6D}$ and its coverings}
  \label{fig:Hoq10}
\end{figure}

\begin{lem}
	\label{lem:Hoq10_imm}
	Let $\Sigma$ be a covering of $\Sigma_{6D}$ with simply intersecting cycles.
	Let $c\in \{1-a,-b,a+b\}$ and let $n=|\langle c\rangle|$. Then
	\[
		\imm(\Sigma)\leq \frac{N+N/n+n-1}{6N}.
	\]
\end{lem}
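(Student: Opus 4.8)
The plan is to mimic the second construction in the proof of Lemma~\ref{lem:Hoq7_imm}: exhibit a plague of $\Sigma$ consisting of one full fibre together with the union of a transversal and a single coset on a second fibre, and arrange the final spreading step inside that second fibre to be a copy of the automaton of Example~\ref{exa:6D}. First I would reduce to one value of $c$. The Schreier graph $\Sigma_{6D}$ of Figure~\ref{fig:Hoq10} has the order-three automorphism $\rho=(v_1\,v_2\,v_3)(v_4\,v_6\,v_5)$, which sends $x$-arrows to $x$-arrows and $y$-edges to $y$-edges, hence induces isomorphisms between coverings of $\Sigma_{6D}$, and it cyclically permutes the three $xy$-cycles $(v_1\,v_4)$, $(v_2\,v_6)$, $(v_3\,v_5)$ and thus their labels $a+b$, $1-a$, $-b$. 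Since immunity is an isomorphism invariant, it suffices to treat $c=a+b$, so that $n=|\langle a+b\rangle|$.

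Next I would record that $v_1[*]\cup v_4[*]$ is a plague of $\Sigma$: with the point $v_3$ as pivot the two full fibres force $v_2[*]$; using $v_5$ as pivot they then force $v_6[*]$; using $v_1$ as pivot they force $v_3[*]$; and using $v_4$ as pivot they force $v_5[*]$. Now fix $K=\langle a+b\rangle$ and a set $J$ of representatives for $\ZN/\langle a+b\rangle$ with $0\in J$, so that $|J\cup K|=N/n+n-1$. I claim $P=v_1[*]\cup v_4[J\cup K]$ is a plague; since $|P|=N+N/n+n-1$ and $|\Sigma|=6N$, this yields exactly the asserted bound. By the preceding sentence it is enough to show that $P$ spreads to $v_4[*]$.

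For this, the key observation is that as soon as $v_1[*]$ is active the cellular automaton degenerates on the remaining five fibres: because $v_1$ shares its $\sigma_1$-cycle with $v_4$ and its $\sigma_2$-cycle with $v_5$ (this is read off from the cycle data established in the proof of Lemma~\ref{lem:Hoq10}), a point of $v_2[*],v_3[*],v_4[*]$ or $v_5[*]$ is activated from a single prescribed point of a neighbouring fibre, while a point of $v_6[*]$ is activated only from a conjunction of two of the fibres $v_2,v_3,v_4,v_5$. Chasing these reduced rules once around the graph through $v_2$ and $v_6$ and keeping track of the index shifts dictated by the labels $a,b$ of Figure~\ref{fig:Hoq10}, one obtains on the fibre $v_4[*]\cong\ZN$ precisely an evolution rule of the form of Example~\ref{exa:6D} with $\Gamma=\langle a+b\rangle$: the point $v_4[x]$ becomes active once three prescribed translates of $x$ lie in the active part of $v_4[*]$, two of them congruent modulo $\Gamma$ and differing by a generator of $\Gamma$, and the third lying in $\Gamma$. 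Since $J\cup K$ is the union of a transversal of $\ZN/\Gamma$ with a coset of $\Gamma$, Example~\ref{exa:6D} shows $v_4[J\cup K]$ spreads to $v_4[*]$, whence $v_1[*]\cup v_4[*]$, and hence $P$, spreads to $\Sigma$.

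The delicate part is the third paragraph: one must carry out the circuit carefully, tracking which of the three clauses of the automaton is used at each vertex, verify that the three index shifts it produces on the fibre over $v_4$ are exactly those appearing in Example~\ref{exa:6D} up to a global translation (and here Lemma~\ref{lem:Hoq10} is what pins down the relevant cycle labels), and make sure the ``transversal $+$ coset'' data always sits on the fibre that gets completed. A minor but necessary point is to choose the transversal $J$ through $0$ so that $|J\cup K|=N/n+n-1$ rather than $N/n+n$.
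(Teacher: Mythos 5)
Your first two steps are fine: the automorphism of $\Sigma_{6D}$ permuting the three $xy$-cycles does reduce the claim to one position of $c$ (it plays the same role as the relabelling isomorphism $\Sigma_{6D}^{N;a,b}\to\Sigma_{6D}^{N;1-a-b,a-1}$ used in the paper), and $v_1[*]\cup v_4[*]$ is indeed a plague. The gap is precisely at the step you flag as delicate, and it cannot be repaired in your configuration. Write out the automaton of Figure~\ref{fig:percolation} at each vertex of Figure~\ref{fig:Hoq10}: once $v_1[*]$ is active, the rules at the pivots $v_2,v_3,v_6$ degenerate to the identifications $v_3[m]\leftrightarrow v_5[m]$, $v_2[m]\leftrightarrow v_4[m+1-b]$, $v_5[m]\leftrightarrow v_4[m+1]$, and every composite rule producing a new point of $v_4[*]$ (necessarily through a point of $v_6[*]$ created at one of the pivots $v_1,v_4,v_5$) has a shift triple in which one shift equals $\pm b$ and the other two differ by $b$. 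In other words, the only label entering as the ``small'' shift is $b$, the label of the $y$-edge incident to $v_4$; the cycle label $a+b$ never occurs as a shift, because the automaton does not propagate along a $\sigma_1$-cycle just because the interleaved $v_1$-points are active. So the induced evolution on the fibre over $v_4$ is of the type of Example~\ref{exa:6D} for $\Gamma=\langle b\rangle$, not for $\langle a+b\rangle$, and your input set ``transversal of $\ZN/\langle a+b\rangle$ plus one coset of $\langle a+b\rangle$'' has the wrong shape for it.

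This is not only a defect of the justification: the set $P$ itself can fail to be a plague. Take $N=15$, $a=6$, $b=0$, so the cycle labels are $1-a=10$, $-b=0$, $a+b=6$; the three subgroups they generate intersect pairwise trivially, hence this covering has simply intersecting cycles, and for $c=a+b$ one has $n=5$. Since $b\equiv 0\pmod N$, one checks directly from the six pivot rules that for \emph{every} subset $S\subseteq\Z_{15}$ the set $R=v_1[*]\cup v_6[*]\cup v_2[S-1]\cup v_3[S-1]\cup v_5[S-1]\cup v_4[S]$ is a quarantine. Your $P=v_1[*]\cup v_4[\langle a+b\rangle\cup J]$ is contained in such an $R$ with $S\neq\Z_{15}$, so it is not a plague, although it has exactly the size $N+N/n+n-1=22$ demanded by the bound. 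The point the paper's proof exploits, and which your configuration misses, is that the subgroup governing the completion of a fibre is the one attached to the $y$-edge label at that vertex; therefore one must first transport $c$ into the $-b$ slot by the relabelling isomorphism and then place the ``subgroup plus transversal'' set on $v_5$, where the surviving rule has shifts $\{-1,-1-b,b\}$, i.e.\ literally Example~\ref{exa:6D} with $\lambda=b$, and where the full fibre $v_1[*]$ does not lie on the $xy$-cycle through the partial fibre. Putting both the full and the partial fibre on the same $(a+b)$-labelled cycle, as you do, can only yield a bound in terms of $|\langle b\rangle|$.
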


\begin{proof}
  There exists an isomorphism $\Sigma_{6D}^{N;a,b}\to \Sigma
  _{6D}^{N;1-a-b,a-1}$, $v_1\mapsto v_2$.
	This isomorphism induces the permutation
	$(1\;3\;2)$ on the triple $(1-a,-b,a+b)$.
        Therefore we may assume that $c\equiv -b \pmod N$.

	The cycle structure at each vertex of $\Sigma$ is the following:
	\begin{align}
		\label{eq:Hoq10_v1v2}v_1[i]\text{ and }v_6[i]: & \text{ cycles of length $2|\langle a+b\rangle|$ and $2|\langle -a+1\rangle|$,}\\
		\label{eq:Hoq10_v2v5}v_2[i]\text{ and }v_5[i]: & \text{ cycles of length $2|\langle -a+1\rangle|$ and $2|\langle -b\rangle|$,}\\
		\label{eq:Hoq10_v3v4}v_3[i]\text{ and }v_4[i]: & \text{ cycles of length $2|\langle a+b\rangle|$ and $2|\langle -b\rangle|$,}
	\end{align}
	for all $i\in\ZN$.
  Let $I$
	be the union of $\langle b\rangle$ with a set of representatives for
	$\ZN/\langle b\rangle$. Then $|I|=n+N/n-1$.
  We claim that $P=v_1[*]\cup v_5[I]$ is
	a plague. Using the sequence of pivots $v_{6}[I]$, $v_{3}[I+b]$,
	$v_{2}[I-1]$, $v_{5}[(I+b)\cap I]$, and $v_4[(I+1)\cap(I+b+1)\cap(I-b)]$, we
  see that $P$ spreads to
	$$ v_{4}[I+1]\cup v_{2}[I+b]\cup v_{3}[I]\cup
     v_{6}[(I+1)\cap(I+b+1)]\cup v_5[(I+1)\cap(I+b+1)\cap(I-b)].$$
  By looking at the evolution of $\chi _P$ at $v_5[*]$,
  Example \ref{exa:6D} implies that $P$ spreads to $v_5[*]$.
  Since $P\cup v_5[*]$ is a plague, also $P$ is a plague. This implies the
  lemma.
\end{proof}

\begin{lem}
	\label{lem:Hoq10_ineq}
	Let $n\in\N$ with $2\leq n\leq N/2$. Then 
	\[
		\frac{6N}{n}+6n-6\leq 3N+6. 
		\]
\end{lem}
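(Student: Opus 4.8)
The inequality $\frac{6N}{n}+6n-6\le 3N+6$ to be proven is equivalent, after multiplying through by $n>0$, to $6N+6n^2-6n\le 3Nn+6n$, i.e.\ to $6N+6n^2-12n-3Nn\le 0$, or equivalently $2N+2n^2-4n-Nn\le 0$. The plan is to treat this as a statement about the function
\[
g(n)=2N+2n^2-4n-Nn=2n^2-(N+4)n+2N
\]
for $n$ ranging over the real interval $[2,N/2]$, and to show $g(n)\le 0$ there. Since $g$ is a quadratic in $n$ with positive leading coefficient, it is convex, so its maximum on the interval $[2,N/2]$ is attained at one of the two endpoints. Hence it suffices to check $g(2)\le 0$ and $g(N/2)\le 0$.

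First I would compute $g(2)=2\cdot 4-(N+4)\cdot 2+2N=8-2N-8+2N=0$, so the bound holds with equality at $n=2$. Next, $g(N/2)=2\cdot\frac{N^2}{4}-(N+4)\cdot\frac N2+2N=\frac{N^2}{2}-\frac{N^2}{2}-2N+2N=0$, so the bound also holds with equality at $n=N/2$. By convexity, $g(n)\le\max\{g(2),g(N/2)\}=0$ for all $n\in[2,N/2]$, which is exactly the desired inequality after undoing the reduction. (One can also see this directly: the two roots of $g$ are $n=2$ and $n=N/2$, since their product is $2N/2\cdot 2 =\ldots$ — more cleanly, $2n^2-(N+4)n+2N=2(n-2)(n-N/2)$, which one verifies by expanding, and this product is $\le 0$ precisely when $n$ lies between $2$ and $N/2$.)

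I expect no real obstacle here; the only thing to be careful about is the hypothesis $2\le n\le N/2$, which guarantees $N\ge 4$ and ensures $n$ actually lies in the closed interval between the two roots of the quadratic $2(n-2)(n-N/2)$, making the product non-positive. The cleanest writeup is the factorization $2N+2n^2-4n-Nn=2(n-2)(n-\tfrac N2)$ followed by the sign analysis; the division by $n$ at the start is harmless since $n\ge 2>0$.
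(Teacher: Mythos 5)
Your proof is correct and is essentially the paper's argument: the paper also clears denominators and rewrites the inequality as $(n-2)(2n-N)\le 0$, which is exactly your factorization $2(n-2)(n-\tfrac N2)\le 0$ combined with the hypothesis $2\le n\le N/2$. The convexity/endpoint framing is a harmless repackaging of the same observation.
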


\begin{proof}
	The inequality is equivalent to $(n-2)(2n-N)\leq 0$ and hence the claim
	follows.
\end{proof}

\begin{lem}
	\label{lem:6D_imm}
	Let $\Sigma$ be a covering of $\Sigma_{6D}$ with simply intersecting cycles.
	Then $\imm(\Sigma)\le \wg(\Sigma )$.
\end{lem}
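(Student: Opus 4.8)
The plan is to compare the immunity bound of Lemma~\ref{lem:Hoq10_imm} with $\wg(\Sigma)$, using the strong arithmetic restriction on the labels supplied by Lemma~\ref{lem:Hoq10}. Set $\alpha=|\langle a+b\rangle|$, $\beta=|\langle 1-a\rangle|$, $\gamma=|\langle -b\rangle|$ for the orders in $\ZN$ of the three labels. By Lemma~\ref{lem:Hoq10} the cyclic subgroups $\langle a+b\rangle$, $\langle 1-a\rangle$, $\langle -b\rangle$ of the cyclic group $\ZN$ pairwise intersect trivially, and since two subgroups of orders $m_1,m_2$ of a cyclic group meet in the subgroup of order $\gcd(m_1,m_2)$, this forces $\alpha,\beta,\gamma$ to be pairwise coprime. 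In particular $\alpha\beta\gamma\mid N$, so $N\ge\alpha\beta\gamma$, and the product of any two of the three divides $N$. This is the key point: the raw bound of Lemma~\ref{lem:Hoq10_imm} by itself only gives something slightly bigger than $\tfrac14$ in general, which would be too weak where $\wg(\Sigma)=\tfrac14$.

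Next I would read off the weights from the cycle structure. By \eqref{eq:Hoq10_v1v2}--\eqref{eq:Hoq10_v3v4} the pair of $\sigma_1$- and $\sigma_2$-cycle lengths is $(2\alpha,2\beta)$ over $v_1,v_6$, $(2\beta,2\gamma)$ over $v_2,v_5$, and $(2\alpha,2\gamma)$ over $v_3,v_4$. These lengths are even, so by \eqref{eq:weights} and the ``otherwise'' branch of \eqref{eq:omega} (which applies since $\Sigma$ covers $\Sigma_{6D}$) a vertex over $v_k$ has weight $\tfrac14$ if both associated orders are $\ge 2$, and $\tfrac13$ if one of them is $1$. Since $\wg$ is constant on fibres and $|\Sigma|=6N$, this gives $\wg(\Sigma)=\tfrac13\bigl(\wg(v_1)+\wg(v_2)+\wg(v_3)\bigr)$.

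Then I would split into cases by the number of $\alpha,\beta,\gamma$ equal to $1$. If at least two are $1$, every pair among $\{\alpha,\beta\},\{\beta,\gamma\},\{\alpha,\gamma\}$ contains a $1$, so $\wg(\Sigma)=\tfrac13$, while Lemma~\ref{lem:Hoq10_imm} with $n=1$ (choosing $c$ to be a label of order $1$) gives $\imm(\Sigma)\le\tfrac{2N}{6N}=\tfrac13$. If exactly one is $1$, then by the isomorphism in the proof of Lemma~\ref{lem:Hoq10_imm}, which induces a $3$-cycle on $1-a,-b,a+b$, we may assume $\gamma=1$ and $\alpha,\beta\ge 2$; then $\wg(\Sigma)=\tfrac13(\tfrac14+\tfrac13+\tfrac13)=\tfrac{11}{36}$, and since $\alpha,\beta$ are coprime they are distinct, so with $n=\min(\alpha,\beta)\ge 2$ we have $N\ge\alpha\beta\ge n(n+1)$ and Lemma~\ref{lem:Hoq10_imm} (with $c$ the label of order $n$) gives $\imm(\Sigma)\le\tfrac16\bigl(1+\tfrac1n+\tfrac{n-1}{n(n+1)}\bigr)=\tfrac{n+3}{6(n+1)}\le\tfrac5{18}<\tfrac{11}{36}$. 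If none is $1$, every $\wg(v_k)=\tfrac14$, so $\wg(\Sigma)=\tfrac14$; writing $r_1<r_2<r_3$ for $\alpha,\beta,\gamma$ sorted, we get $r_1\ge 2$, $r_2\ge 3$, and $N\ge r_1r_2r_3\ge 2r_2(r_2+1)$, whence taking $n=r_2$ in Lemma~\ref{lem:Hoq10_imm} and using $n^2-2n-1\ge 0$ for $n\ge 3$ (a short computation in the spirit of Lemma~\ref{lem:Hoq10_ineq}) yields $\imm(\Sigma)\le\tfrac14=\wg(\Sigma)$.

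The main obstacle is conceptual: the estimate of Lemma~\ref{lem:Hoq10_imm} is not sharp enough on its own, and one has to recognise that the pairwise coprimality of $\alpha,\beta,\gamma$, hence the bound $N\ge\alpha\beta\gamma$, is precisely what closes the gap. The only mildly delicate book-keeping is the single-unit-order case, where $\wg(\Sigma)=\tfrac{11}{36}$ is not much above $\tfrac14$ and one must use $N\ge\alpha\beta$ honestly, together with keeping straight which of the three labels to feed into Lemma~\ref{lem:Hoq10_imm} so that $n$ is the order one wants; the rest reduces to the three elementary inequalities above.
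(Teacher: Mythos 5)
Your proof is correct and follows essentially the same route as the paper: it combines the immunity bound of Lemma~\ref{lem:Hoq10_imm} with the trivial-intersection conditions of Lemma~\ref{lem:Hoq10}, computes $\wg(\Sigma)$ from the cycle structure \eqref{eq:Hoq10_v1v2}--\eqref{eq:Hoq10_v3v4}, and splits into cases according to how many of the labels $1-a,-b,a+b$ vanish modulo $N$. The only differences are in the elementary bookkeeping (you use pairwise coprimality to get $N\ge\alpha\beta$ resp.\ $N\ge 2r_2(r_2+1)$, where the paper instead uses $n\le N/2$ with Lemma~\ref{lem:Hoq10_ineq} and $n\le N/3$ with $n\ge 4$), which is an equally valid way to close the same estimates.
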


\begin{proof}
        There is an isomorphism
        $\Sigma_{6D}^{N;a,b}\to \Sigma _{6D}^{N;b+1,a-1}$, $v_1\mapsto v_4$
        and this induces the permutation
	$(1\;2)$ on $(1-a,-b,a+b)$.
        Using this and the isomorphism in the proof of Lemma~\ref{lem:Hoq10_imm},
        we may assume that
        $$0\leq [1-a]\leq [-b]\leq [a+b]<N,$$
        where for any $x\in \ndZ $ we let $[x]\in \{0,1,\dots ,N-1\}$
        such that $[x]\equiv x\pmod{N}$.
	
	Suppose first that $a=1$ and $b=0$.  Then, using
	Lemma \ref{lem:Hoq10_imm} with $c=1-a$, we obtain $\imm(\Sigma)\le 1/3=\wg(\Sigma)$.

	Assume now that $a\equiv 1\pmod N$,
        $b\not\equiv 0\pmod{N}$ and $a+b\not\equiv0\pmod{N}$.
        Then $N\geq3$ by Lemma \ref{lem:Hoq10}.  Further, by \eqref{eq:Hoq10_v1v2}--\eqref{eq:Hoq10_v3v4},
	\[
		\wg(\Sigma)=\frac{(1/3)4N+(1/4)2N}{6N}=\frac{11}{36}.
	\]
        Let $n=|\langle -b\rangle |$. Then $2\le c\le N/2$ by Lemma~\ref{lem:Hoq10}.
        Hence Lemma~\ref{lem:Hoq10_imm} for $c=-b$ and Lemma~\ref{lem:Hoq10_ineq} imply
        that $\imm (\Sigma )\le (9N+6)/36N$ and hence $\imm(\Sigma)\le \wg (\Sigma )$ since $N\ge 3$.

	Finally we assume that $a\not\equiv 1\pmod N$, $b\not\equiv 0\pmod N$,
        and $a+b\not \equiv 0$.
        Let $c\in \{1-a,-b,a+b\}$ and $n=|\langle c\rangle |$.
        Because of Lemma~\ref{lem:Hoq10} we have $n\notin \{N,N/2\}$ and hence
        $n\le N/3$, $N-3n\ge 0$.
        Moreover, by choosing $c\in \{1-a,-b,a+b\}$ appropriately, we may assume that
        $n\ge 4$.        Since all vertices of $\Sigma $
	belong to cycles of length $\geq4$, we obtain that $\wg(\Sigma)=1/4$. Then Lemma
	\ref{lem:Hoq10_imm} implies that
	\[
		\imm(\Sigma)\leq\frac{N+N/n+n-1}{6N}
                =\frac{3Nn-(Nn-2N-2n^2+2n)}{12Nn} \leq\frac14
	\]
	because $Nn-2N-2n^2+2n=(N-2n)(n-4)+2(N-3n)\geq 0$.
\end{proof}	

\subsection{The graph $\Sigma_{7A}$}
\label{7A}

\begin{lem}
	\label{lem:Hoq11}
	Let $\Sigma $ be a covering of $\Sigma_{7A}$ in Figure \ref{fig:Hoq11} with simply intersecting
	cycles. Then $N=7$ and $(a,b,c)=(2,4,3)$. 
\end{lem}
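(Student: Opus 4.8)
The plan is to read the cycle data of $\Sigma_{7A}$ off Figure~\ref{fig:Hoq11} and then impose the numerical constraints coming from the two loops and from Lemma~\ref{lem:xy_and_yx_cycles_with_loops}. Recall that $\Sigma_{7A}$ has a vertex $v_1$ with $xv_1=v_1$, a vertex $v_7$ with $yv_7=v_7$, two $x$-triangles, and $y$-edges joining them; one checks directly that its $xy$-cycles are a $4$-cycle containing both $v_1$ and $v_2$ and a $3$-cycle containing $v_7$, and likewise its $yx$-cycles are a $4$-cycle containing both $v_1$ and $v_2$ and a $3$-cycle containing $v_7$. In particular $v_1\neq v_2$ lie simultaneously on a common $xy$-cycle and a common $yx$-cycle, so Lemma~\ref{lem:trivial_covering} already gives $N>1$.

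Next I would extract the divisibility conditions. Applying Lemma~\ref{lem:loops}(1) to the $x$-loop at $v_1$ gives $3a\equiv-1\pmod N$ (whence $\gcd(3,N)=1$), and Lemma~\ref{lem:loops}(2) to the $y$-loop at $v_7$ gives $2b\equiv1\pmod N$ (whence $N$ is odd). Then I would apply Lemma~\ref{lem:xy_and_yx_cycles_with_loops} at the two special vertices: since $xv_1=v_1$, the $xy$- and $yx$-cycles through $v_1$ have label $0$, and since $yv_7=v_7$, the same holds at $v_7$. Tracing these four cycles through the labelled Schreier graph according to the conventions of Remark~\ref{rem:covgraph} (the labels in an $x$-triangle sum to $-1$, and the two opposite labels of a $y$-edge sum to $1$), each of the two cycles through $v_1$ yields the relation $c\equiv a+1\pmod N$ and each of the two through $v_7$ yields $c\equiv -b\pmod N$; hence $a+b\equiv-1\pmod N$.

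Now combine the congruences: multiplying $3a\equiv-1$ by $2$ and $2b\equiv1$ by $3$ and adding gives $6(a+b)\equiv1\pmod N$, so $6(-1)\equiv1\pmod N$, i.e. $N\mid 7$. Together with $N>1$ this forces $N=7$. Finally, modulo $7$ the congruence $3a\equiv-1$ gives $a\equiv2$, the congruence $2b\equiv1$ gives $b\equiv4$, and $c\equiv-b\equiv3$. These values are mutually consistent with all the relations above, so the covering is unique and equals $\Sigma_{7A}^{7;2,4,3}$.

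The routine but error-prone step—the one I expect to be the main obstacle—is the third one: correctly writing the four cycle labels in terms of $a$, $b$, $c$ from Figure~\ref{fig:Hoq11}, since one must carefully follow the orientation of the $x$-triangles, keep track of the two opposite labels on each $y$-edge, and handle the back-label of the spanning-tree edges. Once those labels are recorded, the remainder is a short congruence computation; as a consistency check I would also verify Lemma~\ref{lem:xy_and_yx_cycles} (disjointness of the relevant cyclic subgroups of $\mathbb{Z}_N$) at a non-special vertex, although it is not logically needed for the conclusion.
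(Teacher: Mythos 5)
Your proposal is correct and follows essentially the same route as the paper: identify the two four-element cycles through $v_1,v_2$ to get $N>1$ via Lemma~\ref{lem:trivial_covering}, use Lemma~\ref{lem:loops} at the $x$-loop on $v_1$ and the $y$-loop on $v_7$ to get $3a\equiv-1$ and $2b\equiv1\pmod N$, and use Lemma~\ref{lem:xy_and_yx_cycles_with_loops} to force the cycle labels $a-c+1$ and $b+c$ to vanish, which yields exactly the congruences $c\equiv a+1$ and $c\equiv -b$ stated in the paper. Your final elimination ($6(a+b)\equiv 1$ together with $a+b\equiv-1$ giving $N\mid 7$, hence $N=7$ and $(a,b,c)=(2,4,3)$) is the same short computation the paper leaves implicit.
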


\begin{proof}
	The $xy$ cycles and their labels are: $(v_1\;v_3\;v_6\;v_2)$ with label
	$a-c+1$ and $(v_5\;v_4\;v_7)$ with $b+c$. The $yx$-cycles are:
	$(v_1\;v_2\;v_5\;v_4)$ with $a-c+1$ and $(v_6\;v_7\;v_3)$ with $b+c$. Since
	$v_1$ and $v_2$ are on the same $xy$- and $yx$-cycles, $N>1$ by Lemma
	\ref{lem:trivial_covering}.  Lemma \ref{lem:loops} on $v_1$ and $v_7$ implies
	that $3a\equiv-1\pmod{N}$ and $2b\equiv1\pmod{N}$. By Lemma
	\ref{lem:xy_and_yx_cycles_with_loops} on $v_1$ and $v_7$ we obtain
	$b\equiv -c\pmod{N}$ and $a\equiv c-1\pmod{N}$.  From this the claim
	follows.
\end{proof}

\begin{figure}[ht]
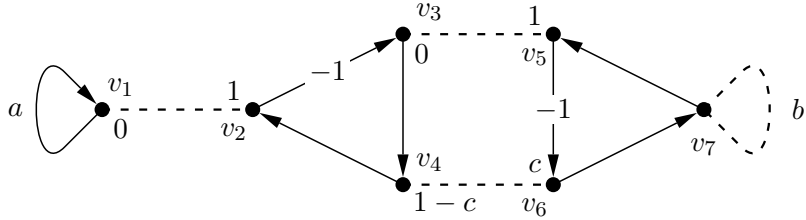

  \begin{graph}(10,4)
    \roundnode{n1}(1,2)
    \roundnode{n2}(3,2)
    \roundnode{n3}(5,3)
    \roundnode{n4}(5,1)
    \roundnode{n5}(7,3)
    \roundnode{n6}(7,1)
    \roundnode{n7}(9,2)
    \dirloopedge{n1}(-.5,-.5)(-.5,.5)
    \edge{n1}{n2}[\yedge ]
    \diredge{n2}{n3}
    \diredge{n3}{n4}
    \diredge{n4}{n2}
    \diredge{n5}{n6}
    \diredge{n6}{n7}
    \diredge{n7}{n5}
		\edgetext{n2}{n3}{$-1$}
		\edgetext{n5}{n6}{$-1$}
    \edge{n3}{n5}[\yedge ]
    \edge{n4}{n6}[\yedge ]
    \loopedge{n7}(.5,.5)(.5,-.5)[\yedge ]
		\freetext(-.15,2){$a$}
		\freetext(10.25,2){$b$}
		\nodetext{n1}(.25,.30){$v_1$}
		\nodetext{n2}(-.25,-.30){$v_2$}
		\nodetext{n3}(.35,.30){$v_3$}
		\nodetext{n4}(.35,.30){$v_4$}
		\nodetext{n3}(.25,-.25){$0$}
		\nodetext{n4}(.55,-.25){$1-c$}
		\nodetext{n5}(-.25,-.30){$v_5$}
		\nodetext{n5}(-.25,.25){$1$}
		\nodetext{n6}(-.25,-.30){$v_6$}
		\nodetext{n6}(-.25,.25){$c$}
		\nodetext{n7}(0,-.5){$v_7$}
		\nodetext{n1}(.25,-.25){$0$}
		\nodetext{n2}(-.25,.25){$1$}
  \end{graph}
	\caption{Schreier graph $\Sigma _{7A}$ and its coverings}
  \label{fig:Hoq11}
\end{figure}

\begin{lem} \label{lem:7A_imm}
	Let $\Sigma$ be a covering of $\Sigma_{7A}$ with simply intersecting
	cycles. Then $\imm(\Sigma)\leq\omega(\Sigma)$.
\end{lem}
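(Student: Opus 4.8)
The plan rests on Lemma~\ref{lem:Hoq11}, which forces $\Sigma$ to have $N=7$ and $(a,b,c)=(2,4,3)$; hence $|\Sigma|=7\cdot 7=49$ and there is only one covering to treat. I would proceed in two steps: compute $\wg(\Sigma)$ from the cycle data, and then exhibit a plague of size at most $13$. Since $\wg(\Sigma)\cdot|\Sigma|=329/24\approx13.7$, such a plague yields $\imm(\Sigma)\le 13/49<\wg(\Sigma)$; the margin is genuinely tight, since a plague of size $14$ would not suffice ($14/49>\wg(\Sigma)$).

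\emph{The weight.} With $(a,b,c)=(2,4,3)$ all cycle labels appearing in the proof of Lemma~\ref{lem:Hoq11} are $\equiv 0\pmod 7$: the $xy$-cycles $(v_1\,v_3\,v_6\,v_2)$ and $(v_5\,v_4\,v_7)$ carry labels $a-c+1$ and $b+c$, and the $yx$-cycles $(v_1\,v_2\,v_5\,v_4)$, $(v_6\,v_7\,v_3)$ the same. Therefore each $\sigma_1$-cycle of $\Sigma$ has the length of the $xy$-cycle it covers and each $\sigma_2$-cycle the length of its $yx$-cycle, so $c(v_1[i])=c(v_2[i])=(4,4)$, $c(v_3[i])=c(v_6[i])=(4,3)$, $c(v_4[i])=c(v_5[i])=(3,4)$ and $c(v_7[i])=(3,3)$ for all $i\in\ZN$. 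As $\Sigma$ is none of the three exceptional coverings in~\eqref{eq:omega}, $\wg(v)=\omega'_{ij}$ whenever $c(v)=(i,j)$, and
\[
 \wg(\Sigma)=\frac{1}{49}\left(14\cdot\frac14+35\cdot\frac{7}{24}\right)=\frac{329}{1176}=\frac{47}{168}.
\]

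\emph{The plague.} From the edge labels of Figure~\ref{fig:Hoq11} I would write the $\sigma_1$- and $\sigma_2$-actions on the $49$ points explicitly (fixing one base point per fiber) and list the pivot triangles $\{A,\sigma_2A,\sigma_1\sigma_2A\}$; they form seven one-parameter families
\[
\begin{array}{llll}
\{v_1[i],v_1[i+5],v_2[i+3]\}, & \{v_2[i],v_4[i],v_5[i]\}, & \{v_3[i],v_6[i+3],v_2[i+1]\}, & \{v_4[i],v_1[i],v_3[i]\},\\
\{v_5[i],v_4[i-3],v_7[i]\}, & \{v_6[i],v_7[i+4],v_5[i+1]\}, & \{v_7[i],v_3[i],v_6[i]\}, & {}
\end{array}
\]
with the rule that any two members of a triangle force the third. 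I then expect $P=v_4[*]\cup\{v_7[i]\mid i\in\ZN\setminus\{0\}\}$, which has $13$ elements, to be a plague. Tracing it: $\{v_5[i],v_4[i-3],v_7[i]\}$ infects $v_5[i]$ for $i\neq0$; then $\{v_6[i],v_7[i+4],v_5[i+1]\}$, $\{v_7[i],v_3[i],v_6[i]\}$, $\{v_4[i],v_1[i],v_3[i]\}$ and $\{v_2[i],v_4[i],v_5[i]\}$ infect $v_6[i]$, $v_3[i]$, $v_1[i]$ (each for a suitable set of $i$) and $v_2[i]$ for $i\neq0$; the family $\{v_1[i],v_1[i+5],v_2[i+3]\}$ then completes $v_1[*]$ and also produces $v_2[0]$ (essentially because $\gcd(5,7)=1$, cf.\ Example~\ref{exa:9A}); finally $\{v_2[i],v_4[i],v_5[i]\}$, $\{v_4[i],v_1[i],v_3[i]\}$, $\{v_3[i],v_6[i+3],v_2[i+1]\}$ and $\{v_7[i],v_3[i],v_6[i]\}$ fill in $v_5[*]$, $v_3[*]$, $v_6[*]$ and $v_7[0]$.

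\emph{Main obstacle.} The only real choice is the plague $P$: the obvious candidates --- a full fiber plus a transversal of another fiber, e.g.\ $v_4[*]\cup v_7[*]$ --- have size $14$, just over the threshold, so one must use the feedback whereby the growth of $v_1[*]$ and $v_2[*]$ spreads back through the cross-fiber triangles, letting one drop a point. With $P$ chosen, checking that it spreads to all of $\Sigma$ is a finite, routine verification with the explicit actions; I expect bookkeeping rather than any conceptual difficulty.
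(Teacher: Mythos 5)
Your proposal is correct and follows essentially the same route as the paper: use Lemma~\ref{lem:Hoq11} to reduce to the single covering with $N=7$, $(a,b,c)=(2,4,3)$, read off the cycle structure (two $4$-cycles at $v_1,v_2$; a $3$- and a $4$-cycle at $v_3,\dots,v_6$; two $3$-cycles at $v_7$) to get $\wg(\Sigma)=47/168$, and exhibit a $13$-element plague so that $\imm(\Sigma)\le 13/49<47/168$. The only difference is the choice of plague -- you take $v_4[*]$ together with six points of the fiber over $v_7$, while the paper uses $v_3[*]\cup v_1[\{0,\dots,5\}]$ -- and your spreading verification via the pivot triangles is consistent with the edge labels of Figure~\ref{fig:Hoq11}.
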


\begin{proof}
	First we prove that $\imm(\Sigma)\leq13/49$.  By Lemma \ref{lem:Hoq11}, $N=7$
	and $(a,b,c)=(2,4,3)$. Let $I=\{0,\dots,5\}$.  We claim that $P=v_1[I]\cup
	v_3[*]$ is a plague of size $13$.
  Use the pivots $v_2[I]$, $v_1[(I-3)\cap (I+2)]$,
  $v_4[(I-3)\cap (I+2)]$, $v_5[I\cap (I-2)]$, $v_3[(I-3)\cap (I-1)\cap (I+2)]$
  and $v_7[(I-2)\cap I\cap (I+1)\cap (I+3)]$ to let $P$ spread to
  \begin{align*}
    v_4[I]&\cup v_2[(I-2)\cap (I+3)]\cup v_6[(I-2)\cap I]\\
    &\cup v_7[(I-2)\cap I]\cup v_5[\{0,1,3,5\}] \cup v_5[\{2,4,6\}].
  \end{align*}
  Then $P$ spreads to $v_5[*]$. With $v_3[I-1]$
  and with $v_1[3]$ as pivots, $P$ further spreads to $v_2[I]\cup v_1[6]$.
  Thus $P$ spreads to $v_1[*]\cup v_3[*]$.
  The above calculations with $I=\ZN $ prove
  that $P$ is a plague. Hence $\imm(\Sigma)\leq13/49$. 
  By Lemma~\ref{lem:Hoq11}, the cycle structure of $\Sigma$ is  
	\begin{align*}
		v_1[i]\text{ and }v_2[i]&:\text{ two $4$-cycles,}\\
		v_3[i],v_4[i],v_5[i],v_6[i]&:\text{ one $3$-cycle and one $4$-cycle,}\\
		v_7[i]&:\text{ two $3$-cycles,}
	\end{align*}
	for all $i\in\ZN$. Hence $\wg(\Sigma)=47/168>13/49\geq\imm(\Sigma)$.
\end{proof}

\subsection{The graph $\Sigma_{8A}$}
\label{8A}

\begin{lem} \label{lem:8A_imm}
	Let $\Sigma$ be the covering of $\Sigma_{8A}$ with simply intersecting
	cycles. Then $\imm(\Sigma)\leq\omega(\Sigma)$. 
\end{lem}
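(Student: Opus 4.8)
The plan is to follow the same route as the preceding subsections: first determine the covering of $\Sigma_{8A}$ with simply intersecting cycles (it is essentially unique), then read off the $\sigma_1$- and $\sigma_2$-cycle lengths to compute $\wg(\Sigma)$, and finally produce a plague whose size realizes the bound. For Step~1, recall that $\Sigma_{8A}$ (Figure~\ref{fig:Hoq12}) has $x$-loops at two vertices $v_1,v_8$, two triangles of $x$-arrows $(v_2\;v_3\;v_4)$ and $(v_5\;v_6\;v_7)$, and $y$-edges $\{v_1,v_2\}$, $\{v_3,v_5\}$, $\{v_4,v_6\}$, $\{v_7,v_8\}$, so that its $xy$-cycles are $(v_1\;v_3\;v_6\;v_2)$ and $(v_4\;v_7\;v_8\;v_5)$ and its $yx$-cycles are $(v_1\;v_2\;v_5\;v_4)$ and $(v_3\;v_6\;v_8\;v_7)$, all of length $4$. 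I would apply Lemma~\ref{lem:loops}(1) at $v_1$ and at $v_8$ (so $3$ is coprime to $N$ and the two loop labels agree mod $N$) and Lemma~\ref{lem:xy_and_yx_cycles_with_loops} at $v_1$ and $v_8$ (so all four cycle labels vanish); combining the resulting congruences gives $N\mid 4$. The value $N=1$ is excluded by Lemma~\ref{lem:trivial_covering} (the pair $v_1,v_2$ lies on a common $xy$- and a common $yx$-cycle) and $N=2$ by Lemma~\ref{lem:from_v_to_w} (the pair $v_3,v_6$ likewise lies on common $xy$- and $yx$-cycles, which forces the label of the $y$-edge $\{v_4,v_6\}$ to be nonzero). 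Hence $N=4$, the covering is the unique one carrying the labels in Figure~\ref{fig:Hoq12}, and from those labels I would write $\sigma_1,\sigma_2$ out explicitly as permutations of the $32$ points $v_k[i]$.

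For Step~2, observe that the four $xy$- and $yx$-cycles of $\overline{\Sigma}$ have length $4$ and, by Lemma~\ref{lem:xy_and_yx_cycles_with_loops}, label $0$; hence every $\sigma_1$- and $\sigma_2$-cycle of $\Sigma$ has length $4$, so $c(v)=(4,4)$ and $\omega(v)=\omega'_{44}=\frac14$ for all $v\in\Sigma$. Since $\Sigma$ is none of the three exceptional coverings in the definition of $\omega$, this gives $\wg(\Sigma)=\frac14$.

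For Step~3, I claim that $P=v_1[*]\cup v_4[*]$, a set of $2N=8$ points, is a plague. Using the full fiber $v_1[*]$ and the loop at $v_1$, $P$ spreads to $v_2[*]$; using $v_4[i]$ together with $v_1[i]$ it spreads to $v_3[*]$. Once $v_1[*],\dots,v_4[*]$ are infected, pivoting at the $v_2$-fiber (with partners in $v_4[*]$) spreads $P$ to $v_5[*]$ and at the $v_3$-fiber (with partners in $v_2[*]$) to $v_6[*]$; then the $v_5$-fiber (partners in $v_4[*]$) gives $v_7[*]$ and the $v_6$-fiber (partners in $v_5[*]$) gives $v_8[*]$. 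Thus $P$ spreads to $\Sigma$, so $\imm(\Sigma)\le 8/|\Sigma|=8/32=\frac14=\wg(\Sigma)$, as claimed. The only genuinely fiddly parts are the explicit determination of the covering and of $\sigma_1,\sigma_2$ in Step~1, and the index bookkeeping in Step~3 — one must check that each of the six pivoting moves fills a whole fiber; everything else is a direct application of the results of Sections~\ref{section:hurwitz}--\ref{section:automaton}, exactly as in the earlier cases.
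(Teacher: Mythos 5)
Your overall strategy is the paper's: exhibit a plague consisting of two full fibers (size $2N$) and observe that every $\sigma_1$- and $\sigma_2$-cycle of $\Sigma$ has length at least $4$, so that $\wg(\Sigma)=1/4$; the paper does exactly this with the plague $v_1[*]\cup v_3[*]$ and never needs to know $N$, since $2N/8N=1/4$ for any fiber size. Your Step~1 (forcing $N\mid 4$ via Lemmas~\ref{lem:loops} and \ref{lem:xy_and_yx_cycles_with_loops}, then excluding $N=1$ by Lemma~\ref{lem:trivial_covering} on $v_1,v_2$ and $N=2$ by Lemma~\ref{lem:from_v_to_w} on $v_3,v_6$) is correct but superfluous for the statement, and note that Figure~\ref{fig:Hoq12} carries no printed labels, so "the unique covering carrying the labels in the figure" is not something you can literally read off; for Step~2 it even suffices that cycle lengths in a covering are multiples of the base cycle lengths, hence $\geq 4$.

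The one place where your write-up, as written, does not work is the pivot bookkeeping in Step~3. With the paper's convention (the three neighbors of a pivot $p$ are its $x$-successor, its $x$-predecessor and its $y$-neighbor, cf.\ the table in the proof of Lemma~\ref{lem:Hoq5_imm}), $v_5$ is not a neighbor of $v_2$, so "pivoting at the $v_2$-fiber with partners in $v_4[*]$" infects nothing new; the same slip recurs afterwards. The correct sequence for your set $P=v_1[*]\cup v_4[*]$ is: pivot $v_1$ (partners the two loop-neighbors in $v_1[*]$) gives $v_2[*]$; pivot $v_2$ (partners $v_4[*],v_1[*]$) gives $v_3[*]$; pivot $v_3$ (partners $v_2[*],v_4[*]$) gives $v_5[*]$; pivot $v_4$ (partners $v_2[*],v_3[*]$) gives $v_6[*]$; pivot $v_5$ (partners $v_3[*],v_6[*]$) gives $v_7[*]$; pivot $v_7$ (partners $v_5[*],v_6[*]$) gives $v_8[*]$. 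Since all pivots use full fibers, the index shifts are irrelevant, and $P$ is indeed a plague of size $2N$, so your conclusion $\imm(\Sigma)\leq 1/4=\wg(\Sigma)$ stands once these pivots are corrected.
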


\begin{figure}[ht]
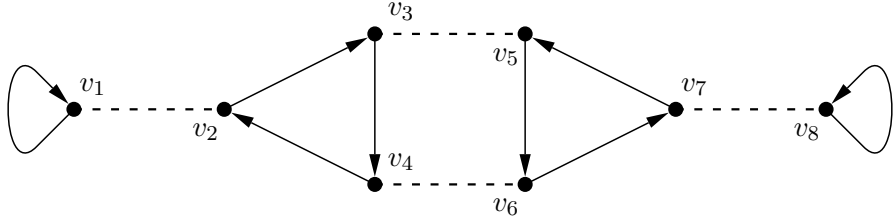

  \begin{graph}(12,4)
    \roundnode{n1}(1,2)
    \roundnode{n2}(3,2)
    \roundnode{n3}(5,3)
    \roundnode{n4}(5,1)
    \roundnode{n5}(7,3)
    \roundnode{n6}(7,1)
    \roundnode{n7}(9,2)
    \roundnode{n8}(11,2)
    \dirloopedge{n1}(-.5,-.5)(-.5,.5)
    \edge{n1}{n2}[\yedge ]
    \diredge{n2}{n3}
    \diredge{n3}{n4}
    \diredge{n4}{n2}
    \diredge{n5}{n6}
    \diredge{n6}{n7}
    \diredge{n7}{n5}
    \edge{n3}{n5}[\yedge ]
    \edge{n4}{n6}[\yedge ]
    \edge{n7}{n8}[\yedge ]
    \dirloopedge{n8}(.5,-.5)(.5,.5)
		\nodetext{n1}(.25,.30){$v_1$}
		\nodetext{n2}(-.25,-.30){$v_2$}
		\nodetext{n3}(.35,.30){$v_3$}
		\nodetext{n4}(.35,.30){$v_4$}
		\nodetext{n5}(-.25,-.30){$v_5$}
		\nodetext{n6}(-.25,-.30){$v_6$}
		\nodetext{n7}(.25,.30){$v_7$}
		\nodetext{n8}(-.25,-.30){$v_8$}
  \end{graph}
	\caption{Schreier graph $\Sigma _{8A}$ and its coverings}
  \label{fig:Hoq12}
\end{figure}
\begin{proof}
	A straightforward computation shows that $v_1[*]\cup v_3[*]$ is a plague of
	size $2N$ and hence $\imm(\Sigma)\leq1/4$.
	Since all cycles have length $\geq4$, the claim follows.
\end{proof}

\subsection{The graph $\Sigma_{9A}$}
\label{9A}

{
\setlength{\unitlength}{1.2cm}
\begin{figure}[h]
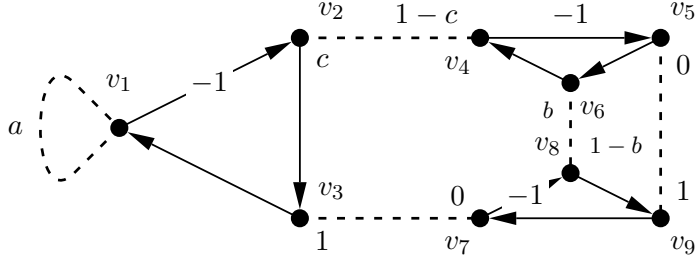

  \begin{graph}(8,3)
    \roundnode{n1}(1,1.5)
    \roundnode{n2}(3,2.5)
    \roundnode{n3}(3,.5)
    \roundnode{n4}(5,2.5)
    \roundnode{n6}(6,2)
    \roundnode{n5}(7,2.5)
    \roundnode{n7}(5,.5)
    \roundnode{n9}(7,.5)
    \roundnode{n8}(6,1)
    \diredge{n1}{n2}
    \diredge{n2}{n3}
    \diredge{n3}{n1}
    \diredge{n4}{n5}
    \diredge{n5}{n6}
    \diredge{n6}{n4}
    \diredge{n7}{n8}
    \diredge{n8}{n9}
    \diredge{n9}{n7}
    \loopedge{n1}(-.5,-.5)(-.5,.5)[\yedge]
    \edge{n2}{n4}[\yedge]
    \edge{n3}{n7}[\yedge]
    \edge{n5}{n9}[\yedge]
    \edge{n6}{n8}[\yedge]
		\edgetext{n1}{n2}{$-1$}
		\edgetext{n7}{n8}{$-1$}
		\freetext(6,2.75){$-1$}
		\freetext(-.15,1.5){$a$}
		\nodetext{n1}(0,.5){$v_1$}
		\nodetext{n2}(.35,.30){$v_2$}
		\nodetext{n2}(.25,-.25){$c$}
		\nodetext{n4}(-.25,-.30){$v_4$}
		\nodetext{n4}(-.6,.25){$1-c$}
		\nodetext{n3}(.35,.30){$v_3$}
		\nodetext{n3}(.25,-.25){$1$}
		\nodetext{n7}(-.25,-.30){$v_7$}
		\nodetext{n5}(.25,.30){$v_5$}
		\nodetext{n9}(.25,-.30){$v_9$}
		\nodetext{n6}(.25,-.30){$v_6$}
		\nodetext{n6}(-.25,-.30){\footnotesize{$b$}}
  	\nodetext{n8}(-.25,.30){$v_8$}
		\nodetext{n8}(.5,.30){\footnotesize{$1-b$}}
		\nodetext{n7}(-.25,.25){$0$}
		\nodetext{n5}(.25,-.30){$0$}
		\nodetext{n9}(.25,.30){$1$}
  \end{graph}
  \caption{Schreier graph $\Sigma _{9A}$ and its coverings}
  \label{fig:Hoq13}
\end{figure}
}

\begin{lem}
	\label{lem:Hoq13}
	Let $\Sigma $ be a covering of $\Sigma_{9A}$ in Figure \ref{fig:Hoq13}
  with simply intersecting cycles. Then $N>1$, $2a\equiv1\pmod{N}$,
  $N$ is odd, $c\equiv a+1\pmod{N}$ and
  $\langle b-1\rangle\cap\langle a+b\rangle=0$.
\end{lem}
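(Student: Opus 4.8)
The plan is to imitate the proofs of Lemmas~\ref{lem:Hoq7} and \ref{lem:Hoq11}: first read the cycle data off Figure~\ref{fig:Hoq13}, then feed it into the general lemmas of Section~\ref{section:hurwitz}. The initial step is to list, for $\Sigma_{9A}$, the three $xy$-cycles and the three $yx$-cycles together with their labels, using the conventions of Remark~\ref{rem:covgraph} (the three labels in an $x$-triangle sum to $-1$, the two labels of a $y$-edge sum to $1$, spanning-tree edges carry label $0$, and the label of a cycle is the sum of the labels of its edges). Tracing $x$ and $y$ through the graph, the $xy$-cycles come out as $(v_1\,v_2\,v_5\,v_7)$ with label $a-c+1$, $(v_3\,v_8\,v_4)$ with label $b+c-1$, and $(v_6\,v_9)$ with label $1-b$; the $yx$-cycles are $(v_1\,v_4\,v_9\,v_3)$ with label $a-c+1$, $(v_2\,v_7\,v_6)$ with label $b+c-1$, and $(v_5\,v_8)$ with label $1-b$.

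Once these labels are established, the four assertions follow quickly. Since $v_2$ and $v_7$ are distinct vertices that both lie on the $xy$-cycle $(v_1\,v_2\,v_5\,v_7)$ and on the $yx$-cycle $(v_2\,v_7\,v_6)$, Lemma~\ref{lem:trivial_covering} shows that no such covering is trivial, so $N>1$. The $y$-loop at $v_1$ has label $a$, so Lemma~\ref{lem:loops}(2) gives $2a\equiv1\pmod N$; since $N$ divides $2a-1$, which is odd, $N$ is odd. As $yv_1=v_1$ and $\mathbf{PSL}(2,\Z)v_1=\Sigma_{9A}\neq\{v_1\}$, Lemma~\ref{lem:xy_and_yx_cycles_with_loops} tells us that the $xy$-cycle through $v_1$ has label $0$, i.e.\ $a-c+1\equiv0\pmod N$, which is $c\equiv a+1\pmod N$. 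Substituting this into the remaining labels gives $b+c-1\equiv a+b\pmod N$, so the $xy$-cycle $(v_6\,v_9)$ has label $1-b$ while the $yx$-cycle $(v_2\,v_7\,v_6)$ through $v_6$ has label $a+b$; applying Lemma~\ref{lem:xy_and_yx_cycles} at $v_6$ yields $\langle1-b\rangle\cap\langle a+b\rangle=0$, and since $\langle1-b\rangle=\langle b-1\rangle$ this is the last claim.

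The one place that requires care is the first step: getting every edge orientation right, deciding which edges carry the nonzero labels $a$, $-1$, $\pm b$, $\pm c$, and summing along each four-, three- and two-element cycle without sign errors; everything afterwards is a routine three-line application of Lemmas~\ref{lem:trivial_covering}, \ref{lem:loops}, \ref{lem:xy_and_yx_cycles} and \ref{lem:xy_and_yx_cycles_with_loops}, exactly as in the earlier cases. It is worth double-checking that the two vertices used for the non-triviality argument ($v_2$, $v_7$) really do share both an $xy$- and a $yx$-cycle, and that the vertex used for the intersection statement ($v_6$) sits on the $xy$-cycle of label $1-b$ and the $yx$-cycle of label $a+b$, as these are the only spots where a wrong choice of vertex would silently break the argument.
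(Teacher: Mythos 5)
Your proposal is correct and follows essentially the same route as the paper: the same $xy$- and $yx$-cycle data with the same labels, non-triviality from $v_2,v_7$ via Lemma~\ref{lem:trivial_covering}, the congruences at $v_1$ via Lemmas~\ref{lem:loops} and \ref{lem:xy_and_yx_cycles_with_loops}, and the intersection claim at $v_6$ via Lemma~\ref{lem:xy_and_yx_cycles}. The only (harmless) addition is spelling out why $N$ is odd, which the paper leaves implicit in $2a\equiv1\pmod{N}$.
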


\begin{proof}
	The $xy$-cycles and their labels are: $(v_1\;v_2\;v_5\;v_7)$ with label
	$a-c+1$, $(v_6\;v_9)$ with $-b+1$ and $(v_3\;v_8\;v_4)$ with $b+c-1$.  The
	$yx$-cycles are: $(v_1\;v_4\;v_9\;v_3)$ with $a-c+1$, $(v_2\;v_7\;v_6)$ with
	$b+c-1$ and $(v_5\;v_8)$ with $-b+1$.  Lemmas \ref{lem:loops} and
	\ref{lem:xy_and_yx_cycles_with_loops} on $v_1$ imply $2a\equiv1\pmod{N}$ and
	$c\equiv a+1\pmod{N}$. Since $v_2$ and $v_7$ are on the same $xy$-cycle and
	the same $yx$-cycle, we conclude that $N>1$ by
	Lemma~\ref{lem:trivial_covering}.  By Lemma \ref{lem:xy_and_yx_cycles} on
	$v_6$ the claim follows.
\end{proof}

\begin{lem}
	\label{lem:Hoq13_imm}
	Let $\Sigma$ be a covering of $\Sigma_{9A}$ with simply intersecting cycles.
	Then
	\begin{equation*}
		\imm(\Sigma)\leq
		\begin{cases}
			8/27 & \text{ if $a+b\equiv 0\pmod N$ and $b\equiv 1\pmod N$, }\\
			11/45 & \text{ if $a+b\not\equiv 0\pmod N$ and $b\not \equiv 1\pmod N$, }\\
			7/27 & \text{ otherwise.}
		\end{cases}
	\end{equation*}
\end{lem}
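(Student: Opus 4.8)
The plan is to argue exactly as in the immunity lemmas already proved in this section --- Lemmas~\ref{lem:Hoq5_imm}, \ref{lem:Hoq7_imm}, \ref{lem:Hoq10_imm} and \ref{lem:7A_imm} --- namely, in each of the three cases I would exhibit an explicit plague $P\subseteq\Sigma$ assembled from full fibers $v_i[*]$ and partial fibers $v_i[I]$ over the nine vertices of Figure~\ref{fig:Hoq13}, check that $P$ spreads to all of $\Sigma$ by a finite chain of pivots in the sense of Example~\ref{exa:B3} supplemented by Examples~\ref{exa:9A} and \ref{exa:6D}, and then read off the bound from $\imm(\Sigma)\le|P|/|\Sigma|=|P|/(9N)$.

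First I would collect the combinatorial data. By Lemma~\ref{lem:Hoq13} we have $N>1$, $2a\equiv1\pmod N$ (so $N$ is odd and $\gcd(a,N)=1$), $c\equiv a+1\pmod N$, and $\langle b-1\rangle\cap\langle a+b\rangle=0$. Substituting $c\equiv a+1$ into the labels computed in the proof of Lemma~\ref{lem:Hoq13}, the three $xy$-cycles $(v_1\,v_2\,v_5\,v_7)$, $(v_6\,v_9)$, $(v_3\,v_8\,v_4)$ carry labels $0$, $1-b$, $a+b$ and the three $yx$-cycles $(v_1\,v_4\,v_9\,v_3)$, $(v_2\,v_7\,v_6)$, $(v_5\,v_8)$ carry labels $0$, $a+b$, $1-b$. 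Writing $n_1=|\langle a+b\rangle|$ and $n_2=|\langle b-1\rangle|$, the $\sigma_i$-cycle of $\Sigma$ lying over a cycle of length $\ell$ and label $d$ has length $\ell\,|\langle d\rangle|$; thus every vertex of $\Sigma$ lies on $\sigma_i$-cycles of lengths among $4$, $3n_1$, $2n_2$, and the condition $\langle b-1\rangle\cap\langle a+b\rangle=0$ says exactly that $\gcd(n_1,n_2)=1$ and $n_1n_2\mid N$. The three cases of the statement are then: (i) $n_1=n_2=1$; (ii) exactly one of $n_1,n_2$ equals $1$ (the ``otherwise'' case); (iii) $n_1,n_2\ge2$. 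In case (i), $a+b\equiv b-1\equiv0$ together with $2a\equiv1$ forces $N\mid3$, so $N=3$ and $\Sigma=\Sigma_{9A}^{3;2,1}$ is the unique such covering; the bound $\imm(\Sigma)\le8/27$ is then obtained by exhibiting an $8$-element plague directly, as was done for $\Sigma_{7A}$ in Lemma~\ref{lem:7A_imm}. I would also write out, once and for all, the explicit $\sigma_1$- and $\sigma_2$-action formulas $\sigma_1v_i[k]=v_{\bullet}[k+\bullet]$, $\sigma_2v_i[k]=v_{\bullet}[k+\bullet]$ for $i=1,\dots,9$ read off from Figure~\ref{fig:Hoq13}, exactly as tabulated in the proofs of Lemmas~\ref{lem:exception_4A} and \ref{lem:exception_6A}; these turn the pivot bookkeeping into mechanical computation.

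For case (ii) assume $n_2=1<n_1$ (the subcase $n_1=1<n_2$ being entirely analogous after the obvious relabelling of the two outer triangles); then $b\equiv1\pmod N$ and, using $2a\equiv1$, one checks $N\ge5$ and $n_1\mid N$ with $3\le n_1\le N$. Here the only non-collapsed cyclic direction is $\langle a+b\rangle$, carried by the $\sigma_1$-cycles over $v_3,v_8,v_4$ and the $\sigma_2$-cycles over $v_2,v_7,v_6$. I would take $P=v_1[*]\cup v_j[I]$, with $v_j$ a suitable one of $v_3,v_6,v_8$ and $I=\langle a+b\rangle\cup J$ for a transversal $J$ of $\ZN/\langle a+b\rangle$, so that $|I|\le n_1+N/n_1-1$. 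Starting from the seed $v_j[I]$ one pivots $v_1[*]$ outward along the three label-$0$ cycles to fill $v_2[*],v_3[*],v_4[*],v_5[*],v_7[*],v_9[*]$ in the manner of the proofs of Lemmas~\ref{lem:Hoq7_imm} and \ref{lem:Hoq10_imm}; the propagation finally induced on the remaining fiber over $v_j$ is of the type of Example~\ref{exa:6D}, which spreads $v_j[I]$ to $v_j[*]$. Since $n_1+N/n_1\le N+1$ for $N\ge5$, this gives $|P|\le 2N$, hence $\imm(\Sigma)\le2/9\le7/27$.

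Case (iii) --- $n_1,n_2\ge2$, $\gcd(n_1,n_2)=1$, $n_1n_2\mid N$, so $n_1,n_2\ge3$ (as $N$ is odd) and $N\ge15$ --- is the main obstacle, because both cyclic directions must be filled. Here I would take $P=v_1[*]\cup v_j[I]\cup v_k[I']$ for suitable vertices $v_j,v_k$, with $I$ (resp.\ $I'$) the union of $\langle a+b\rangle$ (resp.\ $\langle b-1\rangle$) with a transversal of the corresponding quotient, so $|I|\le n_1+N/n_1-1$ and $|I'|\le n_2+N/n_2-1$. After pivoting $v_1[*]$ across the label-$0$ vertices, one invokes Example~\ref{exa:6D} once in the $n_1$-direction and once in the $n_2$-direction to fill the last two fibers, obtaining $|P|\le N+(n_1+N/n_1-1)+(n_2+N/n_2-1)$. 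Using the elementary inequality $n+N/n\le N/3+3$ for $3\le n\le N/3$ (which applies since $n_i\le N/n_{3-i}\le N/3$), this is $\le 5N/3+4$, and $(5N/3+4)/(9N)\le 11/45$ for $N\ge15$. The two delicate points are: (a) verifying that the chosen chain of pivots actually reaches all nine fibers --- this requires the explicit $\sigma_i$-formulas and careful attention to which of the three neighbours of each pivot already lie in $P$ at each stage, and, as in the proofs of the earlier lemmas, the fact that $v_1[*]$ alone does not spread but acquires momentum once one partial fiber is present; and (b) squeezing the final size estimate down to the constant $11/45$, which leaves the least slack of the three cases and is the reason for including the full subgroups $\langle a+b\rangle$ and $\langle b-1\rangle$ in the seeds, so that Example~\ref{exa:6D}, and not merely Example~\ref{exa:9A}, becomes available on the two hard fibers.
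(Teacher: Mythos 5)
Your case division and the arithmetic are fine, but the heart of this lemma is the verification that the proposed seeds actually spread, and that is exactly what your plan leaves open --- and where it departs from what can be made to work routinely. The paper's proof uses seeds consisting of \emph{two} full fibers plus one transversal (namely $v_1[*]\cup v_2[*]\cup v_5[I]$ or $v_4[*]\cup v_8[*]\cup v_2[I]$, with $I$ a transversal of $\ZN/\langle b-1\rangle$ resp.\ $\ZN/\langle a+b\rangle$); with two full fibers meeting a common pivot vertex, each pivot step produces a further \emph{full} fiber, the travelling sets are never shrunk by intersections, and only Example~\ref{exa:9A} is needed --- in the hardest case one uses that $\langle a+b\rangle$ and $\langle b-1\rangle$ intersect trivially and $N$ is odd, so one of them has order $\geq 5$, giving $|I|\leq N/5$ and the bound $11/45$. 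Your seeds contain only the single full fiber $v_1[*]$. No pivot vertex of $\Sigma_{9A}$ has two of its three automaton-neighbours lying over $v_1$ (even at $v_1$ the $y$-loop accounts for only one of the three), so as long as $v_1[*]$ is the only full fiber every application of the spreading rule has at most one full input and produces only shifted intersections of your partial seeds; a second full fiber can arise only after an Example~\ref{exa:9A}/\ref{exa:6D}-type accumulation succeeds on some fiber, and whether the offsets induced by the labels of the covering $\Sigma_{9A}^{N;a,b,c}$ are the ones those examples require is precisely what you have not computed. So the assertions that $v_1[*]\cup v_j[I]$ (your case (ii), with the stronger bound $2/9$) and $v_1[*]\cup v_j[I]\cup v_k[I']$ (your case (iii)) are plagues are not routine adaptations of Lemmas~\ref{lem:Hoq7_imm} or \ref{lem:Hoq10_imm}; they are unproved claims, and they are exactly where the bounds would have to be earned.

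There is also a concrete error in case (ii): you dispose of the subcase $a+b\equiv 0$, $b\not\equiv 1\pmod N$ by declaring it ``entirely analogous after the obvious relabelling of the two outer triangles''. There is no such relabelling: the label $a+b$ always sits on the $3$-cycles $(v_3\,v_8\,v_4)$ and $(v_2\,v_7\,v_6)$, while $1-b$ always sits on the $2$-cycles $(v_6\,v_9)$ and $(v_5\,v_8)$, and the quotient graph has trivial automorphism group ($v_1$ is the unique vertex with a $y$-loop, the oriented triangles then fix every vertex), so the two subcases are genuinely asymmetric; the paper accordingly treats them with different plagues and different pivot chains. Your case (i) ($N=3$, an $8$-element plague) agrees with the paper, and your counting (e.g.\ $n+N/n\leq N/3+3$ and $N\geq 15$ in case (iii)) is correct, but as it stands the proposal has a genuine gap at its core.
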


\begin{proof}
  There are four cases to consider. Assume first that $b\equiv 1\pmod N$
  and $a+b\equiv 0\pmod N$. Lemma~\ref{lem:Hoq13} implies then that $N=3$,
  $a\equiv 2\pmod N$, $b\equiv 1\pmod N$, and $c\equiv 0\pmod N$.  Let
	$I=\{0,1\}$. Then $P=v_1[I]\cup v_2[*]\cup
	v_5[*]$ is a plague of size $8$. Indeed, 
	\begin{center}
	\begin{tabular}{c|cccccc}
		pivot & $v_4[*]$ & $v_{1}[I+1]$ & $v_{3}[I-1]$ & $v_{8}[I+1]$ & $v_{2}[I\cap(I-1)]$ & $v_{5}[I+1]$ \tabularnewline
		\hline 
		& $v_6[*]$ & $v_{3}[I+1]$ & $v_{7}[I-1]$ & $v_{9}[I+2]$ & $v_{4}[I\cap(I+1)]$ & $v_{4}[I+2]$ \tabularnewline
	 \end{tabular}
  \end{center}
	and hence, since $(I\cap(I+1))\cup(I+2)=\ZN$, $P$ spreads to
        $v_4[*]$. From this the claim follows.

	Assume now that $b\not \equiv 1\pmod N$ and that $a+b\equiv 0\pmod N$.
        Then $|\langle b-1\rangle|\geq3$ because
	$N$ is odd. 
	Let $I$ be a set of representatives for $\ZN/\langle b-1\rangle$. We
	claim that $v_1[*]\cup v_2[*]\cup v_5[I]$ is a plague. We compute
	\begin{center}
	\begin{tabular}{c|ccccccc}
    pivot & $v_{1}[*]$ & $v_{3}[*]$ & $v_{2}[*]$ & $v_{4}[I]$ & $v_{5}[I-1]$ & $v_{7}[I]$ & $v_{6}[I+b-1]$ \tabularnewline
		 \hline
 	  & $v_{3}[*]$ & $v_{7}[*]$ & $v_{4}[*]$ & $v_{6}[I]$ & $v_{9}[I]$ & $v_{8}[I]$ & $v_{5}[I+b-1]$ \tabularnewline
		\end{tabular}
	\end{center}
	and the claim follows from Example \ref{exa:9A}. Therefore 
	\begin{equation}
		\label{eq:Hoq13_imm}
		\imm(\Sigma)\leq\frac{2N+|I|}{9N}\leq7/27
	\end{equation}
	since $|I|\leq N/3$. 

	Assume now that $b\equiv 1\pmod N$ and $a+b\not \equiv 0\pmod N$.
        Let $I$ be a set of representatives for
	$\ZN/\langle a+b\rangle$. 
	Then $P=v_4[*]\cup v_8[*]\cup v_2[I]$ is a plague. Indeed, $P$ spreads
        according to the following table.
	\begin{center}
	\begin{tabular}{c|ccccc}
		pivot & $v_{6}[*]$ & $v_{9}[*]$ & $v_{4}[I-a-1]$ & $v_{8}[I-a-b-1]$ & $v_{7}[I-a-b]$ \tabularnewline
		 \hline
		 & $v_{5}[*]$ & $v_{7}[*]$ & $v_{6}[I-a-1]$ & $v_{9}[I-a-b]$ & $v_{3}[I-a-b+1]$ \tabularnewline
		\end{tabular}
	\end{center}
	Pivoting from $v_2[I-a-b]$ and $v_3[I-a-b]$, $P$ spreads further to $v_1[I-a-b+1]\cup v_2[I-a-b]$.
        As before, since $|I|\leq N/3$, and using Example
	\ref{exa:9A} with $\lambda=a+b$, we conclude that $P$ spreads to $v_2[*]$.
        Hence $P$ is a plague and $\imm(\Sigma)\leq7/27$. 

	Finally, assume that $b\not \equiv 1\pmod N$ and $a+b\not \equiv 0\pmod N$.
        From Lemma \ref{lem:Hoq13} we
	obtain that $N$ is odd,
        $\langle a+b\rangle\cap\langle b-1\rangle=0$,
        and $c\equiv a+1\pmod N$.
        If $|\langle a+b\rangle|=3$
	then $|\langle b-1\rangle|\ge 5$ and $v_1[*]\cup v_2[*]\cup v_5[I]$ is a
	plague, where $I$ is a set of representatives for $\ZN/\langle b-1\rangle$. 
        Indeed, the calculations
	\begin{center}
	\begin{tabular}{c|ccccccc}
		pivot & $v_{1}[*]$ & $v_{2}[*]$ & $v_{3}[*]$ & $v_{4}[I]$ & $v_{5}[I-1]$
                 & $v_8[I-1]$ & $v_4[I+b-1]$
                 \tabularnewline
		 \hline
		 & $v_{3}[*]$ & $v_{4}[*]$ & $v_{7}[*]$ & $v_{6}[I]$ & $v_{9}[I]$
                 & $v_6[I+b-1]$ & $v_5[I+b-1]$
                 \tabularnewline
		\end{tabular}
	\end{center}
        together with Example~\ref{exa:9A} with $\lambda =1-b$ show that
        $P$ is a plague if $P\cup v_5[*]$ is a plague. However, the latter is easy to check.
	On the other hand, if $|\langle a+b\rangle|\geq5$,
        then let $P=v_4[*]\cup v_8[*]\cup v_2[I]$,
        where $I$ is a set of representatives for $\ZN/\langle a+b\rangle$.
        Then $P$ is a plague. Indeed, the calculations
	\begin{center}
	\begin{tabular}{c|ccccccc}
		pivot & $v_{6}[*]$ & $v_{9}[*]$ & $v_{4}[I-a-1]$ & $v_{8}[I-a-b-1]$ & $v_{7}[I-a-b]$
                 \tabularnewline
		 \hline
		 & $v_{5}[*]$ & $v_{7}[*]$ & $v_{6}[I-a-1]$ & $v_{9}[I-a-b]$ & $v_{3}[I-a-b+1]$
                 \tabularnewline
                 pivot & & & & $v_2[I-a-b]$ & $v_3[I-a-b]$
                 \tabularnewline
		 \hline
                 & & & & $v_1[I-a-b+1]$ & $v_2[I-a-b]$
                 \tabularnewline
		\end{tabular}
	\end{center}
        together with Example~\ref{exa:9A} with $\lambda =a+b$ show that
        $P$ is a plague if $P\cup v_2[*]$ is a plague.
        Again, the latter is easy to check.
	Since $|I|\leq N/5$, in the last two cases we obtain that
	\[
		\imm(\Sigma)\leq\frac{2N+|I|}{9N}\leq\frac{2N+N/5}{9N}=11/45.
	\]
	This completes the proof.
\end{proof}

\begin{lem}
	\label{lem:9A_imm}
	Let $\Sigma$ be a covering of $\Sigma_{9A}$ with simply intersecting cycles.
	Then $\imm(\Sigma)\leq\wg(\Sigma)$.
\end{lem}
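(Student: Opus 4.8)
The plan is to follow the scheme already used in the proofs of Lemmas~\ref{lem:6A_imm}, \ref{lem:6D_imm} and \ref{lem:7A_imm}: feed the arithmetic constraints and the cycle labels supplied by Lemma~\ref{lem:Hoq13} into a determination of the cycle type of every vertex, compute $\wg(\Sigma)$ from \eqref{eq:omega}, and then compare with the immunity bounds of Lemma~\ref{lem:Hoq13_imm}, case by case.

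First I would collect the structural data. By Lemma~\ref{lem:Hoq13}, $N>1$ is odd, $2a\equiv 1\pmod N$, $c\equiv a+1\pmod N$, and $\langle b-1\rangle\cap\langle a+b\rangle=0$; write $p=|\langle a+b\rangle|$ and $q=|\langle b-1\rangle|$, which are odd divisors of $N$ with $p=1$ exactly when $a+b\equiv 0\pmod N$ and $q=1$ exactly when $b\equiv 1\pmod N$. From the list of $xy$- and $yx$-cycles and their labels in the proof of Lemma~\ref{lem:Hoq13}, using $a-c+1\equiv 0$ and $b+c-1\equiv a+b\pmod N$, together with the fact (used e.g.\ in the proof of Lemma~\ref{lem:6A_imm}) that a $\sigma_1$-cycle lying over an $xy$-cycle of length $\ell$ with label $s$ has length $\ell\,|\langle s\rangle|$, one reads off, for all $i\in\ZN$, that $c(v_1[i])=(4,4)$, $c(v_2[i])=c(v_7[i])=(4,3p)$, $c(v_3[i])=c(v_4[i])=(3p,4)$, $c(v_5[i])=(4,2q)$, $c(v_9[i])=(2q,4)$, and $c(v_6[i])=c(v_8[i])=(2q,3p)$. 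Since $\Sigma$ covers $\Sigma_{9A}$, none of the three exceptional coverings in \eqref{eq:omega} occurs, so $\omega(v)=\omega'_{ij}$ whenever $c(v)=(i,j)$, and the symmetry of the matrix $(\omega'_{ij})$ gives
\[
	9\,\wg(\Sigma)=\omega'_{4,4}+4\,\omega'_{4,3p}+2\,\omega'_{4,2q}+2\,\omega'_{3p,2q}.
\]

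Then I would run through the three cases of Lemma~\ref{lem:Hoq13_imm}. If $p=q=1$ (so $N=3$), the entries $\omega'_{4,4}=1/4$, $\omega'_{4,3}=7/24$, $\omega'_{4,2}=\omega'_{3,2}=1/3$ give $\wg(\Sigma)=11/36>8/27\ge\imm(\Sigma)$. If $p\ge 3$ and $q\ge 3$, then $3p,2q\ge 4$, every term equals $1/4$, so $\wg(\Sigma)=1/4>11/45\ge\imm(\Sigma)$. In the remaining case exactly one of $p,q$ equals $1$: for $p=1$, $q\ge 3$ one gets $\wg(\Sigma)=\tfrac19\bigl(1/4+4\cdot 7/24+2\cdot 1/4+2\cdot 7/24\bigr)=5/18$, and for $p\ge 3$, $q=1$ one gets $\wg(\Sigma)=\tfrac19\bigl(1/4+4\cdot 1/4+2\cdot 1/3+2\cdot 1/3\bigr)=31/108$; in both sub-cases $\wg(\Sigma)>7/27\ge\imm(\Sigma)$ by Lemma~\ref{lem:Hoq13_imm}. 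This completes the proof.

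I do not expect a genuine obstacle beyond Lemmas~\ref{lem:Hoq13} and \ref{lem:Hoq13_imm}; the only delicate part is the bookkeeping for the cycle structure --- correctly pairing each $v_k$ with the $xy$- and $yx$-cycle containing it, tracking which cycle labels vanish modulo $N$, and, in the last case, remembering that $3p$ and $2q$ drop to $3$ and $2$ when $p=1$ or $q=1$, so that the entries $\omega'_{4,3}=7/24$, $\omega'_{4,2}=1/3$, $\omega'_{3,2}=1/3$ (and not $1/4$) are the ones contributing to $\wg(\Sigma)$.
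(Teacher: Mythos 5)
Your proposal is correct and follows essentially the same route as the paper: it derives the same cycle structure \eqref{eq:Hoq13_v1}--\eqref{eq:Hoq13_v6v8} from Lemma~\ref{lem:Hoq13}, obtains the same four weight values $11/36$, $5/18$, $31/108$, $1/4$, and compares them with the immunity bounds of Lemma~\ref{lem:Hoq13_imm} exactly as in the paper's proof. (The only nitpick is the harmless swap of coordinates in $c(v_8[i])$, which is irrelevant since $(\omega'_{ij})$ is symmetric.)
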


\begin{proof}
	The cycle structure at each vertex of $\Sigma$ is the following:
	\begin{align}
		\label{eq:Hoq13_v1}	v_1[i] & \text{ two $4$-cycles, }\\
		\label{eq:Hoq13_v2v3v4v7}	v_2[i],v_3[i],v_4[i],v_7[i] & \text{ one $4$-cycle and one cycle of length $3|\langle a+b\rangle|$,}\\
		\label{eq:Hoq13_v5v9}	v_5[i],v_9[i] & \text{ one $4$-cycle and one cycle of length $2|\langle b-1\rangle|$,}\\
		\label{eq:Hoq13_v6v8}	v_6[i],v_8[i] & \text{ cycles of length $3|\langle a+b\rangle|$ and $2|\langle b-1\rangle|$,}
	\end{align}
	for all $i\in\ZN$. 
	From \eqref{eq:Hoq13_v1}--\eqref{eq:Hoq13_v6v8} it is straightforward to compute
	the weight at every vertex of $\Sigma$ and $\wg(\Sigma)$:
	\[
		\wg(\Sigma)=\begin{cases}
			11/36 & \text{if $a\equiv -b\pmod N$, $b\equiv 1\pmod N$,}\\
			5/18 & \text{if $a\equiv -b\pmod N$, $b\not\equiv 1\pmod N$,}\\
			31/108 & \text{if $a\not\equiv -b\pmod N$, $b\equiv 1\pmod N$,}\\
			1/4 & \text{if $a\not\equiv -b\pmod N$, $b\not\equiv 1\pmod N$.}
		\end{cases}
	\]
	From this and from Lemma~\ref{lem:Hoq13_imm}
  the claim follows.
\end{proof}

\subsection{The graph $\Sigma_{12A}$}
\label{12A}

\begin{lem}
	\label{lem:12A_imm}
	Let $\Sigma$ be a covering of $\Sigma_{12A}$ with simply intersecting cycles.
	Then $\imm(\Sigma)\leq1/4=\wg(\Sigma)$.
\end{lem}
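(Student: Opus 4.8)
The proof will follow the familiar two-part pattern of the earlier cases (compare Lemma~\ref{lem:8A_imm}): I first exhibit a plague of $\Sigma$ of size $3N$, which by the definition of immunity gives $\imm(\Sigma)\le 3N/(12N)=1/4$, and then I show that $\wg(\Sigma)=1/4$ by computing the cycle structure at every vertex.

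The weight is the easier half. The point is that in the Schreier graph of $\Sigma_{12A}$ (Figure~\ref{fig:Hoq15}) every $xy$-cycle, and hence also every $yx$-cycle, has exactly four elements: $\Sigma_{12A}$ has no oriented loops and no $y$-edge joining two vertices of the same oriented triangle, and its six $y$-edges link the four triangles so that $xy$ acts with all cycles of length $4$. By the remark preceding Lemma~\ref{lem:loops}, a $\sigma_1$-cycle of the covering $\Sigma$ maps onto an $xy$-cycle of $\Sigma_{12A}$ and a $\sigma_2$-cycle maps onto a $yx$-cycle, and since this projection is an equivariant surjection of cyclic sets, the length of every $\sigma_1$-cycle and of every $\sigma_2$-cycle of $\Sigma$ is a positive multiple of $4$. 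In particular $c(v)=(i,j)$ with $i,j\ge 4$ for every $v\in\Sigma$. As $\Sigma$ is a covering of $\Sigma_{12A}$, it is none of the three exceptional coverings singled out in \eqref{eq:omega}, so $\omega(v)=\omega'_{ij}=1/4$ by \eqref{eq:weights}, and therefore $\wg(\Sigma)=\frac{1}{|\Sigma|}\sum_{v\in\Sigma}\omega(v)=1/4$.

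For the plague I would choose three of the twelve fibers, say $v_1[*]$, $v_i[*]$ and $v_j[*]$ lying in three distinct oriented triangles, and verify by a short table of pivots --- in the style of the proofs of Lemmas~\ref{lem:Hoq5_imm} and \ref{lem:Hoq13_imm} --- that $P=v_1[*]\cup v_i[*]\cup v_j[*]$ spreads to all of $\Sigma$, propagating one complete fiber at a time. Working only with complete fibers is what makes this uniform: every translation $I\mapsto I+k$ that appears in the spreading rule then acts on all of $\ZN$, so no appeal to Examples~\ref{exa:9A}--\ref{exa:game12C} is needed, the argument is insensitive to the fiber size $N$ and to the labels of $\Sigma$, and --- in contrast to the cases $\Sigma_{6A}$, $\Sigma_{6D}$, $\Sigma_{9A}$ treated in Lemmas~\ref{lem:Hoq7_imm} and \ref{lem:Hoq10_imm} --- no structural lemma restricting those labels is required. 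This is also the reason the bound here is the clean constant $1/4$ rather than an expression depending on $N$. Combining the two halves gives $\imm(\Sigma)\le 1/4=\wg(\Sigma)$.

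I expect the only genuine work to be the bookkeeping in the plague step: choosing the three starting fibers and the order of the pivots so that the infection eventually reaches all four triangles, and keeping careful track of the orientation of each $y$-edge and the position of each label in Figure~\ref{fig:Hoq15}. Once a correct pivot sequence has been written down, its verification is purely mechanical, because $\Sigma$ has no short cycles and hence there are no number-theoretic conditions on $N$ to monitor.
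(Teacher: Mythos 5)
Your proof is, in substance, the paper's proof: the paper also exhibits a plague consisting of three complete fibers (it uses $v_1[*]\cup v_2[*]\cup v_5[*]$, of size $3N$) and gets $\wg(\Sigma)=1/4$ from the fact that all cycles of $\Sigma$ have length $\geq 4$, so your two-part plan and the reduction to a label-independent, fiber-by-fiber spreading computation are exactly right. One correction, though: your justification of the cycle lengths describes the wrong graph. You cite Figure~\ref{fig:Hoq15}, which is $\Sigma_{12B}$; the graph $\Sigma_{12A}$ is the one in Figure~\ref{fig:Hoq14}, and it is \emph{not} loop-free --- it has $y$-loops at $v_1$ and $v_{12}$ (it arises in the classification as a space with points fixed by $y$). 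So the argument ``no loops and no $y$-edge inside a triangle, hence all $xy$-cycles have length $4$'' is not available as stated. The conclusion you need is nevertheless true and is checked directly: the $xy$-cycles of $\Sigma_{12A}$ are $(v_1\,v_2\,v_5\,v_7)$, $(v_3\,v_8\,v_{11}\,v_4)$, $(v_6\,v_{12}\,v_{10}\,v_9)$ and the $yx$-cycles likewise all have length $4$, so every $\sigma_1$- and $\sigma_2$-cycle of a covering has length a multiple of $4$ and $\omega(v)=\omega'_{ij}=1/4$ everywhere (and $\Sigma$ is none of the three exceptional coverings, since those lie over other base graphs). For the plague step, note that the paper's choice puts two fibers ($v_1,v_2$) in the same triangle; your ``three distinct triangles'' variant does work for a suitable choice (e.g.\ $v_1[*]\cup v_5[*]\cup v_8[*]$: pivoting at $v_9$ infects $v_7[*]$, then pivoting at $v_3$ infects $v_2[*]$, after which one is reduced to the paper's configuration), but you still owe that explicit pivot sequence --- it is the only actual content of the step, and it is not automatic from the choice of triangles.
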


\begin{proof}
  In $\Sigma $, all cycles have length $\geq4$,
  hence $\wg(\Sigma )=1/4$.
	A straightforward computation shows that $v_1[*]\cup v_2[*]\cup v_5[*]$ is a
	plague of $\Sigma $ of size $3N$ and hence $\imm(\Sigma)\leq1/4=\wg(\Sigma)$.
\end{proof}

%

\begin{figure}[ht]
  \begin{graph}(12,3)
    \roundnode{n1}(1,1.5)
    \roundnode{n2}(3,2.5)
    \roundnode{n3}(3,.5)
    \roundnode{n4}(5,2.5)
    \roundnode{n5}(6,2)
    \roundnode{n6}(7,2.5)
    \roundnode{n7}(5,.5)
    \roundnode{n8}(7,.5)
    \roundnode{n9}(6,1)
    \roundnode{n10}(9,.5)
    \roundnode{n11}(9,2.5)
    \roundnode{n12}(11,1.5)
    \diredge{n1}{n2}
    \diredge{n2}{n3}
    \diredge{n3}{n1}
    \diredge{n4}{n5}
    \diredge{n5}{n6}
    \diredge{n6}{n4}
    \diredge{n7}{n8}
    \diredge{n8}{n9}
    \diredge{n9}{n7}
    \diredge{n10}{n11}
    \diredge{n11}{n12}
    \diredge{n12}{n10}
    \loopedge{n1}(-.5,-.5)(-.5,.5)[\yedge]
    \edge{n2}{n4}[\yedge]
    \edge{n3}{n7}[\yedge]
    \edge{n5}{n9}[\yedge]
    \edge{n6}{n11}[\yedge]
    \edge{n8}{n10}[\yedge]
    \loopedge{n12}(.5,.5)(.5,-.5)[\yedge]
		\nodetext{n1}(0,.5){$v_1$}
		\nodetext{n2}(.35,.30){$v_2$}
		\nodetext{n4}(-.25,-.30){$v_4$}
		\nodetext{n3}(.35,.30){$v_3$}
		\nodetext{n7}(-.25,-.30){$v_7$}
		\nodetext{n5}(.25,-.30){$v_5$}
		\nodetext{n9}(-.30,.30){$v_9$}
		\nodetext{n6}(.25,-.30){$v_6$}
		\nodetext{n8}(.25,-.30){$v_8$}
		\nodetext{n10}(-.35,.30){$v_{10}$}
		\nodetext{n11}(-.35,.30){$v_{11}$}
		\nodetext{n12}(0,.5){$v_{12}$}
  \end{graph}
  \caption{Schreier graph $\Sigma _{12A}$}
  \label{fig:Hoq14}
\end{figure}

\subsection{The graph $\Sigma_{12B}$}
\label{12B}

{
\setlength{\unitlength}{1.2cm}
\begin{figure}[h]
  \begin{graph}(8,3)
    \roundnode{n1}(1,2.5)
    \roundnode{n2}(3,2.5)
    \roundnode{n3}(2,2)
    \roundnode{n4}(1,.5)
    \roundnode{n5}(2,1)
    \roundnode{n6}(3,.5)
    \roundnode{n7}(5,2.5)
    \roundnode{n8}(7,2.5)
    \roundnode{n9}(6,2)
    \roundnode{n10}(5,.5)
    \roundnode{n11}(6,1)
    \roundnode{n12}(7,.5)
    \diredge{n1}{n2}
    \diredge{n2}{n3}
    \diredge{n3}{n1}
    \diredge{n4}{n5}
    \diredge{n5}{n6}
    \diredge{n6}{n4}
    \diredge{n7}{n8}
    \diredge{n8}{n9}
    \diredge{n9}{n7}
    \diredge{n10}{n11}
    \diredge{n11}{n12}
    \diredge{n12}{n10}
    \edge{n1}{n4}[\yedge]
    \edge{n2}{n7}[\yedge]
    \edge{n3}{n5}[\yedge]
    \edge{n6}{n10}[\yedge]
    \edge{n8}{n12}[\yedge]
    \edge{n9}{n11}[\yedge]
		\nodetext{n1}(-.30,.25){$v_1$}
		\nodetext{n1}(-.30,-.25){$a$}
		\nodetext{n2}(.30,-.30){$v_2$}
		\nodetext{n2}(.30,.30){$0$}
		\nodetext{n3}(-.30,-.25){$v_3$}
		\nodetext{n3}(.30,-.25){$1$}
	  \nodetext{n4}(-.30,-.25){$v_4$}
		\nodetext{n4}(-.50,.25){\footnotesize{$1-a$}}
		\nodetext{n5}(.30,.25){$v_5$}
		\nodetext{n5}(-.30,.25){$0$}
		\nodetext{n6}(.30,-.25){$v_6$}
		\nodetext{n6}(.30,.25){$0$}
 	  \nodetext{n7}(-.30,.25){$v_7$}
 	  \nodetext{n7}(-.30,-.25){$1$}
		\nodetext{n8}(.30,.25){$v_8$}
		\nodetext{n8}(.30,-.25){$c$}
		\nodetext{n9}(-.30,-.25){$v_9$}
		\nodetext{n9}(.50,-.25){\footnotesize{$1-b$}}
 	  \nodetext{n10}(-.30,.30){$v_{10}$}
 	  \nodetext{n10}(-.30,-.30){$1$}
		\nodetext{n11}(.40,.25){$v_{11}$}
		\nodetext{n11}(-.30,.25){$b$}
		\nodetext{n12}(.40,-.25){$v_{12}$}
		\nodetext{n12}(.50,.25){\footnotesize{$1-c$}}
 		\edgetext{n1}{n2}{$-1$}	
 		\edgetext{n4}{n5}{$-1$}	
 		\edgetext{n7}{n8}{$-1$}	
 		\edgetext{n10}{n11}{$-1$}	
  \end{graph}
  \caption{Schreier graph $\Sigma _{12B}$ and its coverings}
  \label{fig:Hoq15}
\end{figure}
}

\begin{lem}
	\label{lem:Hoq15}
	Let $\Sigma $ be a covering of $\Sigma_{12B}$ in Figure \ref{fig:Hoq15} with
  simply intersecting cycles. Then $N>1$ and the following conditions are satisfied:
	\begin{enumerate}
		\item\label{item:Hoq15_1} $\langle1-a\rangle\cap\langle a-c\rangle=0$,
		\item\label{item:Hoq15_2} $\langle1-b\rangle\cap\langle a-c\rangle=0$,
		\item\label{item:Hoq15_3} $\langle1-b\rangle\cap\langle 1-a\rangle=0$,
		\item\label{item:Hoq15_4} $\langle1-b\rangle\cap\langle b+c\rangle=0$,
		\item\label{item:Hoq15_5} $\langle a-c\rangle\cap\langle b+c\rangle=0$,
		\item\label{item:Hoq15_6} $(-c+\langle a-c\rangle)\cap\langle b-1\rangle=\emptyset$.
	\end{enumerate}
\end{lem}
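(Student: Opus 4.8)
The plan is to read off the $\sigma_1$- and $\sigma_2$-cycle structure of $\Sigma_{12B}$ from Figure~\ref{fig:Hoq15} and then feed it into Lemmas~\ref{lem:xy_and_yx_cycles}, \ref{lem:from_v_to_w} and \ref{lem:trivial_covering}, exactly as in the proofs of Lemmas~\ref{lem:Hoq8} and~\ref{lem:Hoq10}. First I would list the four $xy$-cycles of $\Sigma_{12B}$ with their labels (each label being the sum of the edge labels around the cycle, so that every $x$-triangle contributes $-1$ and every $y$-edge contributes $1$):
\[
(v_1\;v_5)\colon 1-a,\quad (v_2\;v_8\;v_{10}\;v_4)\colon a-c,\quad (v_3\;v_6\;v_{11}\;v_7)\colon 1-b,\quad (v_9\;v_{12})\colon b+c ,
\]
and the four $yx$-cycles:
\[
(v_1\;v_7\;v_{12}\;v_6)\colon a-c,\quad (v_2\;v_5\;v_{10}\;v_9)\colon 1-b,\quad (v_3\;v_4)\colon 1-a,\quad (v_8\;v_{11})\colon b+c .
\]
Each of the twelve vertices occurs exactly once in each list, which is a handy consistency check on the computation.

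Next, conditions (1)--(5) each follow from a single application of Lemma~\ref{lem:xy_and_yx_cycles}: at $v_1$ the $xy$- and $yx$-cycle labels are $1-a$ and $a-c$, giving \eqref{item:Hoq15_1}; at $v_2$ they are $a-c$ and $1-b$, giving \eqref{item:Hoq15_2}; at $v_3$ they are $1-b$ and $1-a$, giving \eqref{item:Hoq15_3}; at $v_9$ they are $b+c$ and $1-b$, giving \eqref{item:Hoq15_4}; at $v_8$ they are $a-c$ and $b+c$, giving \eqref{item:Hoq15_5}. Every remaining vertex merely reproduces one of these five, so this choice of vertices loses nothing.

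For $N>1$ and condition \eqref{item:Hoq15_6} I would use the distinct vertices $v_2$ and $v_{10}$, which lie together on the $xy$-cycle $(v_2\;v_8\;v_{10}\;v_4)$ and on the $yx$-cycle $(v_2\;v_5\;v_{10}\;v_9)$. As $v_{10}\in C_{xy}(v_2)\cap C_{yx}(v_2)$ and $v_{10}\neq v_2$, Lemma~\ref{lem:trivial_covering} gives $N>1$ directly. Tracking labels along the two arcs from $v_2$ to $v_{10}$, the $xy$-arc $v_2\to v_8\to v_{10}$ has label $1-c$ and the $yx$-arc $v_2\to v_5\to v_{10}$ has label $1$; hence in a covering with simply intersecting cycles $v_2[i]$ and $v_{10}[j]$ share a $\sigma_1$-cycle exactly when $j-i\in(1-c)+\langle a-c\rangle$ and share a $\sigma_2$-cycle exactly when $j-i\in 1+\langle 1-b\rangle$. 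Since two distinct points cannot lie on a common $\sigma_1$-cycle and a common $\sigma_2$-cycle (this is the coset form of Lemma~\ref{lem:from_v_to_w}, obtained by letting the two arcs wind around their cycles), these two cosets of $\ZN$ must be disjoint; subtracting $1$ and using $\langle 1-b\rangle=\langle b-1\rangle$ yields $\bigl(-c+\langle a-c\rangle\bigr)\cap\langle b-1\rangle=\emptyset$, which is \eqref{item:Hoq15_6}.

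The only real labor here is the bookkeeping, and that is where I expect the pitfalls to lie: one must orient each $y$-edge correctly (each label sits at the head of its arrow) and keep signs straight while summing labels around the four-element cycles, since a single transposed label would spoil one of the six conditions. The one conceptual point worth spelling out carefully is that \eqref{item:Hoq15_6} is a disjointness of cosets rather than a single non-congruence, so it is the full strength of ``simply intersecting cycles'' that is being used at that step.
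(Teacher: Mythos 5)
Your proposal is correct and follows essentially the same route as the paper: the same lists of $xy$- and $yx$-cycles with labels, Lemma~\ref{lem:xy_and_yx_cycles} for conditions (1)--(5), Lemma~\ref{lem:trivial_covering} for $N>1$ (the paper happens to use the pair $v_6,v_7$ rather than your $v_2,v_{10}$, which changes nothing), and the winding-path comparison at $v_2$ and $v_{10}$ with labels $1-c+k(a-c)$ versus $1+l(1-b)$ for condition (6). Your remark that (6) needs the ``coset'' (arbitrary winding) reading of Lemma~\ref{lem:from_v_to_w} is exactly how the paper itself applies it.
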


\begin{proof}
	The $yx$-cycles and their labels are: $(v_3\;v_4)$ with $1-a$,
	$(v_8\;v_{11})$ with $b+c$, $(v_1\;v_7\;v_{12}\;v_6)$ with $a-c$ and
	$(v_2\;v_5\;v_{10}\;v_9)$ with $1-b$.  The $xy$-cycles are: $(v_1\;v_5)$ with
	$1-a$, $(v_9\;v_{12})$ with $b+c$, $(v_2\;v_8\;v_{10}\;v_4)$ with $a-c$ and
	$(v_3\;v_6\;v_{11}\;v_7)$ with $1-b$.
        Then Lemma~\ref{lem:xy_and_yx_cycles} implies \eqref{item:Hoq15_1}--\eqref{item:Hoq15_5}.
        Moreover, Lemma \ref{lem:trivial_covering} on $v_6$ and $v_7$ implies that $N>1$. 
	Finally, the claim \eqref{item:Hoq15_6} follows from
	Lemma \ref{lem:from_v_to_w} on $v_2$ and $v_{10}$. Indeed, the $xy$-path
        starting in $v_2[i]$ for some $i\in \ZN$, which has length $4k+2$ with $k\in \Z $,
        ends in $v_{10}[1-c+k(a-c)]$, and any $yx$-path of length $4l+2$
        with the same starting point ends in $v_{10}[1+l(1-b)]$.
\end{proof}

\begin{lem}
	\label{lem:Hoq15_imm}
	Let $\Sigma $ be a covering of $\Sigma_{12B}$ as in Figure \ref{fig:Hoq15} with
	simply intersecting cycles. Then 
	\[
	\imm(\Sigma)\leq\begin{cases}
		\frac{3N+1}{12N} & \text{if $a\equiv1\pmod N$ or $b+c\equiv 0\pmod N$, }\\
		1/4 & \text{otherwise.}
	\end{cases}
	\]
\end{lem}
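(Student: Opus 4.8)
The plan is to follow the pattern of the immunity lemmas already proved for the other graphs (Lemmas~\ref{lem:Hoq5_imm}, \ref{lem:Hoq7_imm}, \ref{lem:Hoq10_imm}, \ref{lem:Hoq13_imm}, \ref{lem:12A_imm}): first use symmetries of the Schreier graph in Figure~\ref{fig:Hoq15} to cut down the number of cases, then exhibit an explicit plague built from full fibers $v_i[*]$ together with one or two partial fibers $v_k[I]$, spread it by a chain of pivots, and bound its size by $3N$, resp.\ $3N+1$, dividing finally by $|\Sigma|=12N$.

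For the reduction, note that in Figure~\ref{fig:Hoq15} the left half $\{v_1,\dots,v_6\}$ and the right half $\{v_7,\dots,v_{12}\}$ are exchanged by an automorphism of the underlying $\mathbf{PSL}(2,\Z)$-space which, arguing with the labels as in the proof of Lemma~\ref{lem:Hoq10_imm}, lifts to an isomorphism of coverings interchanging the label $1-a$ of the short $yx$-cycle $(v_3\;v_4)$ with the label $b+c$ of the short $yx$-cycle $(v_8\;v_{11})$. Hence the hypothesis ``$a\equiv1\pmod N$ or $b+c\equiv0\pmod N$'' collapses to the single case $a\equiv1\pmod N$, in which $\langle 1-a\rangle=0$, so that the $\sigma_1$-cycle through each $v_1[i]$, $v_5[i]$ and the $\sigma_2$-cycle through each $v_3[i]$, $v_4[i]$ have length~$2$, whereas otherwise all $\sigma_1$- and $\sigma_2$-cycles of $\Sigma$ have length~$\ge3$. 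By Lemma~\ref{lem:Hoq15} the subgroups $\langle 1-a\rangle$, $\langle a-c\rangle$, $\langle 1-b\rangle$, $\langle b+c\rangle$ of $\ZN$ have pairwise trivial intersection, which is exactly what the spreading arguments below need.

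In the generic case ($a\not\equiv1\pmod N$ and $b+c\not\equiv0\pmod N$) I would check, just as in Lemma~\ref{lem:12A_imm}, that a set $P$ consisting of three suitably chosen full fibers is a plague: each full fiber is $\langle\Delta\rangle$-stable, so every pivot step with these fibers as pivots produces another full fiber, and running the pivot sequence with $I=\ZN$ throughout (treating the four $x$-triangles in turn and moving between them along the $y$-edges) spreads $P$ to all of $\Sigma$; hence $|P|=3N$ and $\imm(\Sigma)\le 3N/(12N)=1/4$. In the special case $a\equiv1\pmod N$ the three-fiber set no longer spreads to everything, because the length-$2$ cycles at $v_1[*],v_5[*],v_3[*],v_4[*]$ block the propagation: knowing one point of such a cycle no longer forces the other. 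I would instead use a plague of the form $P=v_i[*]\cup v_j[*]\cup v_k[I]\cup\{v_1[0]\}$, with $v_i,v_j,v_k$ in the part of the graph carrying the long cycles, $I$ a set of coset representatives for $\ZN/\Gamma$ where $\Gamma$ is one of the nontrivial subgroups among $\langle a-c\rangle,\langle 1-b\rangle,\langle b+c\rangle$, and the single point $v_1[0]$ seeding the degenerate $2$-cycle. Pivoting then spreads $P$ first along the long cycles, using Example~\ref{exa:9A} inside the relevant fibers (and Example~\ref{exa:6D} wherever a representative set must first be enlarged by its subgroup, as in Lemma~\ref{lem:Hoq10_imm}), and afterwards through the short cycles once one point of each is present. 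Since $|I|\le N$, this gives $|P|\le 2N+N+1=3N+1$ and $\imm(\Sigma)\le(3N+1)/(12N)$.

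The step I expect to be the main obstacle is the special case: one must choose the two full fibers, the modulus subgroup and the position of the seed so that the pivot chain actually closes up to all of $\Sigma$, and so that every congruence it produces (a translation in $\ZN$) is among those guaranteed by Lemma~\ref{lem:Hoq15} and handled by the $(\Z,\Z_m)$-automata of Examples~\ref{exa:9A} and~\ref{exa:6D}; in particular the fibers over $v_1,v_3,v_4,v_5$ have to be tracked separately because their cycles are short, and the ``doubly degenerate'' sub-cases (e.g.\ $a\equiv1$ together with $b\equiv1$, $a\equiv c$, or $b+c\equiv0$, which by Lemma~\ref{lem:Hoq15} force $N$ to be small) may need a separate direct inspection. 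The generic case is routine.
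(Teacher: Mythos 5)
Your proposal breaks down in the case you call routine, the generic one. A union of full fibers evolves under the automaton as a union of full fibers, so spreading from three full fibers is governed entirely by the induced rule on the $12$-vertex quotient $\Sigma_{12B}$, where the three neighbours of a pivot $p$ are $x^{-1}p$, $yp$ and $xp$. In this quotient, every neighbour-triple that contains a vertex of $\{v_1,v_3,v_4,v_5\}$ contains exactly two vertices of that set, so a vertex of it can only become infected if another one already is; the same holds for $\{v_8,v_9,v_{11},v_{12}\}$. Hence any quotient-level plague must meet both four-element sets, and checking the remaining three-element seeds (there are few, up to the half-swapping symmetry) shows they always stall: for instance $\{v_1,v_2,v_4\}$ stops at $\{v_1,\dots,v_7,v_{10}\}$, and $\{v_1,v_3\}$ together with any vertex of $\{v_8,v_9,v_{11},v_{12}\}$ stops at six vertices. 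The smallest quotient plague has four vertices (e.g.\ $\{v_2,v_5,v_9,v_{10}\}$), which is precisely why $\Sigma_{12B}$ cannot be handled like $\Sigma_{12A}$: no choice of three full fibers is ever a plague, for any covering, so your generic case cannot produce the bound $3N/(12N)=1/4$. The paper gets $1/4$ differently: it takes only two full fibers $v_1[*]\cup v_2[*]$ together with two partial fibers $v_3[I]\cup v_8[J]$, where $I$ and $J$ are coset representatives for $\ZN/\langle a-1\rangle$ and $\ZN/\langle b+c\rangle$; in the generic case both subgroups are nontrivial, so $|I|+|J|\le N$, and the nonzero shifts along the long cycles together with Example~\ref{exa:9A} make the partial fibers grow to full ones. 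This use of the labels inside the fibers is exactly the ingredient missing from your generic case.

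Your special case is closer in spirit to the paper, which uses $P=v_2[*]\cup v_5[*]\cup v_9[*]\cup v_{10}[I]$ with $|I|=1$ (four quotient vertices forming a quotient plague, plus a single seed whose fibre fills up by a shift-by-one application of Example~\ref{exa:9A}), giving $(3N+1)/(12N)$; but your version is only an unverified sketch, and you yourself flag the pivot chain as the main obstacle. One further inaccuracy: Lemma~\ref{lem:Hoq15} does not give pairwise trivial intersection of all four subgroups $\langle1-a\rangle$, $\langle a-c\rangle$, $\langle1-b\rangle$, $\langle b+c\rangle$ --- the pair $\langle1-a\rangle$, $\langle b+c\rangle$ is not among its conclusions --- so the blanket appeal to that statement is not available. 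Also note that no symmetry reduction between the hypotheses $a\equiv1$ and $b+c\equiv0$ is needed: the paper's single degenerate-case plague works verbatim under either assumption.
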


\begin{proof}
	%
The cycle structure at each vertex of $\Sigma$ is:
	\begin{align}
		\label{eq:hoq15_v1v4}v_1[i],v_4[i]: & \text{ cycles of length $4|\langle a-c\rangle|$ and $2|\langle1-a\rangle|$},\\
		\label{eq:hoq15_v2v6v_7v10}v_2[i],v_6[i],v_7[i],v_{10}[i]: & \text{ cycles of length $4|\langle 1-b\rangle|$ and $4|\langle a-c\rangle|$},\\
		\label{eq:hoq15_v3v5}v_3[i],v_5[i]: & \text{ cycles of length $2|\langle1-a\rangle|$ and $4|\langle1-b\rangle|$},\\
		\label{eq:hoq15_v8v12}v_8[i],v_{12}[i]: & \text{ cycles of length $2|\langle b+c\rangle|$ and $4|\langle a-c\rangle|$},\\
		\label{eq:hoq15_v9v11}v_9[i],v_{11}[i]: & \text{ cycles of length $2|\langle b+c\rangle|$ and $4|\langle1-b\rangle|$},
	\end{align}
	for all $i\in\ZN$.

Assume first that $a\equiv 1\pmod N$ or $b+c\equiv 0\pmod N$.
We claim that there exists a subset $I\subseteq\ZN$ of size one such that the
set $P=v_2[*]\cup v_5[*]\cup v_9[*]\cup v_{10}[I]$ is a plague of size $3N+1$.
We compute
\begin{center}
\begin{tabular}{c|cccccc}
	pivot & $v_{3}[*]$ & $v_{4}[*]$ & $v_{7}[*]$ & $v_{11}[I-1]$ & $v_{10}[I]$ & $v_{12}[I]$\tabularnewline
	\hline 
	 & $v_{1}[*]$ & $v_{6}[*]$ & $v_{8}[*]$ & $v_{12}[I]$ & $v_{11}[I]$ & $v_{10}[I+1]$\tabularnewline
 \end{tabular}
\end{center}
and hence $P$ spreads to $v_{10}[*]$ by Example \ref{exa:9A}.
Thus $P$ is a plague since $P\cup v_{10}[*]$ is a plague.
In this case $\imm(\Sigma)\leq(3N+1)/12N$.  

Assume now that $a\not\equiv 1\pmod N$ and $b+c\not\equiv 0\pmod N$.
Let $I$ be a set of representatives for $\ZN/\langle a-1\rangle$ and
let $J$ be a set of representatives for $\ZN/\langle b+c\rangle$.
We claim that  $v_3[I]\cup v_8[J]\cup v_1[*]\cup v_2[*]$ is a plague.
We compute
\begin{center}
	\begin{tabular}{c|cccc}
		pivot & $v_{3}[*]$ & $v_{4}[*]$ & $v_{5}[I-1]$ & $v_1[I+a-1]$\tabularnewline
		\hline 
		 & $v_{5}[*]$ & $v_{6}[*]$ & $v_4[I]$ & $v_{3}[I+a-1]$\tabularnewline
	 \end{tabular}
 \end{center}
 and therefore $P$ spreads to $v_3[*]\cup v_4[*]$
 by Example \ref{exa:9A}. Now the calculations
	\begin{center}
	\begin{tabular}{c|cccc}
		pivot & $v_{2}[*]$ & $v_{6}[*]$ & $v_{9}[J]$ & $v_{12}[J+b]$\tabularnewline
		\hline 
		 & $v_{7}[*]$ & $v_{10}[*]$ & $v_{11}[J+b]$ & $v_{8}[I+b+c]$\tabularnewline
	 \end{tabular}
 \end{center}
 show that $P$ spreads to $v_8[*]$
 by Example \ref{exa:9A}. Then the claim
 follows and in this case $\imm(\Sigma)\leq1/4$ since $|I|+|J|\le N$.
\end{proof}

\begin{lem} \label{lem:12B_imm}
	Let $\Sigma$ be a covering of $\Sigma_{12B}$ with simply intersecting cycles.
	Then $\imm(\Sigma)\leq\wg(\Sigma)$.
\end{lem}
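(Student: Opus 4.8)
The plan is to follow exactly the scheme of Lemmas~\ref{lem:4A_imm}, \ref{lem:6D_imm} and \ref{lem:9A_imm}: read off the cycle structure at every vertex of $\Sigma$ from \eqref{eq:hoq15_v1v4}--\eqref{eq:hoq15_v9v11}, compute $\wg(\Sigma)$ in a handful of cases, and compare with the immunity bound supplied by Lemma~\ref{lem:Hoq15_imm}. Throughout I will invoke Lemma~\ref{lem:Hoq15}, so that $N>1$ and the six incommensurability conditions \eqref{item:Hoq15_1}--\eqref{item:Hoq15_6} are available. The only facts about $\omega$ that I need are: $\omega(v)=\omega'_{ij}$ for every vertex, since none of the three exceptional coverings in the definition \eqref{eq:omega} of $\omega$ is a covering of $\Sigma_{12B}$; $\omega'_{2j}=1/3$ for all $j$; $\omega'_{ij}=1/4$ whenever $i,j\ge4$; a cycle of length $4|\langle x\rangle|$ always has length $\ge4$; and a cycle of length $2|\langle x\rangle|$ has length $\ge4$ unless $x\equiv0\pmod N$.

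The case split is according to whether $a\equiv1\pmod N$ and whether $b+c\equiv0\pmod N$, because by \eqref{eq:hoq15_v1v4}--\eqref{eq:hoq15_v9v11} these are precisely the congruences producing $2$-cycles: $v_1[i],v_3[i],v_4[i],v_5[i]$ acquire a $2$-cycle exactly when $a\equiv1$, while $v_8[i],v_9[i],v_{11}[i],v_{12}[i]$ acquire one exactly when $b+c\equiv0$, and $v_2[i],v_6[i],v_7[i],v_{10}[i]$ always lie on two cycles of length $\ge4$. Hence: if neither congruence holds, every vertex lies on two cycles of length $\ge4$, so $\wg(\Sigma)=1/4$ and Lemma~\ref{lem:Hoq15_imm} already gives $\imm(\Sigma)\le1/4=\wg(\Sigma)$. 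If both hold, eight of the twelve vertex-classes carry a $2$-cycle and four carry two long cycles, so
\[
\wg(\Sigma)=\frac{(1/3)\,8N+(1/4)\,4N}{12N}=\frac{11}{36},
\]
and Lemma~\ref{lem:Hoq15_imm} gives $\imm(\Sigma)\le\frac{3N+1}{12N}$, which is $\le\frac{11}{36}$ for all $N\ge2$. If exactly one of the two congruences holds, four vertex-classes carry a $2$-cycle and eight carry two long cycles, so
\[
\wg(\Sigma)=\frac{(1/3)\,4N+(1/4)\,8N}{12N}=\frac{5}{18},
\]
and I need $\frac{3N+1}{12N}\le\frac{5}{18}$, which is equivalent to $N\ge3$.

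So the one point requiring care is ruling out $N=2$ when exactly one of $a\equiv1\pmod N$, $b+c\equiv0\pmod N$ holds. I will in fact show that $N=2$ forces $a\equiv1$ \emph{and} $b+c\equiv0\pmod N$, so $N=2$ can occur only in the ``both'' case, where $\wg(\Sigma)=\frac{11}{36}$ already suffices. This is a finite check in $\Z_2$ using condition \eqref{item:Hoq15_6}: if $a\not\equiv c\pmod2$ then $\langle a-c\rangle=\Z_2$, so $-c+\langle a-c\rangle=\Z_2$, which meets $\langle b-1\rangle\ni0$, contradicting \eqref{item:Hoq15_6}; hence $a\equiv c$, and then \eqref{item:Hoq15_6} reads $\{c\}\cap\langle b-1\rangle=\emptyset$, which forces $c\equiv1$ and $\langle b-1\rangle=\{0\}$, i.e. $b\equiv1$; thus $a\equiv b\equiv c\equiv1$ and $b+c\equiv0\pmod2$. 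Combining the four cases yields $\imm(\Sigma)\le\wg(\Sigma)$ in each.

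I expect the genuinely error-prone part to be the bookkeeping in the second paragraph — tracking which of the twelve vertex-classes picks up a $2$-cycle in each case, and hence pinning down the exact rational values $11/36$ and $5/18$ of $\wg(\Sigma)$ — rather than any conceptually hard step; the $N=2$ exclusion is the only place where the incommensurability conditions of Lemma~\ref{lem:Hoq15} enter in an essential, non-mechanical way, and it is short.
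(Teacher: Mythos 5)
Your proposal is correct and follows essentially the same route as the paper: the same weight computation from the cycle structure \eqref{eq:hoq15_v1v4}--\eqref{eq:hoq15_v9v11} (yielding $11/36$, $5/18$, $1/4$ in the same cases), the same immunity bounds from Lemma~\ref{lem:Hoq15_imm}, and the same use of Lemma~\ref{lem:Hoq15}(6) to show that $N=2$ forces $a\equiv b\equiv c\equiv 1\pmod N$, hence only occurs in the case $\wg(\Sigma)=11/36$.
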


\begin{proof}
Using \eqref{eq:hoq15_v1v4}--\eqref{eq:hoq15_v9v11} we obtain
	\[
	\wg(\Sigma)=\begin{cases}
		11/36 & \text{ if $a\equiv 1\pmod N$ and $b+c\equiv 0\pmod N$, }\\
		5/18 & \text{ if $a\equiv 1\pmod N$ and $b+c\not \equiv 0\pmod N$, }\\
		5/18 & \text{ if $a\not\equiv 1\pmod N$ and $b+c\equiv 0\pmod N$, }\\
		1/4 & \text{ otherwise.}
	\end{cases}
	\]
	First notice that if $N=2$ then $a\equiv 1\pmod N$, $b\equiv 1\pmod N$,
        and $c\equiv 1\pmod N$ by Lemma~\ref{lem:Hoq15}(6).
	Then $\imm(\Sigma)\leq\frac{3N+1}{12N}=7/24<11/36=\wg(\Sigma)$
        by Lemma~\ref{lem:Hoq15_imm}, and the claim holds.
  So we may assume that $N>2$. 

	If $a\equiv1\pmod N$ or $b+c\equiv0\pmod N$ then Lemma \ref{lem:Hoq15_imm}
  and $N\ge 3$ imply that 
	\[
		\imm(\Sigma)\leq\frac{3N+1}{12N}\leq10/36\le \wg(\Sigma).
	\]
	Otherwise $\imm(\Sigma)\leq 1/4=\wg(\Sigma)$ by Lemma \ref{lem:Hoq15_imm}.
\end{proof}

\subsection{The graph $\Sigma_{12C}$}
\label{12C}

\setlength{\unitlength}{1.5cm}
\begin{figure}[ht]
  \begin{graph}(6,6)
    \roundnode{n1}(.5,4.5)
    \roundnode{n2}(1,5.5)
    \roundnode{n3}(1.5,4.5)
    \roundnode{n4}(2.5,3.5)
    \roundnode{n5}(3.5,3.5)
    \roundnode{n6}(3,2.5)
    \roundnode{n7}(2.5,.5)
    \roundnode{n8}(3,1.5)
    \roundnode{n9}(3.5,.5)
    \roundnode{n10}(4.5,4.5)
    \roundnode{n11}(5,5.5)
    \roundnode{n12}(5.5,4.5)
    \diredge{n1}{n2}
    \diredge{n2}{n3}
    \diredge{n3}{n1}
    \diredge{n4}{n5}
    \diredge{n5}{n6}
    \diredge{n6}{n4}
    \diredge{n7}{n8}
    \diredge{n8}{n9}
    \diredge{n9}{n7}
    \diredge{n10}{n11}
    \diredge{n11}{n12}
    \diredge{n12}{n10}
    \edge{n1}{n7}[\yedge]
    \edge{n2}{n11}[\yedge]
    \edge{n3}{n4}[\yedge]
    \edge{n5}{n10}[\yedge]
    \edge{n6}{n8}[\yedge]
    \edge{n9}{n12}[\yedge]
		\nodetext{n1}(-.35,0){$v_1$}
		\nodetext{n1}(0,-0.35){$a$}
		\nodetext{n2}(-.35,0){$v_2$}
		\nodetext{n2}(.30,.30){$0$}
		\nodetext{n11}(-.30,.30){$1$}
		\nodetext{n3}(.35,0){$v_3$}
		\nodetext{n3}(0,-.30){$0$}
		\nodetext{n4}(-.35,0){$v_4$}
		\nodetext{n4}(0,.30){$1$}
		\nodetext{n5}(0,.30){$b$}
		\nodetext{n5}(.35,0){$v_5$}
		\nodetext{n6}(-.30,-.30){$v_6$}
		\nodetext{n7}(-.30,-.30){$v_7$}
		\nodetext{n7}(-.5,0){$1-a$}
		\nodetext{n8}(.30,.30){$v_8$}
		\nodetext{n9}(.30,-.30){$v_9$}
		\nodetext{n9}(.30,0){$1$}
		\nodetext{n10}(-.45,0){$v_{10}$}
		\nodetext{n10}(.25,-.30){\footnotesize{$1-b$}}
		\nodetext{n11}(.45,0){$v_{11}$}
		\nodetext{n12}(.45,0){$v_{12}$}
		\nodetext{n12}(0,-.35){$0$}
		\nodetext{n6}(.40,-.30){\footnotesize{$1-c$}}
		\nodetext{n8}(-.30,.30){\footnotesize{$c$}}
		\edgetext{n1}{n2}{$-1$}
		\edgetext{n4}{n5}{$-1$}
		\edgetext{n10}{n11}{$-1$}
		\edgetext{n7}{n8}{$-1$}
  \end{graph}
  \caption{Schreier graph $\Sigma _{12C}$ and its coverings}
  \label{fig:Hoq16}
\end{figure}

\begin{lem}
	\label{lem:Hoq16}
  Let $\Sigma $ be a covering of $\Sigma_{12C}$ in Figure \ref{fig:Hoq16}
  with simply intersecting cycles. Then the following hold:
	\begin{enumerate}
		\item $\langle -a-c+1\rangle\cap\langle a+1 \rangle=0$,
		\item $\langle a+1\rangle\cap\langle -b\rangle=0$,
		\item $\langle -b\rangle\cap\langle -a-c+1\rangle=0$,
		\item $\langle -b\rangle\cap\langle b+c\rangle=0$,
		\item $\langle b+c\rangle\cap\langle -a-c+1\rangle=0$,
		\item $\langle b+c\rangle\cap\langle a+1\rangle=0$.
	\end{enumerate}
\end{lem}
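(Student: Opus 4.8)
The plan is to mimic the proofs of Lemmas~\ref{lem:Hoq10} and~\ref{lem:Hoq15}: read off the $xy$- and $yx$-cycles of $\Sigma_{12C}$, together with their labels, directly from Figure~\ref{fig:Hoq16}, and then invoke Lemma~\ref{lem:xy_and_yx_cycles} once for each of the six assertions.

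First I would determine the four $xy$-cycles. Using the edge labels displayed in Figure~\ref{fig:Hoq16} and the normalization from Remark~\ref{rem:covgraph} (the three labels around each $x$-triangle sum to $-1$ and the two labels of a $y$-edge sum to $1$), I would compute the label of each cycle by summing the labels along it; the outcome is that the four $xy$-cycles carry, up to sign, the labels $-a-c+1$, $a+1$, $-b$ and $b+c$, and the four $yx$-cycles carry the same four values but with the twelve vertices grouped differently. Since $\langle\lambda\rangle=\langle-\lambda\rangle$ in $\ZN$, the signs here are immaterial.

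Then, for each of the six unordered pairs of values among $\{-a-c+1,\,a+1,\,-b,\,b+c\}$, I would point to a vertex $v$ of $\Sigma_{12C}$ whose $xy$-cycle carries one of the two values and whose $yx$-cycle carries the other, and apply Lemma~\ref{lem:xy_and_yx_cycles} at $v$ to get $\langle\lambda\rangle\cap\langle\mu\rangle=0$. Because the four $xy$-cycles and the four $yx$-cycles are two partitions of the same set of twelve vertices, and because no vertex lies on an $xy$-cycle and a $yx$-cycle with the same label, all six pairs do occur, and this yields precisely the six claims.

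The hard part is the bookkeeping in the first step: correctly distributing the twelve vertices among the $xy$- and $yx$-cycles and summing the edge labels with the right orientations, so that the four recurring label values really come out as $-a-c+1$, $a+1$, $-b$ and $b+c$ and every pair of them is realized at some common vertex. Once the cycle data is laid out correctly, the remaining argument is a purely mechanical sixfold application of Lemma~\ref{lem:xy_and_yx_cycles}.
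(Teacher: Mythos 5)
Your proposal is correct and follows the paper's own route: the paper simply lists the four $xy$-cycles $(v_1\,v_8\,v_4)$, $(v_2\,v_{12}\,v_7)$, $(v_{11}\,v_3\,v_5)$, $(v_9\,v_{10}\,v_6)$ with labels $-a-c+1$, $a+1$, $-b$, $b+c$ and the four $yx$-cycles $(v_1\,v_{11}\,v_9)$, $(v_2\,v_4\,v_{10})$, $(v_3\,v_7\,v_6)$, $(v_5\,v_8\,v_{12})$ with labels $a+1$, $-b$, $-a-c+1$, $b+c$, and then applies Lemma~\ref{lem:xy_and_yx_cycles} at vertices realizing the six label pairs (each pair in fact occurs at exactly two of the twelve vertices). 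One small caution: your counting remark by itself does not force all six pairs to occur (a $4\times 4$ zero-diagonal nonnegative integer matrix with row and column sums $3$ can have off-diagonal zeros), so this point genuinely rests on the explicit cycle bookkeeping, exactly as you anticipate in your last paragraph.
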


\begin{proof}
	The $yx$-cycles and their labels are: $(v_1\;v_{11}\;v_9)$
	with label $a+1$, $(v_2\;v_4\;v_{10})$ with $-b$.  $(v_3\;v_7\;v_6)$ with
	$-a-c+1$ and $(v_5\;v_8\;v_{12})$ with $b+c$. The $xy$-cycles are:
	$(v_1\;v_8\;v_4)$ with $-a-c+1$, $(v_2\;v_{12}\;v_7)$ with $a+1$,
	$(v_{11}\;v_3\;v_5)$ with $-b$ and $(v_9\;v_{10}\;v_6)$ with $b+c$.  Then the
	claim follows from Lemma \ref{lem:xy_and_yx_cycles}.
\end{proof}

\begin{exa}
	\label{exa:12C}
	The trivial covering of $\Sigma_{12C}$ is isomorphic to the Hurwitz orbit of
	size $12$ of \cite[Figure 12]{MR2891215}. The covering
	$\Sigma_{12C}^{2;1,0,0}$ is isomorphic to the Hurwitz orbit of size $24$ of
	\cite[Figure 14]{MR2891215}. 
\end{exa}

\begin{lem}
	\label{lem:ineq_12C}
	 Assume that $N/n\geq6$ and $n\geq5$. Then 
	 \[
	 n\frac{N/n+1}{2}+\frac{N/n-1}{2}+\frac{N}{n}\leq N.
	 \]
 \end{lem}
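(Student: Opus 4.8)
The plan is to clear denominators and reduce the claim to an obvious inequality between positive integers. First I would substitute $m=N/n$ (a positive integer, since $n$ is the order of an element of $\ZN$ and hence divides $N$); then $N=nm$ and the asserted inequality becomes
\[
\frac{n(m+1)}{2}+\frac{m-1}{2}+m\leq nm.
\]
Multiplying through by $2$ (which preserves the direction of the inequality) and simplifying the left-hand side to $nm+n+3m-1$, the claim is equivalent to $nm-n-3m+1\geq 0$.

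The decisive step is to recognize the factorization
\[
nm-n-3m+1=(n-3)(m-1)-2,
\]
which turns the goal into the inequality $(n-3)(m-1)\geq 2$. This then follows immediately from the hypotheses: $n\geq 5$ gives $n-3\geq 2$, and $N/n=m\geq 6$ gives $m-1\geq 5$, so that $(n-3)(m-1)\geq 10\geq 2$, as required. I would also note in passing that the weaker assumptions $n\geq 4$ and $m\geq 3$ would already suffice, so the stated hypotheses leave ample slack.

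I do not anticipate any genuine obstacle here: the only point that needs a moment's thought is spotting the above factorization of $nm-n-3m+1$; once that is in place the estimate is a one-line consequence of the stated bounds on $n$ and $N/n$, and everything else is routine arithmetic. The one thing to double-check is that the passage from $m=N/n$ back to $N=nm$ and the clearing of the factor $2$ are valid, which holds since $n>0$.
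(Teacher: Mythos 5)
Your proof is correct and follows essentially the same route as the paper: clear denominators and rewrite the resulting polynomial inequality in a form whose nonnegativity is immediate from the hypotheses. Your factorization $nm-n-3m+1=(n-3)(m-1)-2\geq 0$ is, if anything, slightly cleaner than the paper's rearrangement $(N-6n)(n-3)+4n(n-5)+3n\geq 0$, and your observation that the hypotheses leave slack is consistent with the paper's argument.
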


 \begin{proof}
         After multiplication with $2n$ the inequality takes the form
         $Nn-3N+n-2n^2\geq 0$, or $(N-6n)(n-3)+4n(n-5)+3n\geq 0$.
 \end{proof}

\begin{lem}
	\label{lem:Hoq16_imm}
  Let $\Sigma $ be a covering of $\Sigma_{12C}$ in Figure \ref{fig:Hoq16}
	with simply intersecting cycles. Then 
	\[
	\imm(\Sigma)\leq\begin{cases}
		1/3 & \text{if $\Sigma$ is the trivial covering,}\\
		7/24 & \text{if $\Sigma=\Sigma_{12C}^{2;1,0,0}$,}\\
	  7/24 & \text{if $1+a\equiv 1-a-c\equiv b\equiv 0\pmod N$,}\\
          & \text{$b+c\not\equiv 0\pmod N$,}\\
	  5/18 & \text{if $1+a\equiv 1-a-c\equiv 0\pmod N$,}\\
               &  \text{$b,b+c\not\equiv 0\pmod N$,}\\
		1/4&\text{otherwise.}
	\end{cases}
	\]
\end{lem}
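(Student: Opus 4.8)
The plan is to run a case analysis over the label configurations permitted by Lemma~\ref{lem:Hoq16}, in the same spirit as the immunity bounds for $\Sigma_{6D}$, $\Sigma_{9A}$ and $\Sigma_{12B}$ (Lemmas~\ref{lem:Hoq10_imm}, \ref{lem:Hoq13_imm} and \ref{lem:Hoq15_imm}). Write $\lambda_1=1-a-c$, $\lambda_2=a+1$, $\lambda_3=-b$, $\lambda_4=b+c$ for the four $xy$-/$yx$-cycle labels appearing in the proof of Lemma~\ref{lem:Hoq16}. By that lemma the cyclic subgroups $\langle\lambda_i\rangle\subseteq\ZN$ intersect pairwise in $0$, and a direct computation gives $\lambda_1+\lambda_2+\lambda_3+\lambda_4=2$. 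Hence if $N\le 2$ then every $\lambda_i$ vanishes, and a short check shows that the only such coverings are the trivial one ($N=1$) and $\Sigma_{12C}^{2;1,0,0}$ ($N=2$); for $N\ge 3$ at least one $\lambda_i$ is nonzero, and the positions of the vanishing labels among the four cycles (equivalently, whether the congruences $a\equiv -1$, $c\equiv 2$, $b\equiv 0$ hold mod $N$) decide which case of the statement applies.

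First I would settle the two small coverings by exhibiting explicit plagues: a $4$-element plague of $\Sigma_{12C}$ and a $7$-element plague of $\Sigma_{12C}^{2;1,0,0}$, each verified by a short chain of pivotings; equivalently one transports these from the size-$12$ and size-$24$ Hurwitz orbits of \cite[Figures 12, 14]{MR2891215} through the isomorphisms recorded in Example~\ref{exa:12C}. This gives $\imm(\Sigma)\le 4/12=1/3$ and $\imm(\Sigma)\le 7/24$ in these two cases.

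For $N\ge 3$ the plan is, in each configuration, to take a plague consisting of a few complete fibers $v_i[*]$ together with a set $v_j[I]$ of coset representatives over a remaining vertex, and to check that it spreads to all of $\Sigma$ by a pivot table as in the earlier lemmas, filling in the partial fibers by Example~\ref{exa:9A} (or Example~\ref{exa:6D}, when a fibre must be completed from a coset together with a subgroup). When two of the labels are nonzero and positioned so that both corresponding coset-representative sets can be used, one gets a plague of size $\le 2N+N/|\langle\lambda_i\rangle|+N/|\langle\lambda_j\rangle|\le 3N$, hence $\imm(\Sigma)\le 1/4$. The configuration $\lambda_1\equiv\lambda_2\equiv 0\pmod N$, i.e.\ $a\equiv -1$ and $c\equiv 2$, is the exceptional one: the vanishing labels decouple the fibers and one is forced to spend three complete fibers, closing up the last one over a vertex carrying a nonzero label. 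If $\lambda_3,\lambda_4\not\equiv 0$ then $\langle-b\rangle\cap\langle b+c\rangle=0$ forces $\max(|\langle-b\rangle|,|\langle b+c\rangle|)\ge 3$, so the remaining partial fibre has $\le N/3$ points and $\imm(\Sigma)\le(3N+N/3)/(12N)=5/18$; if in addition $\lambda_3\equiv 0$ (so $b\equiv 0$ and then $\lambda_4\equiv 2$), the partial fibre is a transversal of $\langle 2\rangle$, of size $\le 2$, so the plague has size $\le 3N+2$ and $\imm(\Sigma)\le(3N+2)/(12N)\le 7/24$ for $N\ge 4$, while $N=3$ gives $\le 5/18$; in all cases $\imm(\Sigma)\le 7/24$.

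The step I expect to be the real obstacle is verifying that the proposed subsets actually spread in the degenerate configurations $\lambda_1\equiv\lambda_2\equiv 0$: once several labels vanish, distinct fibers can no longer communicate ``around'' a $\sigma$-cycle, so one has to track, via the $y$-edge incidence pattern of $\Sigma_{12C}$, that the chosen complete fibers reach every remaining fibre before the coset-representative argument applies — and it is precisely this decoupling that blocks the $1/4$ bound in those cases. The remaining work is bookkeeping of the $\ZN$-shifts in the pivot tables and checking that the five cases of the statement are exhaustive, i.e.\ that every admissible $(a,b,c)$ with $N\ge 3$ outside the configuration $\lambda_1\equiv\lambda_2\equiv 0$ has two nonzero labels in usable position.
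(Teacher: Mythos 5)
Your overall architecture (case split by which of the four labels $1-a-c$, $1+a$, $-b$, $b+c$ vanish, small coverings via the known orbits of size $12$ and $24$, and for $N\ge 3$ plagues made of full fibers plus partial fibers filled in by the cyclic automata of Section~\ref{section:automaton}) is the same as the paper's, and your treatment of the degenerate configurations $1+a\equiv 1-a-c\equiv 0\pmod N$ matches the paper's (a single pivot chain plus Example~\ref{exa:9A}; your transversal-of-$\langle 2\rangle$ count even sharpens the paper's $|I|\le N/2$ slightly). But there is a genuine gap exactly in the case you dismiss as routine: the ``otherwise'' case with bound $1/4$. You assert that when two labels are nonzero one may take two full fibers plus two coset-representative sets, of total size $\le 2N+N/|\langle\lambda_i\rangle|+N/|\langle\lambda_j\rangle|\le 3N$, and that this spreads. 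That is unjustified, and it is precisely where the paper has to work hardest. The pivot analysis for $\Sigma_{12C}$ with generic labels (Steps 1--2 of the paper's proof) shows that to reach $v_5[x]$ one needs simultaneously $x+a+c\in I$ and $x+b+c\in I\cap J$, and to reach $v_3[x]$ a further shifted condition; so the induced evolution on a fiber is the two-neighbor rule of Example~\ref{exa:game12C} (with $m=N/n$, $\lambda=1-a-c$), not the single-shift rule of Example~\ref{exa:9A}. For that rule a transversal is far from sufficient --- one needs roughly half of $\Z_{N/n}$ --- which is why the paper takes $J$ a transversal of $\langle b+c\rangle$ but $I=J\cup\{i+\langle b+c\rangle\mid 0\le i\le (N/n-1)/2\}$, and then needs the arithmetic input that $n\ge 5$, $N/n\ge 6$, $a+c-1\not\equiv 0,-1\pmod{N/n}$ (Step 3, using the pairwise-trivial-intersection conditions of Lemma~\ref{lem:Hoq16}) together with Lemma~\ref{lem:ineq_12C} to guarantee $|I|+|J|\le N$.

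So your plan inverts the location of the difficulty: the decoupled cases $\lambda_1\equiv\lambda_2\equiv 0$ that you flag as the obstacle are handled by a short explicit pivot sequence, while the generic two-nonzero-label case cannot be settled by ``two full fibers plus two transversals'' as you propose; you would either have to verify a spreading table that in fact fails to close up, or reproduce the paper's larger set $I$, the game-automaton argument of Example~\ref{exa:game12C}, and the counting inequality of Lemma~\ref{lem:ineq_12C}. Until that case is repaired, the claimed bound $\imm(\Sigma)\le 1/4$ in the ``otherwise'' branch is not established.
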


\begin{proof}
	The cycle structure at each vertex is the following:
	\begin{align}
		\label{eq:Hoq16_v1v7}	v_1[i],v_7[i]: & \text{ cycles of length $3|\langle1-a-c\rangle|$ and $3|\langle1+a\rangle|$, }\\
		\label{eq:Hoq16_v2v11}v_2[i],v_{11}[i]: & \text{ cycles of length $3|\langle1+a\rangle|$ and $3|\langle-b\rangle|$, }\\
		\label{eq:Hoq16_v3v4}	v_3[i],v_4[i]: & \text{ cycles of length $3|\langle-b\rangle|$ and $3|\langle1-a-c\rangle|$, }\\
		\label{eq:Hoq16_v5v10}	v_5[i],v_{10}[i]: & \text{ cycles of length $3|\langle b+c\rangle|$ and $3|\langle-b\rangle|$, }\\
		\label{eq:Hoq16_v6v8}	v_6[i],v_8[i]: & \text{ cycles of length $3|\langle b+c\rangle|$ and $3|\langle1-a-c\rangle|$, }\\
		\label{eq:Hoq16_v9v12}	v_9[i],v_{12}[i]: & \text{ cycles of length $3|\langle b+c\rangle|$ and $3|\langle1+a\rangle|$, }
	\end{align}
	for all $i\in\ZN$. Without loss of generality we may assume that
	\begin{equation}
		\label{eq:Hoq16_symmetry}
		|\langle 1+a\rangle|\leq|\langle1-a-c\rangle|\leq|\langle-b\rangle|\leq|\langle b+c\rangle|.
	\end{equation}

	We split the proof into five cases according to the number of trivial groups
	in \eqref{eq:Hoq16_symmetry}. However, two of these cases will be considered
        simultaneously.

        First assume that $1+a\equiv 1-a-c\equiv b\equiv b+c\equiv 0\pmod N$. Then
	$N=1$ (the trivial covering) or $N=2$ and $(a,b,c)=(1,0,0)$. Since these
	orbits appear as Hurwitz orbits of braided racks, see Example \ref{exa:12C},
	the claim follows from \cite[Proposition 10]{MR2891215}.

	Assume now that $1+a\equiv a+c-1\equiv b\equiv 0\pmod N$
        and $b+c\not\equiv 0\pmod N$. Then $a\equiv-1\pmod N$, $b\equiv 0\pmod N$,
        $c\equiv 2\pmod N$, and $N\geq3$.
        Let $I$ be a set of representatives for $\ZN/\langle b+c\rangle$.
  Then $|I|\le N/2$.
	We claim that $P=v_1[*]\cup v_2[*]\cup v_3[*]\cup v_5[I]$ is a plague. With the
	sequence of pivots 
	$v_1[*]$, $v_2[*]$, $v_3[*]$, $v_4[I]$, $v_5[I-1]$, $v_6[I]$,
	$v_7[I+c]$, $v_9[I+c]$, $v_{10}[I+c]$ we see that $P$ spreads to
  $$ v_7[*]\cup v_{11}[*]\cup v_4[*]\cup v_6[I]\cup v_{10}[I-b]
     \cup v_8[I+c]\cup v_9[I+c]\cup v_{12}[I+c]\cup v_5[I+b+c].$$
	Hence the claim follows from Example \ref{exa:9A} for the fiber over $v_5$.

	Now assume that exactly two groups of \eqref{eq:Hoq16_symmetry} are
	trivial, i.e., $1+a\equiv a+c-1\equiv 0\pmod N$, and $b,b+c\not\equiv 0\pmod N$.
        Then $a\equiv -1\pmod N$, $c\equiv 2\pmod N$, and
	$1<|\langle b\rangle |\le |\langle b+c\rangle |$.
  From Lemma \ref{lem:Hoq15} we
	obtain $\langle b\rangle\cap\langle b+c\rangle=0$ and hence 
	$|\langle b+c\rangle |\ge 3$.
  Therefore the previous computation with $|I|\le N/3$ yields the claim.

	Finally, assume that $1-a-c,b,b+c\not\equiv 0\pmod N$.
        We prove that $\imm(\Sigma)\leq1/4$. Let $n=|\langle b+c\rangle |$.
        Instead of $|\langle 1-a-c\rangle |\le |\langle b\rangle |\le |\langle b+c\rangle |$
        we assume that $|\langle 1-a-c\rangle |,|\langle b\rangle |\le n$
        and $n|\langle 1-a-c\rangle |\not=N$. This is possible since
        any subgroup of $\ZN$ is uniquely determined by its order and any two of these three subgroups
        have trivial intersection by Lemma~\ref{lem:Hoq16}. Observe that $n\ge 5$.
	We claim that there exist two subsets $I,J\subseteq \ZN $ with $|I|+|J|\le N$, such that
	\begin{equation}
		\label{eq:plague12C}
		P=v_1[*]\cup v_2[*]\cup v_3[I]\cup v_5[J]
	\end{equation}
	is a plague.  We split the proof into several steps. 

	\medskip
	\textbf{Step 1.}
	The following computation shows that if $x+a+c\in I$ and $x+b+c\in I\cap J$,
  then $P$ spreads to $\{v_5[x]\}$: 
	 {\footnotesize\begin{center}
		\begin{tabular}{c|c}
			pivot & \tabularnewline
			\hline 
			$v_{3}[*]$ & $v_{4}[*]$\tabularnewline
			$v_{1}[I]$ & $v_{7}[I-a+1]$\tabularnewline
			$v_{4}[I\cap J]$ & $v_{6}[I\cap J]$\tabularnewline
			$v_{5}[(I-1)\cap(J-1)]$ & $v_{10}[(I-b)\cap(J-b)]$\tabularnewline
			$v_{11}[(I-b-1)\cap(J-b-1)]$ & $v_{12}[(I-b)\cap(J-b)]$\tabularnewline
			$v_{9}[(I-a)\cap(I-b)\cap(J-b)]$ & $v_{8}[(I-a)\cap(I-b)\cap(J-b)]$\tabularnewline
			$v_{6}[(I-a-c)\cap(I-b-c)\cap(J-b-c)]$ & $v_{5}[(I-a-c)\cap(I-b-c)\cap(J-b-c)]$\tabularnewline
		 \end{tabular}
	 \end{center}}

	 \textbf{Step 2.}
	 We claim that if $x$ satisfies $x+a+c-1\in I$, $x+a+c\in I$ and $x+b+c\in
   I\cap J$, then $P$ spreads to $\{v_3[x],v_5[x]\}$.
   This follows from Step~1 and
	 the following fact: if $x+a+c-1\in I$ and $x\in J$, then $P$ spreads to
   $\{v_3[I]\}$. The proof is obtained from the following table:
	 \begin{center}
		 \begin{tabular}{c|c}
			 pivot & \tabularnewline
			 \hline 
			 $v_{3}[*]$ & $v_{4}[*]$\tabularnewline
			 $v_{6}[J]$ & $v_{8}[J+c]$\tabularnewline
			 $v_{7}[J+c]$ & $v_{9}[J+c]$\tabularnewline
			 $v_{1}[I]$ & $v_{7}[I+1-a]$\tabularnewline
			 $v_{8}[(I-a)\cap(J+c-1)]$ & $v_{6}[(I-a-c+1)\cap J]$\tabularnewline
			 $v_{4}[(I+1-a-c)\cap J]$ & $v_{3}[(I-a-c+1)\cap J]$\tabularnewline
			\end{tabular}
	 \end{center}

	 \textbf{Step 3.}
	 Since $\langle a+c-1\rangle\cap\langle b+c\rangle=0$ and
   $\langle b+c\rangle =\langle N/n\rangle $,
   it follows that $a+c-1\not\equiv0\pmod{N/n}$. The order
	 of $a+c-1$ modulo $N/n$ is the same as modulo $N$, as $\langle
	 a+c-1\rangle\cap\langle b+c\rangle=0$. Since $n|\langle a+c-1\rangle |\not=N$,
	 we obtain that $a+c-1\not\equiv-1\pmod{N/n}$.  Using the
	 prime factorization of $N$ and three nonzero parameters, from the conditions
	 on the parameters we also get that $N/n\geq6$.  (The generators of the
	 groups generated by the nonzero parameters have at least two distinct prime
	 factors.)

	 \textbf{Step 4.}
	 Let $J$ be a set of representatives for $\ZN/\langle b+c\rangle$.  Let
	 \[
		 I=J\cup\{i+\langle b+c\rangle \mid 0\leq i\leq(N/n-1)/2\}.
	 \]
	 Then $|J|=N/n$ and $|I|\leq n\frac{N/n+1}2+\frac{N/n-1}{2}$. By Lemma
	\ref{lem:ineq_12C}, $|I|+|J|\leq N$.  So it is enough to show
	that \eqref{eq:plague12C} is a plague.
	 For that purpose, we use Example \ref{exa:game12C} with $m=N/n$ and 
	 $\lambda=1-a-c$.
\end{proof}

\begin{lem} \label{lem:12C_imm}
	Let $\Sigma$ be a covering of $\Sigma_{12C}$ with simply intersecting cycles.
	Then $\imm(\Sigma)\leq\omega(\Sigma)$.
\end{lem}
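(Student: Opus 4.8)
The plan is to proceed exactly as in the proofs of Lemmas~\ref{lem:6D_imm}, \ref{lem:9A_imm} and \ref{lem:12B_imm}: read off the cycle structure at each vertex of $\Sigma$ from Lemma~\ref{lem:Hoq16}, i.e.\ from \eqref{eq:Hoq16_v1v7}--\eqref{eq:Hoq16_v9v12}, compute $\wg(\Sigma)$ exactly, and compare with the immunity bounds of Lemma~\ref{lem:Hoq16_imm}. Since $\wg(\Sigma)$ depends only on the multiset of the pairs $c(v)$ and not on the particular labels $a,b,c$, I may organize the argument along the same five cases as Lemma~\ref{lem:Hoq16_imm}, which under the normalization \eqref{eq:Hoq16_symmetry} are governed by how many of the subgroups $\langle 1+a\rangle,\langle 1-a-c\rangle,\langle -b\rangle,\langle b+c\rangle$ of $\ZN$ are trivial.

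First I would dispose of the two degenerate coverings. If $\Sigma$ is the trivial covering, all four subgroups vanish, so by \eqref{eq:Hoq16_v1v7}--\eqref{eq:Hoq16_v9v12} every vertex lies in a pair with $c(v)=(3,3)$, and $\omega'_{33}=7/24$; but since $\Sigma$ is the trivial covering of $\Sigma_{12C}$, the third line of \eqref{eq:omega} adds $1/24$ at every vertex, so $\wg(\Sigma)=1/3$, which matches the bound $\imm(\Sigma)\le 1/3$ of Lemma~\ref{lem:Hoq16_imm}. This is precisely the point where the exceptional summand in \eqref{eq:omega} is indispensable: without it one would get $\wg(\Sigma)=7/24<1/3$. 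For $\Sigma=\Sigma_{12C}^{2;1,0,0}$ the same cycle structure gives $\wg(\Sigma)=\omega'_{33}=7/24$, and Lemma~\ref{lem:Hoq16_imm} gives $\imm(\Sigma)\le 7/24$.

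For the three remaining cases I would use that, by \eqref{eq:Hoq16_v1v7}--\eqref{eq:Hoq16_v9v12}, every cycle length occurring is either $3$ (from a trivial subgroup) or a multiple $3m$ with $m\ge 2$, hence at least $6$, together with the fact that $\omega'_{3j}=7/24$ for all $j\ge 3$ and $\omega'_{ij}=1/4$ for all $i,j\ge 4$, read off from \eqref{eq:weights}. If exactly one of the four subgroups is nontrivial, every vertex has $c(v)$ of the form $(3,3)$ or $(3,3n)$ with $n\ge 2$, so $\wg(\Sigma)=7/24\ge\imm(\Sigma)$ by Lemma~\ref{lem:Hoq16_imm}. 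If exactly two subgroups are nontrivial, a direct count shows that precisely the two vertices $v_5[*]$ and $v_{10}[*]$ acquire a pattern $(3m,3n)$ with $m,n\ge 2$, hence weight $1/4$, while the remaining ten vertices keep weight $7/24$; thus $\wg(\Sigma)=(10\cdot 7/24+2\cdot 1/4)/12=41/144\ge 40/144=5/18\ge\imm(\Sigma)$ by Lemma~\ref{lem:Hoq16_imm}. If at least three subgroups are nontrivial, then in every case $\omega(v)=\omega'_{c(v)}\ge 1/4$, so $\wg(\Sigma)\ge 1/4\ge\imm(\Sigma)$. This exhausts all cases.

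The main obstacle is the two-nontrivial-subgroups case, where the margin $41/144\ge 40/144$ is the tightest in the whole argument: there one has to identify correctly which of the twelve vertices still carry a $3$-cycle and confirm that the two ``heavy'' vertices are exactly $v_5[*]$ and $v_{10}[*]$ under the normalization \eqref{eq:Hoq16_symmetry} used in Lemma~\ref{lem:Hoq16_imm}. Everything else is routine bookkeeping with the matrix \eqref{eq:weights}.
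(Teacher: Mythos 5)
Your proposal is correct and follows essentially the same route as the paper: compute $\wg(\Sigma)$ case by case from the cycle structure \eqref{eq:Hoq16_v1v7}--\eqref{eq:Hoq16_v9v12} under the normalization \eqref{eq:Hoq16_symmetry}, including the exceptional $+1/24$ for the trivial covering, and compare with the bounds of Lemma~\ref{lem:Hoq16_imm}; your values ($1/3$, $7/24$, $7/24$, $41/144$ vs.\ $5/18$, and $\geq 1/4$ vs.\ $1/4$) match the paper's \eqref{eq:Hoq16_weights}. Merging the "at most one trivial subgroup" cases via $\omega(v)\geq 1/4$ is a harmless simplification of the paper's separate $13/48$ and $1/4$ computations.
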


\begin{proof}
	As in the proof of Lemma \ref{lem:Hoq16_imm}, without loss of generality we
	may assume that $|\langle
	1+a\rangle|\leq|\langle1-a-c\rangle|\leq|\langle-b\rangle|\leq|\langle
	b+c\rangle|$.  A straightforward computation shows that
	\begin{equation}
		\label{eq:Hoq16_weights}
		\wg(\Sigma)=\begin{cases}
			1/3 & \text{ if $\Sigma $ is the trivial covering of $\Sigma_{12C}$,}\\
			7/24 & \text{ if $1+a,1-a-c,b,b+c\equiv 0\pmod N$, $N=2$,}\\
			7/24 & \text{ if $1+a,1-a-c,b\equiv 0\pmod N$, $b+c\not \equiv 0\pmod N$,}\\
			41/144 & \text{ if $1+a,1-a-c\equiv 0\pmod N$, $b,b+c\not \equiv 0\pmod N$,}\\
			13/48 & \text{ if $1+a\equiv 0\pmod N$, $1-a-c,b,b+c\not \equiv 0\pmod N$,}\\
			1/4 & \text{ if $1+a,1-a-c,b,b+c\not \equiv 0\pmod N$.}
		\end{cases}
	\end{equation}
	Hence the claim follows from Lemma \ref{lem:Hoq16_imm}.
\end{proof}


\begin{lem}
	\label{lem:exception_12C}
	Let $X$ be an injective indecomposable rack. Assume that $X$ has at least one
	Hurwitz orbit $\mathcal{O}$ isomorphic to the covering
	$\Sigma_{12C}^{1;0,0,0}$.  Then $x\triangleright(x\triangleright y)=z$ for
	all $(x,y,z)\in\mathcal{O}$.
\end{lem}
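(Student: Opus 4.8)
The plan is to imitate the proof of Lemma~\ref{lem:exception_6A}: fix one vertex of $\mathcal{O}\cong\Sigma_{12C}^{1;0,0,0}$, express all twelve triples of $\mathcal{O}$ through rack operations on its first entry, and read off the required identity from the relations forced by closing up the orbit. Since here $N=1$, all index shifts vanish, so by inspection of Figure~\ref{fig:Hoq16} (equivalently, from the $xy$- and $yx$-cycles computed in the proof of Lemma~\ref{lem:Hoq16}) the $\sigma_1$- and $\sigma_2$-orbits are the $3$-cycles listed there. Writing $v_1=(x,y,z)$, this gives in particular $v_8=\sigma_1 v_1=(x\trid y,x,z)$, $v_4=\sigma_1 v_8=((x\trid y)\trid x,\,x\trid y,\,z)$, $v_{11}=\sigma_2 v_1=(x,\,y\trid z,\,y)$ and $v_3=\sigma_1 v_{11}=(x\trid(y\trid z),\,x,\,y)$.

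First I would use that $\mathcal{O}$ has twelve \emph{distinct} elements, so the $\sigma_1$- and $\sigma_2$-orbits of $v_1$ are honest $3$-cycles and $\sigma_1^3 v_1=\sigma_2^3 v_1=v_1$. Comparing coordinates yields $(x\trid y)\trid x=y$, $y\trid(x\trid y)=x$ (hence $v_4=(y,\,x\trid y,\,z)$) and $(y\trid z)\trid y=z$, $z\trid(y\trid z)=y$. Next I would exploit the $y$-edge joining $v_3$ and $v_4$, i.e.\ $\sigma_1\sigma_2\sigma_1 v_3=v_4$: with $p=x\trid(y\trid z)$ a direct computation gives $\sigma_1\sigma_2\sigma_1(p,x,y)=(p\trid(x\trid y),\,p\trid x,\,p)$, and equating this with $v_4=(y,\,x\trid y,\,z)$, using self-distributivity and $(y\trid z)\trid y=z$, produces $x\trid(y\trid z)=z$, $x\trid z=y$ and $z\trid x=x\trid y$. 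Finally, passing to $\Inn(X)$ and using $\varphi_{a\trid b}=\varphi_a\varphi_b\varphi_a^{-1}$ and $\varphi_a(a)=a$: from $x\trid z=y$ one gets $z=\varphi_x^{-1}(y)$ and $\varphi_z=\varphi_x^{-1}\varphi_y\varphi_x$; substituting into $z\trid x=x\trid y$ gives $\varphi_y(x)=\varphi_x^{2}(y)$, and substituting into $(y\trid z)\trid y=z$ (rewritten as $\varphi_y\varphi_z(y)=z$) together with $y\trid(x\trid y)=x$ gives $\varphi_y(x)=z$; hence $x\trid(x\trid y)=\varphi_x^{2}(y)=z$.

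This settles the identity at $v_1$. To cover all of $\mathcal{O}$, I would run the same bookkeeping at the remaining vertices — most cleanly by writing out the full table of the twelve triples (as in Lemma~\ref{lem:exception_6A}) and checking $a\trid(a\trid b)=c$ for each $v_j=(a,b,c)$; by the identities above each such check collapses to one line (for example at $v_8=(x\trid y,x,z)$ it becomes $(x\trid y)\trid y=z$, which is $x\trid(y\trid z)=z$). I expect the only genuine difficulty to be organizational: getting the orientations of the triangles and dashed edges of Figure~\ref{fig:Hoq16} right and keeping track of which closure relation is invoked at which vertex; the algebra underneath is a short computation in $\Inn(X)$.
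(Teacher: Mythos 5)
Your argument is correct, but it is not the paper's route: the paper disposes of this lemma by citing the proof of Proposition~9 of \cite{MR2891215}, where the size-$12$ Hurwitz orbit (the trivial covering of $\Sigma_{12C}$) was already analysed for braided racks, whereas you give a self-contained orbit chase in the style of the in-paper proofs of Lemmas~\ref{lem:exception_4A} and \ref{lem:exception_6A}. I checked the details: your reading of Figure~\ref{fig:Hoq16} is consistent (with $x=\sigma_2^{-1}\sigma_1^{-1}$, $y=\sigma_1\sigma_2\sigma_1$ one has $xy=\sigma_1$, $yx=\sigma_2$ modulo $\Delta$, so in the trivial covering $\sigma_1\cdot v_1=v_8$, $\sigma_1\cdot v_8=v_4$, $\sigma_2\cdot v_1=v_{11}$, $\sigma_1\cdot v_{11}=v_3$, and $\sigma_1\sigma_2\sigma_1\cdot v_3=v_4$ as you use); the closure relations $\sigma_1^3v_1=\sigma_2^3v_1=v_1$ give $(x\trid y)\trid x=y$, $y\trid(x\trid y)=x$, $(y\trid z)\trid y=z$, $z\trid(y\trid z)=y$; comparing $\sigma_1\sigma_2\sigma_1(p,x,y)=(p\trid(x\trid y),p\trid x,p)$ with $(y,x\trid y,z)$ gives $x\trid(y\trid z)=z$, $z\trid x=x\trid y$, $z\trid(x\trid y)=y$, and indeed $x\trid z=x\trid((y\trid z)\trid y)=(x\trid(y\trid z))\trid(x\trid y)=z\trid(x\trid y)=y$ by self-distributivity; your $\Inn(X)$ manipulation (which tacitly uses $\varphi_a(a)=a$, legitimate since an injective rack is a quandle) then yields $\varphi_x^2(y)=z$. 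Two small remarks. First, your sample check at $v_8$ is stated a bit glibly: $(x\trid y)\trid y$ equals $x\trid(y\trid z)$ only via $\varphi_{x\trid y}=\varphi_x\varphi_y\varphi_x^{-1}$ and $\varphi_x^{-1}(y)=z$, which is the one line you promise. Second, the twelve-fold bookkeeping you anticipate can be avoided entirely: in the trivial covering all $\sigma_1$- and $\sigma_2$-cycles have length $3$ and the element $\sigma_1^{-1}\sigma_2\sigma_1^{2}\sigma_2$ (encoding the $y$-edge relation you used at $v_1$) fixes every vertex of $\Sigma_{12C}^{1;0,0,0}$, so the computation based at $v_1$ applies verbatim at every point of $\mathcal{O}$, proving the identity for all $(x,y,z)$ at once. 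What the paper's citation buys is brevity; what your version buys is a proof that is checkable inside this paper without consulting \cite{MR2891215}.
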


\begin{proof}
	See the proof of \cite[Proposition 9]{MR2891215}.
\end{proof}


\subsection{The graph $\Sigma_{18A}$}
\label{18A}

\setlength{\unitlength}{1.5cm}
\begin{figure}[h]
  \begin{graph}(6,6)
    \roundnode{n1}(.5,4.5)
    \roundnode{n2}(1.5,5.5)
    \roundnode{n3}(1.5,4.5)
    \roundnode{n4}(4.5,5.5)
    \roundnode{n5}(5.5,4.5)
    \roundnode{n6}(4.5,4.5)
    \roundnode{n7}(2.5,4.5)
    \roundnode{n8}(3.5,4.5)
    \roundnode{n9}(3,3.5)
    \roundnode{n10}(3,2.5)
    \roundnode{n11}(3.5,1.5)
    \roundnode{n12}(2.5,1.5)
    \roundnode{n13}(.5,1.5)
    \roundnode{n14}(1.5,1.5)
    \roundnode{n15}(1.5,.5)
    \roundnode{n16}(4.5,1.5)
    \roundnode{n17}(5.5,1.5)
    \roundnode{n18}(4.5,.5)
    \diredge{n1}{n2}
    \diredge{n2}{n3}
    \diredge{n3}{n1}
    \diredge{n4}{n5}
    \diredge{n5}{n6}
    \diredge{n6}{n4}
    \diredge{n7}{n8}
    \diredge{n8}{n9}
    \diredge{n9}{n7}
    \diredge{n10}{n11}
    \diredge{n11}{n12}
    \diredge{n12}{n10}
    \diredge{n13}{n14}
    \diredge{n14}{n15}
    \diredge{n15}{n13}
    \diredge{n16}{n17}
    \diredge{n17}{n18}
    \diredge{n18}{n16}
    \edge{n1}{n13}[\yedge]
    \edge{n2}{n4}[\yedge]
    \edge{n3}{n7}[\yedge]
    \edge{n5}{n17}[\yedge]
    \edge{n6}{n8}[\yedge]
    \edge{n9}{n10}[\yedge]
    \edge{n11}{n16}[\yedge]
    \edge{n12}{n14}[\yedge]
    \edge{n15}{n18}[\yedge]
		\edgetext{n1}{n2}{$-1$}
		\edgetext{n4}{n5}{$-1$}
		\edgetext{n16}{n17}{$-1$}
		\edgetext{n13}{n14}{$-1$}
		\edgetext{n7}{n8}{$-1$}
		\edgetext{n10}{n11}{$-1$}
		\nodetext{n1}(-.35,0){$v_1$}
		\nodetext{n1}(-.35,-.25){$a$}
		\nodetext{n2}(-.35,0){$v_2$}
		\nodetext{n2}(.25,.25){$0$}
		\nodetext{n3}(0,-.25){$v_3$}
		\nodetext{n4}(.35,0){$v_4$}
		\nodetext{n4}(-.25,.25){$1$}
		\nodetext{n5}(.35,0){$v_5$}
		\nodetext{n5}(.35,-.25){$0$}
		\nodetext{n6}(0,-.30){$v_6$}
		\nodetext{n7}(0,.25){$v_7$}
		\nodetext{n8}(0,.25){$v_8$}
		\nodetext{n9}(.35,0){$v_9$}
		\nodetext{n10}(-.35,0){$v_{10}$}
		\nodetext{n11}(0,-.25){$v_{11}$}
		\nodetext{n12}(0,-.25){$v_{12}$}
		\nodetext{n13}(-.45,0){$v_{13}$}
		\nodetext{n13}(-.50,.25){$1-a$}
		\nodetext{n14}(0,.25){$v_{14}$}
		\nodetext{n15}(-.45,0){$v_{15}$}
		\nodetext{n15}(.35,-.25){$1$}
		\nodetext{n16}(0,.25){$v_{16}$}
		\nodetext{n17}(.45,0){$v_{17}$}
		\nodetext{n17}(.35,.25){$1$}
		\nodetext{n18}(.45,0){$v_{18}$}
		\nodetext{n18}(-.35,-.25){$0$}
		\nodetext{n3}(.25,.25){$c$}
		\nodetext{n7}(-.30,-.25){$1-c$}
		\nodetext{n8}(.25,-.25){$0$}
		\nodetext{n6}(-.25,.25){$1$}
		\nodetext{n9}(-.25,-.25){$d$}
		\nodetext{n10}(.35,.25){$1-d$}
		\nodetext{n14}(.25,-.25){$0$}
		\nodetext{n12}(-.25,.25){$1$}
		\nodetext{n11}(.25,.25){$b$}
		\nodetext{n16}(-.45,-.25){$1-b$}
  \end{graph}
  \caption{Schreier graph $\Sigma _{18A}$}
  \label{fig:Hoq17}
 \end{figure}

\begin{lem}
	\label{lem:Hoq17}
  Let $\Sigma $ be a covering of $\Sigma_{18A}$ in Figure \ref{fig:Hoq17}
  with simply intersecting cycles. Then
  \begin{enumerate}
    \item $\langle -a+c+d+1\rangle\cap\langle a+1\rangle=0$,
    \item $\langle -a+c+d+1\rangle\cap\langle -c+1\rangle=0$,
    \item $\langle -a+c+d+1\rangle\cap\langle -b-d\rangle=0$,
    \item $\langle -a+c+d+1\rangle\cap\langle b\rangle=0$,
    \item $\langle a+1\rangle\cap\langle -c+1\rangle=0$,
    \item $\langle a+1\rangle\cap\langle -b-d\rangle=0$,
    \item $\langle a+1\rangle\cap\langle b\rangle=0$,
    \item $\langle -c+1\rangle\cap\langle -b-d\rangle=0$,
    \item $\langle -b-d\rangle\cap\langle b\rangle=0$.
  \end{enumerate}
\end{lem}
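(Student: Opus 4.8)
The plan is to repeat, for the eighteen-vertex graph $\Sigma_{18A}$, the method already used for $\Sigma_{6D}$, $\Sigma_{12B}$ and $\Sigma_{12C}$ in Lemmas~\ref{lem:Hoq10}, \ref{lem:Hoq15} and \ref{lem:Hoq16}: determine all $xy$-cycles and all $yx$-cycles of $\Sigma_{18A}$ together with their labels, and then read off each intersection condition from a single well-chosen vertex by invoking Lemma~\ref{lem:xy_and_yx_cycles}. No new idea beyond that lemma should be needed; unlike several of the other graphs, no $N>1$ claim is made here, consistent with $\Sigma_{18A}$ admitting a trivial covering (cf.\ $\Sigma_{6D}$).

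First I would list the six $x$-triangles $(v_1v_2v_3)$, $(v_4v_5v_6)$, $(v_7v_8v_9)$, $(v_{10}v_{11}v_{12})$, $(v_{13}v_{14}v_{15})$, $(v_{16}v_{17}v_{18})$ and the nine $y$-edges of Figure~\ref{fig:Hoq17}, and, using the normalisation of Remark~\ref{rem:covgraph} (the three labels in any $x$-triangle sum to $-1$, the two labels of any $y$-edge sum to $1$), trace the orbits of $xy$ and of $yx$ on the eighteen vertices. Since $\overline{\Sigma}$ has no $xy$-cycle with more than four elements, every $xy$- and $yx$-cycle has length at most four, and one finds that the labels occurring are exactly the five expressions $-a+c+d+1$, $a+1$, $-c+1$, $-b-d$ and $b$, both for the $xy$-cycles and for the $yx$-cycles.

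For each of the nine conditions I would then exhibit a vertex $v$ lying simultaneously on an $xy$-cycle whose label is one of the two expressions in that condition and on a $yx$-cycle whose label is the other; Lemma~\ref{lem:xy_and_yx_cycles} applied at $v$ then gives that the two corresponding cyclic subgroups of $\ZN$ meet in $0$. Concretely, the peripheral triangles $(v_1v_2v_3)$ and $(v_{13}v_{14}v_{15})$ supply the conditions pairing $-a+c+d+1$ with $a+1$, the vertices around $v_9$, $v_{10}$ account for the conditions involving $b$ and $-b-d$, and the middle triangles $(v_4v_5v_6)$, $(v_7v_8v_9)$, $(v_{16}v_{17}v_{18})$ yield the remaining ones. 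The main obstacle is purely organisational bookkeeping: one must check that for each of the nine listed pairs of labels some vertex is actually shared by the relevant $xy$- and $yx$-cycle. The one pair among the five labels \emph{not} appearing as a condition, namely $\langle-c+1\rangle\cap\langle b\rangle$, is presumably exactly the one for which no such common vertex exists; were it needed, the fallback would be Lemma~\ref{lem:from_v_to_w} applied to two distinct vertices lying on a common $xy$- and $yx$-cycle, as in part~\eqref{item:Hoq15_6} of Lemma~\ref{lem:Hoq15} — but all nine stated conditions follow directly from the cycle--cycle incidences, so no such refinement is required here.
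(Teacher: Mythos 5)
Your proposal is correct and follows exactly the paper's own argument: list the $xy$- and $yx$-cycles of $\Sigma_{18A}$ with their labels (which are indeed the five expressions $-a+c+d+1$, $a+1$, $1-c$, $-b-d$, $b$ on each side) and apply Lemma~\ref{lem:xy_and_yx_cycles} at a vertex common to the two relevant cycles, one such vertex existing for each of the nine stated pairs and none for the omitted pair $\langle 1-c\rangle\cap\langle b\rangle$. This is precisely how the paper proves the lemma, so no further comment is needed.
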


\begin{proof}
	The $xy$-cycles and their labels are: $(v_1\;v_{14}\;v_{10}\;v_7)$ with
  $-a+c+d+1$, $(v_2\;v_5\;v_{18}\;v_{13})$ with $1+a$, $(v_3\;v_8\;v_4)$ with
	$1-c$, $(v_6\;v_9\;v_{11}\;v_{17})$ with $-b-d$ and
	$(v_{12}\;v_{15}\;v_{16})$ with $b$. The $yx$-cycles are:
	$(v_1\;v_4\;v_{17}\;v_{15})$ with $1+a$, $(v_2\;v_7\;v_6)$ with $1-c$,
	$(v_3\;v_{13}\;v_{12}\;v_9)$ with $-a+c+d+1$, $(v_5\;v_8\;v_{10}\;v_{16})$
	with $-b-d$ and $(v_{11}\;v_{14}\;v_{18})$ with $b$.  
	Lemma	\ref{lem:xy_and_yx_cycles} implies the claim.
\end{proof}

\begin{lem} \label{lem:18A_imm}
	Let $\Sigma$ be a covering of $\Sigma_{18A}$ with simply intersecting cycles.
	Then $\imm(\Sigma)\leq \wg(\Sigma )$.
\end{lem}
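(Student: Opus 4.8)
The plan is to follow the scheme already used for the larger graphs in Lemmas~\ref{lem:9A_imm}, \ref{lem:12B_imm} and \ref{lem:12C_imm}: first read off the cycle structure of $\Sigma$ from (the proof of) Lemma~\ref{lem:Hoq17}, then compute $\wg(\Sigma)$ exactly, and finally bound $\imm(\Sigma)$ by exhibiting explicit plagues and propagating them with the automata of Examples~\ref{exa:9A}, \ref{exa:6D} and~\ref{exa:game12C}. From the proof of Lemma~\ref{lem:Hoq17} one knows that every $xy$- and every $yx$-cycle of $\Sigma_{18A}$ has length $3$ or $4$; the length-$3$ cycles carry the labels $1-c$ (the cycles through $v_2,v_3,v_4,v_6,v_7,v_8$) and $b$ (the cycles through $v_{11},v_{12},v_{14},v_{15},v_{16},v_{18}$), while the six remaining vertices $v_1,v_5,v_9,v_{10},v_{13},v_{17}$ lie on two length-$4$ cycles. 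Hence in the covering $\Sigma$ every $\sigma_i$-cycle has length $3|\langle\lambda\rangle|$ or $4|\langle\lambda\rangle|$ with $\lambda$ the label, so no $\sigma_i$-cycle has length $1$ or $2$, and by~\eqref{eq:weights} and~\eqref{eq:omega} every $\omega(v)$ equals $1/4$ except that $\omega(v)=7/24$ precisely when $v$ sits on a length-$3$ cycle whose label vanishes in $\ZN$, i.e.\ when $v$ is of type $1-c$ with $c\equiv1\pmod N$ or of type $b$ with $b\equiv0\pmod N$.

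Summing the eighteen local contributions then gives
\[
\wg(\Sigma)=\begin{cases}
5/18 & \text{if }c\equiv1\text{ and }b\equiv0\pmod N,\\
19/72 & \text{if exactly one of }c\equiv1,\ b\equiv0\pmod N\text{ holds},\\
1/4 & \text{otherwise},
\end{cases}
\]
the trivial covering $N=1$ (all labels $\equiv0$) falling under the first case.

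For the immunity bound I would construct, in each of the three regimes, a plague $P$ which is a union of full fibers over a small set of the eighteen vertices together with a few partial fibers — sets of coset representatives for the cyclic subgroups of $\ZN$ generated by the labels $1+a$, $-a+c+d+1$, $1-c$, $-b-d$, $b$, or the ``half intervals'' of Example~\ref{exa:game12C} — and propagate $P$ by a table of pivots exactly as in the proofs of Lemmas~\ref{lem:Hoq13_imm}, \ref{lem:Hoq15_imm} and~\ref{lem:Hoq16_imm}; the nine pairwise-trivial-intersection conditions of Lemma~\ref{lem:Hoq17} (which force the five orders $|\langle1+a\rangle|,|\langle 1-c\rangle|,|\langle b\rangle|,|\langle -b-d\rangle|,|\langle -a+c+d+1\rangle|$ to be pairwise coprime divisors of $N$) are exactly what makes the resulting recurrences of Examples~\ref{exa:9A}, \ref{exa:6D} and~\ref{exa:game12C} sweep out the fibers. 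In the generic regime one aims for a plague of size at most $\tfrac{9}{2}N$ (being careful about the parity of $N$ where a half interval is used), which forces $\imm(\Sigma)\le1/4=\wg(\Sigma)$; in the two structured regimes the collapse of several $\sigma_i$-cycles to length $3$ both simplifies the propagation and, since it raises $\wg(\Sigma)$, leaves more room, so slightly larger plagues (of size about $\tfrac{19}{4}N$, resp.\ $5N$) still suffice, and the case $N=1$ can be settled by a direct small computation.

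The main obstacle is the bookkeeping. With eighteen vertices, four parameters $a,b,c,d$ modulo $N$, and the constraints of Lemma~\ref{lem:Hoq17}, both the pivot tables and the final comparison $|P|/(18N)\le\wg(\Sigma)$ split into many subcases organized by which of the five orders above are trivial and by their relative sizes. As in Lemmas~\ref{lem:Hoq10_ineq} and~\ref{lem:ineq_12C}, I expect the comparison to reduce, in the hardest subcases, to one or two elementary numerical inequalities of the shape $(N-kn)(n-k')\ge0$; the longest part is verifying that the chosen seeds really are plagues, i.e.\ that every pivot table closes up, but this is routine once the correct seeds are guessed with the help of the recurrence examples.
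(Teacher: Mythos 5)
Your computation of the weight is correct and matches the paper's cycle data: all cycles in a covering of $\Sigma_{18A}$ have length $3|\langle\lambda\rangle|$ or $4|\langle\lambda\rangle|$, so $\omega(v)\in\{1/4,7/24\}$, giving $\wg(\Sigma)=5/18$, $19/72$ or $1/4$ exactly as you state. But the heart of the lemma is the immunity bound, and there your proposal stops at a plan: you never exhibit a concrete seed set, nor a single pivot table, for $\Sigma_{18A}$ itself; you only promise that suitable seeds of size roughly $\tfrac92 N$, $\tfrac{19}{4}N$, $5N$ can be ``guessed'' and that their verification is ``routine.'' That is precisely the content that has to be supplied, and it is what the paper's proof consists of: the single seed $P=v_1[*]\cup v_2[*]\cup v_3[*]\cup v_5[*]\cup v_{11}[I]$, with $I$ a set of representatives for $\ZN/\langle b\rangle$, is shown to spread via two explicit pivot sequences and Example~\ref{exa:9A}. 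This one construction handles all regimes at once: its size is $4N+N/|\langle b\rangle|$, which is $5N$ when $b\equiv0$, $c\equiv1\pmod N$ (matching $\wg=5/18$), and after a graph isomorphism one may assume $b\not\equiv0\pmod N$ in the remaining cases, giving size at most $\tfrac92N$ and $\imm(\Sigma)\le1/4\le\wg(\Sigma)$. No case split over which label orders are trivial, no use of Examples~\ref{exa:6D} or~\ref{exa:game12C}, and no numerical inequalities in the style of Lemma~\ref{lem:ineq_12C} are needed.

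Two further cautions. First, your parenthetical claim that the nine conditions of Lemma~\ref{lem:Hoq17} force the five orders $|\langle1+a\rangle|,|\langle1-c\rangle|,|\langle b\rangle|,|\langle-b-d\rangle|,|\langle-a+c+d+1\rangle|$ to be \emph{pairwise} coprime is not justified: the pair $\langle1-c\rangle$, $\langle b\rangle$ does not occur in Lemma~\ref{lem:Hoq17} (no $xy$-cycle with label $1-c$ meets a $yx$-cycle with label $b$ at a vertex), so a propagation strategy that relies on coprimality of that pair could fail. Second, the anticipated bookkeeping over ``which of the five orders are trivial'' is a sign that the approach, while not wrong in principle, has not yet identified the simplification that makes the lemma provable in a few lines; until the seed and its spreading are written down, the inequality $\imm(\Sigma)\le\wg(\Sigma)$ remains unproved.
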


\begin{proof}
 Let $I$ be a set of representatives for $\ZN/\langle
	b\rangle$. We claim that \[
	P=v_1[*]\cup v_2[*]\cup v_3[*]\cup v_5[*]\cup
	v_{11}[I]
	\]
	is a plague. With the sequence of pivots
  $v_1[*]$, $v_3[*]$, $v_4[*]$,
	$v_8[*]$, $v_7[*]$, $v_6[*]$, $v_9[*]$ and $v_5[*]$ we obtain that
  $P$ spreads to
  $$ v_{13}[*] \cup v_7[*] \cup v_6[*] \cup v_9[*] \cup v_8[*]
     \cup v_4[*] \cup v_{10}[*] \cup v_{17}[*].$$
  Then with the sequence of pivots $v_{16}[I-b]$, $v_{15}[I-b]$, and
	$v_{12}[I-b]$ we see that $P$ spreads to
  $v_{18}[I-b] \cup v_{14}[I-b] \cup v_{11}[I-b]$.
  Hence the claim follows from Example \ref{exa:9A}.

The cycle structure of $\Sigma $ at the vertices is the following:
\begin{align}
  	\label{eq:Hoq17_v1v13}v_1[i], v_{13}[i]: & \text{ cycles of length $4|\langle -a+c+d+1\rangle|$ and $4|\langle 1+a\rangle|$,}\\
		\label{eq:Hoq17_v2v4}v_2[i], v_4[i]: & \text{ cycles of length $4|\langle 1+a\rangle|$ and $3|\langle 1-c\rangle|$,}\\
		\label{eq:Hoq17_v3v7}v_3[i], v_7[i]: & \text{ cycles of length $3|\langle
    1-c\rangle|$ and $4|\langle -a+c+d+1\rangle|$,}\\
		\label{eq:Hoq17_v5v17}v_5[i], v_{17}[i]: & \text{ cycles of length $4|\langle 1+a\rangle|$ and $4|\langle -b-d\rangle|$,}\\
		\label{eq:Hoq17_v6v8}v_6[i],v_8[i]: & \text{ cycles of length $4|\langle -b-d\rangle|$ and $3|\langle 1-c\rangle|$,}\\
		\label{eq:Hoq17_v9v10}v_9[i],v_{10}[i]: & \text{ cycles of length $4|\langle -b-d\rangle|$ and $4|\langle -a+c+d+1\rangle|$,}\\
		\label{eq:Hoq17_v11v16}v_{11}[i],v_{16}[i]: & \text{ cycles of length $4|\langle -b-d\rangle|$ and $3|\langle b\rangle|$,}\\
		\label{eq:Hoq17_v12v14}v_{12}[i],v_{14}[i]: & \text{ cycles of length $4|\langle -a+c+d+1\rangle|$ and $3|\langle b\rangle|$,}\\
		\label{eq:Hoq17_v15v18}v_{15}[i],v_{18}[i]: & \text{ cycles of length $4|\langle 1+a\rangle|$ and $3|\langle b\rangle|$,}
	\end{align}
	for all $i\in\ZN$. If $b\equiv 0\pmod N$ and $c\equiv1\pmod N$ then $\imm
  (\Sigma )\le 5/18=\wg (\Sigma )$. Otherwise, using a graph isomorphism,
  we may assume that $b\not\equiv 0\pmod N$. Then $\imm (\Sigma )\le
  (4N+N/2)/18N=1/4\le \wg (\Sigma )$.
\end{proof}

\subsection{The graph $\Sigma_{24A}$}
\label{24A}

\setlength{\unitlength}{1.5cm}
\begin{figure}[h]
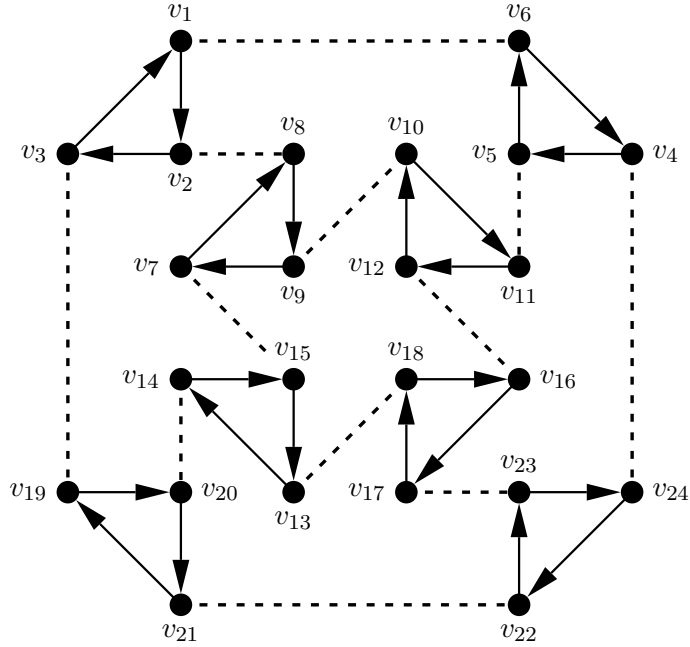

  \begin{graph}(6,6)
    \roundnode{n1}(1.5,5.5)
    \roundnode{n2}(1.5,4.5)
    \roundnode{n3}(.5,4.5)
    \roundnode{n4}(5.5,4.5)
    \roundnode{n5}(4.5,4.5)
    \roundnode{n6}(4.5,5.5)
    \roundnode{n7}(1.5,3.5)
    \roundnode{n8}(2.5,4.5)
    \roundnode{n9}(2.5,3.5)
    \roundnode{n10}(3.5,4.5)
    \roundnode{n11}(4.5,3.5)
    \roundnode{n12}(3.5,3.5)
    \roundnode{n13}(2.5,1.5)
    \roundnode{n14}(1.5,2.5)
    \roundnode{n15}(2.5,2.5)
    \roundnode{n16}(4.5,2.5)
    \roundnode{n17}(3.5,1.5)
    \roundnode{n18}(3.5,2.5)
    \roundnode{n19}(.5,1.5)
    \roundnode{n20}(1.5,1.5)
    \roundnode{n21}(1.5,.5)
    \roundnode{n22}(4.5,.5)
    \roundnode{n23}(4.5,1.5)
    \roundnode{n24}(5.5,1.5)
    \diredge{n1}{n2}
    \diredge{n2}{n3}
    \diredge{n3}{n1} 
    \diredge{n4}{n5}
    \diredge{n5}{n6}
    \diredge{n6}{n4} 
    \diredge{n7}{n8} 
    \diredge{n8}{n9}
    \diredge{n9}{n7}
    \diredge{n10}{n11} 
    \diredge{n11}{n12}
    \diredge{n12}{n10}
    \diredge{n13}{n14} 
    \diredge{n14}{n15}
    \diredge{n15}{n13}
    \diredge{n16}{n17} 
    \diredge{n17}{n18}
    \diredge{n18}{n16}
    \diredge{n19}{n20}
    \diredge{n20}{n21}
    \diredge{n21}{n19} 
    \diredge{n22}{n23}
    \diredge{n23}{n24}
    \diredge{n24}{n22} 
    \edge{n1}{n6}[\yedge]
    \edge{n2}{n8}[\yedge]
    \edge{n3}{n19}[\yedge]
    \edge{n4}{n24}[\yedge]
    \edge{n5}{n11}[\yedge]
    \edge{n7}{n15}[\yedge]
    \edge{n9}{n10}[\yedge]
    \edge{n12}{n16}[\yedge]
    \edge{n13}{n18}[\yedge]
    \edge{n14}{n20}[\yedge]
    \edge{n17}{n23}[\yedge]
    \edge{n21}{n22}[\yedge]
		\nodetext{n1}(0,.25){$v_1$}
		\nodetext{n2}(0,-.25){$v_2$}
		\nodetext{n3}(-.30,0){$v_3$}
		\nodetext{n4}(.30,0){$v_4$}
		\nodetext{n5}(-.30,0){$v_5$}
		\nodetext{n6}(0,.25){$v_6$}
		\nodetext{n7}(-.30,0){$v_7$}
		\nodetext{n8}(0,.25){$v_8$}
		\nodetext{n9}(0,-.25){$v_9$}
		\nodetext{n10}(0,.25){$v_{10}$}
		\nodetext{n11}(0,-.25){$v_{11}$}
		\nodetext{n12}(-.35,0){$v_{12}$}
		\nodetext{n13}(0,-.25){$v_{13}$}
		\nodetext{n14}(-.35,0){$v_{14}$}
		\nodetext{n15}(0,.25){$v_{15}$}
		\nodetext{n16}(.35,0){$v_{16}$}
		\nodetext{n17}(-.35,0){$v_{17}$}
		\nodetext{n18}(0,.25){$v_{18}$}
		\nodetext{n19}(-.35,0){$v_{19}$}
		\nodetext{n20}(.35,0){$v_{20}$}
		\nodetext{n21}(0,-.25){$v_{21}$}
		\nodetext{n22}(0,-.25){$v_{22}$}
		\nodetext{n23}(0,.25){$v_{23}$}
		\nodetext{n24}(.35,0){$v_{24}$}
  \end{graph}
  \caption{Schreier graph $\Sigma _{24A}$}
  \label{fig:Hoq18}
\end{figure}

\begin{lem} \label{lem:24A_imm}
	Let $\Sigma$ be a covering of $\Sigma_{24A}$ with simply intersecting cycles.
	Then $\imm(\Sigma)\leq1/4=\wg(\Sigma)$.
\end{lem}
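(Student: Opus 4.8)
The plan is to follow the same scheme as in Lemmas~\ref{lem:8A_imm} and~\ref{lem:12A_imm}: first compute $\wg(\Sigma)$, then produce a plague consisting of finitely many complete fibers. For the weight, note from Figure~\ref{fig:Hoq18} that $\Sigma_{24A}$ has no oriented loops and that every $xy$-cycle and every $yx$-cycle of $\Sigma_{24A}$ has exactly four elements (the six $xy$-cycles and the six $yx$-cycles partition the $24$ vertices into quadruples). Since the image of a $\sigma_1$-cycle (resp.\ $\sigma_2$-cycle) of a covering $\Sigma$ is an $xy$-cycle (resp.\ $yx$-cycle) of $\Sigma_{24A}$, and the restriction of $p$ to a cycle is surjective, every $\sigma_1$- and every $\sigma_2$-cycle of $\Sigma$ has length $\ge 4$. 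Hence $c(v)=(i,j)$ with $i,j\ge 4$ for all $v\in\Sigma$, and since $\Sigma$ is a covering of $\Sigma_{24A}$ it is none of the three exceptional coverings appearing in~\eqref{eq:omega}; therefore $\omega(v)=\omega'_{ij}=1/4$ by~\eqref{eq:weights}, and $\wg(\Sigma)=1/4$.

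For the immunity it suffices to exhibit a plague of size $6N$, since then $\imm(\Sigma)\le 6N/|\Sigma|=6N/24N=1/4$. The key point is that when the current support is a union of complete fibers, one step of the cellular automaton of Example~\ref{exa:B3} is label-independent: with a pivot $p[\ZN]$ the three neighbouring subsets $x_1[\ZN-c]$, $x_2[\ZN+a]$, $x_3[\ZN+b+1]$ are again complete fibers, so whenever two of $x_1[*],x_2[*],x_3[*]$ lie in the support the third is added — which is precisely the propagation rule for the trivial covering $\Sigma_{24A}$. Thus it is enough to find six vertices of $\Sigma_{24A}$ whose complete fibers form a plague; pulling these back gives a plague of size $6N$ of an arbitrary covering $\Sigma$. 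Using the vertex labelling of Figure~\ref{fig:Hoq18}, one checks that $P=v_1[*]\cup v_2[*]\cup v_3[*]\cup v_5[*]\cup v_7[*]\cup v_{14}[*]$ works: the complete triangle $v_1,v_2,v_3$ forces $v_6[*],v_8[*],v_{19}[*]$ (pivots $v_3,v_1,v_2$), and then the oriented triangles carrying $v_5,v_7,v_{14},v_{19}$ become complete one after another (for instance pivoting at $v_6$ adds $v_4[*]$, completing the triangle $\{v_4,v_5,v_6\}$, which then forces $v_{11}[*]$, and so on), and iterating this fills all eight oriented triangles, hence all of $\Sigma$. Consequently $\imm(\Sigma)\le 1/4=\wg(\Sigma)$.

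The only genuine work is the finite cascade verifying that $P$ above (or an equally explicit six-vertex set) is a plague of $\Sigma_{24A}$; as in Lemmas~\ref{lem:8A_imm} and~\ref{lem:12A_imm} this is a completely routine, if slightly lengthy, check, and it is where all the case-specific content of the lemma sits. Everything else is formal: the comparison between cycles of $\Sigma$ and of $\Sigma_{24A}$, the value $1/4$ coming from~\eqref{eq:omega} and~\eqref{eq:weights}, and the label-independence of the automaton on complete fibers.
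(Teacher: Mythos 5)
Your proof is correct and follows essentially the same route as the paper, which likewise exhibits a union of six complete fibers (there $v_1[*]\cup v_2[*]\cup v_3[*]\cup v_4[*]\cup v_7[*]\cup v_{13}[*]$) as a plague of size $6N$ and observes that all cycles have length $\ge 4$, so $\wg(\Sigma)=1/4$; your alternative set $v_1[*]\cup v_2[*]\cup v_3[*]\cup v_5[*]\cup v_7[*]\cup v_{14}[*]$ does spread to all of $\Sigma$, and your explicit reduction to the quotient via label-independence on complete fibers is exactly what the paper's ``straightforward computation'' implicitly uses. Only a cosmetic slip: in your first step the pivots $v_1,v_2,v_3$ force $v_6[*],v_8[*],v_{19}[*]$ respectively (your parenthetical pairing is permuted), which does not affect the cascade.
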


\begin{proof}
	First we prove that $\imm(\Sigma)\leq1/4$.  A straightforward computation shows that 
	$v_1[*]\cup v_2[*]\cup v_3[*]\cup v_4[*]\cup v_7[*]\cup v_{13}[*]$
	is a plague. 
	To prove that $\wg(\Sigma)=1/4$ observe that in every covering all cycles
	have length $\geq4$. 
\end{proof}

\subsection{The proof of Theorem \ref{thm:percolation}}
    By Proposition \ref{pro:homogeneous_spaces}, the Schreier graphs to
    consider are those in Figures \ref{fig:Hoq1}--\ref{fig:Hoq18}. In this section
    we proved that only the Schreier graphs $\Sigma _{1A}$,
    $\Sigma _{3A}$, $\Sigma _{4A}$, $\Sigma _{6A}$, $\Sigma _{6D}$, $\Sigma
    _{7A}$, $\Sigma _{8A}$, $\Sigma _{9A}$, $\Sigma _{12A}$, $\Sigma _{12B}$,
    $\Sigma _{12C}$, $\Sigma _{18A}$, and $\Sigma _{24A}$ have coverings with
    simply intersecting cycles. For each of these coverings we determined an
    upper bound for the immunity and then we proved that this upper bound can be
    bounded from above by $\omega$. The corresponding claims are
    Lemmas~\ref{lem:1A_imm}, \ref{lem:3A_imm}, \ref{lem:4A_imm},
    \ref{lem:6A_imm}, \ref{lem:6D_imm}, \ref{lem:7A_imm}, \ref{lem:8A_imm},
    \ref{lem:9A_imm}, \ref{lem:12A_imm}, \ref{lem:12B_imm}, \ref{lem:12C_imm},
    \ref{lem:18A_imm}, and \ref{lem:24A_imm}.

\section{Nichols algebras with many cubic relations}
\label{section:manycubic}

We refer to \cite{MR1913436} 
for an introduction to Nichols algebras and
Yetter-Drinfeld modules. Some elementary facts can also be found in
Section~\ref{section:intro}.
Let $\fie$ be a field, $G$ be a group, and $V$ be an absolutely irreducible
finite-dimensional Yetter-Drinfeld module over the group algebra $\fie G$.
Recall that $V$ decomposes as $V=\oplus_{x\in G}V_x$, where $V_x=\{v\in
	V\mid\delta(v)=x\otimes v\}$ and $\delta :V\to \fie G\otimes V$ is the left
  coaction of $\fie G$ on $V$.  The \emph{support} of $V$ is the set 
\[
\mathrm{supp}(V)=\{x\in G\mid V_x\ne0\}.
\]
It is well-known that $(V,c)$ is a braided vector space,
where $c\in \mathrm{Aut}(V\otimes V)$ is defined by
$$ c(u\otimes v)=gv\otimes u\quad \text{for all $u\in V_g$,
   $g\in \supp V$, $v\in V$,} $$
and $c$ satisfies the braid relation on $V\otimes V\otimes V$.

\begin{defn}
Let $G,H$ be groups. We say
that two Yetter-Drinfeld modules $V\in \ydG $, $W\in \ydH $ are
\emph{bg-equivalent}
if there exists a bijection
$\varphi :\supp V\to \supp W$ and a
linear isomorphism $\psi :V\to W$ such that
$$\psi (V_g)=W_{\varphi (g)},\quad
\psi (gv)=\varphi(g)\psi (v)$$
for all $g,x\in \supp V$, $v\in V$. The pair $(\psi ,\varphi )$
is then called a \emph{bg-equivalence} between $V$ and $W$.
We also say that the Nichols algebras
$\NA (V)$ and $\NA (W)$ are \emph{bg-equivalent}.
\end{defn}

The reason for $b$ and $g$ in the definition is the following.

\begin{lem}
  Let $G,H$ be groups and let $V\in \ydG $, $W\in \ydH $.
  If $(\psi ,\varphi )$ is a bg-equivalence between $V$ and $W$,
  then $\psi :V\to W$ is an isomorphism of braided vector spaces,
  which maps the $G$-homogeneous components of $V$ to the
  $H$-homogeneous components of $W$.
\end{lem}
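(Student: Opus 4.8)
The plan is to unwind the two defining properties of a bg-equivalence and check directly that $\psi$ respects both the grading and the braiding. Write $c_V$ and $c_W$ for the braidings of $V$ and $W$ defined as above.

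First I would dispose of the grading statement. By hypothesis $\psi$ is a linear isomorphism with $\psi(V_g)=W_{\varphi(g)}$ for all $g\in\supp V$. Since $V=\bigoplus_{g\in\supp V}V_g$ and $W=\bigoplus_{h\in\supp W}W_h$, and $\varphi\colon\supp V\to\supp W$ is a bijection, $\psi$ maps the $\fie G$-homogeneous component $V_g$ isomorphically onto the $\fie H$-homogeneous component $W_{\varphi(g)}$, and as $g$ runs over $\supp V$ these exhaust all homogeneous components of $W$. This is precisely the second assertion of the lemma.

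Second, I would verify $(\psi\otimes\psi)\circ c_V=c_W\circ(\psi\otimes\psi)$ on $V\otimes V$. By bilinearity it is enough to test on $u\otimes v$ with $u\in V_g$ for some $g\in\supp V$ and $v\in V$ arbitrary. Using the definition of $c_V$ and then the relation $\psi(gv)=\varphi(g)\psi(v)$ gives
\[
(\psi\otimes\psi)(c_V(u\otimes v))=(\psi\otimes\psi)(gv\otimes u)=\psi(gv)\otimes\psi(u)=\varphi(g)\psi(v)\otimes\psi(u),
\]
while $\psi(u)\in W_{\varphi(g)}$ (because $\psi(V_g)=W_{\varphi(g)}$), so the definition of $c_W$ yields
\[
c_W(\psi(u)\otimes\psi(v))=\varphi(g)\psi(v)\otimes\psi(u).
\]
The two right-hand sides agree, so the intertwining relation holds on all of $V\otimes V$; since $\psi$ is a linear isomorphism, it is an isomorphism of braided vector spaces.

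There is no genuine obstacle here — the claim is an immediate consequence of the definitions. The only mild point of care is that the formula for $c_V$ is stated only for $\fie G$-homogeneous first tensor factor, so the braiding identity is checked on such elements and then extended by bilinearity; and the surjectivity of $\psi$ onto each $W_h$ relies on $\varphi$ being a bijection, not merely a map.
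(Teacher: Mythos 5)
Your argument is correct and coincides with the paper's proof: the same computation $(\psi\otimes\psi)(c(u\otimes v))=\psi(gv)\otimes\psi(u)=\varphi(g)\psi(v)\otimes\psi(u)=c(\psi(u)\otimes\psi(v))$ for $u\in V_g$, with the grading statement read off directly from the hypotheses on $\psi$ and the bijectivity of $\varphi$. Nothing further is needed.
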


\begin{proof}
  Let $v\in V$, $g\in G$, and $u\in V_g$. Then $\psi (u)\in W_{\varphi (g)}$
  and hence
  \begin{align*}
    (\psi \otimes \psi ) (c(u\otimes v))=&\;\psi (gv)\otimes \psi (u)\\
    =&\;\varphi (g)\psi (v)\otimes \psi (u)\\
    =&\;c(\psi (u)\otimes \psi (v)).
  \end{align*}
  Thus $\psi $ is an isomorphism of braided vector spaces. The rest is clear
  from the assumptions on $\psi $.
\end{proof}

In particular, Nichols algebras of bg-equivalent Yetter-Drinfeld modules are
isomorphic as algebras and coalgebras.

Recall from \cite[Section 2.1]{MR2891215} that a Nichols
algebra $\toba(V)$ has \emph{many cubic relations} if 
\begin{equation}
	\label{eq:cubic_relations}
	\dim\ker(1+c_{12}+c_{12}c_{23})\geq\frac13\dim V\left((\dim V )^2-1\right).
\end{equation}

Let $X=\mathrm{supp}(V)$, $x\in X$ and $e=\dim V_x$.  Assume that $X$ is an
indecomposable rack of size $d>1$.  Since $V=\oplus_{x\in X}V_x$, we conclude
that $V^{\otimes3}=\bigoplus_{\mathcal{O}}V_{\mathcal{O}}^{\otimes3}$, where
the direct sum is taken over all Hurwitz orbits and
\[
	V_{\mathcal{O}}^{\otimes3}=\bigoplus_{(x,y,z)\in\mathcal{O}}V_x\otimes V_y\otimes V_z.
\]
Each $V_\mathcal{O}^{\otimes3}$ is invariant under
the map $1 + c_{12} + c_{12}c_{23}$. 

\begin{rem}
	By Theorem \ref{thm:percolation}, the inequality
	$\imm(\mathcal{O})\leq\omega(\mathcal{O})$ holds for $\mathcal{O}\subseteq
	X^3$ if the quotient $\mathcal{\overline{O}}$ does not have
	$xy$-cycles of length $\geq5$.
\end{rem}

\begin{lem} \label{lem:manycubic}
	Assume that $\toba(V)$ has many cubic relations. Then the following hold.
        \begin{enumerate}
          \item $\sum_{\mathcal{O}}\imm(\mathcal{O})\dim V_\mathcal{O}^{\otimes3}\geq\frac{de}{3}((de)^2-1)$,
  	    where the sum is taken over all Hurwitz orbits of $X^3$.
	  \item Assume that $\imm(\mathcal{O})\leq\omega(\mathcal{O})$
            for all $\mathcal{O}\subseteq X^3$.  Then 
	\[
	  \sum_{\mathcal{O}}\wg(\mathcal{O})\dim V_\mathcal{O}^{\otimes3}\geq\frac{de}{3}\left((de)^2-1\right).
	\]
        \end{enumerate}
\end{lem}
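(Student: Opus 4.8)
The plan is to split both sides of the inequality \eqref{eq:cubic_relations} defining ``many cubic relations'' according to the Hurwitz orbit decomposition of $X^{3}$. Since $X=\supp(V)$ is indecomposable, all the homogeneous components $V_x$, $x\in X$, have the same dimension $e$, so $\dim V=de$ and $\dim V_{\mathcal{O}}^{\otimes 3}=|\mathcal{O}|\,e^{3}$ for every Hurwitz orbit $\mathcal{O}\subseteq X^{3}$; also $X^{3}$ contains only finitely many Hurwitz orbits. Because the operator $1+c_{12}+c_{12}c_{23}$ preserves each summand $V_{\mathcal{O}}^{\otimes 3}$, one has $\dim\ker(1+c_{12}+c_{12}c_{23})=\sum_{\mathcal{O}}\dim\ker\bigl((1+c_{12}+c_{12}c_{23})|_{V_{\mathcal{O}}^{\otimes 3}}\bigr)$, the sum running over all Hurwitz orbits of $X^{3}$.

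The heart of the argument is to bound each summand by $\imm(\mathcal{O})\,\dim V_{\mathcal{O}}^{\otimes 3}$. First I would fix a plague $Y\subseteq\mathcal{O}$ of minimal size, so that $|Y|=\imm(\mathcal{O})\,|\mathcal{O}|$. Then I would invoke the mechanism recalled in Section~\ref{section:intro}: whenever two of the three degrees $(x,y,z)$, $\sigma_2\cdot(x,y,z)$, $\sigma_1\sigma_2\cdot(x,y,z)$ lie in a subset on which the homogeneous parts of an element $\alpha\in\ker(1+c_{12}+c_{12}c_{23})$ are already known, the remaining homogeneous part of $\alpha$ is recovered from them by a \emph{linear} map. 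Iterating this is exactly the cellular automaton of Example~\ref{exa:B3}, so since $Y$ is a plague for $\mathcal{O}$ the projection of $\ker\bigl((1+c_{12}+c_{12}c_{23})|_{V_{\mathcal{O}}^{\otimes 3}}\bigr)$ onto $\bigoplus_{(x,y,z)\in Y}V_x\otimes V_y\otimes V_z$ is injective. Hence $\dim\ker\bigl((1+c_{12}+c_{12}c_{23})|_{V_{\mathcal{O}}^{\otimes 3}}\bigr)\le|Y|\,e^{3}=\imm(\mathcal{O})\,|\mathcal{O}|\,e^{3}=\imm(\mathcal{O})\,\dim V_{\mathcal{O}}^{\otimes 3}$. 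Summing over $\mathcal{O}$, using the displayed equality for $\dim\ker$, and then the hypothesis \eqref{eq:cubic_relations} with $\dim V=de$, I obtain part~(1).

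For part~(2) it suffices to note that each $\dim V_{\mathcal{O}}^{\otimes 3}$ is nonnegative, so under the assumption $\imm(\mathcal{O})\le\wg(\mathcal{O})$ for all $\mathcal{O}$ one may replace $\imm(\mathcal{O})$ by $\wg(\mathcal{O})$ term by term in the inequality of part~(1).

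I do not anticipate any real obstacle: the only step that is not bookkeeping is the passage from a plague to an injective projection, and this is precisely the argument given after the definition of immunity in Section~\ref{section:intro}, which is valid verbatim for homogeneous components of any fixed dimension $e$, the map reconstructing the third homogeneous part from the other two being linear. For a self-contained treatment one can refer to \cite[\S 1.4]{MR2891215}, where the case $e=1$ is carried out and the general case only changes the dimension count by the factor $e^{3}$.
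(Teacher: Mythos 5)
Your proof is correct and follows essentially the same route as the paper: the paper simply cites \cite[Proposition 6]{MR2891215} for the orbitwise bound $\dim\ker(1+c_{12}+c_{12}c_{23})|_{V_{\mathcal{O}}^{\otimes 3}}\le\imm(\mathcal{O})\dim V_{\mathcal{O}}^{\otimes 3}$, which is exactly the plague/injective-projection argument you spell out, and then combines it with \eqref{eq:cubic_relations} and $\dim V=de$. Part (2) is, as you say, just the term-by-term replacement of $\imm(\mathcal{O})$ by $\wg(\mathcal{O})$, so no further comment is needed.
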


\begin{proof}
  (1) follows from \cite[Proposition 6]{MR2891215} and \eqref{eq:cubic_relations}.
  (2) follows from (1).
\end{proof}

%

\begin{lem}
	\label{lem:ineq}
	Assume that $\toba(V)$ has many cubic relations.  Also assume that
	$\imm(\mathcal{O})\leq\omega(\mathcal{O})$ for all $\mathcal{O}\subseteq
	X^3$. Then 
	\[
	\sum_{p,q\ge 1}k_pk_q(3\omega'_{p,q}-1)+\left(\frac1{10}+\frac{1}{8}\right)k_3+\frac1{4}k_4\geq -1.
	\]
	Further, if the covering $\Sigma_{6A}^{4;2,2}$ appears in $X^3$
  then $k_4\ge 4$, otherwise
	\[
	\sum_{p,q\ge 1}k_pk_q(3\omega'_{p,q}-1)+\left(\frac1{10}+\frac{1}{8}\right)k_3\geq -1.
	\]
\end{lem}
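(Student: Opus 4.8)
The plan is to run the weight bound from Lemma~\ref{lem:manycubic}(2) through a decomposition of $V^{\otimes 3}$ into Hurwitz orbits and then group the orbits by the cycle type $c(x,y,z)=(i,j)$ of their triples, so that the left-hand side $\sum_{\mathcal O}\wg(\mathcal O)\dim V_{\mathcal O}^{\otimes 3}$ becomes $\sum_{(x,y,z)\in X^3}\wg(x,y,z)\,e^3$, where $\wg(x,y,z)$ is the vertex weight of the point $(x,y,z)$ in its Hurwitz orbit. First I would recall that, by definition of $\wg$ in \eqref{eq:omega} and \eqref{eq:weight_of_sigma}, for all but finitely many exceptional orbits the vertex weight equals $\wg'_{ij}$ where $c(x,y,z)=(i,j)$, and count how many triples in $X^3$ have a given $(i,j)$. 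Here $i$ is the length of the $\sigma_1$-cycle through $(x,y,z)$ and $j$ the length of the $\sigma_2$-cycle; the key combinatorial identity is that, for a fixed $x$, the number of $y\in X$ for which $(x,y,z)$ lies on a $\sigma_1$-cycle of length $p$ is precisely $k_p$ (this is exactly the definition of $k_p$ via the alternating word $x\trid(y\trid(x\trid\cdots))$, reindexed), and independently the number of $z$ giving a $\sigma_2$-cycle of length $q$ is $k_q$. Hence the number of triples with $c=(p,q)$ is $d\,k_p k_q$, and since all $e^3$ copies in $V_x\otimes V_y\otimes V_z$ carry the same weight, the contribution is $d e^3\sum_{p,q}k_p k_q\wg'_{pq}$.

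Next I would rewrite the right-hand side $\frac{de}{3}((de)^2-1)=\frac{de}{3}((de)^2-1)$ using $de=d\cdot e$ and $d=1+\sum_{p\ge 2}k_p$; more precisely $d=\sum_{p\ge 1}k_p$ if we set $k_1=1$ (the fixed point of $\varphi_x$ beyond the $1+k_2$ convention is absorbed — I would be careful here and check whether the indexing puts the trivial triple into $k_1$ or handles it separately, which is where the additive $-1$ on the right will come from). Writing $(de)^2-1=(de-1)(de+1)$ and $de=e\sum_p k_p$, one gets $\frac{de}{3}((de)^2-1)=\frac{e^3}{3}\left(\big(\sum_p k_p\big)^3-\sum_p k_p\big/e^2\right)$; after dividing the whole inequality of Lemma~\ref{lem:manycubic}(2) by $d e^3$ (valid since $d,e\ge 1$) the main term $\big(\sum_p k_p\big)^3$ cancels against $3\sum_{p,q}k_p k_q$ contributions via the algebraic identity $\big(\sum_p k_p\big)^3=\sum_{p,q,r}k_p k_q k_r$, and what survives is exactly $\sum_{p,q}k_p k_q(3\wg'_{pq}-1)$ plus a controlled remainder. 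I would then absorb the finitely many exceptional orbits: each exceptional covering contributes an extra $\tfrac1{30}$, $\tfrac1{12}$ or $\tfrac1{24}$ per vertex over $\wg'_{ij}$, and one counts how many such vertices appear. The covering $\Sigma_{4A}^{5;3,2}$ forces (by Lemma~\ref{lem:exception_4A}) a very rigid local picture; by Lemma~\ref{lem:Hoq7} / Lemma~\ref{lem:exception_6A} the covering $\Sigma_{6A}^{4;2,2}$ forces $k_4\ge 4$, which is exactly the dichotomy in the statement. The $\Sigma_{12C}$ trivial-covering exception contributes $\tfrac1{24}k_3$-type terms. Bounding the number of exceptional vertices in terms of $k_3$ (for the depth-$\le 4$ $\sigma_1$-cycles, i.e.\ $3$-cycles) gives the coefficients $\tfrac1{10}$ (from $\tfrac1{30}$, tripled) and $\tfrac18$ (from $\tfrac1{24}$, tripled) in front of $k_3$, and $\tfrac14 k_4$ from the $\Sigma_{6A}^{4;2,2}$ case when $k_4<4$.

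The main obstacle I expect is the bookkeeping in the exceptional-covering step: showing that the number of triples $(x,y,z)\in X^3$ whose Hurwitz orbit is isomorphic to $\Sigma_{4A}^{5;3,2}$ (resp.\ $\Sigma_{12C}^{1;0,0,0}$) and which lie in the fiber carrying the extra weight is at most $k_3$ times a small constant, with the constant matching $\tfrac1{10}+\tfrac18$ after multiplying by $3$ and by the number of such vertices per orbit. This requires using the structural consequences in Lemmas~\ref{lem:exception_4A} and \ref{lem:exception_12C} — that such triples satisfy $(x\trid y)\trid y=z$ resp.\ $x\trid(x\trid y)=z$ — to see that $z$ is determined by $(x,y)$ and that the relevant $\sigma_1$-cycle has length $3$, so these triples inject into the $k_3$-counted set for each $x$. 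A secondary subtlety is the precise handling of the trivial triple and the $k_1=1$ convention so that the constant on the right comes out as exactly $-1$ rather than $-1$ up to a term that must be shown nonpositive; I would treat $p=q=1$ explicitly, noting $\wg'_{11}=1$ so $3\wg'_{11}-1=2$, and track that single term through the cancellation. Everything else is the routine algebra of expanding $\big(\sum k_p\big)^3$ and the routine invocation of Theorem~\ref{thm:percolation} to justify $\imm\le\wg$ (hence Lemma~\ref{lem:manycubic}(2)) under the standing hypothesis that Hurwitz orbit quotients have only $xy$-cycles of length $\le 4$.
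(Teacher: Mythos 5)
Your proposal is correct and follows essentially the same route as the paper: reduce to $e=1$, apply Lemma~\ref{lem:manycubic}(2), count triples of cycle type $(p,q)$ as $d\,k_pk_q$, bound the exceptional vertices by $dk_3$, $dk_4$, $dk_3$ via Lemmas~\ref{lem:exception_4A}, \ref{lem:exception_6A} and \ref{lem:exception_12C} (with $z$ determined by $(x,y)$), and obtain the $k_4\ge 4$ dichotomy from Lemma~\ref{lem:exception_6A}(1), exactly as in the paper's proof. The only blemishes are cosmetic: the cancellation is just $d(d^2-1)/3$ against $d\sum_{p,q}k_pk_q\omega'_{pq}$ followed by division by $d$ and $d^2=\sum_{p,q}k_pk_q$ (no cubic expansion needed), and your phrase ``when $k_4<4$'' has the dichotomy backwards, though you state it correctly just before.
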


\begin{proof}
  The claim on $k_4$ holds by Lemma~\ref{lem:exception_6A}(1).
	We prove the first inequality of the lemma, the second is analogous.
  Since $\dim
	V_{\mathcal{O}}^{\otimes3}=|\mathcal{O}|e^3$ and $(de)^2-1\geq (de)^2-e^2$,
	we may assume that $e=1$. By Lemma \ref{lem:manycubic}(2), 
	\begin{equation}
		\label{eq:ineq_aux}
	\sum_{v\in X^3}\omega(v)=\sum_{\mathcal{O}}\omega(\mathcal{O})|\mathcal{O}|\geq d(d^2-1)/3.
	\end{equation}
	By Lemmas \ref{lem:exception_4A}, \ref{lem:exception_6A} and \ref{lem:exception_12C}, 
	\begin{align*}
	\left|\left\{v\in X^3:v\in\Sigma_{4A}^{5;3,2},\;v\in v_1[*]\right\}\right|&\leq dk_3,\\
	\left|\left\{v\in X^3:v\in\Sigma_{6A}^{4;2,2},\;v\in v_3[*]\right\}\right|&\leq dk_4,\\
	\left|\left\{v\in X^3:v\in\Sigma_{12C}^{1;0,0,0}\right\}\right|&\leq dk_3.
	\end{align*}
	By the definition of $\omega$ in \eqref{eq:omega}, 
	\eqref{eq:ineq_aux} and the last three inequalities imply that
	\[
	\sum_{p,q\ge 1}k_pk_q\omega'_{p,q}d+\frac{dk_3}{30}+\frac{dk_4}{12}+\frac{dk_3}{24}\geq\frac{d(d^2-1)}{3}.
	\]
  Since $\sum k_pk_q=d^2$, the lemma follows.
\end{proof}

%

\begin{cor}
	\label{cor:ineq}
	Assume that $\toba(V)$ has many cubic relations.  Also assume that
	$\imm(\mathcal{O})\leq\omega(\mathcal{O})$ for all $\mathcal{O}\subseteq
	X^3$.  Then 
	\begin{equation}
	\label{eq:all_exceptions}
		(k_3+k_3'-5)^2+\frac{11}{5}k_3+k_3'^2\leq49.
	\end{equation}
	Further, if the covering $\Sigma_{6A}^{4;2,2}$ appears in $X^3$ then $k_4\ge
  4$, otherwise
	\begin{equation}
	\label{eq:no_6A}
		(k_3+k_3'-4)^2+\frac{1}{5}k_3+k_3'^2\leq40.
	\end{equation}
\end{cor}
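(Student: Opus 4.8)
The plan is to feed Lemma~\ref{lem:ineq} into a completion of squares, after first making the quadratic form $\sum_{p,q\ge1}k_pk_q(3\omega'_{p,q}-1)$ fully explicit. Recalling the convention $k_1=1$ (which is what makes $\sum_{p\ge1}k_p=d$, and $\sum_{p,q}k_pk_q=d^2$), one observes that the entry $3\omega'_{p,q}-1$ depends only on the membership of $p$ and $q$ in the four classes $\{1\}$, $\{2\}$, $\{3\}$, $\{4,5,\dots\}$, and that it vanishes whenever $p=2$ or $q=2$. Writing the total mass of the $k_p$ in these classes as $1$, $k_2$, $k_3$ and $k_3'=\sum_{j\ge4}k_j$, and using $3\omega'_{1,1}-1=2$, $3\omega'_{1,3}-1=\tfrac38$, $3\omega'_{1,j}-1=\tfrac12$ for $j\ge4$, $3\omega'_{3,3}-1=3\omega'_{3,j}-1=-\tfrac18$ for $j\ge4$, and $3\omega'_{i,j}-1=-\tfrac14$ for $i,j\ge4$, a short count of ordered pairs (remembering the factor $2$ coming from the symmetric off-diagonal terms) gives
\[
\sum_{p,q\ge1}k_pk_q(3\omega'_{p,q}-1)=2+\tfrac34k_3+k_3'-\tfrac18k_3^2-\tfrac14k_3k_3'-\tfrac14k_3'^2 .
\]
Note that $k_2$ --- hence the size $d$ --- has cancelled out entirely.

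For \eqref{eq:all_exceptions} I would substitute this into the first inequality of Lemma~\ref{lem:ineq}, bound $\tfrac14k_4\le\tfrac14k_3'$ (as $k_4$ is a nonnegative summand of $k_3'$), and use $\tfrac1{10}+\tfrac18=\tfrac9{40}$ to arrive at
\[
2+\tfrac{39}{40}k_3+\tfrac54k_3'-\tfrac18k_3^2-\tfrac14k_3k_3'-\tfrac14k_3'^2\ge-1 .
\]
Multiplying by $-8$ and rearranging gives $k_3^2+2k_3k_3'+2k_3'^2-\tfrac{39}5k_3-10k_3'\le24$; since $(k_3+k_3'-5)^2+\tfrac{11}5k_3+k_3'^2=k_3^2+2k_3k_3'+2k_3'^2-\tfrac{39}5k_3-10k_3'+25$, this is exactly \eqref{eq:all_exceptions}. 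For \eqref{eq:no_6A}, if the covering $\Sigma_{6A}^{4;2,2}$ occurs in $X^3$ then $k_4\ge4$ is already part of Lemma~\ref{lem:ineq}; if it does not, I would run the same computation starting from the second inequality of Lemma~\ref{lem:ineq}, which carries no $k_4$-term (so the coefficient of $k_3'$ stays $1$ rather than $\tfrac54$). This yields $k_3^2+2k_3k_3'+2k_3'^2-\tfrac{39}5k_3-8k_3'\le24$, and from $(k_3+k_3'-4)^2+\tfrac15k_3+k_3'^2=k_3^2+2k_3k_3'+2k_3'^2-\tfrac{39}5k_3-8k_3'+16$ we read off \eqref{eq:no_6A}.

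There is no genuine obstacle here: once Lemma~\ref{lem:ineq} is available the argument is a finite, mechanical computation. The only points that need care are the convention $k_1=1$ (so that the index $1$ really occurs in the sums), the factor $2$ in the symmetric off-diagonal contributions to the double sum, and the observation that the entire row and column of index $2$ in $(\omega'_{p,q})$ contribute nothing --- which is precisely why $k_2$, and therefore the size $d$, drops out of the final two inequalities.
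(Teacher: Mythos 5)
Your proposal is correct and follows the paper's own route exactly: the paper derives the corollary from Lemma~\ref{lem:ineq} together with the explicit values of $\omega'_{p,q}$ (noting $k_4\le k_3'$, with the convention $k_1=1$ so that $\sum_p k_p=d$), which is precisely the expansion and completion of squares you carry out. Your explicit evaluation of $\sum_{p,q}k_pk_q(3\omega'_{p,q}-1)$ and the resulting identities matching \eqref{eq:all_exceptions} and \eqref{eq:no_6A} check out.
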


\begin{proof}
	The claim follows from Lemma \ref{lem:ineq} and the definition of $\omega '_{p,q}$.
	Note that $k_4\le k'_3$.
\end{proof}

\begin{rem}
	\label{rem:ineq}
	Recall that $k_n\ne1$ for all $n\geq3$.  In \cite{MR2891215}, the case where
	$X$ is braided was completely classified. Thus we may assume that $X$ is not
	braided. Hence $k_3'\geq2$.  Corollary \ref{cor:ineq} implies that
	$k'_3\leq 6$.
  Further, if $k_3'=2$ then $k_4\leq 2$ and $0\leq k_3\leq7$. If $k_3'=3$
	then $0\leq k_3\leq6$. If $k_3'=4$ then $0\leq k_3\leq5$. If $k_3'=5$ then
	$0\leq k_3\leq3$. Finally, if $k_3'=6$ then $k_3=0$.
  In particular, $k_3+k'_3\le 9$.
	If the covering $\Sigma_{6A}^{4;2,2}$ does not appear, then we get the same
	solutions except $(k_3,k'_3)=(5,4)$ and $(3,5)$.
\end{rem}

\begin{lem}
	\label{lem:bound_on_size}
	Assume that $\toba(V)$ has many cubic relations. Also assume that
	$\imm(\mathcal{O})\leq\omega(\mathcal{O})$ for all $\mathcal{O}\subseteq
	X^3$. Then $|X|\leq33$.
\end{lem}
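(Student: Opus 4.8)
The plan is to combine the numerical restrictions on $k_3$ and $k_3'$ recorded in Remark~\ref{rem:ineq} with the size estimate of Theorem~\ref{thm:size_of_racks}, using Proposition~\ref{pro:incommensurable_profiles} to kill the single borderline configuration. First I would fix the setup: $X=\supp V$ is a conjugacy class in a group, hence an indecomposable crossed set, and $|X|=1+k_2+k_2'$, where $k_2'=k_3+k_3'$ is the number of points moved by $\varphi_x$. If $X$ is braided, the conclusion follows from the classification in \cite{MR2891215}, so I may assume $X$ is not braided; then Remark~\ref{rem:ineq} applies and gives $k_2'=k_3+k_3'\le 9$.

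Next I would extract the key inequality. If the profile of $X$ is $1^{a_1}2^{a_2}\cdots$, then the number $A=\sum_{j\ge2}a_j$ of nontrivial cycles of $\varphi_x$ satisfies $2A\le\sum_{j\ge2}ja_j=k_2'$, so $A\le\lfloor k_2'/2\rfloor$, while Theorem~\ref{thm:size_of_racks} gives $k_2\le A(k_2'-2)$. Hence
\[
  |X|\le 1+A(k_2'-2)+k_2'.
\]
For $k_2'\le 8$ the right-hand side, evaluated at $A=\lfloor k_2'/2\rfloor$, is increasing in $k_2'$ and is maximal at $k_2'=8$, $A=4$, where it equals $1+4\cdot 6+8=33$; so these cases give $|X|\le 33$.

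It then remains to treat $k_2'=9$. Here $A\le 4$; if $A\le 3$ the displayed bound gives $|X|\le 1+3\cdot 7+9=31$. If instead $A=4$, then, because $9=2+2+2+3$ is the only partition of $9$ into four parts $\ge 2$, the profile of $X$ must be $1^{a_1}\,2^{3}\,3^{1}$; that is, $\varphi_x$ is a product of three disjoint transpositions and one disjoint $3$-cycle. Since $\gcd(2\cdot 2\cdot 2,3)=1$ and $|X|=1+k_2+9\ge 10=2+2+2+3+1$, Proposition~\ref{pro:incommensurable_profiles} forbids such an indecomposable crossed set, a contradiction. Thus $A\le 3$ also when $k_2'=9$, and in all cases $|X|\le 33$.

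The proof is essentially bookkeeping once these ingredients are in hand; the one place that needs genuine care is the configuration $k_2'=9$, $A=4$, where the elementary counting bound of Theorem~\ref{thm:size_of_racks} alone would only yield $|X|\le 38$ and one really has to bring in the incommensurability obstruction of Proposition~\ref{pro:incommensurable_profiles}. A secondary point to check is that the reduction to non-braided $X$ --- and hence the use of Remark~\ref{rem:ineq}, the constraints $k_n\ne 1$, and the braided classification of \cite{MR2891215} --- is legitimate in the present setting.
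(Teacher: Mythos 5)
Your proof is correct and follows essentially the same route as the paper: Remark~\ref{rem:ineq} to get $k_2'\le 9$, the bound $|X|\le 1+\bigl(\sum_{j\ge2}a_j\bigr)(k_2'-2)+k_2'$ from Theorem~\ref{thm:size_of_racks}, and Proposition~\ref{pro:incommensurable_profiles} to eliminate the borderline profile. The only difference is organizational: the paper enumerates the $29$ profiles with at most $9$ moved points and strikes out $11$ of them via the incommensurability obstruction, leaving $1^a2^4$ as the unique profile with $\sum_{j\ge2}a_j=4$ (giving the bound $33$), whereas you invoke that obstruction just once, for the single profile $1^a2^33$ at $k_2'=9$ --- a slightly leaner version of the same argument.
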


\begin{proof}
	By Remark \ref{rem:ineq}, we may assume that $\varphi_x$ with $x\in X$
  moves at most $9$ elements of $X$.
  There are $29$ possible profiles for $\varphi _x$ if we ignore the number of
  fixed points.
	Using Proposition~\ref{pro:incommensurable_profiles} we can exclude $11$
  profiles and are left with the list $1^a 2$, $1^a 3$, $1^a 2^2$, $1^a 4$,
  $1^a 5$, $1^a 2^3$, $1^a 2 4$, $1^a 3^2$, $1^a 6$, $1^a 7$, $1^a 2^4$,
  $1^a 2^2 4$, $1^a 26$, $1^a 4^2$, $1^a 8$, $1^a 3^3$, $1^a 36$, $1^a 9$,
  where $a\ge 1$ is arbitrary.
  Theorem~\ref{thm:size_of_racks} tells that
  $$|X|\le \left(\sum _{j\ge 2}a_j\right)(k'_2-2)+k'_2+1.$$
  Since $k'_2\le 9$
  is the number of points moved by $\varphi _x$, $x\in X$, we conclude that
  $|X|\le 31$ if $\sum _{j\ge 2}a_j\le 3$. The only profile not satisfying
  the latter inequality is $1^a2^4$. For this we get $|X|\le 33$.
  This yields the claim.
\end{proof}

\begin{lem}
	\label{lem:only_aff5}
	Assume that $\toba(V)$ has many cubic relations, and the rack $X$ satisfies
	$k_3'(X)\ne0$.  Also assume that $\imm(\mathcal{O})\leq\omega(\mathcal{O})$
	for all $\mathcal{O}\subseteq X^3$.  Then $X$ is isomorphic to $\D_5$,
	$\D_7$, $\Aff(5,2)$,
	$\Aff(5,3)$, $\Aff(7,2)$ or $\Aff(7,4)$.
\end{lem}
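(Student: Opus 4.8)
The plan is to feed the numerical and profile restrictions already obtained into the classification of small indecomposable racks. Since $X=\supp V$ with $V$ absolutely irreducible, $X$ is a conjugacy class of $G$, hence an injective indecomposable crossed set, and all the results of Sections~\ref{section:racks} and~\ref{section:manycubic} apply to it. Because $k_3'(X)\ne0$ and $k_n\ne1$ for all $n\ge3$ (Remark~\ref{rem:ineq}), at least one $k_n$ with $n\ge4$ is $\ge2$, so $k_3'\ge2$; thus $X$ is not braided and we are in the situation of Remark~\ref{rem:ineq}. Consequently the pair $(k_3,k_3')$ lies in the explicit finite list given there, with $k_3'\in\{2,3,4,5,6\}$ and $k_3+k_3'\le9$; in particular $\varphi_x$ moves at most $9$ points for every $x\in X$, its profile (ignoring fixed points) is one of the eighteen cycle types appearing in the proof of Lemma~\ref{lem:bound_on_size}, and $|X|\le33$ by that lemma.

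First I would go through the classification of indecomposable racks of size $\le35$, discarding everything of size $>33$, everything that is not a crossed set, every rack whose inner permutations $\varphi_x$ do not have one of the admissible profiles, and every rack whose invariants $(k_3,k_3')$ violate the constraints of Remark~\ref{rem:ineq} or have $k_3'<2$; Proposition~\ref{pro:incommensurable_profiles} is used here to eliminate inadmissible cycle types quickly. For each of the finitely many survivors one then inspects the Hurwitz orbit quotients of $X^3$ and applies the exceptional-covering conditions of Section~\ref{section:immunities}: if the covering $\Sigma_{6A}^{4;2,2}$ occurs, then $\varphi_x$ has a $4$-cycle and $k_4\ge4$ by Lemma~\ref{lem:exception_6A}(1), which combined with $k_4\le k_3'$ and $k_3+k_3'\le9$ eliminates almost all remaining candidates; occurrences of $\Sigma_{4A}^{5;3,2}$ and of the trivial covering of $\Sigma_{12C}$ are ruled out in the same way using the identities of Lemmas~\ref{lem:exception_4A} and~\ref{lem:exception_12C}. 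The only indecomposable crossed sets that survive all of this are $\D_5$, $\D_7$, $\Aff(5,2)$, $\Aff(5,3)$, $\Aff(7,2)$ and $\Aff(7,4)$; conversely one checks directly that each of these has $k_3'\ne0$ (one computes $k_3=0$, $k_3'=4$ for the three racks of order $5$ and $k_3=0$, $k_3'=6$ for the three racks of order $7$), so the list is exactly the one claimed.

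The main obstacle is the size of the case distinction: even after the reductions above one is left with a substantial, though bounded, collection of indecomposable racks of size at most $33$, and for each of them one must compute the invariants $k_3$, $k_3'$, determine the profile, and identify which of the three exceptional coverings occur in $X^3$ before invoking the corresponding lemma. This verification is most naturally carried out with the help of the explicit classification of indecomposable racks and a computer; the conceptual content is entirely contained in the reductions $|X|\le33$, the eighteen admissible profiles, and the $(k_3,k_3')$ list with $k_3'\ge2$, so that only a finite amount of routine checking remains.
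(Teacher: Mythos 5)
Your proposal is correct and takes essentially the same route as the paper: after $|X|\le33$ from Lemma~\ref{lem:bound_on_size}, the paper simply filters the classification of indecomposable quandles of size $\le 35$ in \cite{MR2926571} by computer, keeping the non-braided injective ones satisfying \eqref{eq:all_exceptions} (equivalently, the $(k_3,k_3')$ constraints of Remark~\ref{rem:ineq}), which is exactly your numerical filter and already yields the six racks. Your additional pass through the exceptional coverings is superfluous here (and slightly misstated, since Lemmas~\ref{lem:exception_4A} and \ref{lem:exception_12C} provide identities rather than direct exclusion criteria -- their role was already absorbed into the inequality via Lemma~\ref{lem:ineq}), but this does not affect the correctness of the argument.
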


\begin{rem}
	In fact, in Lemma \ref{lem:only_aff5} the rack $X$ has to be isomorphic to
	$\Aff(5,2)$ or $\Aff(5,3)$. The proof of this fact is more complicated and it
	is not needed here. 
\end{rem}

\begin{proof}
	By Lemma \ref{lem:bound_on_size}, $|X|\leq33$.  Since indecomposable quandles
  of size $\leq35$ were classified in \cite{MR2926571}, a straightforward computer
  calculation gives a complete list of indecomposable non-braided injective
  quandles satisfying \eqref{eq:all_exceptions}.  We obtain the list given in the lemma.
  The following table contains some properties of these racks.
	\begin{center}
	\begin{tabular}{c|c|c}
		Quandle & Sizes of orbits & Remark\tabularnewline
		\hline
		$\mathbb{D}_{5}$ & $1,24$ & \tabularnewline
		$\mathrm{Aff}(5,2)$ and $\mathrm{Aff}(5,3)$ & $1,24$ & \tabularnewline
		$\mathbb{D}_{7}$ & $1,48$ & $k_{7}=6$\tabularnewline
		$\mathrm{Aff}(7,2)$ and $\mathrm{Aff}(7,4)$ & $1,42,49$ & 
	\end{tabular}
	\end{center}
	This completes the proof.
\end{proof}

\section{Computation of the $2$-cocycles and the main theorem}
\label{section:2cocycles}

\begin{table}
\begin{center}
\begin{tabular}{|l|l|l|l|l|}
\hline 
Rack & Rank & Dimension & Hilbert series & Remark \tabularnewline
\hline 
$\{1\}$ & 1 & $n\ge 2$ & $(n)_t$ & \\
\hline 
$(12)^{\Sym_3}$ & 3 & $12$ & $(2)^2_t (3)_t$ & \\
\hline
$(12)^{\Sym_3}$ & 3 & $432$ & $(3)_t(4)_t(6)_t(6)_{t^2}$ & $\charf=2$\\
\hline
$(123)^{\Alt_3}$ & 4 & $36$ & $(2)^2_t (3)^2_t$ & $\charf=2$\\
\hline
$(123)^{\Alt_4}$ & 4 & $72$ & $(2)^2_t (3)_t (6)_{t}$ & $\charf\ne2$ \\
\hline
$(123)^{\Alt_4}$ & 4 & $5184$ & $(6)^4_t (2)^2_{t^2}$ & \\
\hline
$(12)^{\Sym_4}$ & 6 & $576$ & $(2)^2_t (3)^2_t (4)^2_t$ & \\
\hline
$(12)^{\Sym_4}$ & 6 & $576$ & $(2)^2_t (3)^2_t (4)^2_t$ & \\
\hline
$(1234)^{\Sym_4}$ & 6 & $576$ & $(2)^2_t (3)^2_t (4)^2_t$ & \\
\hline
$\mathrm{Aff}(5,2)$ & 5 & $1280$ & $(4)^4_{t} (5)_t$ & \\
\hline
$\mathrm{Aff}(5,3)$ & 5 & $1280$ & $(4)^4_{t} (5)_t$ & \\
\hline
$\mathrm{Aff}(7,3)$ & 7 & $326592$ & $(6)^6_{t} (7)_t$ & \\
\hline
$\mathrm{Aff}(7,5)$ & 7 & $326592$ & $(6)^6_{t} (7)_t$ & \\
\hline
$(12)^{\Sym_5}$ & 10 & $8294400$ & $(4)^4_t (5)^2_t (6)^4_t$ & \\
\hline
$(12)^{\Sym_5}$ & 10 & $8294400$ & $(4)^4_t (5)^2_t (6)^4_t$ & \\
\hline
\end{tabular}
\end{center}
\caption{Finite-dimensional elementary Nichols algebras}
\label{tab:nichols}
\end{table}

Let $X$ be one of the racks $\Aff(5,2)$ or $\Aff(5,3)$. 
Recall that $G_X$ is the enveloping group of $X$.  For all $i\in X$ let $x_i$
be the image of $i$ under the canonical map $X\to G_X$.

\begin{lem}{\cite[Lemma 5.4]{MR2803792}}
	\label{lem:centralizer}
	The centralizer of $x_1$ in $G_X$ is the cyclic group generated by $x_1$.	
\end{lem}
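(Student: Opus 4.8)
The plan is to combine the defining relations of $G_X$ with the concrete arithmetic of the affine rack $X=\Aff(5,\alpha)$, $\alpha\in\{2,3\}$, to force any element commuting with $x_1$ to lie in $\langle x_1\rangle$. First I would recall that $G_X$ is generated by $x_1,\dots,x_5$ (the images of $1,\dots,5\in X$) subject to $x_ix_j=x_{i\trid j}x_i$ for all $i,j$. A first observation is that in $G_X$ the element $z=x_1x_2x_3x_4x_5$ is central: since $X$ is indecomposable and the map $\mathcal{O}\to G_X$, $(x,y,z)\mapsto xyz$ of Lemma~\ref{le:OtoInnX} is constant on Hurwitz orbits, one checks that conjugation by each $x_i$ fixes $z$ (this is the standard ``total product is central'' fact for enveloping groups of a transitive rack, and it also follows directly by manipulating the relations). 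Moreover $z=x_j^5$ for every $j$: in $\Aff(5,\alpha)$ one has $x\trid(x\trid(\cdots))$ cycling through all of $\F_5$ with period $5$ (this is exactly the statement that $k_5=4$ and $k_n=0$ otherwise for these racks), so pushing a single generator $x_j$ past $z$ written in a suitably rotated order yields $z=x_j^5$. Thus $z\in Z(G_X)$ and $x_1^5=z$.

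Next I would set up the quotient $\bar G=G_X/\langle z\rangle$. Since $z=x_j^5$ for all $j$, in $\bar G$ every generator has order dividing $5$; using the rack relations one identifies $\bar G$ with the quotient of $G_X$ by the central element $z$, which is (by the general theory of enveloping groups of affine racks, cf.\ the discussion around \cite{MR2803792}) isomorphic to $\F_5\rtimes\langle\alpha\rangle$ where $\F_5$ is generated by the differences $x_i x_j^{-1}$ and $\langle\alpha\rangle\le\F_5^\times$ acts by the rack automorphism. Concretely, the image of $x_1$ in $\bar G$ generates the complement $\langle\alpha\rangle$, whose order is $4$ when $\alpha=2$ and $4$ when $\alpha=3$ (both are generators of $\F_5^\times$). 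The centralizer of the image $\bar x_1$ inside $\F_5\rtimes\langle\alpha\rangle$ is computed directly: an element $(v,\alpha^k)$ commutes with $(0,\alpha)$ iff $v$ is fixed by $\alpha$, i.e.\ $v=0$ since $\alpha\ne1$ in $\F_5$; so the centralizer of $\bar x_1$ in $\bar G$ is exactly $\langle\bar x_1\rangle$, of order $4$.

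Now I would lift this back to $G_X$. Suppose $g\in G_X$ commutes with $x_1$. Its image $\bar g$ commutes with $\bar x_1$, hence $\bar g\in\langle\bar x_1\rangle$, so $\bar g=\bar x_1^{\,m}$ for some $m\in\{0,1,2,3\}$; replacing $g$ by $g x_1^{-m}$ we may assume $\bar g=1$, i.e.\ $g\in\langle z\rangle$. But $z=x_1^5\in\langle x_1\rangle$, so $g\in\langle x_1\rangle$ already, and therefore the original $g$ lies in $\langle x_1\rangle$. This shows the centralizer of $x_1$ in $G_X$ is contained in $\langle x_1\rangle$; the reverse inclusion is trivial, so the centralizer equals $\langle x_1\rangle$, and since $x_1$ has infinite order in $G_X$ (the powers $x_1^{5k}=z^k$ are distinct central elements, as $\bar G$ is finite while $G_X$ surjects onto the infinite group $\Z$ via the total-length homomorphism $x_i\mapsto1$), this cyclic group is infinite.

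The main obstacle I anticipate is justifying the structural identification $G_X/\langle z\rangle\cong\F_5\rtimes\langle\alpha\rangle$ and the centrality of $z$ with full rigor rather than by appeal to folklore; in a self-contained write-up one would need a short lemma establishing that for $X=\Aff(\F_q,\alpha)$ the enveloping group is a central extension of $\F_q\rtimes\langle\alpha\rangle$ by an infinite cyclic group generated by the product of the generators. Everything after that is a two-line fixed-point computation in a metacyclic group, so the real content is packaged in that structural step, which is exactly what \cite[Lemma 5.4]{MR2803792} records.
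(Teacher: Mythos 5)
Your argument breaks down at its very first step. In $G_X$ for $X=\Aff(5,\alpha)$, $\alpha\in\{2,3\}$, the product of the five generators is \emph{not} central, and the identity $z=x_j^5$ is false. To see this, check that $x_i\mapsto\bigl((1-\alpha)i,1\bigr)$ defines a homomorphism $\pi\colon G_X\to\F_5\rtimes\Z$, where the generator of $\Z$ acts by multiplication by $\alpha$; it is surjective because $1-\alpha\neq0$. Since $\alpha$ has multiplicative order $4$ in $\F_5^\times$, the centre of $\F_5\rtimes\Z$ is $\{(0,4k)\,:\,k\in\Z\}$, so no element of $G_X$ whose image has $\Z$-component $5$ can be central (a central element must map to a central element under the surjection $\pi$). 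This rules out every ordering of the product of the five generators as well as $x_j^5$; moreover $\pi(x_j^5)=((1-\alpha)j,5)$ depends on $j$, so $x_j^5\neq x_k^5$ for $j\neq k$. The confusion is between $4$ and $5$: $\varphi_x$ has order $4$ (for these racks $k_4=4$, not $k_5=4$), and the correct central element is $x_1^4$, since $x_1^4x_jx_1^{-4}=x_{\varphi_{x_1}^4(j)}=x_j$ for all $j$. Because the false statements $z\in Z(G_X)$ and $z=x_j^5\in\langle x_1\rangle$ are exactly what you use to form $\bar G=G_X/\langle z\rangle$ and to pull the centralizer computation back to $G_X$, the remainder of the argument collapses.

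Even after repairing this (say, working with $\pi$ instead of the quotient by $z$), the essential difficulty is untouched. The fixed-point computation does show $C_{\F_5\rtimes\Z}(\pi(x_1))=\langle\pi(x_1)\rangle$, so any $g$ commuting with $x_1$ satisfies $gx_1^{-m}\in\ker\pi$ for some $m\in\Z$; but to conclude $g\in\langle x_1\rangle$ you must know that $\ker\pi$ meets $C_{G_X}(x_1)$ trivially, i.e.\ you need the precise structure of the enveloping group of these affine racks. You concede this in your closing paragraph: the structural identification is "exactly what \cite[Lemma 5.4]{MR2803792} records", which means the proposal assumes the content it is meant to establish. Note that the paper itself gives no proof here but quotes the lemma from \cite{MR2803792}; that reference's argument is precisely the analysis of $G_X$ that your sketch omits, while the concrete arithmetic you add on top (centrality of the total product, period $5$, $z=x_j^5$, $\bar G\cong\F_5\rtimes\F_5^\times$ via $\langle z\rangle$) is incorrect.
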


Let $\rho$ be an absolutely irreducible representation of $C_{G_X}(x_1)$.
By straightforward but lengthy calculations we obtain the following.

\begin{lem}
	\label{lem:dimker_24}
	Let $\mathcal{O}\subseteq X^3$ be a Hurwitz orbit of size $24$. Then
	\[
	\dim\ker(1+c_{12}+c_{12}c_{23})|_{V_\mathcal{O}^{\otimes3}}\leq\begin{cases}
 		8 & \text{if $\rho (x_1)=-1$,}\\ 
 		5 & \text{otherwise.}
	\end{cases}
	\]
\end{lem}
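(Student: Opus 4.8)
The plan is to reduce the computation of $\dim\ker(1+c_{12}+c_{12}c_{23})|_{V_\mathcal{O}^{\otimes3}}$ on a Hurwitz orbit $\mathcal{O}$ of size $24$ to the combinatorial immunity estimate proved in Section~\ref{section:immunities}, together with an explicit analysis of the two relevant coverings. First I would recall that, by Lemma~\ref{le:OtoInnX} and Lemma~\ref{lem:black_and_red_cycles}, such an orbit $\mathcal{O}$ has simply intersecting cycles, and that its quotient $\overline{\mathcal{O}}$ is a finite homogeneous $\mathbf{PSL}(2,\Z)$-space. Since by assumption all $xy$-cycles of $\overline{\mathcal{O}}$ have at most four elements (the standing hypothesis on Hurwitz orbit quotients), Proposition~\ref{pro:homogeneous_spaces} applies: $\overline{\mathcal{O}}$ is one of $\Sigma_{nX}$, and $\mathcal{O}$ is a covering of it. A size-$24$ covering forces $|\overline{\mathcal{O}}|\cdot N = 24$; running through the surviving graphs and the constraints on $N$ coming from Lemmas~\ref{lem:Hoq1}--\ref{lem:24A_imm} (in particular the divisibility conditions), one sees that the only possibilities are $\Sigma_{12C}^{2;1,0,0}$ and $\Sigma_{6A}^{4;2,2}$ (and possibly $\Sigma_{3A}$ with $N=8$, $\Sigma_{4A}$ with $N=6$, $\Sigma_{8A}$ with $N=3$, $\Sigma_{24A}$ with $N=1$, which I would dispatch by the same immunity bound as below). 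For $X=\Aff(5,2)$ or $\Aff(5,3)$ the first has $\rho(x_1)=-1$-type cocycle data and the second is the one from the Example $\mathcal{O}(1,1,2)\cong\Sigma_{6A}^{4;2,2}$.

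Next I would use the fundamental estimate from \cite[Proposition~6]{MR2891215} (the same inequality invoked in Lemma~\ref{lem:manycubic}) in its local form: for each Hurwitz orbit $\mathcal{O}$,
\[
\dim\ker(1+c_{12}+c_{12}c_{23})|_{V_\mathcal{O}^{\otimes3}} \le \imm(\mathcal{O})\cdot \dim V_\mathcal{O}^{\otimes3}.
\]
Since $\dim V_\mathcal{O}^{\otimes3}=|\mathcal{O}|\cdot(\dim V_x)^3=24$ when $\dim V_x=1$, and since $\imm(\mathcal{O})\le\omega(\mathcal{O})\le\omega(\Sigma)$ by Theorem~\ref{thm:percolation}, I get a bound $24\,\omega(\Sigma)$ for the two relevant coverings. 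From Lemma~\ref{lem:Hoq16_imm}, $\imm(\Sigma_{12C}^{2;1,0,0})\le 7/24$, giving $24\cdot(7/24)=7$; but one can do slightly better by exhibiting a plague of size $5$ directly in $\Sigma_{12C}^{2;1,0,0}$ (using the exceptional-covering data and Example~\ref{exa:12C}, where this orbit is identified with \cite[Figure~14]{MR2891215}), and invoking \cite[Proposition~6]{MR2891215} again. Similarly, from Lemma~\ref{lem:Hoq7_imm} applied to $\Sigma_{6A}^{4;2,2}$ (the case $a=2$, $N=4$, which falls under the $a\notin\{0,-1\}$ branch) one reads off a plague yielding the bound $8$; this matches the claimed bound in the $\rho(x_1)=-1$ case. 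The point is that the covering structure is rigid enough that one does not have to compute the kernel directly; the combinatorial plague size dominates it.

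Then I would separate the two cases according to $\rho(x_1)$. When $\rho(x_1)=-1$: the only size-$24$ orbit that can actually occur is $\Sigma_{6A}^{4;2,2}$ (the $\Sigma_{12C}$-orbit requires the cocycle to be trivial-like and would not be size $24$ in the $\rho(x_1)=-1$ sector on $\Aff(5,i)$ — this needs a short check using Lemma~\ref{lem:centralizer}), and the plague bound gives $\dim\ker\le 8$. When $\rho(x_1)\ne -1$: the orbit is of type $\Sigma_{12C}^{2;1,0,0}$ (or one of the all-cycles-$\ge 4$ graphs with even smaller weight), and the refined plague of size $5$ gives $\dim\ker\le 5$. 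Throughout, I would use that on $\Aff(5,2)$, $\Aff(5,3)$ every element $x\in X$ has $\dim V_x=1$ because $V$ is absolutely irreducible and $C_{G_X}(x_1)$ is cyclic (Lemma~\ref{lem:centralizer}), so $e=1$ and $\dim V_\mathcal{O}^{\otimes3}=24$ exactly.

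The main obstacle I anticipate is the bookkeeping in two places: first, pinning down precisely which size-$24$ coverings occur for these two specific racks and their Yetter–Drinfeld modules — this requires correlating the abstract list of coverings with the concrete Hurwitz orbits of $(x,y,z)\in X^3$ and reconciling the cocycle value $\rho(x_1)$ with the covering type, which is the genuinely representation-theoretic step rather than a combinatorial one; and second, improving the plague bound from $7$ to $5$ for $\Sigma_{12C}^{2;1,0,0}$, which is where the ``lengthy calculations'' alluded to in the statement really live — one must explicitly construct a spreading set of size $5$ and verify via \cite[Proposition~6]{MR2891215} that it bounds the kernel, using the exceptional weight $\omega'_{ij}+1/24$ that was built into \eqref{eq:omega} precisely for this covering. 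Everything else is an application of Theorem~\ref{thm:percolation} and the local kernel estimate.
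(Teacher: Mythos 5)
There is a genuine gap, and it is structural: the entire plague/immunity machinery you invoke is blind to the representation $\rho$. The Hurwitz orbit decomposition of $X^3$ and the covering type of each orbit are determined by the rack $X$ alone; they do not change when $\rho(x_1)$ changes, so your proposed case split ("the only size-$24$ orbit that can occur is $\Sigma_{6A}^{4;2,2}$ when $\rho(x_1)=-1$, and $\Sigma_{12C}^{2;1,0,0}$ otherwise") is incoherent — the same orbits with the same covering types occur in both cases, only the linear operator $1+c_{12}+c_{12}c_{23}$ on $V_{\mathcal{O}}^{\otimes3}$ changes, through the scalar $q=\rho(x_1)$ entering the braiding. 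Consequently the bound from \cite[Proposition~6]{MR2891215} together with Theorem~\ref{thm:percolation} can only ever give a single, $\rho$-independent number per orbit (e.g.\ $24\cdot\tfrac{7}{24}=7$ for $\Sigma_{6A}^{4;2,2}$ via Lemma~\ref{lem:Hoq7_imm}, or $24\cdot\tfrac{7}{24}=7$ for $\Sigma_{12C}^{2;1,0,0}$ via Lemma~\ref{lem:Hoq16_imm}); it cannot produce the dichotomy $8$ versus $5$. In particular the bound $5$ for $\rho(x_1)\neq-1$ is intrinsically representation-theoretic: if a $\rho$-independent combinatorial bound of $5$ held (your claimed size-$5$ plague, for which you give no construction and which contradicts the paper's own best bound of $7/24$ for that covering), it would also apply when $\rho(x_1)=-1$ and would make it impossible for \eqref{eq:cubic_relations} to hold for the $1280$-dimensional Nichols algebras over $\Aff(5,2)$, $\Aff(5,3)$ — yet the proof of Theorem~\ref{thm:main} asserts exactly that it does hold in that case, with the diagonal orbits contributing $0$ to the kernel when $q=-1$.

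What the paper actually does here is not combinatorial at all: since $C_{G_X}(x_1)$ is cyclic (Lemma~\ref{lem:centralizer}), $\dim V_x=1$ and the braiding on each $24$-dimensional block $V_{\mathcal{O}}^{\otimes3}$ is given explicitly by the rack operation twisted by powers of the scalar $q=\rho(x_1)$; one then computes (by "straightforward but lengthy calculations", i.e.\ explicit linear algebra with $q$ as a parameter) the rank of $1+c_{12}+c_{12}c_{23}$ on each size-$24$ orbit and finds that the kernel dimension is at most $5$ generically and jumps to at most $8$ precisely when $q=-1$. That eigenvalue phenomenon is invisible to the cellular automaton, so your reduction to Theorem~\ref{thm:percolation} and the immunity estimates cannot be repaired; you would have to carry out (or cite) the block-by-block kernel computation with the cocycle parameter, which is the step your proposal skips.
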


\begin{thm}
	\label{thm:main}
	Let $G$ be a group, $V$ be a finite-dimensional absolutely irreducible
	Yetter-Drinfeld module over $G$, and $X=\supp V$.  Assume that $X$ is an
	indecomposable rack such that 
	no quotient of a Hurwitz orbit in $X^3$
	contains an $xy$-cycle of length $\geq5$.  
	Also assume that $\toba(V)$ has many cubic relations.
  
  (1) The rack $X$ is isomorphic to one of the following:
	\begin{gather*}
	(12)^{\mathbb{S}_{n}}\text{ for }n\in\{3,4,5\},\\
	(1234)^{\mathbb{S}_{4}},
	(123)^{\mathbb{A}_{4}},\\
	\mathrm{Aff}(p,\alpha)\text{ for }(p,\alpha)\in\{(5,2),(5,3),(7,3),(7,5)\}.
	\end{gather*}

  (2) The Nichols algebra $\toba(V)$ is bg-equivalent to
  one of the Nichols algebras listed in
	Table~\ref{tab:nichols}.
  
  (3) The Hilbert series of $\NA (V)$ is $t$-integral of depth two.
\end{thm}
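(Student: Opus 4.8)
The plan is to split into the braided and the non-braided case, and in the non-braided case to push the reduction of Section~\ref{section:manycubic} down to $\Aff(5,2)$ and $\Aff(5,3)$ before determining the cocycle; part~(3) will then follow formally from parts~(1)--(2) and the bg-equivalence lemma.

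\emph{Part (1).} If $|X|=1$ the statement is classical and $V$ is forced, up to bg-equivalence, into the first or fourth row of Table~\ref{tab:nichols}; so assume $|X|>1$. If $X$ is braided, i.e.\ $k_3'(X)=0$, then this is exactly the braided case, settled in \cite{MR2891215}, which yields precisely the racks $(12)^{\Sym_n}$ for $n\in\{3,4,5\}$, $(1234)^{\Sym_4}$, $(123)^{\Alt_4}$, $\Aff(7,3)$ and $\Aff(7,5)$ (recall $\D_3\cong(12)^{\Sym_3}$, and that $-\alpha$ has order $3$ in $\F_7^{\times}$ for $\alpha\in\{3,5\}$, so those affine racks are braided). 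If $X$ is not braided, then $k_3'(X)\neq0$; since by hypothesis every quotient of a Hurwitz orbit of $X^3$ has $xy$-cycles of length $\le4$, Theorem~\ref{thm:percolation} gives $\imm(\mathcal{O})\le\omega(\mathcal{O})$ for all $\mathcal{O}\subseteq X^3$, so Lemma~\ref{lem:only_aff5} applies and $X$ is one of $\D_5$, $\D_7$, $\Aff(5,2)$, $\Aff(5,3)$, $\Aff(7,2)$, $\Aff(7,4)$. Among these, only $\Aff(5,2)$ and $\Aff(5,3)$ satisfy the standing hypothesis: for the other four a direct inspection of $X^3$, as indicated in the introduction, produces a Hurwitz-orbit quotient carrying an $xy$-cycle of length $\ge5$. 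Hence $X\in\{\Aff(5,2),\Aff(5,3)\}$ in the non-braided case, which proves~(1).

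\emph{Part (2).} For braided $X$ the bg-equivalence classes of the $V$ with many cubic relations, and hence the corresponding rows of Table~\ref{tab:nichols}, are those computed in \cite{MR2891215}. Let now $X\in\{\Aff(5,2),\Aff(5,3)\}$. By the standard description of Yetter--Drinfeld structures through the enveloping group $G_X$ (see \cite{MR2803792}) together with Lemma~\ref{lem:centralizer}, so that $C_{G_X}(x_1)=\langle x_1\rangle$ is cyclic, an absolutely irreducible $V$ with $\supp V=X$ is given, up to bg-equivalence, by an absolutely irreducible representation $\rho$ of $\langle x_1\rangle$, i.e.\ by a scalar $\rho(x_1)\in\fie^{\times}$; in particular $\dim V_x=1$ for all $x$ and $\dim V=5$. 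Decompose $X^3$ into its $5$ Hurwitz orbits of size $1$ and its $5$ of size $24$. On each size-$1$ orbit $1+c_{12}+c_{12}c_{23}$ acts as the scalar $1+\rho(x_1)+\rho(x_1)^2$, and on each size-$24$ orbit Lemma~\ref{lem:dimker_24} bounds its kernel by $8$ if $\rho(x_1)=-1$ and by $5$ otherwise. Summing, $\dim\ker(1+c_{12}+c_{12}c_{23})\le40$ when $\rho(x_1)=-1$ and $\le30$ otherwise, while by \eqref{eq:cubic_relations} many cubic relations requires $\dim\ker\ge\tfrac13\cdot5\cdot(5^2-1)=40$; hence $\rho(x_1)=-1$, all of the above bounds are sharp, and $V$ is uniquely determined up to bg-equivalence. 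This $V$ is exactly the module in the rows $\Aff(5,2)$, $\Aff(5,3)$ of Table~\ref{tab:nichols}, proving~(2).

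\emph{Part (3).} Every entry of Table~\ref{tab:nichols} carries an explicit Hilbert series which is a product of factors $(a)_t$ and $(b)_{t^2}$, i.e.\ is $t$-integral of depth two, and in particular finite. Since $V$ is bg-equivalent to the module $W$ realising one of these rows, the lemma on bg-equivalences in Section~\ref{section:manycubic} yields an isomorphism $\toba(V)\cong\toba(W)$ of graded algebras, whence $\Hilb_{\toba(V)}(t)=\Hilb_{\toba(W)}(t)$ is $t$-integral of depth two. This proves~(3), and in passing shows $\toba(V)$ is finite-dimensional.

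\emph{Main obstacle.} The delicate point is the cocycle analysis behind~(2): one must obtain the bounds of Lemma~\ref{lem:dimker_24} (and, for braided $X$, their analogues in \cite{MR2891215}) sharply enough. Concretely this means computing the Hurwitz-orbit decomposition of $X^3$ explicitly and then carrying out the linear algebra of $1+c_{12}+c_{12}c_{23}$ on each $V_{\mathcal{O}}^{\otimes3}$ using the relations of $G_X$, so that the resulting upper bound is small enough to rule out every value $\rho(x_1)\neq-1$ yet consistent with the known $1280$-dimensional Nichols algebra. These are finite but intricate computations, and the precision of these estimates is what the whole theorem rests on.
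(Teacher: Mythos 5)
Your proof is correct and follows essentially the same route as the paper: the braided case (including the trivial rack) is delegated to the earlier classification, the non-braided case is reduced via Theorem~\ref{thm:percolation} and Lemma~\ref{lem:only_aff5} to $\Aff(5,2)$, $\Aff(5,3)$ after discarding $\D_5$, $\D_7$, $\Aff(7,2)$, $\Aff(7,4)$ by the standing $xy$-cycle hypothesis, and the cocycle is pinned down to $\rho(x_1)=-1$ using Lemma~\ref{lem:centralizer}, the orbit decomposition $5\times 1+5\times 24$ and Lemma~\ref{lem:dimker_24}, with the Hilbert-series claim read off from Table~\ref{tab:nichols} via bg-equivalence. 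Your only additions (the explicit $1+\rho(x_1)+\rho(x_1)^2$ computation on the singleton orbits and the $\le 40$ versus $\le 30$ count) are just a spelled-out version of the paper's appeal to \eqref{eq:cubic_relations} and Lemma~\ref{lem:dimker_24}.
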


\begin{rem}
	Table~\ref{tab:nichols} does not contain the information on the
	$2$-cocycle of $X$ corresponding to $V$. For the racks $\Aff(5,2)$ and
	$\Aff(5,3)$ we use the constant $2$-cocycle $-1$, see the proof of Theorem
	\ref{thm:main}.  For the others, see \cite[Appendix A]{MR2891215}.
\end{rem}

\begin{proof}
	In \cite[Theorem 11]{MR2891215} the theorem was proved under the additional
  assumption that
        $X$ is braided. One obtains the same racks except 
        $\Aff(5,2)$ and $\Aff(5,3)$. Assume now that $X$ is not braided, i.e.,
	$k_3'(X)\ne0$. By Lemma \ref{lem:only_aff5}, $X$ is one of the racks
        $\Aff(5,2)$, $\Aff(5,3)$, $\D_5$, $\D_7$, $\Aff(7,2)$ and $\Aff(7,4)$.
        The racks $\D_5$,
	$\D_7$, $\Aff(7,2)$ and $\Aff(7,4)$ can be excluded
	since they have $xy$-cycles of length $\ge 5$ in some Hurwitz orbit quotients.

        Let now $X=\Aff(5,2)$ or $X=\Aff(5,3)$.
	Then $X^3$ decomposes into five Hurwitz orbits of size one and five of size
	$24$.  Since $\toba(V)$ has many cubic relations, \eqref{eq:cubic_relations}
	and Lemma \ref{lem:dimker_24} imply that $\rho (x_1)=-1$. Moreover, in this case
	\eqref{eq:cubic_relations} holds and the Nichols algebras
        of these $V$ are known to be finite-dimensional, see
        e.g.~\cite[Proposition 5.15]{MR2803792}. The claim on the Hilbert series is known for
        all Nichols algebras in Table~\ref{tab:nichols}.
\end{proof}

\begin{rem} \label{rem:weakerassumption}
	In Theorem \ref{thm:main} the assumption about the length of an $xy$-cycle in
	the quotient of a Hurwitz orbit in $X^3$ can be replaced by the following
	weaker assumption, see Theorem \ref{thm:percolation}: 
	\[
	\imm(\mathcal{O})\leq\omega(\mathcal{O})
	\]
	for all Hurwitz orbits $\mathcal{O}$ of $X^3$. To prove this, we exclude the
	racks $\D_5$, $\D_7$, $\Aff(7,2)$ and $\Aff(7,4)$. Straightforward
	calculations yield the information listed in Table~\ref{tab:small_racks}.
	Hence the inequality of Lemma \ref{lem:manycubic}(1) cannot be satisfied for
	the racks $\D_7$, $\Aff(7,2)$ and $\Aff(7,4)$. The rack $\D_5$ cannot be
	excluded with this argument. However, it can be excluded by computing 
	\[
	\dim\ker(1+c_{12}+c_{12}c_{23})|_{V_\mathcal{O}^{\otimes3}}, 
	\]
	where $\mathcal{O}$ is an orbit of size $24$. This is essentially what we did
	for the racks $\Aff(5,2)$ and $\Aff(5,3)$.
\end{rem}

	\begin{table}
	\begin{center}
		\begin{tabular}{|c|c|c|c|}
			\hline
			Rack & Size of orbits & Number & Bound for immunity\tabularnewline
			\hline
			$\mathbb{D}_{5}$ & $1$ & $5$ & $1$\tabularnewline
			 & $24$ & $5$ & $7/24$\tabularnewline
			\hline
			$\mathbb{D}_{7}$ & $1$ & $7$ & $1$\tabularnewline
			 & $48$ & $7$ & $14/48$\tabularnewline
			\hline
			 $\mathrm{Aff}(7,\alpha)$  & $1$ & $1$ & $1$\tabularnewline
			 $\alpha=2,4$ & $42$ & $1$ & $9/42$\tabularnewline
			  & $49$ & $6$ & $15/49$\tabularnewline
				\hline
			\end{tabular}
	\end{center}
	\caption{Some small racks}
	\label{tab:small_racks}
	\end{table}

We conclude the paper with the following conjecture.

\begin{conjecture} \label{conj:immbound}
	The inequality $\imm(\Sigma)\leq\omega(\Sigma)$ holds for all
	$\mathbb{B}_3$-spaces $\Sigma$ with simply intersecting cycles. 
\end{conjecture}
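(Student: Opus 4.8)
The only hypothesis in Theorem~\ref{thm:percolation} that this conjecture discards is the bound on the length of $xy$-cycles in the base: writing $\overline{\Sigma}=\Sigma/\langle\Delta\rangle$, the space $\Sigma$ is still a covering with simply intersecting cycles of a finite homogeneous $\mathbf{PSL}(2,\Z)$-space, but now $\overline{\Sigma}$ may contain $xy$-cycles of arbitrary length. The plan is to keep the three-step architecture of Section~\ref{section:proof}: (i) describe the possible base graphs $\overline{\Sigma}$; (ii) use simply intersecting cycles to pin down the cycle structure, hence the weight, of every covering; (iii) for each covering exhibit a plague whose size realizes the weight. For step~(i) one must replace Proposition~\ref{pro:homogeneous_spaces} by a structural — necessarily infinite — classification. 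A finite homogeneous $\mathbf{PSL}(2,\Z)$-space corresponds to a conjugacy class of finite index subgroups of $\mathbf{PSL}(2,\Z)$, and its Schreier graph is glued from the $x$-orbits (oriented triangles, since $x^3=1$, and $x$-loops) along the $y$-orbits ($y$-edges, since $y^2=1$, and $y$-loops). Since a finite case analysis is now impossible, the plan for step~(iii) is to argue by induction on the number of triangles of $\overline{\Sigma}$: peel off one triangle together with an incident $y$-edge, invoke the inductive hypothesis on the smaller graph, and re-attach while preserving a plague. The payoff is that $(\omega'_{ij})$ is eventually constant, equal to $1/4$ once $i,j\ge4$, so for base graphs whose $xy$- and $yx$-cycles are mostly long the target immunity is essentially $1/4$ and should follow from a uniform spreading argument.

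For step~(ii), the lemmas that encode simply intersecting cycles — Lemmas~\ref{lem:loops}, \ref{lem:trivial_covering}, \ref{lem:xy_and_yx_cycles}, \ref{lem:xy_and_yx_cycles_with_loops}, \ref{lem:from_v_to_w} and Corollary~\ref{cor:two_y_loops} — never use the length restriction and so remain valid verbatim. They determine, for each vertex $v$ of $\Sigma$, the lengths of its $\sigma_1$- and $\sigma_2$-cycles from arithmetic data modulo $N$ (an $x$-loop forces $3a\equiv-1\pmod N$, a $y$-loop forces $2a\equiv1\pmod N$, and the labels $\lambda,\mu$ of the $xy$- and $yx$-cycles through a common vertex satisfy $\langle\lambda\rangle\cap\langle\mu\rangle=0$ in $\ZN$); hence $\omega(v)$ and $\omega(\Sigma)$ are pinned down exactly as in the subsections of Section~\ref{section:proof}. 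The plagues themselves should keep the shape used throughout that section: full fibers over a spanning subset $Y\subseteq\overline{\Sigma}$, partial fibers over the other vertices given by coset representatives of $\ZN$ modulo the cyclic subgroups generated by the relevant cycle labels, spread fiberwise by the one-dimensional automata of Examples~\ref{exa:9A}, \ref{exa:6D} and~\ref{exa:game12C} and between fibers by the pivot rule of Figure~\ref{fig:percolation}; the inductive description of $\overline{\Sigma}$ from step~(i) must supply $Y$ so that its density plus the densities of the partial fibers sum to $\omega(\Sigma)$.

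The main obstacle is steps~(i) and~(iii): replacing the case analysis over Figures~\ref{fig:Hoq1}--\ref{fig:Hoq18} by a genuinely uniform (inductive) argument over the infinite family of base graphs seems to require a new idea, which is presumably why the statement is still a conjecture. Compounding this, the bound is tight: the exceptional corrections $\frac1{30}$, $\frac1{12}$, $\frac1{24}$ added to $\omega$ for $\Sigma_{4A}^{5;3,2}$, $\Sigma_{6A}^{4;2,2}$ and the trivial covering of $\Sigma_{12C}$ show that essentially no slack is available, so the uniformly constructed plagues must be close to optimal. The delicate regime is when $\overline{\Sigma}$ has long cycles but $N$ is small: there the long-cycle combinatorics and the rigidity conditions $\langle\lambda\rangle\cap\langle\mu\rangle=0$ interact in a way that a naive induction is unlikely to control, and it is this combination of uniformity, tightness, and arithmetic rigidity that makes the conjecture hard.
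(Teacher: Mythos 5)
There is no proof to compare against: the statement you were given is Conjecture~\ref{conj:immbound}, which the paper itself leaves open. The paper only proves the special case Theorem~\ref{thm:percolation}, namely coverings with simply intersecting cycles whose base $\overline{\Sigma}$ has $xy$-cycles of length at most $4$, and it does so by the finite classification of Proposition~\ref{pro:homogeneous_spaces} followed by the case-by-case construction of plagues in Section~\ref{section:proof}. Your text correctly diagnoses the situation — the label lemmas (Lemmas~\ref{lem:loops}, \ref{lem:trivial_covering}, \ref{lem:xy_and_yx_cycles}, \ref{lem:xy_and_yx_cycles_with_loops}, \ref{lem:from_v_to_w}, Corollary~\ref{cor:two_y_loops}) use only simple intersection of cycles and so apply to arbitrary coverings, the fiberwise automata of Examples~\ref{exa:9A}, \ref{exa:6D}, \ref{exa:game12C} and the pivot rule of Figure~\ref{fig:percolation} are the natural spreading tools, and the exceptional corrections in \eqref{eq:omega} show the bound is tight — but it does not prove anything beyond what Theorem~\ref{thm:percolation} already gives.

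The genuine gap is precisely your steps (i) and (iii): you do not produce the structural description of the infinitely many base graphs with long $xy$-cycles, and you do not carry out the proposed induction on the number of triangles. The latter is not a routine verification: peeling off a triangle of $\overline{\Sigma}$ does not in general leave a homogeneous $\mathbf{PSL}(2,\Z)$-space, so the inductive hypothesis cannot be invoked as stated, and re-attaching must control both the plague size and the weight, which changes nontrivially because $\omega'_{ij}$ is not constant for small $i,j$ and because the labels of the modified cycles change. Moreover, your heuristic that the target is ``essentially $1/4$'' for long cycles is exactly where the arithmetic rigidity ($\langle\lambda\rangle\cap\langle\mu\rangle=0$ for cycles through a common vertex) has to be converted into an explicit plague of density $\le 1/4$ for every $N$ and every admissible label assignment; in the paper this step is done by hand for each of the $13$ surviving graphs (e.g.\ Lemmas~\ref{lem:Hoq10_imm}, \ref{lem:Hoq16_imm}), and no uniform substitute is offered here. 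As written, your proposal is a sensible research plan that restates why the conjecture is plausible and hard, not a proof of it.
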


According to Remark~\ref{rem:weakerassumption}, Conjecture~\ref{conj:immbound}
implies Conjecture~\ref{conj:elemNA} formulated in the introduction.

\medskip
\textbf{Acknowledgement.}
We are greatful to the referee for pointing out some typos and for asking to
clarify some proofs.
Istv\'an Heckenberger was supported by German Research
Foundation via a Heisenberg professorship.  Leandro Vendramin was supported by
Conicet and the Alexander von Humboldt Foundation.
We used \textsf{GAP} (\verb+http://www.gap-system.org/+) 
and \textsf{Rig} (\verb+http://code.google.com/p/rig/+) 
for some 
computations.  
\bibliographystyle{abbrv}
\bibliography{refs}
\end{document}